\numberwithin{equation}{section}
\newtheorem{Theorem}{Theorem}[section]
\newtheorem*{Theorem*}{Theorem}
\newtheorem{Lemma}[Theorem]{Lemma}
\newtheorem{Proposition}[Theorem]{Proposition}
 { \theoremstyle{definition}
\newtheorem{Definition}[Theorem]{Definition}

 }
\DeclareMathOperator{\diag}{diag}
\DeclareMathOperator{\gr}{gr}
\DeclareMathOperator{\KR}{KR}
\DeclareMathOperator{\Hom}{Hom}
\DeclareMathOperator{\ch}{ch}
\DeclareMathOperator{\res}{res}
\DeclareMathOperator{\Sym}{Sym}
\begin{document}

\allowdisplaybreaks

\newcommand{\arXivNumber}{2410.08657}

\renewcommand{\PaperNumber}{041}

\FirstPageHeading

\ShortArticleName{Twisted Fusion Products and Quantum Twisted $Q$-Systems}

\ArticleName{Twisted Fusion Products\\ and Quantum Twisted $\boldsymbol{Q}$-Systems}

\Author{Mingyan Simon LIN}

\AuthorNameForHeading{M.S.~Lin}

\Address{Singapore Institute of Manufacturing Technology (SIMTech), Agency for Science,\\ Technology and Research (A*STAR), 5 Cleantech Loop, \#01-01 CleanTech Two Block~B,\\ Singapore 636732, Republic of Singapore}
\Email{\href{mailto:simon_lin@simtech.a-star.edu.sg}{simon\_lin@simtech.a-star.edu.sg}}

\ArticleDates{Received October 14, 2024, in final form May 21, 2025; Published online June 10, 2025}

\Abstract{We obtain a complete characterization of the space of matrix elements dual to the graded multiplicity space arising from fusion products of Kirillov--Reshetikhin modules over special twisted current algebras defined by Kus and Venkatesh, which generalizes the result of Ardonne and Kedem to the special twisted current algebras. We also prove the conjectural identity of $q$-graded fermionic sums by Hatayama et al.\ for the special twisted current algebras, from which we deduce that the graded tensor product multiplicities of the fusion products of Kirillov--Reshetikhin modules over special twisted current algebras are both given by the $q$-graded fermionic sums, and constant term evaluations of products of solutions of the quantum twisted $Q$-systems obtained by Di Francesco and Kedem.}

\Keywords{twisted $Q$-systems; quantum $Q$-systems; Kirillov--Reshetikhin modules; fusion products}

\Classification{17B37; 13F60}

\section{Introduction}\label{Section 1}

\subsection{Overview}\label{Section 1.1}

The Kirillov--Reshetikhin (KR-) modules were first introduced in \cite{KR87} in the context of Bethe ansatz of generalized inhomogeneous Heisenberg spin chains. These KR-modules are irreducible, finite-dimensional modules over the Yangian $Y(\mathfrak{g})$ of a simple classical Lie algebra $\mathfrak{g}$, and these modules satisfy two key properties. Firstly, the $\mathfrak{g}$-characters of these KR-modules satisfy the $Q$-system relations, which is a family of nonlinear recurrence relations~\cite{Kirillov89,KR87}. Secondly, the multiplicities of irreducible, finite-dimensional $\mathfrak{g}$-modules in a tensor product of KR-modules are given by fermionic formulas \cite{KR87}.

Subsequently, the untwisted $Q$-systems arose in the fusion procedure for the transfer matrices of the vertex and the restricted solid-on-solid (RSOS) models associated to Yangian $Y(\mathfrak{g})$ \cite{KNS94}, or equivalently, an untwisted quantum affine algebra $U_q(\widehat{\mathfrak{g}})$. The twisted $Q$-systems then appeared in a subsequent sequel \cite{KS95}, where Kuniba and Suzuki generalized the fusion procedure to the twisted quantum affine algebras.

In \cite{HKOTY99}, Hatayama et al.\ gave combinatorial definitions of $q$-deformations of the fermionic sums, and defined KR-modules over $U_q(\widehat{\mathfrak{g}})$ in terms of Drinfeld polynomials \cite{Chari01} in the untwisted cases. These definitions were then extended to the twisted cases in \cite{HKOTT02}. In addition, they showed in \cite[Theorem~8.1]{HKOTY99}, \cite[Theorem 6.3]{HKOTT02} that if the $U_q(\mathfrak{g})$-characters of the KR-modules over $U_q(\widehat{\mathfrak{g}})$ satisfy the $Q$-system relations, together with some extra asymptotic conditions, then the multiplicity of an irreducible $U_q(\mathfrak{g})$-module in a tensor product of KR-modules over $U_q(\widehat{\mathfrak{g}})$ is given by the extended fermionic sum defined by \cite[equation~(4.16)]{HKOTY99}, \cite[equation~(4.20)]{HKOTT02} at $q=1$. As a first step towards proving the claims advanced by Hatayama et al., Nakajima showed that the $q$-characters of the KR-modules over $U_q(\widehat{\mathfrak{g}})$ satisfy the $T$-system relations in the simply-laced untwisted cases~\cite{Nakajima03}. Subsequently, Hernandez extended the results to the non-simply laced untwisted cases \cite{Hernandez06}, and the twisted cases \cite{Hernandez10}, using a different approach from Nakajima. As the $Q$-system relations are specializations of the $T$-system relations, this implies the fermionic formulas for the tensor product multiplicities.

In the same papers, Hatayama et al.\ also conjectured via \cite[Conjecture~3.1]{HKOTY99}, \cite[Conjecture~3.10]{HKOTT02} that the $q$-grading of the fermionic sums also appears in the context of crystals of tensor products of KR-modules over a quantum affine algebra. Shortly after \cite{HKOTY99}, Kirillov et al.\ proved \cite[Conjecture~3.1]{HKOTY99} for the \smash{$A_r^{(1)}$} case, by establishing a bijection between rigged configurations and crystal paths in type $A$. This was then extended to the \smash{$D_r^{(1)}$} case by Naoi~\cite{Naoi12} via a representation theoretic approach.

Subsequently, further interpretations of the $q$-grading in the $q$-graded fermionic sums were given in the untwisted cases. In \cite{AK07, DFK08}, it was shown that the $q$-grading corresponds to the $\mathfrak{g}$-equivariant grading on the Feigin--Loktev fusion product \cite{FL99} of localized KR-modules over the untwisted current algebra $\mathfrak{g}[t]$, by showing that the graded tensor product multiplicities are given by defined by the $q$-graded fermionic formula given in \cite[equation~(4.3)]{HKOTY99}. A subsequent interpretation was given in \cite{DFK14, Lin21}, where the $q$-grading corresponds to the $q$-grading of quantum $Q$-systems. These quantum $Q$-systems arise naturally as the quantum deformation of the $Q$-system cluster algebras defined in \cite{DFK09, Kedem08}, and were used to yield a complete characterization of the fusion product of KR-modules over current algebras in terms of the quantum $Q$-systems.\looseness=1

A natural question to ask at this point is whether there are analogues of the various interpretations of the $q$-grading of the fermionic sums \cite[equations~(4.5) and (4.20)]{HKOTT02} in the twisted case. In more recent work, Okado et al.\ \cite{OSS18} showed that $q$-grading of the fermionic sums arises in the context of crystals of tensor products of KR-modules in the non-exceptional twisted cases, and Scrimshaw did the same in the exceptional twisted cases described in \cite{Scrimshaw16, Scrimshaw20}, by proving~\cite[Conjecture~3.10]{HKOTT02} in the aforementioned twisted cases.

Our goal in this paper is to extend the results in \cite{AK07, DFK14, Lin21}, and show that the $q$-grading corresponds to the equivariant grading on the fusion product~\cite{KV16} of localized KR-modules over special twisted current algebras, as well as that of the quantum twisted $Q$-systems of type~\smash{$\neq A_{2r}^{(2)}$} defined by Di Francesco and Kedem~\cite{DFK24}. More precisely, we will use the quantum twisted $Q$-system relations to prove the identity \cite[Conjecture~4.3]{HKOTT02} of $q$-graded fermionic sums for all twisted affine types \smash{$\neq A_{2r}^{(2)}$}. We will also show that the graded multiplicities in the fusion product of KR-modules over the special twisted current algebras are given by the $q$-graded fermionic sums defined in \cite[equation~(4.5)]{HKOTT02}. These two results together will then yield a complete characterization of the fusion product of KR-modules over the special twisted current algebras in terms of the quantum twisted $Q$-systems, thereby complementing the characterizations obtained in the untwisted cases \cite{DFK14, Lin21}.

\subsection{Main results}\label{Section 1.2}

To begin, we let $\mathfrak{g}$ be a simply-laced simple Lie algebra of Dynkin type $X_m\neq A_{2r}$, $\sigma$ be a~nontrivial Dynkin diagram automorphism of $\mathfrak{g}$, and $\kappa$ be the order of $\sigma$. We denote the subalgebra of $\sigma$-fixed points of the untwisted current algebra $\mathfrak{g}[t]$ by $\mathfrak{g}[t]^{\sigma}$, and we call $\mathfrak{g}[t]^{\sigma}$ the (special) \cite[Section 1.5]{KV16} twisted current algebra of affine Dynkin type \smash{$X_m^{(\kappa)}$}. We also denote the subalgebra of $\sigma$-fixed points of $\mathfrak{g}$ by $\mathfrak{g}^{\sigma}$, the Dynkin type of $\mathfrak{g}^{\sigma}$ by $Y_r$, and the Cartan matrix of $\mathfrak{g}^{\sigma}$ by $\overline{C}$.

The KR-modules over $\mathfrak{g}[t]^{\sigma}$ are parameterized by $a\in I_r$, where $I_r=[1,r]$ is the set of simple root labels of $\mathfrak{g}^{\sigma}$, $m\in\mathbb{Z}_+$, along with a non-zero localization parameter $z\in\mathbb{C}^*$, and are denoted by $\KR_{a,m}^{\sigma}(z)$. The $\mathfrak{g}^{\sigma}$-characters \smash{$Q_{a,k}=\ch\res_{\mathfrak{g}^{\sigma}}^{\mathfrak{g}[t]^{\sigma}}\KR_{a,k}^{\sigma}(z)$} of these KR-modules over~$\mathfrak{g}[t]^{\sigma}$ satisfy the $X_m^{(\kappa)}$ $Q$-system relations \cite{KS95}, that is, we have
\begin{equation}\label{eq:1.1}
Q_{a,k+1}Q_{a,k-1}=Q_{a,k}^2-\prod_{b\sim a}Q_{b,k}^{-\overline{C}_{ba}},\qquad a\in I_r,\quad k\in\mathbb{N},
\end{equation}
where $b\sim a$ if and only if $\overline{C}_{ba}<0$.

Our first result involves the identity $M_{\overline{\lambda},\mathbf{n}}\bigl(q^{-1}\bigr)=\tilde{M}_{\overline{\lambda},\mathbf{n}}\bigl(q^{-1}\bigr)$ \cite[Conjecture~4.3]{HKOTT02}, which was previously proved for the untwisted simply-laced and untwisted non-simply laced cases in~\cite[Theorem~5.1]{DFK14} and \cite[Theorem~1.2]{Lin21} respectively. To begin, we let $t_1^{\vee},\dots,t_r^{\vee}$ be integers that satisfy~${\min_{a\in I_r}t_a^{\vee}=1}$ and $t_a^{\vee}\overline{C}_{ab}= t_b^{\vee}\overline{C}_{ba}$ for all $a,b\in I_r$. We let $\mathbf{n}=(n_{a,i})_{a\in I_r,i\in\mathbb{N}}$ be a vector that parameterizes a finite set of KR-modules over $\mathfrak{g}[t]^{\sigma}$, where $n_{a,i}$ is the number of KR-modules of type~$\KR_{a,i}^{\sigma}$. In addition, let us fix any dominant $\mathfrak{g}^{\sigma}$-weight $\overline{\lambda}$, and write~\smash{$\overline{\lambda} =\sum_{a\in I_r}\ell_a\overline{\omega}_a$}, where $\overline{\omega}_a$ is the fundamental $\mathfrak{g}^{\sigma}$-weight corresponding to the root label $a$. For any vector~${\mathbf{m}=(m_{a,i})_{a\in I_r,i\in\mathbb{N}}}$ of nonnegative integers with a finite number of nonzero entries, we define the total spin $q_{a,0}$ and the vacancy numbers $p_{a,i}$ as follows:
\begin{gather*}
q_{a,0}=\ell_a+\sum_{j\in\mathbb{N}}\sum_{b\in I_r}j\bigl(\overline{C}_{ab}m_{b,j}-\delta_{ab}n_{b,j}\bigr),\qquad
p_{a,i}=\sum_{j\in\mathbb{N}}\sum_{b\in I_r}\min(i,j)\bigl(\delta_{ab}n_{b,j}-\overline{C}_{ab}m_{b,j}\bigr).
\end{gather*}
Next, we define the quadratic form $Q(\mathbf{m},\mathbf{n})$ by
\[
Q(\mathbf{m},\mathbf{n})=\frac{1}{2}
\sum_{i,j\in\mathbb{N}}\sum_{a,b\in I_r}
t_a^{\vee}\min(i,j)m_{a,i}\bigl(\overline{C}_{ab}m_{b,j}-2\delta_{ab}n_{b,j}\bigr).
\]
The $M$-sum $M_{\overline{\lambda},\mathbf{n}}\bigl(q^{-1}\bigr)$ is given by \cite[equation~(4.5)]{HKOTT02}
\begin{equation}\label{eq:1.5}
M_{\overline{\lambda},\mathbf{n}}\bigl(q^{-1}\bigr)
=\sum_{\substack{\mathbf{m}\geq\mathbf{0}\\q_{a,0}=0,p_{a,i}\geq0}}q^{Q(\mathbf{m},\mathbf{n})}\prod_{i\in\mathbb{N}}\prod_{a\in I_r}
\begin{bmatrix}m_{a,i}+p_{a,i}\\m_{a,i}\end{bmatrix}_{q_a},
\end{equation}
where
\begin{equation*}
\begin{bmatrix}m+p\\m\end{bmatrix}_v=\frac{\bigl(v^{p+1};v\bigr)_{\infty}\bigl(v^{m+1};v\bigr)_{\infty}}{(v;v)_{\infty}\bigl(v^{p+m+1};v\bigr)_{\infty}},\qquad (a;v)_{\infty}=\prod_{j=0}^{\infty}\bigl(1-av^j\bigr),
\end{equation*}
and $q_a=q^{t_a^{\vee}}$ for all $a\in I_r$. Similarly, the $\tilde{M}$-sum $\tilde{M}_{\overline{\lambda},\mathbf{n}}\bigl(q^{-1}\bigr)$, defined without the constraint~${p_{a,i}\geq0}$, is given by \cite[equation~(4.20)]{HKOTT02}
\begin{equation*}
\tilde{M}_{\overline{\lambda},\mathbf{n}}\bigl(q^{-1}\bigr)
=\sum_{\substack{\mathbf{m}\geq\mathbf{0}\\q_{a,0}=0}}q^{Q(\mathbf{m},\mathbf{n})}\prod_{i\in\mathbb{N}}\prod_{a\in I_r}
\begin{bmatrix}m_{a,i}+p_{a,i}\\m_{a,i}\end{bmatrix}_{q_a}.
\end{equation*}
The following theorem implies that the identity $M_{\overline{\lambda},\mathbf{n}}\bigl(q^{-1}\bigr)=\tilde{M}_{\overline{\lambda},\mathbf{n}}\bigl(q^{-1}\bigr)$ \cite[Conjecture 4.3]{HKOTT02} holds for all twisted types not of type \smash{$A_{2r}^{(2)}$}:

\begin{Theorem}\label{1.1}
Let $\mathfrak{g}[t]^{\sigma}$ be a twisted current algebra of type \smash{$X_m^{(\kappa)}\neq A_{2r}^{(2)}$}, $\overline{\lambda}$ be a dominant $\mathfrak{g}^{\sigma}$-weight, and $\mathbf{n}=(n_{a,i})_{a\in I_r,i\in\mathbb{N}}$ be a vector that parameterizes a finite set of KR-modules over~$\mathfrak{g}[t]^{\sigma}$. Then we have
\smash{$
M_{\overline{\lambda},\mathbf{n}}\bigl(q^{-1}\bigr)=\tilde{M}_{\overline{\lambda},\mathbf{n}}\bigl(q^{-1}\bigr)$}.
\end{Theorem}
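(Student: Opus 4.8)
The strategy is to reduce the twisted identity $M_{\overline{\lambda},\mathbf{n}}(q^{-1})=\tilde{M}_{\overline{\lambda},\mathbf{n}}(q^{-1})$ to the already-known untwisted identity \cite[Theorem~5.1]{DFK14}, \cite[Theorem~1.2]{Lin21} by exploiting the structure of the Cartan matrix $\overline{C}$ of $\mathfrak{g}^{\sigma}$.  The key observation is that for every twisted type $X_m^{(\kappa)}\neq A_{2r}^{(2)}$, the fixed-point algebra $\mathfrak{g}^{\sigma}$ is of type $Y_r\in\{B_r,C_r,F_4,G_2\}$, and the symmetrizers $t_a^{\vee}$ and the $Q$-system relations \eqref{eq:1.1} are exactly those of the Langlands dual $Y_r^{\vee}$ affine algebra appearing in the untwisted non-simply-laced case.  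Concretely, the $M$- and $\tilde{M}$-sums depend on the data $(\overline{C}, t_a^{\vee}, \mathbf{n}, \overline{\lambda})$ only through the quadratic form $Q(\mathbf{m},\mathbf{n})$, the linear constraints $q_{a,0}=0$, the vacancy numbers $p_{a,i}$, and the $q$-binomials with base $q_a=q^{t_a^{\vee}}$; so I first write out a dictionary showing that these data coincide (possibly after a relabelling of nodes and a transpose $\overline{C}\leftrightarrow\overline{C}^{t}$) with the corresponding data of a non-simply-laced untwisted $Q$-system already treated in the literature.

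\medskip

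\noindent\textbf{Key steps.}  First, I would establish the combinatorial dictionary type by type: for $X_m^{(\kappa)}=D_{r+1}^{(2)},A_{2r-1}^{(2)},E_6^{(2)},D_4^{(3)}$ the pair $(\mathfrak{g}^{\sigma},(t_a^{\vee}))$ is $(C_r, \dots), (B_r,\dots),(F_4,\dots),(G_2,\dots)$ respectively, and I would check that the matrix $B_{ab}:=t_a^{\vee}\overline{C}_{ab}$ and the numbers $t_a^{\vee}$ match, up to the standard symmetry, an untwisted datum.  Second, with the dictionary in hand, the proof is essentially immediate: the summand in \eqref{eq:1.5} and in the $\tilde{M}$-sum is literally the same function of $\mathbf{m}$ in both the twisted and the matched untwisted problem, the set of $\mathbf{m}\geq\mathbf{0}$ with $q_{a,0}=0$ is the same, and the extra constraint $p_{a,i}\geq 0$ is imposed in the same way; hence $M_{\overline{\lambda},\mathbf{n}}$ and $\tilde{M}_{\overline{\lambda},\mathbf{n}}$ agree with the untwisted $M$- and $\tilde{M}$-sums, and the twisted identity follows from the untwisted one.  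Alternatively, if such a clean reduction does not quite go through for one of the exceptional twisted types, I would instead re-run the argument of \cite[Theorem~5.1]{DFK14} directly: deduce from the quantum twisted $Q$-system relations of Di Francesco and Kedem \cite{DFK24} a family of functional/recursion relations among the generating functions $\sum_{\mathbf{n}}M_{\overline{\lambda},\mathbf{n}}(q^{-1})\,z^{\mathbf{n}}$, show the same relations hold for the $\tilde{M}$-generating function, and then match initial conditions (small $\mathbf{n}$, where the constraint $p_{a,i}\geq 0$ is vacuous) to conclude equality of all coefficients.

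\medskip

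\noindent\textbf{Main obstacle.}  The delicate point is the positivity/telescoping argument that removes the $p_{a,i}\geq 0$ constraint.  In the untwisted proof one shows that the terms with some $p_{a,i}<0$ cancel in pairs (or telescope to zero) using the $q$-binomial identity $\begin{bmatrix}m+p\\ m\end{bmatrix}_v = 0$ behaviour for $p<0$ together with the precise form of $Q(\mathbf{m},\mathbf{n})$; in the twisted setting the bases $q_a=q^{t_a^{\vee}}$ now differ from node to node in a way governed by $Y_r$ rather than $X_m$, so I must check that the cancellation still respects the grading by $q$ (not by $q_a$).  This amounts to verifying that the relevant "boundary" substitutions $m_{a,i}\mapsto m_{a,i}+p_{a,i}+\text{shift}$ preserve the value of $Q(\mathbf{m},\mathbf{n})$ modulo the change of variables — a computation that is routine once the dictionary above is set up correctly, but which is where the exclusion of $A_{2r}^{(2)}$ genuinely enters (there $t_a^{\vee}$ and the $Q$-system itself have the anomalous $2\times 2$ block and the argument breaks).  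I therefore expect the bulk of the work to be the careful case-by-case verification of the dictionary and the attendant bookkeeping of $t_a^{\vee}$-weights, rather than any new conceptual input.
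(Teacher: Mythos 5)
Your primary plan---reducing to the untwisted non-simply-laced identity of \cite[Theorem~1.2]{Lin21} by a dictionary on Cartan data---does not go through, and this is a genuine gap rather than a fixable bookkeeping issue. The twisted $M$-/\,$\tilde M$-sums of \cite[equations~(4.5), (4.20)]{HKOTT02} and the untwisted non-simply-laced ones of \cite[equations~(4.3), (4.16)]{HKOTY99} are \emph{not} the same function of $(\overline{C},t_a^{\vee},\mathbf{n},\overline\lambda)$ up to relabelling. In the twisted quadratic form $Q(\mathbf{m},\mathbf{n})$ the convolution kernel is $t_a^{\vee}\min(i,j)$ with a plain $\min(i,j)$, whereas the untwisted non-simply-laced sums are built on a rescaled kernel of the shape $\min(t_b i, t_a j)$ and carry different constraints, reflecting the fact that the underlying $Q$-systems differ: the twisted $X_m^{(\kappa)}$ $Q$-system \eqref{eq:1.1} is the uniform relation $Q_{a,k+1}Q_{a,k-1}=Q_{a,k}^2-\prod_{b\sim a}Q_{b,k}^{-\overline C_{ba}}$ in the DFK normalization for the finite type $Y_r=\mathfrak{g}^\sigma$, while the Hatayama--Kuniba--Okado--Takagi--Yamada untwisted $Q$-system for $Y_r^{(1)}$ (with $Y_r$ non-simply-laced) is a recursion with index rescalings (e.g.\ $Q_{a,m}^2=Q_{a,m+1}Q_{a,m-1}+Q_{a-1,2m}$ at the short node of $C_r$). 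Even under the natural Langlands/affine duality $X_m^{(\kappa)}\leftrightarrow (Y_r^{\vee})^{(1)}$ the two fermionic sums do not coincide term by term, and establishing any such comparison would itself require an argument of essentially the same depth as the theorem. So there is no ready-made untwisted statement to import.

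Your fallback plan---``re-run the argument of \cite[Theorem~5.1]{DFK14} using the quantum twisted $Q$-systems of Di Francesco and Kedem''---is the correct direction and is in fact what the paper does, but your description of it is off in two respects and is too thin to count as a proof. First, the generating function used is not $\sum_{\mathbf{n}}M_{\overline\lambda,\mathbf{n}}(q^{-1})z^{\mathbf{n}}$; it is a non-commutative $q$-series $Z^{(k)}_{\overline\lambda,\mathbf{n}}(\widehat{\mathbf Q}_0,\widehat{\mathbf Q}_1)$ summed over $\mathbf{m}$ (not over $\mathbf{n}$), valued in the quantum torus of the twisted quantum $Q$-system, with $\tilde M^{(k)}_{\overline\lambda,\mathbf n}(q^{-1})$ recovered by taking the constant term in $\widehat{\mathbf Q}_1$ followed by the evaluation $\widehat{\mathbf Q}_0=1$. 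Second, the engine of the proof is not ``matching initial conditions'': after factorizing $Z^{(k)}$ into a product of quantum $Q$-variables via the twisted quantum $Q$-system relations and translation invariance, one uses the quantum Laurent property and a downward induction in the row index $j$ to show that any term with $q_{a,j}<0$ (equivalently $p_{a,j}<0$ once $q_{a,0}=0$) is annihilated by the constant-term-and-evaluation map, so the constraint $p_{a,i}\geq 0$ may be imposed for free. Your worry about the $t_a^{\vee}$-graded cancellations and the exclusion of $A_{2r}^{(2)}$ is well-placed, but the resolution is the quantum-cluster/Laurent positivity argument just described, not a $q$-binomial telescoping.
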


Here, we would like to remark that Okado et al.\ \cite{OSS18} proved \cite[Conjecture 3.10]{HKOTT02} in the non-exceptional twisted cases, and Scrimshaw did the same in the exceptional twisted cases described in \cite{Scrimshaw16, Scrimshaw20}. These results, along with earlier results by Hernandez \cite{Hernandez10}, show that the conjectural identity \cite[Conjecture~4.3]{HKOTT02} of the $q$-graded fermionic sums $M_{\overline{\lambda},\mathbf{n}}\bigl(q^{-1}\bigr)=\tilde{M}_{\overline{\lambda},\mathbf{n}}\bigl(q^{-1}\bigr)$ holds at $q=1$ in the twisted cases described above.

Our next result involves the description of the graded multiplicities of the irreducible $\mathfrak{g}^{\sigma}$-module $V\bigl(\overline{\lambda}\bigr)$ of highest $\mathfrak{g}^{\sigma}$-weight $\overline{\lambda}$ in a graded tensor product of KR-modules over $\mathfrak{g}[t]^{\sigma}$ in terms of the $M$-sum $M_{\overline{\lambda},\mathbf{n}}(q)$. For each vector $\mathbf{n}=(n_{a,i})_{a\in I_r,i\in\mathbb{N}}$, we let $\mathcal{F}_{\mathbf{n}}^*$ denote the corresponding graded tensor product of twisted KR-modules parameterized by $\mathbf{n}$, equipped with a $\mathfrak{g}^{\sigma}$-equivariant grading, which is called the fusion product of twisted KR-modules \cite{KV16} parameterized by $\mathbf{n}$. Then the graded components $\mathcal{F}_{\mathbf{n}}^*[m]$ of $\mathcal{F}_{\mathbf{n}}^*$ are $\mathfrak{g}^{\sigma}$-modules for all $m\in\mathbb{Z}_+$.

Let us define the generating function $\mathcal{M}_{\overline{\lambda},\mathbf{n}}(q)$ for the graded multiplicities of $V\bigl(\overline{\lambda}\bigr)$ in $\mathcal{F}_{\mathbf{n}}^*$ by
\begin{gather}
\mathcal{M}_{\overline{\lambda},\mathbf{n}}(q)=\sum_{m=0}^{\infty}\dim\Hom_{\mathfrak{g}^{\sigma}}\bigl(\mathcal{F}_{\mathbf{n}}^*[m],V\bigl(\overline{\lambda}\bigr)\bigr)q^m.\label{eq:1.8}
\end{gather}
Here, $\Hom_{\mathfrak{g}^{\sigma}}(\mathcal{F}_{\mathbf{n}}^*[m],V\bigl(\overline{\lambda}\bigr))$ denotes the multiplicity space of $\mathfrak{g}^{\sigma}$-equivariant maps from $\mathcal{F}_{\mathbf{n}}^*[m]$ to $V\bigl(\overline{\lambda}\bigr)$. The graded $\mathfrak{g}^{\sigma}$-character $\ch_q \mathcal{F}_{\mathbf{n}}^*$ of $\mathcal{F}_{\mathbf{n}}^*$ is then defined by
\begin{gather*}
\ch_q \mathcal{F}_{\mathbf{n}}^*= \sum_{\overline{\lambda}}\mathcal{M}_{\overline{\lambda},\mathbf{n}}(q)\ch_{\mathfrak{g}^{\sigma}}V\bigl(\overline{\lambda}\bigr).
\end{gather*}

By extending the tools developed in \cite{AK07} to derive the graded dimension of the space of matrix elements dual to the multiplicity space $\Hom_{\mathfrak{g}^{\sigma}}(\mathcal{F}_{\mathbf{n}}^*,V\bigl(\overline{\lambda}\bigr))$, and using Theorem~\ref{1.1} as well, we arrive at the following fermionic formula for the graded multiplicities $\mathcal{M}_{\overline{\lambda},\mathbf{n}}(q)$, which extends the results obtained in \cite{AK07,DFK08} for the untwisted types to the twisted types not of type \smash{$A_{2r}^{(2)}$}:

\begin{Theorem}\label{1.2}
Let us keep the assumptions as in Theorem {\rm\ref{1.1}}. Then we have
$
\mathcal{M}_{\overline{\lambda},\mathbf{n}}\bigl(q^{-1}\bigr)=M_{\overline{\lambda},\mathbf{n}}\bigl(q^{-1}\bigr)$.
\end{Theorem}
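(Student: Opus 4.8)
The plan is to follow the strategy of Ardonne--Kedem \cite{AK07}, adapting it to the twisted current algebra $\mathfrak{g}[t]^{\sigma}$. The key object is the graded dual of the multiplicity space: by classical invariant-theoretic arguments, $\dim\Hom_{\mathfrak{g}^{\sigma}}(\mathcal{F}_{\mathbf{n}}^*,V(\overline{\lambda}))$ is computed by pairing $\mathcal{F}_{\mathbf{n}}^*$ against a suitable space of highest-weight vectors, and dually against a space of \emph{matrix elements} (functions on $\mathfrak{g}^{\sigma}[t]$-orbits built from the cyclic vectors of the KR-modules). First I would set up the localized fusion product $\mathcal{F}_{\mathbf{n}}^*$ concretely as a cyclic $\mathfrak{g}[t]^{\sigma}$-module generated by a tensor product of highest-weight vectors of the $\KR_{a,i}^{\sigma}$, with generic localization parameters, and identify its graded character with a limit of the graded characters at generic parameters (this is where \cite{KV16} enters, guaranteeing the fusion product is independent of the parameters and has the expected dimension). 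Then I would describe the dual space of matrix elements as a space of polynomials in variables attached to the Chevalley generators $f_a(t)$, $a\in I_r$, acting on the cyclic vector, with the grading recorded by the $t$-degree.

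The second step is the combinatorial heart: I would show that this space of matrix elements is cut out by explicit vanishing conditions coming from the defining relations of the KR-modules over $\mathfrak{g}[t]^{\sigma}$ (the Garland-type / Joseph relations descended to the $\sigma$-fixed subalgebra), and that counting monomials subject to these conditions produces exactly the sum over $\mathbf{m}$ in the $M$-sum \eqref{eq:1.5}: the exponents $m_{a,i}$ count how many generators $f_a$ act with ``level'' $i$, the $q$-binomial $\begin{bmatrix}m_{a,i}+p_{a,i}\\m_{a,i}\end{bmatrix}_{q_a}$ counts the allowed $t$-degrees of those generators (with $p_{a,i}\geq 0$ forced by the degree bounds imposed by the KR-relations), and the quadratic form $Q(\mathbf{m},\mathbf{n})$ together with the constraint $q_{a,0}=0$ records the minimal total degree and the weight constraint $\overline{\lambda}=\sum\ell_a\overline{\omega}_a$. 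The factor $q_a=q^{t_a^{\vee}}$ arises precisely because the $\sigma$-fixed Chevalley generator $f_a$ in the twisted current algebra comes with a built-in grading shift by $t_a^{\vee}$, which is the twisted analogue of the short/long root distinction handled in \cite{Lin21}. Combining this count with Theorem~\ref{1.1}, which removes the inconvenient positivity constraint $p_{a,i}\geq 0$ (i.e.\ identifies $M$ with $\tilde{M}$), lets me match the dual count with the generating function $\mathcal{M}_{\overline{\lambda},\mathbf{n}}$ after replacing $q\mapsto q^{-1}$, yielding the claimed equality.

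The main obstacle will be establishing that the space of matrix elements is \emph{exactly} the monomial space described by the $M$-sum, rather than merely being contained in it or containing it: the ``$\leq$'' direction (upper bound on graded multiplicities) follows from showing the KR-relations over $\mathfrak{g}[t]^{\sigma}$ force the vanishing conditions, but the ``$\geq$'' direction requires producing enough linearly independent matrix elements, which in the untwisted case \cite{AK07,DFK14} uses a careful filtration/degeneration argument and the known total dimension $\prod \dim\KR_{a,i}^{\sigma}$. In the twisted setting I would need the analogue of this dimension count and the compatibility of the $\sigma$-action with the filtration; the correct bookkeeping of the grading through the folding $\mathfrak{g}\rightsquigarrow\mathfrak{g}^{\sigma}$ — in particular verifying that the twisted current algebra relations of \cite{KV16} descend to give precisely the degree bounds encoded by $t_a^{\vee}$ and no others — is where the real work lies. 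Once both bounds are in place, the identification $\mathcal{M}_{\overline{\lambda},\mathbf{n}}(q^{-1})=M_{\overline{\lambda},\mathbf{n}}(q^{-1})$ is immediate, and the independence of $\mathcal{F}_{\mathbf{n}}^*$ from the localization parameters (hence well-definedness of the left-hand side) is supplied by \cite{KV16}.
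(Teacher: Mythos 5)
Your overall framework—setting up the space of matrix elements dual to $\Hom_{\mathfrak{g}^{\sigma}}(\mathcal{F}_{\mathbf{n}}^*,V(\overline{\lambda}))$ and extracting vanishing conditions and degree bounds from the twisted KR-relations to produce the $q$-binomials and the quadratic form—is indeed the content of Section~3 of the paper, and it is how the paper proves the one-sided bound $\mathcal{M}_{\overline{\lambda},\mathbf{n}}(q^{-1})\leq M_{\overline{\lambda},\mathbf{n}}(q^{-1})$ (Theorem~\ref{3.1}). But your plan for the other direction has a genuine gap. You propose to establish the lower bound by ``producing enough linearly independent matrix elements'' via a filtration/degeneration argument together with the total dimension $\prod\dim\KR_{a,i}^{\sigma}$. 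This is exactly the surjectivity of the evaluation map $\overline{\varphi}_{\bm{\mu}}\colon\tilde{\Gamma}_{\bm{\mu}}/\tilde{\Gamma}_{\bm{\mu}}'\to\tilde{\mathcal{H}}[\bm{\mu}]$, and the paper states explicitly at the end of Section~\ref{Section 3.5} that it \emph{cannot} prove this surjectivity directly—just as Ardonne--Kedem could not in the untwisted case. Moreover, a dimension count $\sum_{\overline{\lambda}}\mathcal{M}_{\overline{\lambda},\mathbf{n}}(1)\dim V(\overline{\lambda})=\prod\dim\KR_{a,i}^{\sigma}$ plus the upper bound $\mathcal{M}\leq M$ does not close the argument by itself: you would also need to know that $\sum_{\overline{\lambda}}M_{\overline{\lambda},\mathbf{n}}(1)\dim V(\overline{\lambda})$ equals this same product, which is the (classical, $q=1$) Kirillov--Reshetikhin conjecture for twisted types—a nontrivial theorem, not a bookkeeping fact.

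The paper instead closes the loop with a ``pentagon'' of (in)equalities that you do not mention. The crucial external anchor is Hernandez's theorem, which identifies $\tilde{M}_{\overline{\lambda},\mathbf{n}}(1)$ with the genuine tensor-product multiplicity $\dim\Hom_{U_q(\mathfrak{g}^{\sigma})}\bigl(\bigotimes(W_{\alpha,i}^{\sigma})^{\otimes n_{\alpha,i}},V^q(\overline{\lambda})\bigr)$ over the twisted quantum affine algebra. Two deformation arguments then give $\tilde{M}(1)\leq\dim\Hom_{\mathfrak{g}^{\sigma}}(\bigotimes\KR^{\sigma},V(\overline{\lambda}))\leq\mathcal{M}(1)$, Theorem~\ref{3.1} gives $\mathcal{M}(1)\leq M(1)$, and Theorem~\ref{1.1} gives $M(1)=\tilde{M}(1)$; the cycle of inequalities collapses to equalities, yielding $\mathcal{M}(1)=M(1)$. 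Finally, the passage from $q=1$ to the graded identity uses that both sides are polynomials with nonnegative coefficients and that $\mathcal{M}(q^{-1})\leq M(q^{-1})$ coefficientwise, so equality of the sums of coefficients forces equality of each coefficient. You should also note that the independence of $\mathcal{F}_{\mathbf{n}}^*$ from the localization parameters is a \emph{consequence} of Theorem~\ref{1.2}, observed after the proof, not a prior input available from~\cite{KV16}.
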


Our last result involves a $q$-graded version of the $X_m^{(\kappa)}$ $Q$-system relations \eqref{eq:1.1}, which generalizes the $q$-graded version of the $Q$-system relations of untwisted type obtained in \cite[Theorem~1.3]{Lin21} to the twisted types not of type \smash{$A_{2r}^{(2)}$}:

\begin{Theorem}\label{1.3}
For all $a\in I_r$ and $m\in\mathbb{N}$, we let
\[
K_{a,m}^{\sigma}= \bigotimes_{b\sim a}(\KR_{b,m}^{\sigma})^{\otimes |\overline{C}_{ba}|},
\]
and we let~$(K_{a,m}^{\sigma})^*$ denote the fusion product corresponding to the tensor product $K_{a,m}^{\sigma}$ of twisted KR-modules. Likewise, we let $\KR_{a,m+1}^{\sigma}*\KR_{a,m-1}^{\sigma}$ and $\KR_{a,m}^{\sigma}*$ $\KR_{a,m}^{\sigma}$ denote the fusion products corresponding to $\KR_{a,m+1}^{\sigma}\otimes\KR_{a,m-1}^{\sigma}$ and $\KR_{a,m}^{\sigma}\otimes\KR_{a,m}^{\sigma}$ respectively. Then the graded $\mathfrak{g}^{\sigma}$-characters of the fusion products of the twisted KR-modules satisfy the following identity:
\[
\ch_q\KR_{a,m+1}^{\sigma}*\KR_{a,m-1}^{\sigma}=
\ch_q \KR_{a,m}^{\sigma}*\KR_{a,m}^{\sigma}
-q^{t_a^{\vee}m}\ch_q (K_{a,m}^{\sigma})^*.
\]
\end{Theorem}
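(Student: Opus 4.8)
The plan is to reduce the identity to a combinatorial identity of the $q$-graded fermionic sums and then verify that identity directly, following the strategy of \cite[Theorem~1.3]{Lin21}. Write $\mathbf{n}^{(1)},\mathbf{n}^{(2)},\mathbf{n}^{(3)}$ for the vectors attached to the tensor products $\KR_{a,m+1}^{\sigma}\otimes\KR_{a,m-1}^{\sigma}$, $\KR_{a,m}^{\sigma}\otimes\KR_{a,m}^{\sigma}$ and $K_{a,m}^{\sigma}$ respectively; thus $n^{(1)}_{a,m+1}=n^{(1)}_{a,m-1}=1$, $n^{(2)}_{a,m}=2$, $n^{(3)}_{b,m}=\lvert\overline{C}_{ba}\rvert$ for $b\sim a$, and all remaining entries vanish (the case $m=1$, where $\KR_{a,0}^{\sigma}$ is the trivial module, is included, since $\min(i,0)=0$). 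Since $\mathcal{F}^{*}_{\mathbf{n}^{(1)}}=\KR_{a,m+1}^{\sigma}*\KR_{a,m-1}^{\sigma}$, $\mathcal{F}^{*}_{\mathbf{n}^{(2)}}=\KR_{a,m}^{\sigma}*\KR_{a,m}^{\sigma}$, $\mathcal{F}^{*}_{\mathbf{n}^{(3)}}=(K_{a,m}^{\sigma})^{*}$, the irreducible characters $\ch_{\mathfrak{g}^{\sigma}}V\bigl(\overline{\lambda}\bigr)$ are linearly independent, and $\ch_q\mathcal{F}^{*}_{\mathbf{n}}=\sum_{\overline{\lambda}}\mathcal{M}_{\overline{\lambda},\mathbf{n}}(q)\ch_{\mathfrak{g}^{\sigma}}V\bigl(\overline{\lambda}\bigr)$, it is enough to prove
\[
\mathcal{M}_{\overline{\lambda},\mathbf{n}^{(1)}}(q)=\mathcal{M}_{\overline{\lambda},\mathbf{n}^{(2)}}(q)-q^{t_a^{\vee}m}\mathcal{M}_{\overline{\lambda},\mathbf{n}^{(3)}}(q)
\]
for every dominant $\mathfrak{g}^{\sigma}$-weight $\overline{\lambda}$. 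By Theorem~\ref{1.2} this is equivalent to the corresponding identity for the $M$-sums $M_{\overline{\lambda},\mathbf{n}^{(i)}}\bigl(q^{-1}\bigr)$, and by Theorem~\ref{1.1} to the same identity for the $\tilde{M}$-sums $\tilde{M}_{\overline{\lambda},\mathbf{n}^{(i)}}\bigl(q^{-1}\bigr)$; it is the latter that I would establish by a direct manipulation of the defining sums.

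The input for the $\tilde{M}$-identity is the elementary equality $\min(i,m+1)+\min(i,m-1)=2\min(i,m)-\delta_{i,m}$, valid for all $i\in\mathbb{N}$. Combined with $t_a^{\vee}\overline{C}_{ab}=t_b^{\vee}\overline{C}_{ba}$ and $\overline{C}_{aa}=2$, this gives, for each fixed $\mathbf{m}\geq\mathbf{0}$ and with the dependence on $\mathbf{n}$ made explicit, the comparisons $q_{b,0}(\mathbf{n}^{(1)})=q_{b,0}(\mathbf{n}^{(2)})$ for all $b$ --- so the summation ranges of $\tilde{M}_{\overline{\lambda},\mathbf{n}^{(1)}}$ and $\tilde{M}_{\overline{\lambda},\mathbf{n}^{(2)}}$ coincide --- together with $p_{b,i}(\mathbf{n}^{(1)})=p_{b,i}(\mathbf{n}^{(2)})-\delta_{ab}\delta_{i,m}$ and $Q(\mathbf{m},\mathbf{n}^{(1)})=Q(\mathbf{m},\mathbf{n}^{(2)})+t_a^{\vee}m_{a,m}$. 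Feeding these into the definition of $\tilde{M}$ and applying the $q$-Pascal recursion $v^{k}\begin{bmatrix}N-1\\k\end{bmatrix}_v=\begin{bmatrix}N\\k\end{bmatrix}_v-\begin{bmatrix}N-1\\k-1\end{bmatrix}_v$ with $v=q_a$, $N=m_{a,m}+p_{a,m}(\mathbf{n}^{(2)})$, $k=m_{a,m}$ to the $(a,m)$-Gaussian binomial, I split $\tilde{M}_{\overline{\lambda},\mathbf{n}^{(1)}}$ as $\tilde{M}_{\overline{\lambda},\mathbf{n}^{(2)}}$ minus a remainder sum whose $(a,m)$-factor is $\begin{bmatrix}m_{a,m}+p_{a,m}(\mathbf{n}^{(2)})-1\\m_{a,m}-1\end{bmatrix}_{q_a}$, so that only the terms with $m_{a,m}\geq 1$ survive.

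The last step is to match the remainder sum with $q^{-t_a^{\vee}m}\tilde{M}_{\overline{\lambda},\mathbf{n}^{(3)}}$ by the shift $m'_{a,m}=m_{a,m}-1$ (all other entries of $\mathbf{m}$ unchanged). Using $\overline{C}_{aa}=2$ and the symmetry of $t_a^{\vee}\overline{C}_{ab}$ once more, I would check that this shift transports the $\mathbf{n}^{(2)}$-data at $\mathbf{m}$ exactly onto the $\mathbf{n}^{(3)}$-data at $\mathbf{m}'$: the constraints agree ($q_{b,0}(\mathbf{n}^{(2)};\mathbf{m})=q_{b,0}(\mathbf{n}^{(3)};\mathbf{m}')$ for all $b$), the vacancy numbers agree ($p_{b,i}(\mathbf{n}^{(2)};\mathbf{m})=p_{b,i}(\mathbf{n}^{(3)};\mathbf{m}')$ for $(b,i)\neq(a,m)$, while $\begin{bmatrix}m_{a,m}+p_{a,m}(\mathbf{n}^{(2)};\mathbf{m})-1\\m_{a,m}-1\end{bmatrix}_{q_a}=\begin{bmatrix}m'_{a,m}+p_{a,m}(\mathbf{n}^{(3)};\mathbf{m}')\\m'_{a,m}\end{bmatrix}_{q_a}$), and the quadratic form transforms as $Q(\mathbf{m},\mathbf{n}^{(2)})=Q(\mathbf{m}',\mathbf{n}^{(3)})-t_a^{\vee}m$, which accounts exactly for the prefactor $q^{-t_a^{\vee}m}$. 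This yields $\tilde{M}_{\overline{\lambda},\mathbf{n}^{(1)}}\bigl(q^{-1}\bigr)=\tilde{M}_{\overline{\lambda},\mathbf{n}^{(2)}}\bigl(q^{-1}\bigr)-q^{-t_a^{\vee}m}\tilde{M}_{\overline{\lambda},\mathbf{n}^{(3)}}\bigl(q^{-1}\bigr)$, and reversing the reductions through Theorems~\ref{1.1} and~\ref{1.2} gives the stated character identity.

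I expect the only real work to be the bookkeeping in this last step --- confirming that the single shift $m_{a,m}\mapsto m_{a,m}-1$ carries the constraint, the vacancy numbers and the quadratic form simultaneously onto their $\mathbf{n}^{(3)}$-counterparts, with the discrepancy concentrated in the scalar $q^{-t_a^{\vee}m}$. Conceptually the one genuinely non-formal ingredient is the passage to the unrestricted sums $\tilde{M}$: the shift decreases the vacancy numbers $p_{b,i}$ at the neighbours $b\sim a$ (because $\overline{C}_{ba}<0$) and can therefore violate the positivity constraint $p_{b,i}\geq 0$ built into $M_{\overline{\lambda},\mathbf{n}}$, so a clean term-by-term matching exists only after that constraint is dropped --- which is precisely what Theorem~\ref{1.1} licenses.
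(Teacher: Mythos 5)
Your proposal is correct and takes a genuinely different route from the paper's.

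The paper proves Theorem~\ref{1.3} by working on the quantum $Q$-system side: it rewrites the quantum twisted $Q$-system relation as equation \eqref{eq:5.4}, applies the constant-term map $\phi$ sandwiched between $\prod_b\widehat{Z}_{b,0}^{-1}$ and $\prod_b\widehat{Z}_b^{\ell_b+1}$, uses the extended constant-term formula \eqref{eq:4.29} for $\mathcal{M}_{\overline{\lambda},\widehat{\mathbf{n}}}\bigl(q^{-1}\bigr)$, and then verifies that the prefactors $\widehat{F}$ and $\widehat{L}$ computed for the vectors $\widehat{\mathbf{d}},\widehat{\mathbf{s}},\widehat{\mathbf{k}}$ combine to produce exactly the factor $q^{t_a^\vee m}$. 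This exhibits the graded character identity as a direct shadow of the quantum $Q$-system relation. You instead manipulate the fermionic $\tilde{M}$-sums directly: you use $\min(i,m+1)+\min(i,m-1)=2\min(i,m)-\delta_{i,m}$ to compare the $(\mathbf{n}^{(1)},\mathbf{n}^{(2)})$-data, apply the $q$-Pascal recursion at the $(a,m)$-binomial, and then show the remainder matches $q^{-t_a^\vee m}\tilde{M}_{\overline{\lambda},\mathbf{n}^{(3)}}\bigl(q^{-1}\bigr)$ via the shift $m_{a,m}\mapsto m_{a,m}-1$, all of which I checked and which works out: $q_{b,0}$ and $p_{b,i}$ are preserved, and $Q$ picks up exactly $-t_a^\vee m$. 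Both routes ultimately rest on Theorems~\ref{1.1} and~\ref{1.2}; yours is arguably more elementary and illuminates the identity as a manifestation of $q$-Pascal, while the paper's argument ties it to the cluster-algebraic structure and so places it alongside the machinery of Section~\ref{Section 4}.

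One small correction to your final remark. You assert that passing to $\tilde{M}$ is forced because the shift decreases the vacancy numbers $p_{b,i}$ at the neighbours $b\sim a$. That decrease does occur when one keeps $\mathbf{n}^{(2)}$ fixed, but after the simultaneous replacement $\mathbf{n}^{(2)}\to\mathbf{n}^{(3)}$ it is exactly compensated: one verifies $p_{b,i}\bigl(\mathbf{n}^{(2)};\mathbf{m}\bigr)=p_{b,i}\bigl(\mathbf{n}^{(3)};\mathbf{m}'\bigr)$ for \emph{all} $(b,i)$, neighbours included. The genuine obstruction to working with the restricted $M$-sums lies at $(a,m)$ in the first step: the domain of $M_{\overline{\lambda},\mathbf{n}^{(1)}}$ imposes $p_{a,m}\bigl(\mathbf{n}^{(1)}\bigr)\geq 0$, i.e., $p_{a,m}\bigl(\mathbf{n}^{(2)}\bigr)\geq 1$, which is strictly smaller than the domain $p_{a,m}\bigl(\mathbf{n}^{(2)}\bigr)\geq 0$ of $M_{\overline{\lambda},\mathbf{n}^{(2)}}$; terms with $p_{a,m}\bigl(\mathbf{n}^{(2)}\bigr)=0$ contribute a nonzero binomial to $M_{\overline{\lambda},\mathbf{n}^{(2)}}$, so the $q$-Pascal split does not match domains at the level of $M$-sums. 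That is the reason Theorem~\ref{1.1} is needed here, and your conclusion is correct even though the stated mechanism is not.
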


Here, we briefly remark that Kus and Venkatesh obtained a short exact sequence of fusion product of KR-modules in \cite{KV16} that extends the \smash{$X_m^{(\kappa)}$} $Q$-system relations \eqref{eq:1.1}. We will explain the connection between Theorem~\ref{1.3} and their short exact sequences in Section \ref{Section 5}.

\subsection{Outline of the paper}

The paper is organized as follows. In Section \ref{Section 2}, we will review the notion of Kirillov--Reshetikhin modules and fusion products of cyclic modules over both untwisted and twisted current algebras. In Section \ref{Section 3}, we will extend the tools and techniques in \cite{AK07} to the twisted setting, where we will describe the decomposition of fusion products of KR-modules over $\mathfrak{g}[t]^{\sigma}$ into irreducible $\mathfrak{g}^{\sigma}$-modules. Specifically, we will explicitly describe the space of matrix elements dual to the multiplicity space $\Hom_{\mathfrak{g}^{\sigma}}\bigl(\mathcal{F}_{\mathbf{n}}^*,V\bigl(\overline{\lambda}\bigr)\bigr)$ to first deduce that an upper bound for the graded multiplicity~$\mathcal{M}_{\overline{ \lambda},\mathbf{n}}\bigl(q^{-1}\bigr)$ is given by the $M$-sum $M_{\overline{\lambda},\mathbf{n}}\bigl(q^{-1}\bigr)$.

In Section \ref{Section 4}, we will first review the definition and properties of the quantum twisted $Q$-systems given by Di Francesco and Kedem \cite{DFK24}, which are quantum deformations of the cluster transformations corresponding to the twisted $Q$-system relations given by Williams \cite{Williams15}. Subsequently, we will introduce quantum generating functions in the general twisted case whose constant term evaluation is a scalar multiple of the $\tilde{M}$-sum, and derive factorization properties of these generating functions analogous to those in \cite{DFK08, DFK14, Lin21}. Using these factorization properties, along with the Laurent polynomiality property of the solutions of the quantum twisted $Q$-systems, we will then show that $M_{\overline{ \lambda},\mathbf{n}}\bigl(q^{-1}\bigr)=\tilde{M}_{\overline{\lambda},\mathbf{n}}\bigl(q^{-1}\bigr)$ in the twisted cases. Together with the upper bound on the graded multiplicity \smash{$\mathcal{M}_{\overline{ \lambda},\mathbf{n}}\bigl(q^{-1}\bigr)$} derived in Section \ref{Section 3}, we will then show that $\mathcal{M}_{\overline{\lambda},\mathbf{n}}\bigl(q^{-1}\bigr)=M_{\overline{\lambda},\mathbf{n}}\bigl(q^{-1}\bigr)$. Finally, in Section \ref{Section 5}, we will use the results in Section \ref{Section 4} to derive a $q$-graded version of the twisted $Q$-system relations.

\section{Preliminaries}\label{Section 2}

Throughout this section and beyond, we let $\mathfrak{g}$ be a finite-dimensional simply-laced simple Lie algebra of Dynkin type $X_m\neq A_{2r}$. We will also let $\mathfrak{h}$ be a Cartan subalgebra of $\mathfrak{g}$, $\mathfrak{g}=\mathfrak{n}_-\oplus\mathfrak{h}\oplus\mathfrak{n}_+$ be the triangular decomposition of $\mathfrak{g}$ with respect to $\mathfrak{h}$, and $C$ be the Cartan matrix of $\mathfrak{g}$.

In addition, we let $\Delta$ be the set of roots of $\mathfrak{g}$ with respect to $\mathfrak{h}$, and $\Delta^+\subseteq\Delta$ the subset of positive roots. We also fix a basis of simple roots $\Pi=\{\alpha_1,\dots,\alpha_m\}\subseteq\mathfrak{h}^*$ for $\Delta$ and a basis of corresponding simple coroots $\{\alpha_1^{\vee},\dots,\alpha_m^{\vee}\}$ for $\mathfrak{h}^*$ satisfying $\alpha_j(\alpha_i^{\vee})=C_{i,j}$ for all $i\in[1,m]$.
Finally, we let $\{e_{\pm\alpha},\alpha_i^{\vee}\mid \alpha\in\Delta^+,\,i\in[1,m]\}$ be a Chevalley basis for $\mathfrak{g}$, and we denote the positive and negative Chevalley generators for $\mathfrak{g}$ by $e_i:=e_{\alpha_i}$ and $f_i:=e_{-\alpha_i}$ for all $i\in[1,m]$.

Next, we let $P$ denote the weight lattice of $\mathfrak{g}$, and $P^+\subseteq P$ the set of dominant integral weights of $\mathfrak{g}$. The weight lattice $P$ has a basis given by the set $\{\omega_1,\dots,\omega_m\}\subseteq\mathfrak{h}^*$ of fundamental weights of $\mathfrak{g}$, defined by $\omega_i(\alpha_j^{\vee})=\delta_{i,j}$ for all $i,j\in[1,m]$. The irreducible highest weight $\mathfrak{g}$-modules are parameterized by $\lambda\in P^+$, and are denoted by $V({\lambda})$.

\subsection{Untwisted affine and current algebras}\label{Section 2.1}

Let $\mathfrak{g}\bigl[t^{\pm1}\bigr]:=\mathfrak{g}\otimes\mathbb{C}\bigl[t^{\pm1}\bigr]$ denote the untwisted loop algebra of $\mathfrak{g}$, and we denote the current generators of $\mathfrak{g}\bigl[t^{\pm1}\bigr]$ by $x[n]:=x\otimes t^n$ for all $x\in\mathfrak{g}$ and $n\in\mathbb{Z}$. The Lie bracket on $\mathfrak{g}\bigl[t^{\pm1}\bigr]$ is given~by
\smash{$
\bigl[x\otimes t^m,y\otimes t^n\bigr]
=[x,y]\otimes t^{m+n}
$}
for all $x,y\in\mathfrak{g}$ and $m,n\in\mathbb{Z}$. The untwisted loop algebra $\mathfrak{g}\bigl[t^{\pm1}\bigr]$ contains the untwisted current algebra $\mathfrak{g}[t]:=\mathfrak{g}\otimes\mathbb{C}[t]$ of positive currents as a~subalgebra, which in turn contains $\mathfrak{g}$ as a~subalgebra, where we identify $x$ with $x[0]$ for all~${x\in\mathfrak{g}}$.

The untwisted affine algebra \smash{$\widehat{\mathfrak{g}}$} associated with the simple Lie algebra $\mathfrak{g}$ is the central extension of $\mathfrak{g}\bigl[t^{\pm1}\bigr]$ by the central element $K$ associated to the cocycle $\langle\cdot,\cdot\rangle$, where the cocycle $\langle\cdot,\cdot\rangle$ is defined by
$
\big\langle x\otimes t^m,y\otimes t^n\big\rangle = m\delta_{m,-n}(x|y)$
for all $x,y\in\mathfrak{g}$ and $m,n\in\mathbb{Z}$. Here, $(\cdot|\cdot)$ is the symmetric, nondegenerate, invariant bilinear form on $\mathfrak{g}$.

The triangular decomposition of $\widehat{\mathfrak{g}}$ is given by $\widehat{\mathfrak{g}}=\widehat{\mathfrak{n}}_-\oplus\widehat{\mathfrak{h}}\oplus\widehat{\mathfrak{n}}_+$, where $\widehat{\mathfrak{h}}=\mathbb{C}K\oplus\mathfrak{h}$, and~${\widehat{\mathfrak{n}}_{\pm}=\mathfrak{n}_{\pm}\oplus \bigl(\mathfrak{g}\otimes t^{\pm1}\mathbb{C}\bigl[t^{\pm1}\bigr]\bigr)}$.

The irreducible highest weight $\widehat{\mathfrak{g}}$-modules are parameterized by a positive integer $k$, and $\lambda\in P_k^+$, and are denoted by $\widehat{V}_{k,\lambda}$. Here, the integer $k$ is called the level of $\widehat{V}_{k,\lambda}$: the central element $K$ acts on \smash{$\widehat{V}_{k,\lambda}$} by the constant $k$. The set $P_k^+$ is defined by
\begin{equation*}
P_k^+=\Bigg\{\lambda=\sum_{i=1}^m\ell_i\omega_i\in P^+\mid \sum_{i=1}^m\ell_ia_i^{\vee}\leq k\Bigg\},
\end{equation*}
where $a_1^{\vee},\dots,a_m^{\vee}$ are the co-marks of $\widehat{\mathfrak{g}}$.

\subsection{Untwisted current algebra modules}\label{Section 2.2}

\subsubsection{Localization}\label{Section 2.2.1}

Let $V$ be a $\mathfrak{g}[t]$-module on which $\mathfrak{g}[t]$ acts via some representation $\pi_0$. For any $z\in\mathbb{C}^*$, we define the localization of $V$ at $z$ to be the $\mathfrak{g}[t]$-module $V(z)$ whose underlying vector space is $V$, and on which $\mathfrak{g}[t]$ acts by expansion in the local parameter $t_z:=t-z$. More precisely, if we define the Lie algebra map $\varphi_z\colon\mathfrak{g}[t]\to\mathfrak{g}[t]$ by $x\otimes t^n\mapsto x\otimes(t+z)^n$ for all $x\in\mathfrak{g}$ and $n\in\mathbb{Z}_+$, then~$V(z)$ is the pullback of $V$ under $\varphi_z$. By denoting the ``translated'' action of $\mathfrak{g}[t]$ on $V(z)$ by $\pi_z$, it follows that the actions $\pi_0$ and $\pi_z$ of $\mathfrak{g}[t]$ on the vector space $V$ are related to each other by the following equation for all $x\in\mathfrak{g}$, $n\in\mathbb{Z}_+$, and $v\in V$,
\begin{equation}\label{eq:2.1}
\pi_z(x[n])v=\pi_0(x\otimes (t+z)^n)v=\sum_{j=0}^n\binom{n}{j}z^{n-j}\pi_0(x[j])v.
\end{equation}

\subsubsection{Associated graded space of cyclic untwisted current algebra modules}\label{Section 2.2.2}

The degree $d=t\frac{\rm d}{{\rm d}t}$ grading in $t$ on $\mathbb{C}[t]$ induces a natural $\mathbb{Z}_+$-grading on the current algebra $\mathfrak{g}[t]$, and hence also on its universal enveloping algebra $U(\mathfrak{g}[t])$. In particular, for any $j\in\mathbb{Z}_+$, the $j$-th graded component $U(\mathfrak{g}[t])^{(j)}$ of $U(\mathfrak{g}[t])$ is spanned by monomials of the form $x_1[n_1]\cdots x_k[n_k]$, where $k\in\mathbb{Z}_+$, $x_1,\dots,x_k\in\mathfrak{g}$, $n_1,\dots,n_k\in\mathbb{Z}_+$, and $\sum_{i=1}^k n_i=j$. Consequently, the $\mathbb{Z}_+$-grading on $U(\mathfrak{g}[t])$ naturally induces a filtration of $U(\mathfrak{g}[t])$,
\begin{equation*}
U(\mathfrak{g})= U(\mathfrak{g}[t])^{(0)}\subseteq U(\mathfrak{g}[t])^{(\leq 1)}\subseteq U(\mathfrak{g}[t])^{(\leq 2)}\subseteq\cdots,
\end{equation*}
where
\[
U(\mathfrak{g}[t])^{(\leq n)}=\bigoplus_{j=0}^n U(\mathfrak{g}[t])^{(j)}
\]
 for all $n\in\mathbb{Z}_+$.

Let $V$ be a cyclic $\mathfrak{g}[t]$-module with cyclic vector $v$. Then $V$ inherits a filtration (depending on the choice of $v$) from the filtration on $U(\mathfrak{g}[t])$ as follows:
\begin{equation*}
\mathcal{F}(0)(V)\subseteq\mathcal{F}(1)(V)\subseteq\mathcal{F}(2)(V)\subseteq\cdots,
\end{equation*}
where $\mathcal{F}(i)(V)=(U(\mathfrak{g}[t])^{(\leq i)})v$ for all $i\in\mathbb{Z}_+$. Consequently, the associated graded space $\gr V=\bigoplus_{i=0}^{\infty}\mathcal{F}(i)(V)/\mathcal{F}(i-1)(V)$ (where $\mathcal{F}(-1)(V):=\{0\}$) of the above filtration of $V$ inherits a canonical structure of a cyclic graded $\mathfrak{g}[t]$-module, where the $\mathfrak{g}[t]$-action on $\gr V$ is given by~${x[n]\cdot\overline{w}=\overline{x[n]\cdot w}}$ for all $x\in\mathfrak{g}$, $n\in\mathbb{Z}_+$ and $\overline{w}\in\mathcal{F}(i)(V)/\mathcal{F}(i-1)(V)$. As the $\mathbb{Z}_+$-grading on the filtration is $\mathfrak{g}$-equivariant, it follows that the graded components $\mathcal{F}(i)(V)/\mathcal{F}(i-1)(V)$ of~$\gr V$ are $\mathfrak{g}$-modules for all $i\in\mathbb{Z}_+$.

\subsubsection{Fusion product of untwisted current algebra modules}\label{Section 2.2.3}

Let $V_1,\dots,V_N$ be cyclic $\mathfrak{g}[t]$-modules with cyclic vectors $v_1,\dots,v_N$ respectively, and let $z_1,\dots,\allowbreak z_N\in\mathbb{C}$ be pairwise distinct nonzero localization parameters. Feigin and Loktev \cite[Proposition 1.4]{FL99} showed that the tensor product $V_1(z_1)\otimes\cdots\otimes V_N(z_N)$ is a cyclic $\mathfrak{g}[t]$-module with cyclic vector $v_1\otimes\cdots\otimes v_N$. Thus, the tensor product $V_1(z_1)\otimes\cdots\otimes V_N(z_N)$ of localized $\mathfrak{g}[t]$-modules~${V_1(z_1),\dots,V_N(z_N)}$ can be endowed with a $\mathfrak{g}$-equivariant grading, and the resulting graded tensor product, which we denote by $V_1(z_1)*\cdots*V_N(z_N)$, is called the Feigin--Loktev fusion product of untwisted current algebra modules.

\subsection{Kirillov--Reshetikhin modules over untwisted current algebras}\label{Section 2.3}

In this subsection, we will review the construction of Kirillov--Reshetikhin modules over untwisted current algebras, in order to motivate the definition of Kirillov--Reshetikhin modules over twisted current algebras, which we will do in Section \ref{Section 2.5}. While the KR-modules over the untwisted Yangian or the untwisted quantum affine algebra are defined in terms of their Drinfeld polynomials, the KR-modules over the untwisted current algebra $\mathfrak{g}[t]$ are defined in terms of current generators $e_i[n]$, $f_i[n]$, $\alpha_i^{\vee}[n]$ ($i\in[1,m]$, $n\in\mathbb{Z}_+$) and relations, and are the classical limits of the KR-modules over the untwisted quantum affine algebra \cite{CM06, Kedem11}.

\begin{Definition}[{\cite[Definition 2.1]{CM06}}]\label{2.1}
Let $i\in[1,m]$ and $k\in\mathbb{Z}_+$. The KR-module $\KR_{i,k}$ over $\mathfrak{g}[t]$ is the graded $\mathfrak{g}[t]$-module generated by a vector $v$, with relations given by
\begin{gather}
\pi_0(\mathfrak{n}_+[t])v=0,\nonumber\\
\pi_0(f_j[n])v=0,\quad n\geq\delta_{i,j},\nonumber\\
\pi_0(f_i)^{k+1}v=0,\qquad
\pi_0(\alpha_j^{\vee}[n])v=k\delta_{i,j}\delta_{n,0}v.\label{eq:2.2}
\end{gather}
\end{Definition}

Using \eqref{eq:2.1} and \eqref{eq:2.2}, the KR-module $\KR_{i,k}(z)$ is then defined to be the localization of the graded $\mathfrak{g}[t]$-module $\KR_{i,k}$ at $z$, with the relations given by
\begin{subequations}\label{eq:2.3}
\begin{gather}
\pi_z(\mathfrak{n}_+[t]) v=0,\\
\pi_z(f_j[n])v=\delta_{i,j}z^n\pi_0(f_i)v,\label{eq:2.3b}\\
\pi_z(f_i)^{k+1}v=0,\\
\pi_z(\alpha_j^{\vee}[n])v=kz^n\delta_{i,j}v.
\end{gather}
\end{subequations}
For any nonzero $z\in\mathbb{C}^*$, we have that the associated graded space $\gr \KR_{i,k}(z)$ of $\KR_{i,k}(z)$ is isomorphic to $\KR_{i,k}$ as graded $\mathfrak{g}[t]$-modules, and $\KR_{i,k}$ and $\KR_{i,k}(z)$ are isomorphic as $\mathfrak{g}$-modules, but not as $\mathfrak{g}[t]$-modules \cite{CM06}.

In type A, we have $\KR_{i,k}(z)\cong V(k\omega_i)$ as $\mathfrak{g}$-modules, so KR-modules over $\mathfrak{g}[t]$ are irreducible as $\mathfrak{g}$-modules. In general, the KR-module $\KR_{i,k}(z)$ decomposes into irreducible $\mathfrak{g}$-modules as follows \cite{Chari01}
\[
\KR_{i,k}(z)\cong V(k\omega_i)\oplus\Bigl(\bigoplus_{\mu\prec m\omega_i}V(\mu)^{\oplus m_{\mu}}\Bigr),
\]
where $\prec$ is the usual dominance partial ordering on $P$. This decomposition immediately implies that under the restriction of the action to $\mathfrak{g}$, $\KR_{i,k}(z)$ has a highest weight component isomorphic to $V(k\omega_i)$.

\subsection{Twisted affine and current algebras}\label{Section 2.4}

In this subsection, we will review the definition of twisted affine and current algebras. To begin, we let $\overline{\sigma}$ be a nontrivial automorphism of the Dynkin diagram of $\mathfrak{g}$ of order $\kappa>1$, and we let~$r$ denote the number of orbits of $\overline{\sigma}$. The diagram automorphism $\overline{\sigma}$ is described explicitly, as follows:{\samepage
\begin{enumerate}\itemsep=0pt
\item[(1)] If $\mathfrak{g}$ is of type $A_{2r-1}$, then we have $\overline{\sigma}(i)=2r-i$ for all $i\in[1,2r-1]$.
\item[(2)] If $\mathfrak{g}$ is of type $D_{r+1}$, then we have $\overline{\sigma}(i)=i$ for all $i\in[1,r-1]$, $\overline{\sigma}(r)=r+1$ and $\overline{\sigma}(r+1)=r$.
\item[(3)] If $\mathfrak{g}$ is of type $E_6$, then we have $\overline{\sigma}(i)=6-i$ for all $i\in[1,5]$, and $\overline{\sigma}(6)=6$.
\item[(4)] If $\mathfrak{g}$ is of type $D_4$, then we have $\overline{\sigma}(1)=3$, $\overline{\sigma}(2)=2$, $\overline{\sigma}(3)=4$ and $\overline{\sigma}(4)=1$.
\end{enumerate}
The diagram automorphism $\overline{\sigma}$ naturally induces an automorphism $\sigma$ of $\Delta$, and hence of $\mathfrak{g}$.}

{\samepage The following table lists all $\widehat{\mathfrak{g}}^{\sigma}$ and their corresponding $\mathfrak{g}^{\sigma}$:
\begin{center}\renewcommand{\arraystretch}{1.2}
 \begin{tabular}{|c|c|c|c|c|c|}
 \hline
 $\widehat{\mathfrak{g}}^{\sigma}$ & $A_{2r-1}^{(2)}$ & $D_{r+1}^{(2)}$ & $E_{6}^{(2)}$ & $D_4^{(3)}$ \\ \hline
 $\mathfrak{g}^{\sigma}$ & $C_r$ & $B_r$ & $F_4$ & $G_2$ \\
 \hline
 \end{tabular}
\end{center}

}

Next, we let $\xi$ be a $\kappa$-th primitive root of unity. Then $\sigma$ can be extended to an automorphism of $\widehat{\mathfrak{g}}$ by defining $\sigma(K)=K$ and $\sigma(x[n])=\xi^{-n}\sigma(x)[n]$ for all $x\in\mathfrak{g}$ and $n\in\mathbb{Z}$. By \cite[Theorem~8.5]{Kac90}, the affine Dynkin type of $\widehat{\mathfrak{g}}^{\sigma}$ is given by \smash{$X_m^{(\kappa)}$}. We denote the $\sigma$-fixed points of $\mathfrak{g}$, $\mathfrak{g}[t]$ and~$\widehat{\mathfrak{g}}$ by $\mathfrak{g}^{\sigma}$, $\mathfrak{g}[t]^{\sigma}$ and~$\widehat{\mathfrak{g}}^{\sigma}$, respectively.

Our next step is to describe the twisted affine and current algebras in further detail, and review some basic properties concerning the twisted affine and current algebras.

We let $\overline{\Delta}$ be the set of roots of $\mathfrak{g}^{\sigma}$ with respect to the Cartan subalgebra $\mathfrak{h}^{\sigma}$ of $\mathfrak{g}^{\sigma}$, and $\overline{\Delta}^+$ be the set of positive roots of $\mathfrak{g}^{\sigma}$. Then we have
\smash{$
\overline{\Delta}^+=\bigl\{\alpha|_{\mathfrak{h}^{\sigma}}\mid \alpha\in\Delta^+\bigr\}$}.
In particular, a set $\overline{\Pi}$ of simple roots for $\overline{\Delta}$ is given by $\Pi=\{\overline{\alpha}_1,\dots,\overline{\alpha}_r\}\subseteq(\mathfrak{h}^{\sigma})^*$, where $\overline{\alpha}_j= \alpha_j|_{\mathfrak{h}^{\sigma}}$ for all $j\in[1,r]$.

Let $\{\overline{\alpha}_1^{\vee},\dots, \overline{\alpha}_r^{\vee}\}$ be the corresponding basis of simple coroots for $(\mathfrak{h}^{\sigma})^*$ satisfying $\overline{\alpha}_j(\overline{\alpha}_i^{\vee})=\overline{C}_{i,j}$ for all $i\in I_r$. Then for all $j\in I_r$, $\overline{\alpha}_j^{\vee}$ is given by
\begin{gather*}
\overline{\alpha}_j^{\vee}=
\begin{cases}
\alpha_j^{\vee} & \text{if }\overline{\sigma}(j)=j,\\
\displaystyle \sum_{k=0}^{\kappa-1}\alpha_{\overline{\sigma}^k(j)}^{\vee} & \text{if }\overline{\sigma}(j)\neq j.
\end{cases}
\end{gather*}
Next, we will describe a Chevalley basis of $\widehat{\mathfrak{g}}^{\sigma}$ (and hence $\mathfrak{g}[t]^{\sigma}$). To this end, we will first need to describe the set $\widehat{\Delta}^{\sigma}$ of roots of $\widehat{\mathfrak{g}}^{\sigma}$ with respect to $\widehat{\mathfrak{h}}^{\sigma}$. We let $\overline{\Delta}_{\ell}$ and $\overline{\Delta}_s$ denote the set of long and short roots of $\mathfrak{g}^{\sigma}$, respectively. In addition, we also let $\overline{\Delta}_{\ell}^+:= \overline{\Delta}^+ \cap \overline{\Delta}_{\ell}$ and $\overline{\Delta}_s^+= \overline{\Delta}^+\cap\overline{\Delta}_s$. By letting $\delta$ denote the unique non-divisible positive imaginary root of $\widehat{\mathfrak{g}}^{\sigma}$, it follows that the set $\widehat{\Delta}^{\sigma}$ is given by
\[
\widehat{\Delta}^{\sigma}=\{\pm n\delta\mid n\in\mathbb{N}\}\cup\bigl\{\pm\overline{\alpha}+ n\delta\mid \overline{\alpha}\in\overline{\Delta}_s^+,\,n\in\mathbb{Z}\bigr\}\cup\bigl\{\pm\overline{\alpha}+\kappa n\delta\mid \overline{\alpha}\in\overline{\Delta}_{\ell}^+,\,n\in\mathbb{Z}\bigr\}.
\]
We are now ready to describe a Chevalley basis of $\widehat{\mathfrak{g}}^{\sigma}$. As $\widehat{\mathfrak{g}}^{\sigma}=\mathbb{C}K\oplus\mathfrak{g}\bigl[t^{\pm1}\bigr]^{\sigma}$, it suffices to describe a Chevalley basis of $\mathfrak{g}\bigl[t^{\pm1}\bigr]^{\sigma}$. A basis for $\mathfrak{g}\bigl[t^{\pm1}\bigr]^{\sigma}$ is given by the following elements:
\begin{gather*}
\overline{e}_{\pm\overline{\alpha}}[n]=\sum_{i=0}^{\kappa-1}\xi^{-in}e_{\pm\sigma^i(\alpha)}\otimes t^n,\qquad\overline{\alpha}\in\overline{\Delta}_s^+,\\
\overline{e}_{\pm\overline{\alpha}}[\kappa n]=e_{\pm\alpha}\otimes t^{\kappa n},\qquad\overline{\alpha}\in \overline{\Delta}_{\ell}^+,\\
\overline{\alpha}_j^{\vee}[n]=\sum_{i=0}^{\kappa-1}\xi^{-in}\alpha_{\overline{\sigma}^i(j)}^{\vee}\otimes t^n,\qquad i\in I_r\quad\text{with}\ \overline{\sigma}(j)\neq j,\\
\overline{\alpha}_j^{\vee}[\kappa n]=\alpha_j^{\vee}\otimes t^{\kappa n},\qquad i\in I_r\quad\text{with}\ \overline{\sigma}(j)= j,
\end{gather*}
for all $n\in\mathbb{Z}_+$, where $\alpha$ is a root of $\mathfrak{g}$ satisfying $\alpha|_{\mathfrak{h}^{\sigma}} =\overline{\alpha}$.

The roots of $\widehat{\Delta}^{\sigma}$ and the basis elements of the corresponding root space of $\widehat{\mathfrak{g}}^{\sigma}$, are described in the following table:
\begin{center}\renewcommand{\arraystretch}{1.2}
 \begin{tabular}{|c|c|}
 \hline
 root & basis elements \\ \hline
 $\pm\overline{\alpha}+n\delta$, $\overline{\alpha}\in\overline{\Delta}_s^+$, $n\in\mathbb{Z}$ & $\overline{e}_{\pm\overline{\alpha}}[n]$ \\ \hline
 $\pm\overline{\alpha}+\kappa n\delta$, $\overline{\alpha}\in\overline{\Delta}_{\ell}^+$, $n\in\mathbb{Z}$ & $\overline{e}_{\pm\overline{\alpha}}[\kappa n]$ \\ \hline
 $n\delta$, $n\in\mathbb{Z}$, $\kappa\divides n$ & $\overline{\alpha}_j^{\vee}[n]$, $j\in I_r$ \\ \hline
 $n\delta$, $n\in\mathbb{Z}$, $\kappa\not\divides n$ & $\overline{\alpha}_j^{\vee}[n]$, $j\in I_r$ with $\overline{\sigma}(j)\neq j$ \\
 \hline
 \end{tabular}
\end{center}
For later convenience, we will write $\overline{f}_i[t_i^{\vee}n]$ in lieu of $\overline{e}_{-\overline{\alpha}_i}[t_i^{\vee}n]$ for any $n\in\mathbb{Z}$ and $i\in I_r$ \big(note that $t_i^{\vee}=1$ if \smash{$\overline{\alpha}_i\in\overline{\Delta}_s^+$}, and $t_i^{\vee}=\kappa$ if \smash{$\overline{\alpha}_i\in \overline{\Delta}_{\ell}^+$}\big).

Next, we will review a few basic facts and properties concerning twisted affine and current algebras. To begin, we recall that the central element $\overline{K}$ of $\widehat{\mathfrak{g}}^{\sigma}$ is given by $\overline{K}=\kappa K$, and that the triangular decomposition of $\widehat{\mathfrak{g}}$ restricts to the triangular decomposition of $\widehat{\mathfrak{g}}^{\sigma}$, given by~${\widehat{\mathfrak{g}}^{\sigma}=\widehat{\mathfrak{n}}_-^{\sigma}\oplus \widehat{\mathfrak{h}}^{\sigma}\oplus \widehat{\mathfrak{n}}_+^{\sigma}}$, where $\widehat{\mathfrak{h}}^{\sigma}= \mathbb{C}K\oplus\mathfrak{h}^{\sigma}$, and $\widehat{\mathfrak{n}}_{\pm}^{\sigma}=\mathfrak{n}_{\pm}^{\sigma} \oplus(\mathfrak{g}\otimes t^{\pm1}\mathbb{C}\bigl[t^{\pm1}\bigr]) ^{\sigma}$.

We let $\overline{P}$ denote the weight lattice of $\mathfrak{g}^{\sigma}$, and \smash{$\overline{P}^+\subseteq\overline{P}$} the set of dominant integral weights of $\mathfrak{g}^{\sigma}$. The weight lattice $\overline{P}$ has a basis given by the set $\{\overline{\omega}_1,\dots, \overline{\omega}_r\}\subseteq(\mathfrak{h}^{\sigma})^*$ of fundamental weights of $\mathfrak{g}^{\sigma}$, where $\overline{\omega}_j=\omega_j|_{\mathfrak{h}^{\sigma}}$ for all $j\in I_r$. The irreducible highest weight $\mathfrak{g}^{\sigma}$-modules are parameterized by \smash{$\overline{\lambda}\in\overline{P}^+$}, and by an abuse of notation, are denoted by $V\bigl(\overline{\lambda}\bigr)$ as well.

Similar to the untwisted case, the irreducible highest weight $\widehat{\mathfrak{g}}^{\sigma}$-modules are parameterized by a positive integer $k$, and $\overline{\lambda}\in \overline{P}_k^+$, and are denoted by $\widehat{V}_{k,\overline{\lambda}}$. Here, the central element $\overline{K}$ acts on \smash{$\widehat{V}_{k,\overline{\lambda}}$} by the constant $k$, and the set \smash{$\overline{P}_k^+$} is defined by
\begin{equation*}
\overline{P}_k^+=\left\{\overline{\lambda}=\sum_{i=1}^r\ell_i\overline{\omega}_i\in \overline{P}^+\mid \sum_{i=1}^r\ell_i\overline{a}_i^{\vee}\leq k\right\},
\end{equation*}
where $\overline{a}_1^{\vee},\dots,\overline{a}_r^{\vee}$ are the co-marks of $\widehat{\mathfrak{g}}^{\sigma}$.

\subsubsection{Fusion product of twisted current algebra modules}\label{Section 2.4.1}

Our final goal of this subsection is to recall the construction of fusion product of modules over twisted current algebras by Kus and Venkatesh \cite{KV16}. Unlike the untwisted case, the construction of fusion product of modules over twisted current algebras is more involved, as the Lie algebra map $\varphi_z\colon\mathfrak{g}[t]\to\mathfrak{g}[t]$ does not restrict to a Lie algebra map $\mathfrak{g}[t]^{\sigma}\to\mathfrak{g}[t]^{\sigma}$, and hence the localization of twisted current algebra modules cannot be defined using pullbacks via the restriction of $\varphi_z$ to $\mathfrak{g}[t]^{\sigma}$. Nevertheless, the fusion product of twisted current algebra modules can still be defined in the case where the constituent $\mathfrak{g}[t]^{\sigma}$-modules are also $\mathfrak{g}[t]$-modules, which we will describe in detail below the fold.

To begin, we first observe that the $\mathbb{Z}_+$-grading on the universal enveloping algebra $U(\mathfrak{g}[t])$ of $\mathfrak{g}[t]$ naturally induces a $\mathbb{Z}_+$-grading on the universal enveloping algebra $U(\mathfrak{g}[t]^{\sigma})$ of $\mathfrak{g}[t]^{\sigma}$ via restriction, and hence a filtration of $U(\mathfrak{g}[t]^{\sigma})$
\begin{equation*}
U(\mathfrak{g}[t]^{\sigma})= U(\mathfrak{g}[t]^{\sigma})^{(0)}\subseteq U(\mathfrak{g}[t]^{\sigma})^{(\leq 1)}\subseteq U(\mathfrak{g}[t]^{\sigma})^{(\leq 2)}\subseteq\cdots,
\end{equation*}
where $U(\mathfrak{g}[t]^{\sigma})^{(\leq n)}=\bigoplus_{j=0}^n U(\mathfrak{g}[t]^{\sigma})^{(j)}$ for all $n\in\mathbb{Z}_+$. Similar to the untwisted case, this implies that for any cyclic $\mathfrak{g}[t]^{\sigma}$-module $V$ with cyclic vector $v$, $V$ inherits a filtration from the filtration on $U(\mathfrak{g}[t]^{\sigma})$ as follows:
\[
\mathcal{F}(0)(V)\subseteq\mathcal{F}(1)(V)\subseteq\mathcal{F}(2)(V)\subseteq\cdots,
\]
where $\mathcal{F}(i)(V)=\bigl(U(\mathfrak{g}[t]^{\sigma})^{(\leq i)}\bigr)v$ for all $i\in\mathbb{Z}_+$. Consequently, the associated graded space $\gr V=\bigoplus_{i=0}^{\infty}\mathcal{F}(i)(V)/\mathcal{F}(i-1)(V)$ (where $\mathcal{F}(-1)(V):=\{0\}$) of the above filtration of $V$ inherits a canonical structure of a cyclic graded $\mathfrak{g}[t]^{\sigma}$-module, with the $\mathfrak{g}[t]^{\sigma}$-action on $\gr V$ given by~${x[n]\cdot\overline{w}=\overline{x[n]\cdot w}}$
for all $x[n]\in\mathfrak{g}[t]^{\sigma}$ and $\overline{w}\in \mathcal{F}(i)(V)/\mathcal{F}(i-1)(V)$. As the $\mathbb{Z}_+$-grading on the filtration is $\mathfrak{g}^{\sigma}$-equivariant, it follows that the graded components $\mathcal{F}(i)(V)/\mathcal{F}(i-1)(V)$ of~$\gr V$ are $\mathfrak{g}^{\sigma}$-modules for all $i\in\mathbb{Z}_+$.

We are now ready to define the notion of fusion products of twisted current algebra modules. Let us first recall the following twisted analogue of \cite[Proposition 1.4]{FL99}:

\begin{Proposition}[{\cite[Proposition 6.3]{KV16}}]\label{2.2}
Let $V_1,\dots,V_N$ be finite-dimensional cyclic $\mathfrak{g}[t]$-modules with cyclic vectors $v_1,\dots,v_N$ respectively, and $z_1,\dots, z_N\in\mathbb{C}$ be nonzero localization parameters satisfying $z_i^{\kappa}\neq z_j^{\kappa}$ for all distinct $i,j\in[1,N]$. Then $V_1(z_1)\otimes\cdots\otimes V_N(z_N)$ is a cyclic $\mathfrak{g}[t]^{\sigma}$-module with cyclic vector $v_1\otimes\cdots\otimes v_N$.
\end{Proposition}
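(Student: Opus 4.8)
The plan is to adapt the proof of the untwisted statement \cite[Proposition~1.4]{FL99}, the only genuinely new ingredient being a $\sigma$-symmetrization device whose success depends on the hypothesis $z_i^{\kappa}\neq z_j^{\kappa}$. Write $W:=V_1(z_1)\otimes\cdots\otimes V_N(z_N)$ and $v:=v_1\otimes\cdots\otimes v_N$. Since $U(\mathfrak{g}[t]^{\sigma})v\subseteq W$ is automatic, it suffices to prove the reverse inclusion. I would first record the standard fact that, as each $V_i$ is a finite-dimensional graded $\mathfrak{g}[t]$-module with cyclic vector $v_i$ in degree~$0$, its $\mathfrak{g}[t]$-action factors through a finite-dimensional truncation $\mathfrak{g}\otimes\mathbb{C}[t]/\bigl(t^{d_i}\bigr)$ for some $d_i\in\mathbb{N}$; by \eqref{eq:2.1} the action on $V_i(z_i)$ then factors through $\mathfrak{g}\otimes\mathbb{C}[t]/\bigl((t-z_i)^{d_i}\bigr)$, and the tensor-product action of $\mathfrak{g}[t]$ on $W$ factors through $\mathfrak{g}\otimes\mathbb{C}[t]/(P)$ with $P(t)=\prod_{i}(t-z_i)^{d_i}$. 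In particular, for $x\in\mathfrak{g}$ and $g\in\mathbb{C}[t]$, the operator by which $x\otimes g(t)$ acts on the $i$-th tensor factor of $W$ depends only on the residue of $g$ modulo $(t-z_i)^{d_i}$.

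The heart of the argument is to realize, inside the image of the action of $U(\mathfrak{g}[t]^{\sigma})$ on $W$, the operator $(x\otimes g_0)_i$ acting by $x\otimes g_0(t)$ on the $i$-th tensor factor and by $0$ on all others, for arbitrary $i\in[1,N]$, $x\in\mathfrak{g}$ and $g_0\in\mathbb{C}[t]$. Because every $z_i$ is nonzero and $z_i^{\kappa}\neq z_j^{\kappa}$ for $i\neq j$, the points $\xi^{k}z_j$ with $0\le k\le\kappa-1$ and $1\le j\le N$ are pairwise distinct; hence, by the Chinese Remainder Theorem, one may choose $g\in\mathbb{C}[t]$ with $g\equiv g_0\pmod{(t-z_i)^{d_i}}$ and with $g$ vanishing to order $d_j$ at $\xi^{k}z_j$ for every pair $(j,k)\neq(i,0)$. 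Set
\[
y:=\frac1\kappa\sum_{k=0}^{\kappa-1}\sigma^{k}(x)\otimes g\bigl(\xi^{-k}t\bigr).
\]
Since $\sigma\bigl(u\otimes f(t)\bigr)=\sigma(u)\otimes f\bigl(\xi^{-1}t\bigr)$ for $u\in\mathfrak{g}$ and $f\in\mathbb{C}[t]$, a short check gives $\sigma(y)=y$, so $y\in\mathfrak{g}[t]^{\sigma}$. Acting on $W$ by the Leibniz rule, the term $\sigma^{k}(x)\otimes g\bigl(\xi^{-k}t\bigr)$ acts on the $j$-th tensor factor through the residue of $g\bigl(\xi^{-k}t\bigr)$ modulo $(t-z_j)^{d_j}$, which (again by the observation of the first paragraph) is governed by the Taylor expansion of $g$ at $\xi^{-k}z_j$ to order $d_j$; by the choice of $g$ this vanishes unless $(j,k)=(i,0)$. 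Hence $y$ acts on $W$ as $\tfrac1\kappa\,(x\otimes g_0)_i$.

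It remains to assemble these. Letting $x$, $g_0$ and $i$ vary, the operators $(x\otimes g_0)_i$ all lie in the image of $U(\mathfrak{g}[t]^{\sigma})$ on $W$; for a fixed $i$ they generate the image $B_i$ of $U\bigl(\mathfrak{g}\otimes\mathbb{C}[t]/((t-z_i)^{d_i})\bigr)$ acting on the $i$-th tensor factor, and $B_iv_i=V_i(z_i)$ since $v_i$ is a cyclic vector for $V_i$ over $\mathfrak{g}[t]$ and that action factors through $\mathfrak{g}\otimes\mathbb{C}[t]/((t-z_i)^{d_i})$. As $B_1,\dots,B_N$ act on distinct tensor factors, applying them successively to $v$ gives
\[
U(\mathfrak{g}[t]^{\sigma})v\ \supseteq\ B_1\cdots B_N\,v\ =\ V_1(z_1)\otimes\cdots\otimes V_N(z_N)\ =\ W,
\]
which is the desired reverse inclusion.

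I expect the separation step of the second paragraph to be the main obstacle. In the untwisted case one separates the tensor factors using an element of $\mathfrak{g}[t]$ built from a polynomial supported near a single $z_i$; this is no longer available inside $\mathfrak{g}[t]^{\sigma}$, because $\sigma$ carries such an element to one supported near $\xi^{-1}z_i$, forcing one to $\sigma$-symmetrize. The symmetrization still separates the factors precisely because the hypothesis $z_i^{\kappa}\neq z_j^{\kappa}$ makes the full $\xi$-orbits $\{\xi^{k}z_i\}_{k=0}^{\kappa-1}$ pairwise disjoint (and, for $k\neq0$, disjoint from $\{z_1,\dots,z_N\}$), which is what lets the Chinese Remainder Theorem kill all the $k\neq 0$ contributions simultaneously; without it, $\sigma$-symmetrized elements act only ``twisted-diagonally'' across several tensor factors and cyclicity genuinely fails. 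Equivalently, one can phrase the whole argument by passing to the $\tau$-closed truncation $\mathfrak{g}\otimes\mathbb{C}[t]/(Q)$, $Q(t)=\prod_{i}\prod_{k=0}^{\kappa-1}(t-\xi^{k}z_i)^{d_i}$, on which $\sigma$ descends, and checking that $\bigl(\mathfrak{g}\otimes\mathbb{C}[t]/(Q)\bigr)^{\sigma}\cong\bigoplus_{i}\mathfrak{g}\otimes\mathbb{C}[t]/\bigl((t-z_i)^{d_i}\bigr)$ acts on $W$ exactly as the $i$-th summand on the $i$-th factor.
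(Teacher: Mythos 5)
The paper does not give a proof of this statement; it is quoted directly from \cite[Proposition~6.3]{KV16}, so there is no in-paper argument to compare against. Your proof is correct and is the natural twisted adaptation of Feigin--Loktev's separation-of-points argument, which is also the strategy of the cited reference: the $\sigma$-averaged element $y=\frac1\kappa\sum_{k=0}^{\kappa-1}\sigma^k(x)\otimes g\bigl(\xi^{-k}t\bigr)$ lies in $\mathfrak{g}[t]^\sigma$ under the convention $\sigma(u\otimes f(t))=\sigma(u)\otimes f\bigl(\xi^{-1}t\bigr)$ of Section~\ref{Section 2.4}, and the hypothesis $z_i^\kappa\neq z_j^\kappa$ is exactly what guarantees that the $N\kappa$ interpolation points $\xi^k z_j$ are pairwise distinct, so that the Chinese Remainder Theorem can annihilate every contribution to $\pi_{z_j}(y)$ except the one at $(j,k)=(i,0)$, leaving $y$ acting as $\tfrac1\kappa(x\otimes g_0)_i$ on the $i$-th factor alone. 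Two minor remarks: you tacitly use that each $V_i$ is graded with $v_i$ in degree~$0$, so that the $\mathfrak{g}[t]$-action factors through $\mathfrak{g}\otimes\mathbb{C}[t]/\bigl(t^{d_i}\bigr)$; this hypothesis is not written in the paper's condensed restatement but is part of \cite[Proposition~6.3]{KV16} and is what keeps the supports of the localized modules concentrated at the single orbits $\{\xi^k z_i\}_k$, so it is the correct reading. Also, in your closing paragraph ``$\tau$-closed'' should read ``$\sigma$-closed.''
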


Thus, by Proposition \ref{2.2}, the tensor product $V_1(z_1)\otimes\cdots\otimes V_N(z_N)$ of localized $\mathfrak{g}[t]^{\sigma}$-modules~${V_1(z_1),\dots,V_N(z_N)}$ can be endowed with a $\mathfrak{g}^{\sigma}$-equivariant grading, and we call the resulting graded tensor product, which we denote similarly by $V_1(z_1)*\cdots*V_N(z_N)$, the fusion product of twisted current algebra modules.

\subsection{Kirillov--Reshetikhin modules over twisted current algebras}\label{Section 2.5}

In this subsection, we will recall the construction of Kirillov--Reshetikhin modules over $\mathfrak{g}[t]^{\sigma}$ and their associated properties given in \cite{CM06, CM07, KV16}. The graded Kirillov--Reshetikhin modules over~$\mathfrak{g}[t]^{\sigma}$ was first defined by Chari and Moura in \cite{CM06, CM07} in an analogous fashion as their untwisted counterparts in terms of the current generators $\overline{e}_i[t_i^{\vee}n]$, $\overline{f}_i[t_i^{\vee}n]$, $\overline{\alpha}_i^{\vee}[t_i^{\vee}n]$ ($i\in I_r$, $n\in\mathbb{Z}_+$) of~$\mathfrak{g}[t]^{\sigma}$ and relations. While localized KR-modules over $\mathfrak{g}[t]^{\sigma}$ cannot be defined directly using pullbacks via the restriction of the Lie algebra map $\varphi_z\colon \mathfrak{g}[t]\to\mathfrak{g}[t]$ to $\mathfrak{g}[t]^{\sigma}$, they can still be defined by restricting the action of localized KR-modules over $\mathfrak{g}[t]$ to $\mathfrak{g}[t]^{\sigma}$ \cite[Section 6.5]{KV16}, and the associated graded space of the localized KR-modules over $\mathfrak{g}[t]^{\sigma}$ are precisely the graded KR-modules over $\mathfrak{g}[t]^{\sigma}$, which we will explain below the fold.

\begin{Definition}[{\cite[Definition 3.3]{CM06} and \cite[Definition 2.2]{CM07}}]\label{2.3}
Let $i\in I_r$ and $k\in\mathbb{Z}_+$. The KR-module $\KR_{i,k}^{\sigma}$ over $\mathfrak{g}[t]^{\sigma}$ is the graded $\mathfrak{g}[t]^{\sigma}$-module generated by a vector $v$, where $\mathfrak{g}[t]^{\sigma}$ acts on~$V$ via a representation $\psi_0$, with relations given by
\begin{subequations}\label{eq:2.4}
\begin{gather}
\psi_0\bigl(\mathfrak{n}_+\bigl[t^{\pm1}\bigr]^{\sigma}\bigr)v =0,\\
\psi_0\bigl(\overline{f}_j[t_j^{\vee}n]\bigr)v =\delta_{i,j}\delta_{n,0}\psi_0\bigl(\overline{f}_i[0]\bigr)v,\label{eq:2.4b}\\
\psi_0\bigl(\overline{f}_i[0]\bigr)^{k+1}v =0,\\
\psi_0\bigl(\overline{\alpha}_j^{\vee}[t_j^{\vee}n]\bigr)v =\delta_{i,j}\delta_{n,0}kv.
\end{gather}
\end{subequations}
\end{Definition}

\begin{Definition}\label{2.4}
Let $i\in [1,m]$, $k\in\mathbb{Z}_+$ and $z\in\mathbb{C}^*$. Let us denote the restriction of the action~$\pi_z$ of $\mathfrak{g}[t]$ on $\KR_{i,k}(z)$ to $\mathfrak{g}[t]^{\sigma}$ by $\psi_z$. We denote the resulting $\mathfrak{g}[t]^{\sigma}$-module by $\KR_{i,k}^{\sigma}(z)$, and we call $\KR_{i,k}^{\sigma}(z)$ a localized KR-module over $\mathfrak{g}[t]^{\sigma}$.
\end{Definition}

The following proposition justifies the notation and definition of $\KR_{i,k}^{\sigma}(z)$ as a localized KR-module over $\mathfrak{g}[t]^{\sigma}$.

\begin{Proposition}[{\cite[Theorem 4]{FL07} and \cite[Propositions 6.6, 6.7 and 7.2]{KV16}}]\label{2.5}
Let $j\in [1,m]$, $k\in\mathbb{Z}_+$, $z\in\mathbb{C}^*$, and $i\in I_r$ be the unique index that satisfies $\overline{\sigma}(i)=\overline{\sigma}(j)$. Then
\begin{equation*}
\gr\KR_{j,k}^{\sigma}(z)\cong \gr\KR_{i,k}^{\sigma}(z)\cong \KR_{i,k}^{\sigma}
\end{equation*}
as graded $\mathfrak{g}[t]^{\sigma}$-modules.
\end{Proposition}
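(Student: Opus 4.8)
The plan is to reduce both isomorphisms to the single statement that $\gr\KR_{j,k}^{\sigma}(z)\cong\KR_{i,k}^{\sigma}$ as graded $\mathfrak g[t]^{\sigma}$-modules for every $j\in[1,m]$ in the $\overline\sigma$-orbit labelled by $i\in I_r$; the middle term $\gr\KR_{i,k}^{\sigma}(z)$ is then the special case $j=i$, and the first isomorphism follows since both sides equal $\KR_{i,k}^{\sigma}$. The first point to settle is that $\gr\KR_{j,k}^{\sigma}(z)$ is well defined: by Definition~\ref{2.4}, $\KR_{j,k}^{\sigma}(z)$ is the restriction to $\mathfrak g[t]^{\sigma}$ of the cyclic $\mathfrak g[t]$-module $\KR_{j,k}(z)$, and by the $N=1$ case of Proposition~\ref{2.2} (that is, \cite[Proposition~6.6]{KV16}) this restriction is still cyclic over $\mathfrak g[t]^{\sigma}$ on the same cyclic vector $v$. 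Hence $\gr\KR_{j,k}^{\sigma}(z)$ is a cyclic graded $\mathfrak g[t]^{\sigma}$-module with cyclic vector $\overline v$, and, since passing to the associated graded space preserves total dimension, $\dim\gr\KR_{j,k}^{\sigma}(z)=\dim\KR_{j,k}(z)$.

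The next step is to produce a graded surjection $\KR_{i,k}^{\sigma}\to\gr\KR_{j,k}^{\sigma}(z)$ by checking that $\overline v$ satisfies the defining relations~\eqref{eq:2.4} of $\KR_{i,k}^{\sigma}$ from Definition~\ref{2.3}. Each twisted current generator $\overline e_{\pm\overline\alpha}[n]$ and $\overline\alpha_a^{\vee}[n]$ of $\mathfrak g[t]^{\sigma}$ is a $\xi$-weighted sum of generators $x[n]$ of $\mathfrak g[t]$ with $x$ ranging over one $\sigma$-orbit; feeding the localized relations~\eqref{eq:2.3} for $\KR_{j,k}(z)$, obtained from Definition~\ref{2.1} via the expansion~\eqref{eq:2.1}, into these sums shows that $\psi_z\bigl(\overline f_a[t_a^{\vee}n]\bigr)v$ is a scalar multiple of $z^{t_a^{\vee}n}\psi_0\bigl(\overline f_i[0]\bigr)v$ that vanishes unless $a=i$, that $\psi_z\bigl(\overline\alpha_a^{\vee}[t_a^{\vee}n]\bigr)v=k\,\delta_{a,i}\,z^{t_a^{\vee}n}v$, that $\psi_z(x)v=0$ for every $\mathfrak n_+$-type generator $x$ of $\mathfrak g[t]^{\sigma}$ since $x\in\mathfrak n_+\otimes\mathbb C[t]$ and $\pi_z(\mathfrak n_+\otimes\mathbb C[t])v=0$, and that $\psi_0\bigl(\overline f_i[0]\bigr)^{k+1}v=0$ because the Chevalley generators attached to the (mutually non-adjacent) nodes of the orbit of $i$ commute and annihilate $v$ except for $f_j$, which obeys $\pi_0(f_j)^{k+1}v=0$. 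Passing to the associated graded space annihilates the positive-degree contributions (the factors $z^{t_a^{\vee}n}$ with $n\geq 1$), leaving precisely~\eqref{eq:2.4}, so by the universal property of $\KR_{i,k}^{\sigma}$ the assignment $v\mapsto\overline v$ extends to the desired surjection.

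It remains to match dimensions, and here I would invoke \cite[Theorem~4]{FL07} together with \cite[Propositions~6.6, 6.7 and~7.2]{KV16}, which furnish the $\mathfrak g^{\sigma}$-character of the abstractly presented module $\KR_{i,k}^{\sigma}$ and identify it with that of $\KR_{i,k}(z)$; in particular $\KR_{i,k}^{\sigma}$ is finite dimensional with $\dim\KR_{i,k}^{\sigma}=\dim\KR_{i,k}(z)$. The Dynkin diagram automorphism $\overline\sigma^{\ell}$ carrying $i$ to $j$ lifts to a Lie algebra automorphism of $\mathfrak g[t]$ whose pullback takes $\KR_{i,k}$ to $\KR_{j,k}$, so $\dim\KR_{i,k}=\dim\KR_{j,k}=\dim\KR_{j,k}(z)=\dim\gr\KR_{j,k}^{\sigma}(z)$. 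The surjection of the previous paragraph is therefore a surjection of graded vector spaces of equal finite dimension, hence a graded isomorphism; this proves $\gr\KR_{j,k}^{\sigma}(z)\cong\KR_{i,k}^{\sigma}$, and the two isomorphisms in the statement follow by taking $j=i$ and $j$ general, respectively.

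The step I expect to be the real obstacle is the dimension estimate used at the end, namely $\dim\KR_{i,k}^{\sigma}\leq\dim\KR_{j,k}(z)$: the module $\KR_{i,k}^{\sigma}$ is presented purely by generators and relations over the twisted current algebra, so bounding its size is not formal and rests on the PBW-type spanning arguments of \cite{CM07, KV16}, adapted to $\mathfrak g[t]^{\sigma}$ and to the imaginary-root structure of $\widehat{\mathfrak g}^{\sigma}$ described in Section~\ref{Section 2.4}, together with a comparison against $\KR_{i,k}(z)$ restricted to $\mathfrak g[t]^{\sigma}$. A secondary, purely bookkeeping, difficulty is keeping track of the primitive root of unity $\xi$ and of the two cases---orbits of size~$1$, where $\overline\sigma(a)=a$ and $t_a^{\vee}=\kappa$, versus orbits of size~$\kappa$, where $t_a^{\vee}=1$---uniformly across the four twisted types $A_{2r-1}^{(2)}$, $D_{r+1}^{(2)}$, $E_6^{(2)}$ and $D_4^{(3)}$.
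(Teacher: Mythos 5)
The paper gives no proof of Proposition~\ref{2.5}; it is attributed directly to \cite[Theorem~4]{FL07} and \cite[Propositions~6.6, 6.7, 7.2]{KV16}, so there is no internal argument to compare against. Your reconstruction of what those sources establish is sound and follows the standard route: cyclicity of $\KR_{j,k}^{\sigma}(z)$ over $\mathfrak g[t]^{\sigma}$ from the $N=1$ case of Proposition~\ref{2.2}, a degree-preserving surjection $\KR_{i,k}^{\sigma}\twoheadrightarrow\gr\KR_{j,k}^{\sigma}(z)$ obtained by verifying that $\overline v$ satisfies the presentation \eqref{eq:2.4} (the $z^{t_a^{\vee}n}$ terms with $n\geq 1$ drop in the associated graded since they land in lower filtration degree, and the commutativity of the Chevalley generators attached to a $\overline\sigma$-orbit of non-adjacent nodes gives the integrability relation $\psi_0(\overline f_i[0])^{k+1}\overline v=0$), and then a dimension count. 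You correctly identify that the surjection half is routine and that the entire content of the proposition is the bound $\dim\KR_{i,k}^{\sigma}\leq\dim\KR_{j,k}(z)$, which is precisely the Demazure/Weyl-module character computation carried out in \cite{CM06,CM07,FL07,KV16}; I would only caution that \cite[Proposition~6.7]{KV16} is itself essentially this statement, so what must be quoted as an \emph{independent} input is the character of the presented module $\KR_{i,k}^{\sigma}$ and the $\mathfrak g^{\sigma}$-character of $\KR_{i,k}(z)$ (equivalently of $\KR_{i,k}$, which via the diagram automorphism also handles $\KR_{j,k}$), not Proposition~6.7 itself.
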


Thus, by Proposition \ref{2.5}, we may restrict our attention to localized KR-modules $\KR_{i,k}^{\sigma}(z)$ over $\mathfrak{g}[t]^{\sigma}$ whose root index $i$ lies in $I_r$.

It remains to describe the relations of $\KR_{i,k}^{\sigma}(z)$ for any $z\in\mathbb{C}^*$, $k\in\mathbb{Z}_+$ and $i\in I_r$. It follows from the relations \eqref{eq:2.3} of the localized KR-module $\KR_{i,k}(z)$ over $\mathfrak{g}[t]$ that the relations of~$\KR_{i,k}^{\sigma}(z)$ are given by{\samepage
\begin{subequations}\label{eq:2.5}
\begin{gather}
\psi_z\bigl(\mathfrak{n}_+\bigl[t^{\pm1}\bigr]^{\sigma}\bigr)v =0,\\
\psi_z\bigl(\overline{f}_j[t_j^{\vee}n]\bigr)v =\delta_{i,j}z^{t_j^{\vee}n}\pi_0(f_i)v,\label{eq:2.5b}\\
\psi_z\bigl(\overline{f}_i[0]\bigr)^{k+1}v =0, \label{eq:2.5c}\\
\psi_z\bigl(\overline{\alpha}_j^{\vee}[t_j^{\vee}n]\bigr)v =\delta_{i,j}z^{t_j^{\vee}n}kv
\end{gather}
\end{subequations}
for all $n\geq0$ and $j\in I_r$.}

Following \eqref{eq:2.3b}, it follows that we can rewrite \eqref{eq:2.5b}, using \eqref{eq:2.4b}, as
\begin{equation}\label{eq:2.6}
\psi_z\bigl(\overline{f}_j[t_j^{\vee}n]\bigr)v =\delta_{i,j}z^{t_j^{\vee}n}\psi_0\bigl(\overline{f}_i[0]\bigr)v
\end{equation}
for all $n\geq0$ and $j\in I_r$.

\section[Fusion products of Kirillov--Reshetikhin modules over twisted current algebras]{Fusion products of Kirillov--Reshetikhin modules\\ over twisted current algebras}\label{Section 3}

Our goal in this section is to establish an upper bound on the graded multiplicity $\mathcal{M}_{\overline{ \lambda},\mathbf{n}}(q)$.

\begin{Theorem}\label{3.1}
Let us keep the assumptions as in Theorem {\rm\ref{1.1}}. Then we have
$\smash{
\mathcal{M}_{\overline{\lambda},\mathbf{n}}}\bigl(q^{-1}\bigr)\leq \smash{M_{\overline{\lambda},\mathbf{n}}}\bigl(q^{-1}\bigr)$,
where the inequality refers to an inequality in the respective coefficients of each power of $q$.
\end{Theorem}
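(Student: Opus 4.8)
The plan is to adapt the Ardonne–Kedem strategy \cite{AK07} for computing the graded multiplicities of fusion products of KR-modules to the twisted setting. Concretely, I would start by identifying the fusion product $\mathcal{F}_{\mathbf{n}}^*$ with the associated graded space $\gr\bigl(\bigotimes_{a,i}\KR_{a,i}^{\sigma}(z_{a,i})^{\otimes n_{a,i}}\bigr)$ for generic localization parameters $z_{a,i}$ satisfying $z_{a,i}^\kappa\neq z_{b,j}^\kappa$, using Proposition~\ref{2.2} and Proposition~\ref{2.5}. The multiplicity space $\Hom_{\mathfrak{g}^\sigma}\bigl(\mathcal{F}_{\mathbf{n}}^*,V\bigl(\overline{\lambda}\bigr)\bigr)$ is then dual to a space of ``matrix elements'': functionals obtained by pairing a highest-weight vector for $V\bigl(\overline{\lambda}\bigr)^*$ against products of lowering operators $\overline{f}_a[t_a^\vee n]$ acting on the cyclic vector $v = \bigotimes v_{a,i}$. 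The relations \eqref{eq:2.5}, \eqref{eq:2.6} together with the coproduct structure mean that such a matrix element is determined by a symmetric Laurent polynomial in variables attached to the applied currents, and the constraints $\psi_z(\overline{f}_i[0])^{k+1}v=0$ plus the $\mathfrak{n}_+$-annihilation conditions cut this down to a finite-dimensional space whose graded dimension can be read off combinatorially.

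The next step is to set up the bookkeeping. For each root label $a\in I_r$ one records how many current operators $\overline{f}_a$ are applied (this is the total spin bookkeeping encoded by $q_{a,0}$) and, refining by degree, the numbers $m_{a,i}$ that count operators of a given ``level''. The weight constraint that the result lie in the $\overline{\lambda}$-weight space forces $q_{a,0}=0$; the requirement that the matrix element not be killed by the KR-relations (chiefly the nilpotency relation \eqref{eq:2.5c} at each site, which after passing to symmetric functions becomes a divisibility/Pauli-type exclusion) forces the vacancy numbers $p_{a,i}$ to be nonnegative and bounds the admissible symmetric polynomials. The space of such polynomials for fixed $(\mathbf{m},\mathbf{n})$ has dimension $\prod_{a,i}\binom{m_{a,i}+p_{a,i}}{m_{a,i}}$, and its minimal degree contributes the power $q^{Q(\mathbf{m},\mathbf{n})}$ (the quadratic form $Q$ is exactly the degree of the ``ground-state'' monomial $\prod (t_{a,i}^{(j)}-z)^{\bullet}$ after imposing the exclusion conditions), with the remaining grading organized into the $q_a$-binomial $\begin{bmatrix}m_{a,i}+p_{a,i}\\ m_{a,i}\end{bmatrix}_{q_a}$ because the $a$-th current carries intrinsic degree $t_a^\vee$. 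Summing over all $\mathbf{m}$ yields precisely $M_{\overline{\lambda},\mathbf{n}}\bigl(q^{-1}\bigr)$, and since we have exhibited a spanning set of matrix elements (not necessarily linearly independent) the graded dimension of the true multiplicity space is bounded above, giving $\mathcal{M}_{\overline{\lambda},\mathbf{n}}\bigl(q^{-1}\bigr)\leq M_{\overline{\lambda},\mathbf{n}}\bigl(q^{-1}\bigr)$ coefficientwise.

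I expect the main obstacle to be the twisted adjustments to the Ardonne–Kedem computation. Two points need care. First, the currents $\overline{f}_a[t_a^\vee n]$ for long $\overline{\alpha}_a$ only occur in degrees that are multiples of $\kappa$, so the natural variables attached to these operators live on a sublattice; this is what makes the $q_a = q^{t_a^\vee}$ rescaling appear in the binomials and in $Q$, and one has to check that the degree counting and the exclusion relations are consistent with this rescaling. Second, the commutation relations among the twisted current generators $\overline{e}_{\pm\overline{\alpha}}[n]$ — in particular how $\overline{f}_a[n]$ interacts with $\overline{f}_b[m]$ for $b\sim a$ — differ from the untwisted Serre-type relations by $\kappa$-th root of unity factors $\xi^{-in}$, and one must verify these do not obstruct the reduction of a general matrix element to the symmetric-polynomial normal form, nor change the combinatorial count. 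I would handle this by working site-by-site: each localized KR-module $\KR_{a,i}^{\sigma}(z)$ is the restriction of an untwisted $\KR_{a,i}(z)$, so on a single tensor factor the untwisted analysis of \cite{AK07} applies verbatim, and the only genuinely new input is controlling the interaction between factors, which is governed by the generic-parameter condition $z_{a,i}^\kappa\neq z_{b,j}^\kappa$ exactly as in Proposition~\ref{2.2}. Once the normal form and the exclusion conditions are established, the identification of the dimension count with $M_{\overline{\lambda},\mathbf{n}}$ and of the minimal degree with $Q(\mathbf{m},\mathbf{n})$ is a direct, if lengthy, bookkeeping exercise parallel to the untwisted case.
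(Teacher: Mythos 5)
Your strategy --- dualize $\Hom_{\mathfrak{g}^\sigma}\bigl(\mathcal{F}_{\mathbf{n}}^*,V\bigl(\overline{\lambda}\bigr)\bigr)$ to a space of matrix elements in the twisted currents, filter that space by the data $\mathbf{m}=(m_{a,i})$, and bound each graded piece by a combinatorial count reproducing the $M$-sum --- is exactly the one the paper pursues in Sections~\ref{Section 3.1}--\ref{Section 3.6}, and the bookkeeping you identify (zero-weight giving $q_{a,0}=0$, integrability giving $p_{a,i}\geq0$, binomial $q$-dimension with base $q_a=q^{t_a^\vee}$, minimal degree giving $Q(\mathbf{m},\mathbf{n})$, spanning set giving the upper bound) matches the paper's. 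One claim in your plan is wrong and would mislead the execution, though: the twisted structure is \emph{not} confined to between-factor interactions, so ``the untwisted analysis of \cite{AK07} applies verbatim on a single tensor factor'' does not hold. Even on one factor $\KR_{a_p,i_p}^{\sigma}(z_p)$ with $t_{a_p}^{\vee}=\kappa$, the current $\overline{f}_{a_p}^+\bigl(x\bigr)$ produces poles at all $\kappa$-th conjugates $\xi^m z_p$, and the integrability relation imposes vanishing at those conjugate points as well (see~\eqref{eq:3.37}--\eqref{eq:3.38} and the conditions that follow); and for short simple roots, $\overline{f}_a[n]=\sum_i\xi^{-in}f_{\sigma^i(a)}[n]$ mixes several untwisted root vectors, so the OPE relations must be established for the twisted currents directly (this is the content of Lemmas~\ref{3.2} and~\ref{3.3}, including case analysis by root length and a separate $\kappa=3$ computation). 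These twisted complications ultimately feed into the same combinatorial count --- the conjugate poles and vanishings track the $\bigl(x^{k_{ab}}-y^{k_{ab}}\bigr)$ structure of \eqref{eq:3.18} and \eqref{eq:3.24} --- but this must be verified through the twisted OPE and pole/zero lemmas; it is not inherited factor-by-factor from the untwisted analysis.
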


Our strategy in proving Theorem~\ref{3.1} is largely similar to that employed by Ardonne et al.\ in~\cite{AK07,AKS06} for the untwisted cases, where they gave an upper bound for the graded multiplicities of irreducible $\mathfrak{g}$-modules in a fusion product of KR-modules over $\mathfrak{g}[t]$ in terms of $q$-graded fermionic sums. Before we recall the definitions and results needed to prove Theorem~\ref{3.1}, we will first start by recalling some basic results concerning the space of generating functions of matrix elements corresponding to the fusion products of finite-dimensional cyclic modules over untwisted current algebras in the following subsection, following the treatment given in \cite[Section 3.2]{AK07}, before extending the tools and techniques developed in \cite{AK07,AKS06} to the twisted case.

\subsection{Fusion products of current algebra modules and matrix elements}\label{Section 3.1}

\subsubsection{The untwisted case}\label{Section 3.1.1}

Let us first recall the relation between fusion products of finite-dimensional cyclic $\mathfrak{g}[t]$-modules and the fusion product of $\widehat{\mathfrak{g}}$-modules. To begin, we let $V_1,\dots,V_N$ be graded, finite-dimensional cyclic $\mathfrak{g}[t]$-modules with cyclic vectors $v_1,\dots,v_N$ respectively, where the cyclic vectors $v_1,\dots,v_N$ are also highest weight vectors with respect to the $\mathfrak{g}$-action, and $z_1,\dots,z_N\in \mathbb{C}$ be nonzero localization parameters satisfying $z_i\neq z_j$ for all distinct $i,j\in[1,N]$. For convenience, we let~${V:=V_1(z_1)\otimes\cdots\otimes V_N(z_N)}$.

Next, for any positive integer $k$, we let $\widehat{V}_1^k(z_1),\dots,\widehat{V}_N^k(z_N)$ denote the $\widehat{\mathfrak{g}}$-modules induced from the localized $\mathfrak{g}[t]$-modules $V_1(z_1),\dots,V_N(z_N)$ at level $k$ respectively. The fusion product of the $\widehat{\mathfrak{g}}$-modules $\widehat{V}_1^k(z_1),\dots ,\widehat{V}_N^k(z_N)$, denoted $\widehat{V}_1^k(z_1)\boxtimes \cdots\boxtimes\widehat{V}_N^k(z_N)$ \cite{FJKLM04}, is an integrable $\widehat{\mathfrak{g}}$-module of level $k$ (compared to the usual tensor product \smash{$\widehat{V}_1^k(z_1) \otimes\cdots\otimes\widehat{V}_N^k(z_N)$} of \smash{$\widehat{V}_1^k(z_1),\dots,\widehat{V}_N^k(z_N)$}, which is of level $Nk$), where for any $x\otimes f(t)\in \widehat{\mathfrak{g}}$ and
$
w=w_1\otimes\cdots\otimes w_N\in\widehat{V}_1^k(z_1)\boxtimes\cdots\boxtimes\widehat{V}_N^k(z_N)$,
 the element $x\otimes f(t)$ acts on $w$ by the usual coproduct formula, but similar to the setting of localized $\mathfrak{g}[t]$-modules, the action of $x\otimes f(t)$ on the $i$-th component $w_i$ is given by the expansion in the local parameter $t_{z_i}=t-z_i$. For later convenience, we will write $\widehat{V}$ in lieu of~${\widehat{V}_1^k(z_1) \boxtimes\cdots\boxtimes\widehat{V}_N^k(z_N)}$.

As \smash{$\widehat{V}$} is an integrable $\widehat{\mathfrak{g}}$-module of level $k$, \smash{$\widehat{V}$} is completely reducible, and thus \smash{$\widehat{V}$} admits a~decomposition into irreducible $\widehat{\mathfrak{g}}$-modules of level $k$ (we refer the reader to the Appendix of~\cite{FKLMM01} and the introduction of \cite{FJKLM04} for further details). When $k$ is sufficiently large,\footnote{In the case where the modules $V_1,\dots,V_N$ are graded KR-modules over $\mathfrak{g}[t]$, one can explicitly define a lower bound for the level $k$; we refer the reader to the footnote in \cite[Section 5.1]{AK07} for further details.} it follows that the multiplicity of the $\widehat{\mathfrak{g}}$-module $\widehat{V}_{k,\mu}$ in the fusion product $\widehat{V}$ of $\widehat{\mathfrak{g}}$-modules $\widehat{V}_1^k(z_1),\dots,\widehat{V}_N^k(z_N)$ is equal to the multiplicity of the $\mathfrak{g}$-module $V({\mu})$ in the tensor product $V$ of localized $\mathfrak{g}[t]$-modules~${V_1(z_1),\dots,V_N(z_N)}$ for any dominant weight $\mu$ of $\mathfrak{g}$, that is, we have \cite[equation~(3.6)]{AK07}
\begin{gather}\label{eq:3.2}
\dim\Hom_{\widehat{\mathfrak{g}}}\bigl(\widehat{V},\widehat{V}_{k,\mu}\bigr)=\dim\Hom_{\mathfrak{g}}(V, V(\mu)).
\end{gather}
Next, we will describe the dual space \smash{$\mathcal{C}_{ \mu,\widehat{V}}$} of \smash{$\Hom_{\widehat{\mathfrak{g}}} \bigl(\widehat{V},\widehat{V}_{k,\mu}\bigr)$} in terms of generating functions of matrix elements (here, we suppress the integer $k$ from the notation \smash{$\mathcal{C}_{ \mu,\widehat{V}}$}, as it follows from~\eqref{eq:3.2} that we may take $k$ to be sufficiently large in all of our subsequent calculations). We let $v_{\mu^*}$ be a~lowest weight vector of the irreducible lowest weight $\widehat{\mathfrak{g}}$-module \smash{$\widehat{V}_{k,\mu}^*$}, where \smash{$\widehat{V}_{k,\mu}^*$} is the graded dual of \smash{$\widehat{V}_{k,\mu}$}. Let us define the following generating functions of current generators for any~${i\in[1,m]}$:
\begin{gather*}
e_i^{<0}(z)=\sum_{n=-\infty}^{-1}e_i[n]z^{-n-1},\qquad
f_i(z)=\sum_{n\in\mathbb{Z}}f_i[n]z^{-n-1},\qquad
h_i^{<0}(z)=\sum_{n=-\infty}^{-1}\alpha_i^{\vee}[n]z^{-n-1}.
\end{gather*}
Next, we observe that the $\mathfrak{g}[t]$-module $V$ is generated by the action of $U(\mathfrak{n}_-[t])$ on the cyclic vector $v=v_1 \otimes\cdots\otimes v_N$, as the cyclic vectors $v_1,\dots,v_N$ of $V_1(z_1),\dots,V_N(z_N)$ respectively are highest weight vectors with respect to the $\mathfrak{g}$-action. Together with the Poincar\'e--Birkhoff--Witt theorem and the triangular decomposition $\widehat{\mathfrak{g}}= \widehat{\mathfrak{n}}_-\oplus\widehat{\mathfrak{h}}\oplus \widehat{\mathfrak{n}}_+$, it follows that we have
\begin{gather*}
\widehat{V}=U\bigl(\mathfrak{n}_+\otimes t^{-1}\mathbb{C}\bigl[t^{-1}\bigr]\bigr)U\bigl(\mathfrak{h}\otimes t^{-1}\mathbb{C}\bigl[t^{-1}\bigr]\bigr)U\bigl(\mathfrak{n}_-\bigl[t^{\pm1}\bigr]\bigr)v.
\end{gather*}
As the Borel subalgebras $\mathfrak{n}_+$ and $\mathfrak{n}_-$ are generated by $e_1,\dots,e_m$ and $f_1,\dots,f_m$, respectively, it follows that $\mathcal{C}_{ \mu,\widehat{V}}$ consists of generating functions of matrix elements of the following form:
\begin{equation}
\big\langle v_{\mu^*}\big|e_{j_1}^{<0}(y_1)\cdots e_{j_{\ell}}^{<0}(y_{\ell})h_{k_1}^{<0}(u_1)\cdots h_{k_p}^{<0}(u_p)f_{i_1}(x_1)\cdots f_{i_n}(x_n)\big|v\big\rangle,\label{eq:3.3}
\end{equation}
where $\ell,p,n\in\mathbb{Z}_+$, $j_1,\dots,j_{\ell},k_1, \dots,k_p,i_1,\dots,i_n\in[1,m]$, and $y_1,\dots,y_{\ell}, u_1,\dots,u_p,x_1,\dots,x_n$ are formal variables. Moreover, as $v_{\mu^*}$ is a lowest weight vector of \smash{$\widehat{V}_{k,\mu}^*$}, it follows that we have~${\widehat{\mathfrak{n}}_-\cdot v_{\mu^*}=0}$. In particular, we have $e_i[n]\cdot v_{\mu^*}=0=\alpha_i^{\vee}[n]\cdot v_{\mu^*}$ for all $i\in[1,m]$ and $n\in\mathbb{Z}_{<0}$. Thus, it follows from \eqref{eq:3.3} that the space \smash{$\mathcal{C}_{\mu,\widehat{V}}$} only contains generating functions of matrix elements of the following form
$
\langle v_{\mu^*}\big|f_{i_1}(x_1)\cdots f_{i_n}(x_n)\big|v\rangle$,
where $n\in\mathbb{Z}_+$, $i_1,\dots,i_n\in[1,m]$, and~${x_1, \dots,x_n}$ are formal variables, which implies that \smash{$\mathcal{C}_{\mu,\widehat{V}}$} consists of polynomials in the formal variables $x_1, \dots,x_n$. Subsequently, the graded multiplicity of $V({\mu})$ in $V$ is then equal to the graded dimension of the associated graded space \smash{$\gr\mathcal{C}_{\mu,\widehat{V}}$}, where the filtration on \smash{$\mathcal{C}_{\mu,\widehat{V}}$} is inherited from the $\mathbb{Z}$-filtration on the universal enveloping algebra $U\bigl(\mathfrak{n}_-\bigl[t^{\pm1}\bigr]\bigr)$ of $\mathfrak{n}_-\bigl[t^{\pm1}\bigr]$.

\subsubsection{The twisted case}\label{Section 3.1.2}

The approach described above in expressing the graded multiplicity in the fusion product of localized current algebra modules in terms of generating functions of matrix elements for the untwisted case can be extended to the twisted case as well, which we will describe below the fold. Let us keep the notations as above, with the further assumption that we have \smash{$z_i^{\kappa}\neq z_j^{\kappa}$} for all distinct $i,j\in[1,N]$. We recall that for any dominant weights $\mu$ and $\overline{\lambda}$ of $\mathfrak{g}$ and $\mathfrak{g}^{\sigma}$ respectively, the multiplicity of the $\widehat{\mathfrak{g}}^{\sigma}$-module \smash{$\widehat{V}_{\kappa k,\overline{\lambda}}$} in $\widehat{V}_{k,\mu}$ (regarded as a $\widehat{\mathfrak{g}}^{\sigma}$-module of level~$\kappa k$ via restriction, as the central element $\overline{K}$ of $\widehat{\mathfrak{g}}^{\sigma}$ is related to the central element $K$ of $\widehat{\mathfrak{g}}$ by~${\overline{K}= \kappa K}$) is equal to the multiplicity of the $\mathfrak{g}^{\sigma}$-module $V\bigl(\overline{\lambda}\bigr)$ in $V(\mu)$, that is, we have
\begin{equation}\label{eq:3.5}
\dim\Hom_{\widehat{\mathfrak{g}}^{\sigma}}\bigl(\widehat{V}_{k,\mu},\widehat{V}_{\kappa k,\overline{\lambda}}\bigr)=\dim \Hom_{\mathfrak{g}^{\sigma}}\bigl(V(\mu),V\bigl(\overline{\lambda}\bigr)\bigr).
\end{equation}
We claim that when $k$ is sufficiently large, the multiplicity of the $\widehat{\mathfrak{g}}^{\sigma}$-module \smash{$\widehat{V}_{\kappa k,\overline{\lambda}}$} in $\widehat{V}$ is equal to the multiplicity of the $\mathfrak{g}^{\sigma}$-module $V\bigl(\overline{\lambda}\bigr)$ in $V$, that is, we have
\begin{equation*}
\dim\Hom_{\widehat{\mathfrak{g}}^{\sigma}}\bigl(\widehat{V},\widehat{V}_{\kappa k,\overline{\lambda}}\bigr)=\dim \Hom_{\mathfrak{g}^{\sigma}}\bigl(V, V\bigl(\overline{\lambda}\bigr)\bigr).
\end{equation*}
Indeed, as $V$ is finite-dimensional, there are only finitely many dominant $\mathfrak{g}$-weights $\mu$ for which~${\dim\Hom_{\mathfrak{g}}(V, V(\mu))\neq 0}$. Thus, by equations \eqref{eq:3.2} and \eqref{eq:3.5}, we have
\begin{align*}
\dim \Hom_{\mathfrak{g}^{\sigma}}\bigl(V, V\bigl(\overline{\lambda}\bigr)\bigr)
&=\sum_{\mu\in P^+}\dim\Hom_{\mathfrak{g}}(V, V(\mu))\dim \Hom_{\mathfrak{g}^{\sigma}}(V(\mu), V\bigl(\overline{\lambda}\bigr))\\
&=\sum_{\mu\in P^+}\dim\Hom_{\widehat{\mathfrak{g}}}\bigl(\widehat{V},\widehat{V}_{k,\mu}\bigr)\dim\Hom_{\widehat{\mathfrak{g}}^{\sigma}}\bigl(\widehat{V}_{k,\mu},\widehat{V}_{\kappa k,\overline{\lambda}}\bigr)\\
&=\dim\Hom_{\widehat{\mathfrak{g}}^{\sigma}}\bigl(\widehat{V},\widehat{V}_{\kappa k,\overline{\lambda}}\bigr).
\end{align*}
Similar to the untwisted case, the dual space \smash{$\overline{\mathcal{C}}_{\overline{\lambda},\widehat{V}}$} of \smash{$\Hom_{\widehat{\mathfrak{g}}^{\sigma}}\bigl(\widehat{V},\widehat{V}_{\kappa k,\overline{\lambda}}\bigr)$} can be described in terms of generating functions of matrix elements as well. We let \smash{$v_{\overline{\lambda}^*}$} be a lowest weight vector of $\widehat{\mathfrak{g}}^{\sigma}$-module \smash{$\widehat{V}_{\kappa k,\overline{\lambda}}^*$}, where \smash{$\widehat{V}_{\kappa k,\overline{\lambda}}^*$} is the graded dual of \smash{$\widehat{V}_{\kappa k,\overline{\lambda}}$}. Let us define the following generating functions of current generators for any $i\in I_r$:
\begin{equation*}
\overline{f}_i(z)=
\sum_{n\in\mathbb{Z}}\overline{f}_i[t_i^{\vee}n]z^{-t_i^{\vee}(n+1)}.
\end{equation*}
Next, we observe that the $\mathfrak{g}[t]^{\sigma}$-module $V$ is generated by the action of $U(\mathfrak{n}_-[t]^{\sigma})$ on the cyclic vector $v=v_1 \otimes\cdots\otimes v_N$, as the cyclic vectors $v_1,\dots,v_N$ of $V_1(z_1),\dots,V_N(z_N)$ respectively are highest weight vectors with respect to the $\mathfrak{g}^{\sigma}$-action. As the twisted loop algebra $\mathfrak{n}_-\bigl[t^{\pm1}\bigr]^{\sigma}$ is generated by the coefficients of \smash{$\overline{f}_1(x_1),\dots, \overline{f}_r(x_r)$}, it follows from a similar argument as in the untwisted case that $\overline{\mathcal{C}}_{ \overline{\lambda},\widehat{V}}$ consists of generating functions of matrix elements of the following form:
\begin{equation}\label{eq:3.8}
\big\langle v_{\overline{\lambda}^*}\big|\overline{f}_{i_1}(x_1)\cdots \overline{f}_{i_n}(x_n)\big|v\big\rangle,
\end{equation}
where $n\in\mathbb{Z}_+$, $i_1,\dots,i_n\in I_r$, and $x_1, \dots,x_n$ are formal variables. The graded multiplicity of~$V\bigl(\overline{\lambda}\bigr)$ in $V$ is then equal to the graded dimension of the associated graded space \smash{$\gr\overline{ \mathcal{C}}_{\overline{\lambda},\widehat{V}}$}, where the filtration on \smash{$\overline{\mathcal{C}}_{\overline{\lambda}, \widehat{V}}$} is inherited from the $\mathbb{Z}$-filtration on $U\bigl(\mathfrak{n}_-\bigl[t^{\pm1}\bigr]^{\sigma}\bigr)$.

The remainder of this section is devoted to describing the structure of the dual space \smash{$\overline{\mathcal{C}} _{\overline{\lambda},\widehat{V}}$} in the case where $V_1,\dots,V_N$ are graded KR-modules over $\mathfrak{g}[t]$, in which case the associated graded space $\gr V$ of $V$ (with respect to the $\mathbb{Z}_+$-filtration on $U\bigl(\mathfrak{g}\bigl[t^{\pm1}\bigr]^{\sigma}\bigr)$) is given by the fusion product of localized twisted KR-modules. In this case, we let $n_{a,i}$ denote the number of graded $\mathfrak{g}[t]$-modules $V_j$ that are isomorphic to the graded KR-module $\KR_{a,i}$ over $\mathfrak{g}[t]$ for all $a\in I_r$ and~${i\in\mathbb{N}}$, and we will write $\overline{\mathcal{C}}_{\overline{\lambda}, \mathbf{n}}$ in lieu of $\overline{\mathcal{C}}_ {\overline{\lambda},\widehat{V}}$, where $\mathbf{n}=(n_{a,i})_{a\in I_r,i\in\mathbb{N}}$.

The approach that we will take in describing the structure of \smash{$\overline{\mathcal{C}}_{\overline{\lambda},\mathbf{n}}$} in the subsequent subsections will follow that of \cite{AKS06} and \cite{AK07} for the \smash{$A_r^{(1)}$} and the general untwisted cases respectively, which we will describe briefly here. We will first describe the dual space $\mathcal{U}$ of functions to the universal enveloping algebra $U:=U\bigl(\mathfrak{n}_-\bigl[t^{\pm1}\bigr]^{\sigma}\bigr)$ of the twisted loop algebra $\mathfrak{n}_-\bigl[t^{\pm1}\bigr]^{\sigma}$, and introduce a filtration on $\mathcal{U}$. We will then specialize to the
subspace $\overline{\mathcal{C}}_{\overline{\lambda}, \mathbf{n}}$ of $\mathcal{U}$ and its corresponding filtration, from which we will derive Theorem~\ref{3.1}.

\subsection[The dual space of functions to the universal enveloping algebra of the twisted loop algebra]{The dual space of functions to the universal enveloping algebra\\ of the twisted loop algebra}\label{Section 3.2}

Let us take any $\overline{\alpha}\in\overline{\Delta}^+$. When $\overline{\alpha}\in\overline{\Delta}_{\ell}^+$, we define the following generating function $\overline{f}_{\overline{\alpha}}(z)$ of elements in $\mathfrak{n}_-\bigl[t^{\pm1}\bigr]^{\sigma}$ as follows:
\begin{equation*}
\overline{f}_{\overline{\alpha}}(z)=
\sum_{n\in\mathbb{Z}}\overline{e}_{-\overline{\alpha}}[\kappa n]z^{-\kappa(n+1)}.
\end{equation*}
When $\overline{\alpha}\in\overline{\Delta}_s^+$, we define the following generating functions $\overline{f}_{\overline{\alpha},j}(z)$ and $\overline{f}_{\overline{\alpha}}(z)$ of elements in~$\mathfrak{n}_-\bigl[t^{\pm1}\bigr]^{\sigma}$ for all $j\in[0,\kappa-1]$ as follows:{\samepage
\begin{align*}
\overline{f}_{\overline{\alpha},j}(z)
=\sum_{n\in\mathbb{Z}}\overline{e}_{-\overline{\alpha}}[\kappa n+j]z^{-\kappa n-j-1},\qquad
\overline{f}_{\overline{\alpha}}(z)
=\sum_{j=0}^{\kappa-1}\overline{f}_{\overline{\alpha},j}(z)=\sum_{n\in\mathbb{Z}}\overline{e}_{-\overline{\alpha}}[n]z^{-n-1}.
\end{align*}
In particular, we have $\overline{f}_i(z)= \overline{f}_{\overline{\alpha}_i}(z)$ for all $i\in I_r$.}

Similar to the untwisted case, the generating currents $\overline{f}_i(z)$, $i\in I_r$ (or equivalently, $\overline{f}_{\overline{\alpha}}(z)$, $\overline{\alpha}\in \overline{\Pi}$) satisfy two types of operator product expansion (OPE) relations. We will first describe the first type of OPE relations, which arise as a result of the commutation relations between the generating currents of $\mathfrak{n}_-\bigl[t^{\pm1}\bigr]^{\sigma}$.

\begin{Lemma}\label{3.2}
Let $\overline{\alpha},\overline{\beta}\in\overline{\Pi}$ be simple roots that satisfy $\overline{\alpha}+\overline{\beta} \in\overline{\Delta}$. Then up to a sign, the generating currents $\overline{f}_{\overline{\alpha}}(w)$ and $\overline{f}_{\overline{\beta}}(z)$ satisfy the following OPE relation:
\begin{gather}
\overline{f}_{\overline{\alpha}}(w) \overline{f}_{\overline{\beta}}(z)=
\begin{cases}
\dfrac{\overline{f}_{\overline{\alpha}+\overline{\beta}}(z)}{w^{\kappa}-z^{\kappa}}+\text{regular terms} & \text{ if }\overline{\alpha}\in \overline{\Delta}_{\ell},\vspace{1mm}\\
\dfrac{\overline{f}_{\overline{\alpha}+\overline{\beta}}(z)}{w-z}+\text{regular terms} & \text{ if } \overline{\alpha},\overline{\beta}\in\overline{\Delta}_s,\vspace{1mm}\\
\displaystyle \dfrac{1}{w^{\kappa}-z^{\kappa}}\sum_{j=0}^{\kappa-1}\left(\frac{w}{z}\right)^{\kappa-1-j} \overline{f}_{\overline{\alpha}
+\overline{\beta},j}(z)&\\
\qquad {}+\text{regular terms} & \text{ if }\overline{\alpha}\in \overline{\Delta}_s\text{ and }\overline{\beta}\in \overline{\Delta}_{\ell}.
\end{cases}\label{eq:3.12}
\end{gather}
Here, ``regular terms'' refer to terms which have no pole at $w=z$, and the expansion of the denominator is taken in the region $|w|>|z|$.
\end{Lemma}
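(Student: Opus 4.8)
The plan is to compute the OPE relation directly from the commutation relations in $\mathfrak{n}_-[t^{\pm1}]^{\sigma}$, working case by case according to whether the simple roots $\overline{\alpha},\overline{\beta}$ are long or short. The starting point is the bracket relation in the untwisted loop algebra $\mathfrak{g}[t^{\pm1}]$: for a Chevalley basis, $[e_{-\alpha}\otimes t^m, e_{-\beta}\otimes t^n] = N_{-\alpha,-\beta}\, e_{-\alpha-\beta}\otimes t^{m+n}$ (up to the structure constant $N_{-\alpha,-\beta}=\pm(p+1)$, which accounts for the ``up to a sign'' in the statement), and this descends to the $\sigma$-fixed subalgebra once the twisted basis elements $\overline{e}_{-\overline{\alpha}}[n]$ from Section \ref{Section 2.4} are substituted in. The key combinatorial input is the formula $\sum_{n\in\mathbb{Z}} x^n = \delta(x)$ formally, together with the geometric-series expansion $\frac{1}{w-z} = \sum_{k\geq 0} z^k w^{-k-1}$ in the region $|w|>|z|$ (and its $\kappa$-th power analogue $\frac{1}{w^\kappa - z^\kappa}$), so that singular terms in the product of two generating currents collect into exactly these rational prefactors.

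First I would treat the case $\overline{\alpha}\in\overline{\Delta}_\ell$ (so $t_{\overline{\alpha}}^\vee = \kappa$, and $\overline{e}_{-\overline{\alpha}}[\kappa m] = e_{-\alpha}\otimes t^{\kappa m}$). Writing out $\overline{f}_{\overline{\alpha}}(w)\overline{f}_{\overline{\beta}}(z)$ as a double sum, the singular part comes from the commutator term; since $\overline{\alpha}+\overline{\beta}$ is then a long root (the sum of a long root and an adjacent root in these Dynkin types, one checks case by case over $C_r, B_r, F_4, G_2$), one gets $\overline{e}_{-\overline{\alpha}-\overline{\beta}}[\kappa(m+n)] = e_{-\alpha-\beta}\otimes t^{\kappa(m+n)}$, and resumming over the ``diagonal'' $\kappa m + \kappa n = $ fixed produces the prefactor $\frac{1}{w^\kappa - z^\kappa}$ times $\overline{f}_{\overline{\alpha}+\overline{\beta}}(z)$. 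The case $\overline{\alpha},\overline{\beta}\in\overline{\Delta}_s$ is analogous but with $t^\vee = 1$ throughout, so the sums run over all of $\mathbb{Z}$ with step $1$ and the prefactor is the ordinary $\frac{1}{w-z}$; here one must be slightly careful because $\overline{e}_{-\overline{\alpha}}[n] = \sum_{i=0}^{\kappa-1}\xi^{-in} e_{-\sigma^i(\alpha)}\otimes t^n$ is a sum over the orbit, so the commutator produces a sum $\sum_{i,i'}\xi^{-in}\xi^{-i'n'}[e_{-\sigma^i(\alpha)}, e_{-\sigma^{i'}(\beta)}]\otimes t^{n+n'}$; only the ``matched'' terms $i = i'$ survive to give a root vector for $\overline{\alpha}+\overline{\beta}$ (the cross terms either vanish because $\sigma^i(\alpha)+\sigma^{i'}(\beta)$ is not a root, or reorganize into the twisted basis element), and after the $\xi$-bookkeeping one recovers exactly $\overline{e}_{-\overline{\alpha}-\overline{\beta}}[n+n']$.

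The genuinely delicate case — and the one I expect to be the main obstacle — is the mixed case $\overline{\alpha}\in\overline{\Delta}_s$, $\overline{\beta}\in\overline{\Delta}_\ell$. Here $\overline{f}_{\overline{\alpha}}(w)$ is expanded in unit steps while $\overline{f}_{\overline{\beta}}(z)$ only carries powers $z^{-\kappa n - 1}$ (wait: it carries $z^{-\kappa(n+1)}$), so the two gradings are incommensurate and the singular part no longer collapses to a single current; instead, grouping the residual sum over $w$-powers modulo $\kappa$ produces the $\kappa$ distinct ``components'' $\overline{f}_{\overline{\alpha}+\overline{\beta},j}(z)$ with coefficients $(w/z)^{\kappa-1-j}$, all sitting over the common denominator $w^\kappa - z^\kappa$. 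Establishing this cleanly requires carefully tracking which residue class mod $\kappa$ each term $\overline{e}_{-\overline{\alpha}-\overline{\beta}}[\kappa n + j]$ lands in as a function of the summation index of $\overline{f}_{\overline{\alpha}}(w)$, and confirming that $\overline{\alpha}+\overline{\beta}$ is indeed short (so that the $j$-indexed currents $\overline{f}_{\overline{\alpha}+\overline{\beta},j}$ are defined); I would verify this last point and the nonvanishing of the relevant structure constant by a direct inspection of each twisted type $A_{2r-1}^{(2)}, D_{r+1}^{(2)}, E_6^{(2)}, D_4^{(3)}$. The regular terms in every case are precisely the contributions of the ``normal-ordered'' part $\sum_{m+n \text{ with no pole}}$, which by construction have no pole at $w=z$, so nothing further needs to be said about them beyond this observation.
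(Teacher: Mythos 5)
Your overall strategy---expanding the product of generating currents, extracting the singular part from the commutator in the twisted Chevalley basis, and resumming into a pole times a current---is the same as the paper's case-by-case computation, and your handling of the short--short case and of the delicate mixed case ($\overline{\alpha}$ short, $\overline{\beta}$ long) is sound in outline.

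There is, however, a genuine gap in your treatment of the first case. You assert that if $\overline{\alpha}\in\overline{\Delta}_{\ell}$ then $\overline{\alpha}+\overline{\beta}$ is again a long root, and you then write $\overline{e}_{-(\overline{\alpha}+\overline{\beta})}[\kappa(m+n)] = e_{-\alpha-\beta}\otimes t^{\kappa(m+n)}$, as if both indices in the commutator come in multiples of $\kappa$. This is false whenever $\overline{\beta}\in\overline{\Delta}_s$: for example $\overline{\alpha}_{r-1}+\overline{\alpha}_{r}=e_{r-1}$ is short in $B_r$, $\overline{\alpha}_{r-1}+\overline{\alpha}_{r}=e_{r-1}+e_r$ is short in $C_r$, $\overline{\alpha}_2+\overline{\alpha}_3=e_3$ is short in $F_4$, and $\overline{\alpha}_1+\overline{\alpha}_2$ is short in $G_2$---the case-by-case check you parenthetically invoked would have turned up the opposite of what you claimed. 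Since the first branch of \eqref{eq:3.12} covers both $\overline{\beta}\in\overline{\Delta}_{\ell}$ and $\overline{\beta}\in\overline{\Delta}_s$, your argument as written establishes only the both-long subcase. In the long--short subcase the commutator carries index $\kappa m+n$ with $n$ ranging over all of $\mathbb{Z}$, and $\overline{\alpha}+\overline{\beta}$ is short, so $\overline{f}_{\overline{\alpha}+\overline{\beta}}(z)$ has all integer powers; the resummation still produces $\overline{f}_{\overline{\alpha}+\overline{\beta}}(z)/(w^{\kappa}-z^{\kappa})$, but via a different bookkeeping, which the paper carries out as a separate computation. You need to add this subcase (and correct the wrong claim about root lengths); with that fix the proposal matches the paper's proof.
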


\begin{proof}
Let us first show that \eqref{eq:3.12} holds when $\overline{\alpha},\overline{\beta}\in\overline{\Delta}_{\ell}$. In this case, we have $\overline{\alpha}+\overline{\beta}\in \overline{\Delta}_{\ell}$, and up to a sign, we have
\begin{align*}
\bigl[\overline{f}_{\overline{\alpha}}(w), \overline{f}_{\overline{\beta}}(z)\bigr]
&=\sum_{m,n\in\mathbb{Z}}[\overline{e}_{-\overline{\alpha}}[\kappa m],\overline{e}_{-\overline{\beta}}[\kappa n]]w^{-\kappa(m+1)}z^{-\kappa(n+1)}\\
&=\sum_{m,n\in\mathbb{Z}}\overline{e}_{-\left( \overline{\alpha}+\overline{\beta}\right)}[\kappa(m+n)]w^{-\kappa(m+1)}z^{-\kappa (n+1)}\\
&=\frac{1}{w^{\kappa}}\sum_{m,j\in\mathbb{Z}}\overline{e}_{-\left(\overline{\alpha}+\overline{\beta}\right)}[\kappa j]z^{-\kappa (j+1)}\left(\frac{z^{\kappa}}{w^{\kappa}}\right)^m\\
&=\frac{1}{w^{\kappa}}\delta(z^{\kappa}/w^{\kappa}) \overline{f}_{\overline{\alpha}+ \overline{\beta}}(z),
\end{align*}
where $\delta(z)=\sum_{m\in\mathbb{Z}}z^m$. This implies that \eqref{eq:3.12} holds when $\overline{\alpha}, \overline{\beta}\in\overline{\Delta}_{\ell}$. By a similar argument as above, it follows that \eqref{eq:3.12} holds when $\overline{\alpha},\overline{\beta}\in \overline{\Delta}_s$.

Next, let us show that \eqref{eq:3.12} holds when $\overline{\alpha}\in\overline{\Delta}_{\ell}$ and $\overline{\beta}\in\overline{\Delta}_s$. In this case, we have $\overline{\alpha}+\overline{\beta}\in \overline{\Delta}_s$, and up to a sign, we have
\begin{align*}
\bigl[\overline{f}_{\overline{\alpha}}(w), \overline{f}_{\overline{\beta}}(z)\bigr]
&=\sum_{m,n\in\mathbb{Z}}[\overline{e}_{-\overline{\alpha}}[\kappa m],\overline{e}_{-\overline{\beta}}[n]]w^{-\kappa(m+1)}z^{-n-1}\\
&=\sum_{m,n\in\mathbb{Z}}\overline{e}_{-\left( \overline{\alpha}+\overline{\beta}\right)}[\kappa m+n]w^{-\kappa(m+1)}z^{-n-1}\\
&=\frac{1}{w^{\kappa}}\sum_{m,j\in\mathbb{Z}}\overline{e}_{-\left(\overline{\alpha}+\overline{\beta}\right)}[j]z^{-j-1}\left(\frac{z^{\kappa}}{w^{\kappa}}\right)^m\\
&=\frac{1}{w^{\kappa}}\delta(z^{\kappa}/w^{\kappa}) \overline{f}_{\overline{\alpha}+ \overline{\beta}}(z).
\end{align*}
Consequently, it follows that \eqref{eq:3.12} holds when $\overline{\alpha}\in\overline{\Delta}_{\ell}$ and $\overline{\beta}\in\overline{\Delta}_s$.

Finally, let us show that \eqref{eq:3.12} holds when $\overline{\alpha}\in\overline{\Delta}_s$ and $\overline{\beta}\in\overline{\Delta}_{\ell}$. Then up to a sign, we have
\begin{align*}
\bigl[\overline{f}_{\overline{\alpha}}(w), \overline{f}_{\overline{\beta}}(z)\bigr]
&=\sum_{m,n\in\mathbb{Z}}[\overline{e}_{-\overline{\alpha}}[m],\overline{e}_{-\overline{\beta}}[\kappa n]]w^{-m-1}z^{-\kappa(n+1)}\\
&=\sum_{k=0}^{\kappa-1}\sum_{m,n\in\mathbb{Z}}[\overline{e}_{-\overline{\alpha}}[\kappa m+k],\overline{e}_{-\overline{\beta}}[\kappa n]]w^{-\kappa m-k-1}z^{-\kappa(n+1)}\\
&=\sum_{k=0}^{\kappa-1}\sum_{m,n\in\mathbb{Z}}\overline{e}_{-\left( \overline{\alpha}+\overline{\beta}\right)}[\kappa(m+n)+k]w^{-\kappa m-k-1}z^{-\kappa(n+1)}\\
&=\frac{1}{w^{\kappa}}\sum_{k=0}^{\kappa-1}\left(\frac{w}{z}\right)^{\kappa-1-k}\sum_{m,j\in\mathbb{Z}}\overline{e}_{-\left(\overline{\alpha}+\overline{\beta} \right)}[\kappa j+k] z^{-\kappa j-k-1}\left(\frac{z^{\kappa}}{w^{\kappa}}\right)^m\\
&=\frac{1}{w^{\kappa}}\delta(z^{\kappa}/w^{\kappa}) \sum_{k=0}^{\kappa-1}\left(\frac{w}{z}\right)^{\kappa-1-k} \overline{f}_{\overline{\alpha}+\overline{\beta},k}(z).
\end{align*}
Consequently, it follows that \eqref{eq:3.12} holds when $\overline{\alpha}\in\overline{\Delta}_s$ and $\overline{\beta}\in\overline{\Delta}_{\ell}$, and this completes the proof.\looseness=1
\end{proof}

Next, we will describe the second type of OPE relations, which arise mostly as a result of the Serre relations for $\mathfrak{g}^{\sigma}$.

\begin{Lemma}\label{3.3}
Let $\overline{\alpha},\overline{\beta}\in\overline{\Pi}$ be simple roots that satisfy $\overline{\alpha}+\overline{\beta} \in\overline{\Delta}$. Then we have the following OPE relation:
\begin{equation}\label{eq:3.13}
(w_1-z)(w_2-z)\overline{f}_{\overline{\alpha}}(w_1) \overline{f}_{\overline{\alpha}}(w_2) \overline{f}_{\overline{\beta}}(z)\big|_{w_1=w_2=z}=0.
\end{equation}
\end{Lemma}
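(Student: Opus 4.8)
The plan is to follow the strategy of Ardonne--Kedem \cite{AK07} for the untwisted case: reduce \eqref{eq:3.13}, by means of Lemma~\ref{3.2}, to the assertion that the currents $\overline f_{\overline\alpha}(w)$ and $\overline f_{\overline\alpha+\overline\beta}(z)$ have no contraction along $w=z$, and then verify this by a direct computation inside $U\bigl(\mathfrak n_-\bigl[t^{\pm1}\bigr]\bigr)\supseteq U\bigl(\mathfrak n_-\bigl[t^{\pm1}\bigr]^\sigma\bigr)$ that exploits the simple-lacedness of $\mathfrak g$. Two facts will be used repeatedly: (i) the roots in a single $\sigma$-orbit of a simple root of $\mathfrak g$ are pairwise non-adjacent (this is exactly what makes $\sum_k\alpha_{\overline\sigma^k(j)}^\vee$ a coroot of $\mathfrak g^\sigma$), so $[\overline e_{-\overline\alpha}[m],\overline e_{-\overline\alpha}[n]]=0$ and hence $\overline f_{\overline\alpha}(w_1)$ and $\overline f_{\overline\alpha}(w_2)$ commute; and (ii) $\mathfrak g$ is simply laced, so $2\gamma+\delta\notin\Delta$ for any two distinct simple roots $\gamma,\delta$ of $\mathfrak g$ (and, for the relevant foldings, $\alpha+\beta\in\Delta$ for the simple lifts $\alpha,\beta$ of $\overline\alpha,\overline\beta$, a case-by-case check).

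First I would establish the reduction. Multiplying each of the three cases of Lemma~\ref{3.2} by $(w-z)$, one sees that $\overline f_{\overline\alpha}(w)\overline f_{\overline\beta}(z)$ has only a simple pole along $w=z$ and that $(w-z)\,\overline f_{\overline\alpha}(w)\overline f_{\overline\beta}(z)\big|_{w=z}=\lambda\,\overline f_{\overline\alpha+\overline\beta}(z)$ for a nonzero scalar $\lambda$ (equal to $\pm1$ when $\overline\alpha,\overline\beta\in\overline\Delta_s$, and to $\pm(\kappa z^{\kappa-1})^{-1}$ otherwise). Since $\overline f_{\overline\alpha}(w_1)$ is independent of $w_2$ and commutes with $\overline f_{\overline\alpha}(w_2)$, and since $(w_2-z)$ is a scalar, the factor $(w_2-z)$ removes the only $w_2$-singularity of $\overline f_{\overline\alpha}(w_1)\overline f_{\overline\alpha}(w_2)\overline f_{\overline\beta}(z)$ near $w_2=z$, and evaluating at $w_2=z$ gives
\[
(w_1-z)(w_2-z)\,\overline f_{\overline\alpha}(w_1)\overline f_{\overline\alpha}(w_2)\overline f_{\overline\beta}(z)\big|_{w_2=z}=\lambda\,(w_1-z)\,\overline f_{\overline\alpha}(w_1)\,\overline f_{\overline\alpha+\overline\beta}(z).
\]
Thus \eqref{eq:3.13} holds as soon as $\overline f_{\overline\alpha}(w_1)\overline f_{\overline\alpha+\overline\beta}(z)$ is regular at $w_1=z$: then the right-hand side is $(w_1-z)$ times a quantity finite at $w_1=z$, hence vanishes there (and the full expression is regular at the point $w_1=w_2=z$, so the order of specialization is immaterial).

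Finally I would prove that regularity by expanding $\bigl[\overline f_{\overline\alpha}(w_1),\overline f_{\overline\alpha+\overline\beta}(z)\bigr]$ into its defining sum over the untwisted generators $e_{-\sigma^i(\alpha)}\otimes t^a$ and $e_{-\sigma^j(\alpha+\beta)}\otimes t^b$, weighted by $\xi^{-ia}$ and $\xi^{-jb}$. If $2\overline\alpha+\overline\beta\notin\overline\Delta$---equivalently, $\overline\alpha$ and $\overline\beta$ have the same length, or $\overline\alpha$ is long---then every bracket $\bigl[e_{-\sigma^i(\alpha)},e_{-\sigma^j(\alpha+\beta)}\bigr]$ vanishes, since a nonzero value would be a root restricting to $2\overline\alpha+\overline\beta$; so $\overline f_{\overline\alpha}(w_1)$ and $\overline f_{\overline\alpha+\overline\beta}(z)$ commute and we are done. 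Otherwise $2\overline\alpha+\overline\beta\in\overline\Delta$, which forces $\overline\alpha$ short, $\overline\beta$ long, and $\overline\alpha+\overline\beta$ short; in that case the bracket $\bigl[e_{-\sigma^i(\alpha)},e_{-\sigma^j(\alpha+\beta)}\bigr]$ can be nonzero, but only for $i\neq j$, since for $i=j$ it would be a root vector for $\sigma^i(2\alpha+\beta)\notin\Delta$ by fact (ii), hence zero. For each pair $i\neq j$, summing the corresponding terms over $a,b$ with $a+b$ fixed yields a formal delta $\delta\bigl(\xi^{\,i-j}w_1/z\bigr)$, supported at $w_1=\xi^{-(i-j)}z$; and since $i\not\equiv j\pmod\kappa$ and $\xi$ is a primitive $\kappa$-th root of unity, this is a pole at $w_1=\xi^\ell z$ with $\ell\not\equiv0\pmod\kappa$, away from $w_1=z$. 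Hence $\overline f_{\overline\alpha}(w_1)\overline f_{\overline\alpha+\overline\beta}(z)$ is regular at $w_1=z$ and \eqref{eq:3.13} follows. The step I expect to be most delicate is the bookkeeping: carrying the powers of $\xi$ correctly through the passage between twisted currents and untwisted generators, checking in each twisted type $A_{2r-1}^{(2)}$, $D_{r+1}^{(2)}$, $E_6^{(2)}$, $D_4^{(3)}$ that the $\sigma$-orbits of simple roots and the lifts of $\overline\alpha+\overline\beta$ behave as claimed, and handling with care the formal-distribution meaning of ``pole'', ``regular at $w=z$'' and the specialization $\big|_{w_1=w_2=z}$, exactly as in the proof of Lemma~\ref{3.2}.
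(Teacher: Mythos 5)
Your proof is correct, and it takes a genuinely different route from the paper. The paper's proof splits into three cases: when $2\overline{\alpha}+\overline{\beta}\notin\overline{\Delta}$ (i.e., $\overline{\alpha}$ long or both short), it observes the vanishing of the double bracket $\bigl[\overline{f}_{\overline{\alpha}}(w_1),\bigl[\overline{f}_{\overline{\alpha}}(w_2),\overline{f}_{\overline{\beta}}(z)\bigr]\bigr]$ and combines with the regularity from Lemma~\ref{3.2}; when $\overline{\alpha}$ is short and $\overline{\beta}$ long, it carries out two separate explicit OPE computations for $\kappa=2$ (yielding \eqref{eq:3.14}--\eqref{eq:3.15}) and $\kappa=3$ (yielding \eqref{eq:3.16}--\eqref{eq:3.17}), and reads off the vanishing at $w_1=w_2=z$ directly from the factors of $(w_2-w_1)$ and $(w_2-w_1)^2$ present in the numerators of the singular parts. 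Your argument instead reduces uniformly: first peel off $w_2$ by multiplying by $(w_2-z)$ and specializing at $w_2=z$ --- legitimate since Lemma~\ref{3.2} guarantees the only $w_2$-pole there is simple --- reducing \eqref{eq:3.13} to the claim that $\overline{f}_{\overline{\alpha}}(w_1)\overline{f}_{\overline{\alpha}+\overline{\beta}}(z)$ is regular at $w_1=z$. You then prove this by unfolding the twisted currents into the untwisted generators $e_{-\sigma^i(\alpha)}\otimes t^a$ and observing that, by simple-lacedness of $\mathfrak{g}$, the only nonzero brackets $\bigl[e_{-\sigma^i(\alpha)},e_{-\sigma^j(\alpha+\beta)}\bigr]$ require $i\neq j$, so the resulting formal deltas sit at $w_1=\xi^{\ell}z$ with $\ell\not\equiv 0\pmod{\kappa}$, away from $w_1=z$. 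This is a uniform argument that avoids the $\kappa=2$/$\kappa=3$ case split. It is worth noting that your regularity claim is nontrivial to see from the paper's component-wise formula \eqref{eq:3.14}: each individual product $\overline{f}_{\overline{\alpha},j}(w_1)\overline{f}_{\overline{\alpha}+\overline{\beta},j}(z)$ \emph{does} have a pole at $w_1=z$ (through the $(z^{\kappa}-w_1^{\kappa})^{-1}$ factor), and it is precisely the cancellation across $j$ when forming $\overline{f}_{\overline{\alpha}}(w_1)=\sum_j\overline{f}_{\overline{\alpha},j}(w_1)$ that your $\sigma$-orbit argument makes manifest; the paper's approach packages this cancellation implicitly inside the $\overline{e}_{-\overline{\alpha}}[n]$ bracket computations. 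What the paper's approach buys in exchange is a fully explicit description of the singular part of the triple OPE, which is not produced by your reduction.
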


\begin{proof}
When $\overline{\alpha}\in\overline{\Delta}_{\ell}$ or $\overline{\alpha},\overline{\beta}\in\overline{\Delta}_s$, it follows that we have $2\overline{\alpha}+\overline{\beta} \notin\overline{\Delta}$, or equivalently,
\begin{equation*}
\bigl[\overline{f}_{\overline{\alpha}}(w_1),\bigl[\overline{f}_{\overline{\alpha}}(w_2), \overline{f}_{\overline{\beta}}(z)\bigr]\bigr]=0.
\end{equation*}
On the other hand, it follows from \eqref{eq:3.12} that $(w_2-z)\overline{f}_{\overline{\alpha}}(w_2) \overline{f}_{\overline{\beta}}(z)$ consists solely of regular terms. The OPE relation \eqref{eq:3.13} then follows by combining the above two observations.

Next, let us show that the OPE relation \eqref{eq:3.13} holds when $\kappa=2$, $\overline{\alpha}\in\overline{\Delta}_s$ and $\overline{\beta}\in\overline{\Delta}_{\ell}$. Then $\overline{\alpha}+\overline{\beta}\in\overline{\Delta}_s$ and $2\overline{\alpha}+\overline{\beta}\in\overline{\Delta}_{\ell}$, and up to a sign, we have
\begin{equation*}
[\overline{e}_{-\overline{\alpha}}[2m-j],\overline{e}_{-(\overline{\alpha}+\overline{\beta})}[2n+j]]
=2(-1)^j\overline{e}_{-(2\overline{\alpha}+\overline{\beta})}[2(m+n)]
\end{equation*}
for $j\in\{0,1\}$. This implies that for $j\in\{0,1\}$, we have
\begin{align*}
\bigl[\overline{f}_{\overline{\alpha},j}(w_1), \overline{f}_{\overline{\alpha}+\overline{\beta},j}(z)\bigr]
&=\sum_{m,n\in\mathbb{Z}}[\overline{e}_{-\overline{\alpha}}[2m-j],\overline{e}_{-(\overline{\alpha}+\overline{\beta})}[2n+j]] w_1^{-2m-1+j}z^{-2n-1-j}\\
&=(-1)^j\sum_{m,n\in\mathbb{Z}}\overline{e}_{-(2\overline{\alpha}+\overline{\beta})}[2(m+n)]w_1^{-2m-1+j} z^{-2n-1-j}\\
&=(-1)^j\left(\frac{z}{w_1}\right)^{1-j} \sum_{m,k\in\mathbb{Z}}\overline{e}_{-(2\overline{\alpha}+ \overline{\beta})}[2k]z^{-2k-2}\left(\frac{z^2}{w_1^2} \right)^m\\
&=(-1)^j\left(\frac{z}{w_1}\right)^{1-j}\delta\big(z^2/w_1^2\big) \overline{f}_{2\overline{\alpha}+\overline{\beta}}(z),
\end{align*}
from which we deduce that
\begin{equation}\label{eq:3.14}
\overline{f}_{\overline{\alpha},j}(w_1)\overline{f}_{\overline{\alpha}+\overline{\beta},j}(z)
=\frac{(-1)^jz^{1-j}w_1^{1+j}\overline{f}_{2\overline{\alpha}+ \overline{\beta}}(z)}{z^2-w_1^2}+\text{regular terms}.
\end{equation}
Consequently, we deduce from \eqref{eq:3.12} and \eqref{eq:3.14} that we have
\begin{equation}\label{eq:3.15}
\overline{f}_{\overline{\alpha}}(w_1) \overline{f}_{\overline{\alpha}}(w_2)
\overline{f}_{\overline{\beta}}(z)
=\frac{w_1(w_2-w_1)\overline{f}_{2\overline{\alpha}+ \overline{\beta}}(z)}{\bigl(z^2-w_1^2\bigr)\bigl(z^2-w_2^2\bigr)}+\text{regular terms},
\end{equation}
and thus $(w_1-z)(w_2-z)\overline{f}_{\overline{\alpha}}(w_1) \overline{f}_{\overline{\alpha}}(w_2) \overline{f}_{\overline{\beta}}(z)$ consists solely of regular terms. As the terms $w_1(w_2-w_1)$ and $(w_1-z)(w_2-z)$ both vanish when $w_1=w_2=z$, it follows from \eqref{eq:3.15} that the OPE relation \eqref{eq:3.13} holds when $\kappa=2$, $\overline{\alpha}\in\overline{\Delta}_s$ and $\overline{\beta}\in\overline{\Delta}_{\ell}$.

Finally, let us show that the OPE relation \eqref{eq:3.13} holds when $\kappa=3$, $\overline{\alpha}\in\overline{\Delta}_s$ and $\overline{\beta}\in\overline{\Delta}_{\ell}$. Then $\overline{\alpha}+\overline{\beta},2\overline{\alpha}+\overline{\beta}\in\overline{\Delta}_s$, and up to a sign, we have
\begin{equation*}
[\overline{e}_{-\overline{\alpha}}[3m+i-j],\overline{e}_{-(\overline{\alpha}+\overline{\beta})}[3n+j]]
=\bigl(\xi^{j-i}+\xi^{-j}\bigr)\overline{e}_{-(2\overline{\alpha}+ \overline{\beta})}[3(m+n)+i]
\end{equation*}
for all $i,j\in\{0,1,2\}$. By letting $\overline{i-j}$ be the unique element in $\{0,1,2\}$ that satisfies $i-j\equiv \overline{i-j}\mod 3$ for all $i,j\in\{0,1,2\}$, it follows that we have
\begin{align*}
\bigl[\overline{f}_{\overline{\alpha},\overline{i-j}}(w_1), \overline{f}_{\overline{\alpha}+\overline{\beta},j}(z)\bigr]
&=\sum_{m,n\in\mathbb{Z}}[\overline{e}_{-\overline{\alpha}}[3m+i-j],\overline{e}_{-(\overline{\alpha}+\overline{\beta})}[3n+j]]w_1^{-3m-1-i+j}z^{-3n-1-j}\\
&=\bigl(\xi^{j-i}+\xi^{-j}\bigr)\sum_{m,n\in\mathbb{Z}}\overline{e}_{-(2\overline{\alpha}+\overline{\beta})}[3(m+n)+i]w_1^{-3m-1-i+j}z^{-3n-1-j}\\
&=\bigl(\xi^{j-i}+\xi^{-j}\bigr)z^{i-j}w_1^{j-i-1} \sum_{m,k\in\mathbb{Z}}\overline{e}_{-(2\overline{\alpha}+ \overline{\beta})}[3k+i]z^{-3k-i-1}\left(\frac{z^3}{w_1^3} \right)^m\\
&=\bigl(\xi^{j-i}+\xi^{-j}\bigr)z^{i-j}w_1^{j-i-1}\delta\bigl(z^3/w_1^3\bigr) \overline{f}_{2\overline{\alpha}+\overline{\beta},i}(z),
\end{align*}
from which we deduce that
\begin{equation}\label{eq:3.16}
\overline{f}_{\overline{\alpha},\overline{i-j}}(w_1)\overline{f}_{\overline{\alpha}+\overline{\beta},j}(z)
=\frac{\bigl(\xi^{j-i}+\xi^{-j}\bigr)z^{i-j}w_1^{j-i+2} \overline{f}_{2\overline{\alpha}+\overline{\beta},i}(z)}{z^3-w_1^3}+\text{regular terms}.
\end{equation}
Consequently, we deduce from \eqref{eq:3.12} and \eqref{eq:3.16} that we have
\begin{align}
\overline{f}_{\overline{\alpha}}(w_1) \overline{f}_{\overline{\alpha}}(w_2)
\overline{f}_{\overline{\beta}}(z)
={}&\frac{w_1^2(2w_2+w_1)(w_2-w_1)\overline{f}_{ 2\overline{\alpha}+\overline{\beta},0}(z)}{z^2\bigl(z^2-w_1^2\bigr)\bigl(z^2-w_2^2\bigr)}\nonumber\\
&-\frac{\xi w_1(w_2+2w_1)(w_2-w_1)\overline{f}_{ 2\overline{\alpha}+\overline{\beta},1}(z)}{z\bigl(z^2-w_1^2\bigr)\bigl(z^2-w_2^2\bigr)}\nonumber\\
&-\frac{\xi^2(w_2-w_1)^2\overline{f}_{ 2\overline{\alpha}+\overline{\beta},2}(z)}{\bigl(z^2-w_1^2\bigr)\bigl(z^2-w_2^2\bigr)}+\text{regular terms},\label{eq:3.17}
\end{align}
and thus \smash{$(w_1-z)(w_2-z)\overline{f}_{\overline{\alpha}}(w_1) \overline{f}_{\overline{\alpha}}(w_2) \overline{f}_{\overline{\beta}}(z)$} consists solely of regular terms. As the terms $w_1^2(2w_2+w_1)(w_2-w_1)$, $w_1(w_2+2w_1)(w_2-w_1)$, $(w_2-w_1)^2$ and $(w_1-z)(w_2-z)$ all vanish when $w_1=w_2=z$, it follows from \eqref{eq:3.17} that the OPE relation \eqref{eq:3.13} holds when $\kappa=3$, $\overline{\alpha}\in\overline{\Delta}_s$ and $\overline{\beta}\in\overline{\Delta}_{\ell}$, and this completes the proof.
\end{proof}

Next, we will describe the dual space $\mathcal{U}$ of functions to $U=U\bigl(\mathfrak{n}_-\bigl[t^{\pm1}\bigr]^{\sigma}\bigr)$. By the Poincar\'e--Birkhoff--Witt theorem, we have
\begin{equation*}
U=\bigoplus_{\overline{\mathbf{m}}=(m^{(1)},\dots,m^{(r)})\in\mathbb{Z}_+^r}U[\overline{\mathbf{m}}],
\end{equation*}
where
\begin{equation*}
U[\overline{\mathbf{m}}]=\left\{\overline{f}_{i_1}[t_{i_1}^{\vee}n_1]\cdots\overline{f}_{i_m}[t_{i_m}^{\vee}n_m] \mid i_1,\dots,i_m\in I_r,\,n_1,\dots,n_m\in\mathbb{Z},\, \sum_{j=1}^m\overline{\alpha}_{i_j}=\sum_{i=1}^rm^{(i)} \overline{\alpha}_i\right\}.
\end{equation*}
We define the dual space $\mathcal{U}$ to $U$ by
\begin{equation*}
\mathcal{U}=\bigoplus_{\overline{\mathbf{m}}=(m^{(1)},\dots,m^{(r)})\in\mathbb{Z}_+^r}\mathcal{U}[\overline{\mathbf{m}}],
\end{equation*}
where $\mathcal{U}[\overline{\mathbf{m}}]$ is the dual space to $U[\overline{\mathbf{m}}]$ defined as follows: it is a space of functions in
\begin{equation*}
\mathbf{x}_{\overline{\mathbf{m}}}
=\bigl\{\bigl(x_i^{(b)}\bigr)^{t_b^{\vee}}\mid b\in I_r,\,i\in \bigl[1,m^{(b)}\bigr]\bigr\},
\end{equation*}
where \smash{$x_i^{(b)}$} is the variable corresponding to a generator of the form \smash{$\overline{f}_{\overline{\alpha}_b}\bigl(x_i^{(b)}\bigr)$}. We define a~pairing~${\mathcal{U}[\overline{\mathbf{m}}]\times U[\overline{\mathbf{n}}]\to\mathbb{C}}$ between $\mathcal{U}[\overline{\mathbf{m}}]$ and $U[\overline{\mathbf{n}}]$ for all $\overline{\mathbf{m}},\overline{\mathbf{n}}\in \mathbb{Z}^r$ as follows: the pairing is $0$ if~${\overline{\mathbf{m}}\neq\overline{\mathbf{n}}}$, and otherwise it is defined inductively by the relations
\begin{gather*}
\langle 1,1\rangle =1,\\
\big\langle g(\mathbf{x}),W\overline{f}_b[t_b^{\vee}n]\big\rangle
=\bigg\langle\oint\nolimits_{x_1^{(b)}=0}\bigl(x_1^{(b)}\bigr) ^{t_b^{\vee}(n+1)-1} g(\mathbf{x}_{\overline{\mathbf{m}}}) {\rm d}x_1^{(b)},W\bigg\rangle, \qquad W\in U[\overline{\mathbf{m}}_b'],
\end{gather*}
where
\begin{equation*}
\overline{\mathbf{m}}_b'=\bigl(m^{(1)},\dots,m^{(b-1)}, m^{(b)}-1,m^{(b+1)},\dots,m^{(r)}\bigr),
\end{equation*}
and we may take any counterclockwise oriented contour around the point \smash{$x_1^{(b)}=0$} that does not contain the points \smash{$x_1^{(b)}=x_i^{(a)}$} for all $(a,i)\neq(b,1)$ in our above definition of $\big\langle g(\mathbf{x}),W\overline{f}_b[t_b^{\vee}n]\big\rangle$. Similarly,
\begin{equation*}
\big\langle g(\mathbf{x}),\overline{f}_b[t_b^{\vee}n]W\big\rangle
=\bigg\langle\oint\nolimits_{x_1^{(b)}=\infty}\bigl(x_1^{(b)}\bigr) ^{t_b^{\vee}(n+1)-1} g(\mathbf{x}_{\overline{\mathbf{m}}}){\rm d}x_1^{(b)},W\bigg\rangle, \qquad W\in U[\overline{\mathbf{m}}_b'],
\end{equation*}
where we may take any clockwise oriented contour around the point $x_1^{(b)}=\infty$ that does not contain the points \smash{$x_1^{(b)}=x_i^{(a)}$} for all $(a,i)\neq(b,1)$ in our above definition of $\big\langle g(\mathbf{x}),\overline{f}_b[t_b^{\vee}n]W\big\rangle$.

With this pairing, the first OPE relation \eqref{eq:3.12} implies that the functions in the dual space $\mathcal{U}[\overline{\mathbf{m}}]$ may have a simple pole whenever \smash{$x_i^{(a)}=x_j^{(b)}$} for any $a,b\in I_r$ satisfying $\overline{C}_{ab}<0$. In addition, for any $c\in I_r$ satisfying $t_c^{\vee}=\kappa$, it follows from the definition of $\mathbf{x}_{\overline{\mathbf{m}}}$ that any function~${f\in\mathcal{U}[\overline{\mathbf{m}}]}$ is a function in \smash{$\bigl(x_{\ell}^{(c)}\bigr)^{\kappa}$}, which implies in particular that we have $\smash{f\bigl(x_{\ell}^{(c)}\bigr)}=\smash{f\bigl(\xi^mx_{\ell}^{(c)}\bigr)}$ for all $m\in[1,\kappa-1]$. These two observations together imply that in the case where $\overline{C}_{ab}<0$ and $\max\bigl(t_a^{\vee},t_b^{\vee}\bigr)= \kappa$, the functions in $\mathcal{U}[\overline{\mathbf{m}}]$ may have a simple pole whenever \smash{$x_i^{(a)}=\xi^m x_j^{(b)}$} for all $m\in[1,\kappa-1]$. Thus, the dual space $\mathcal{U}[\overline{\mathbf{m}}]$ is a space of rational functions $g(\mathbf{x}_{\overline{\mathbf{m}}})$ in $\mathbf{x}_{\overline{\mathbf{m}}}$ that is of the form
\begin{equation}\label{eq:3.18}
g(\mathbf{x}_{\overline{\mathbf{m}}})
=\frac{g_1(\mathbf{x}_{\overline{\mathbf{m}}})}{ \prod_{\substack{a<b\\\overline{C}_{ab}<0}} \prod_{i,j}\bigl(\bigl(x_i^{(a)}\bigr)^{k_{ab}}-\bigl(x_j^{(b)}\bigr)^{k_{ab}}\bigr)},
\end{equation}
where $k_{ab}=\max(t_a^{\vee},t_b^{\vee})$ for all $a,b\in I_r$ satisfying $\overline{C}_{ab}<0$. The function $g_1(\mathbf{x}_{\overline{\mathbf{m}}})$ is a Laurent polynomial in $\mathbf{x}_{\overline{\mathbf{m}}}$, and is symmetric in each subset \smash{$\bigl\{x_i^{(a)}\mid i\in \bigl[1,m^{(a)}\bigr]\bigr\}$} of $\mathbf{x}_{\overline{\mathbf{m}}}$ for all $a\in I_r$, as we have $\bigl[\overline{f}_a(w), \overline{f}_a(z)\bigr]=0$. Moreover, the second OPE relation \eqref{eq:3.13} implies that the function~$g_1(\mathbf{x}_{\overline{\mathbf{m}}})$ satisfies the following vanishing condition:
\begin{equation}\label{eq:3.19}
g_1(\mathbf{x}_{\overline{\mathbf{m}}})\big|_{x_i^{(a)}=x_j^{(a)}=x_k^{(b)}}=0
\end{equation}
for all $a,b\in I_r$ satisfying $\overline{C}_{ab}<0$, distinct $i,j\in \bigl[1,m^{(a)}\bigr]$, and $k\in\bigl[1,m^{(b)}\bigr]$. Moreover, in the case where $\overline{C}_{ab}<0$ and $\max(t_a^{\vee},t_b^{\vee})= \kappa$, the function $g_1(\mathbf{x}_{\overline{\mathbf{m}}})$ also satisfies the following vanishing condition:
\begin{equation}\label{eq:3.20}
g_1(\mathbf{x}_{\overline{\mathbf{m}}})\big|_{x_i^{(a)}=x_j^{(a)}=\xi^m x_k^{(b)}}=0
\end{equation}
for all distinct $i,j\in \bigl[1,m^{(a)}\bigr]$, $k\in[1,m^{(b)}]$ and $m\in[1,\kappa-1]$.

\subsection[Filtration of the dual space Um]{Filtration of the dual space $\boldsymbol{\mathcal{U}[\overline{\mathbf{m}}]}$}\label{Section 3.3}

Our next step is to introduce a filtration on $\mathcal{U}[\overline{\mathbf{m}}]$, following the approaches outlined in \cite[Section 3.4]{AKS06} and \cite[Section 4.1]{AK07}. To begin, we let \smash{$\bm{\mu}=\bigl(\mu^{(1)},\dots,\mu^{(r)}\bigr)$} be a multipartition of~${\overline{\mathbf{m}}=\bigl(m^{(1)},\dots, m^{(r)}\bigr)}$, that is, $\mu^{(a)}$ is a partition of $m^{(a)}$ for all $a\in I_r$. For each $a\in I_r$, $i\in\mathbb{N}$, we let \smash{$m_{a,i}\bigl(\mu^{(a)}\bigr)$} denote the number of parts of length $i$ in the partition \smash{$\mu^{(a)}$}. When the context is clear, we will write $m_{a,i}$ in lieu of $m_{a,i}\bigl(\mu^{(a)}\bigr)$. We note that there is a bijection between the set of multipartitions \smash{$\bm{\mu}=\bigl(\mu^{(1)},\dots,\mu^{(r)}\bigr)$} of \smash{$\overline{\mathbf{m}}=\bigl(m^{(1)},\dots, m^{(r)}\bigr)$}, and the set of vectors~${\mathbf{m}=(m_{a,i})_{a\in I_r,i\in\mathbb{N}}}$ of nonnegative integers that satisfy \smash{$\sum_{i\in\mathbb{N}} im_{a,i}=m^{(a)}$} for all $a\in I_r$, given by
\begin{equation}\label{eq:3.21}
\bm{\mu}\mapsto\bigl(m_{a,i}\bigl(\mu^{(a)}\bigr)\bigr)_{a\in I_r,i\in\mathbb{N}}
\end{equation}
for all multipartitions $\bm{\mu}$ of $\overline{\mathbf{m}}$.

Next, for any multipartition $\bm{\mu}$ of $\overline{\mathbf{m}}$, denoted $\bm{\mu}\vdash \overline{\mathbf{m}}$, we let $\mathcal{H}[\bm{\mu}]$ be a space of functions in the variables
\smash{$
\mathbf{y}_{\bm{\mu}}=\bigl\{\bigl(y_{i,r}^{(b)}\bigr)^{t_b^{\vee}}\mid b\in I_r,\,i\in\bigl[1,m^{(b)}\bigr],\,r\in[1,m_{b,i}]\bigr\}$}.
For any $a\in I_r$ and any partition~$\mu^{(a)}$ of $m^{(a)}$, we note that there is a bijective correspondence between the parts of $\mu^{(a)}$ and the pairs $(i,r)$, where $i\in \bigl[1,m^{(a)}\bigr]$ and $r\in[1,m_{a,i}]$. We define an ordering on the parts of $\mu^{(a)}$, or equivalently, the pairs $\bigl\{(i,r)\mid i\in \bigl[1,m^{(a)}\bigr],\, r\in[1,m_{a,i}]\bigr\}$, as follows:
$
(i,r)>(j,s)$ if ${i>j}$, or~${ i=j}$ and $r<s$.
Let us pick a collection $\mathbf{T}=(T_1,\dots,T_r)$ of tableaux, where $T_a$ is a tableau of shape~$\mu^{(a)}$ on the letters $1,\dots,m^{(a)}$ for all $a\in I_r$. For each~${n\in\bigl[1,m^{(a)}\bigr]}$, we let~${i_{T_a}(n)}$ be the length of the row in~$T_a$ in which $n$ appears, and $r_{T_a}(n)-1$ be the number of rows above the row in which $n$ appears of the same length $i_{T_a}(n)$. We define the evaluation map ${\varphi_{\bm{\mu},\mathbf{T}}\colon \mathcal{U}[\overline{\mathbf{m}}]\to \mathcal{H}[\bm{\mu}]}$ by~\smash{$
\varphi_{\bm{\mu},\mathbf{T}}\bigl(x_n^{(a)}\bigr)=y_{i_{T_a}(n),r_{T_a}(n)}^{(a)}$}, $ a\in I_r$, $n\in\bigl[1,m^{(a)}\bigr]$,
and extend the map $\varphi_{\bm{\mu},\mathbf{T}}$ to the whole of~$\mathcal{U}[\overline{\mathbf{m}}]$ by linearity. As the functions in $\mathcal{U}[\overline{ \mathbf{m}}]$ are symmetric in~\smash{$\bigl\{x_i^{(a)}\mid i\in \bigl[1,m^{(a)}\bigr]\bigr\}$} for each~${a\in I_r}$, it follows that any two collections $\mathbf{T}$, $\mathbf{T}'$ of tableaux give rise to the same map $\mathcal{U}[\overline{\mathbf{m}}] \to \mathcal{H}[\bm{\mu}]$. Thus, we may write $\varphi_{ \bm{\mu}}$ in lieu of $\varphi_{\bm{\mu},\mathbf{T}}$.

We define a lexicographical ordering on the set of multipartitions $\bm{\mu}\vdash\overline{\mathbf{m}}$ as follows: $\bm{\mu}>\bm{\nu}$ if there exists some index $a\in I_r$ satisfying $\mu^{(b)}=\nu^{(b)}$ for all $b\in[1,a-1]$ and $\mu^{(a)}>\nu^{(a)}$, where we take the lexicographical ordering on partitions here. We define
\begin{equation*}
\Gamma_{\bm{\mu}}=\bigcap_{\bm{\nu}>\bm{\mu}}\ker\varphi_{\bm{\nu}},\qquad\Gamma_{\bm{\mu}}'=\bigcap_{\bm{\nu}\geq\bm{\mu}}\ker\varphi_{\bm{\nu}}\subseteq\Gamma_{\bm{\mu}}
\end{equation*}
for all multipartitions $\bm{\mu}\vdash\overline{\mathbf{m}}$. By enumerating the set of multipartitions $\bm{\mu}\vdash \overline{\mathbf{m}}$ as $\bm{\mu}_1<\bm{\mu}_2<\cdots< \bm{\mu}_N$, where $\bm{\mu}_1=\bigl(\bigl(1^{m^{(1)}}\bigr),\bigl(1^{m^{(2)}}\bigr), \dots,\bigl(1^{m^{(r)}}\bigr)\bigr)$ and $\bm{\mu}_N=\bigl(\bigl(m^{(1)}\bigr),\bigl(m^{(2)}\bigr), \dots,\bigl(m^{(r)}\bigr)\bigr)$, and setting $\Gamma_{\bm{\mu}_0}:=\{0\}$, it follows that we have $\Gamma_{\bm{\mu}_N}=\mathcal{U}[\overline{\mathbf{m}}]$, and $\Gamma_{\bm{\mu}_{i-1}}= \Gamma_{\bm{\mu}_i}'$ for all $i\in [1,N]$. Thus, we have a filtration $\mathcal{F}_{\overline{\mathbf{m}}}$ on $\mathcal{U}[\overline{\mathbf{m}}]$ parameterized by all multipartitions $\bm{\mu}\vdash \overline{\mathbf{m}}$
\begin{equation*}
\{0\}=\Gamma_{\bm{\mu}_0}\subseteq\Gamma_{\bm{\mu}_1} \subseteq \Gamma_{\bm{\mu}_2}\subseteq\cdots\subseteq \Gamma_{\bm{\mu}_N}=\mathcal{U}[\overline{\mathbf{m}}].
\end{equation*}
Consequently, the associated graded space $\gr\mathcal{F}_{\overline{\mathbf{m}}}$ of the filtration $\mathcal{F}_{\overline{\mathbf{m}}}$ on $\mathcal{U}[\overline{\mathbf{m}}]$ is given by
\begin{equation}\label{eq:3.23}
\gr\mathcal{F}_{\overline{\mathbf{m}}}
=\bigoplus_{i=1}^N\Gamma_{\bm{\mu}_i}/\Gamma_{\bm{\mu}_{i-1}}
=\bigoplus_{i=1}^N\Gamma_{\bm{\mu}_i}/\Gamma_{\bm{\mu}_i}'
=\bigoplus_{\bm{\mu}\vdash\overline{\mathbf{m}}} \Gamma_{\bm{\mu}}/\Gamma_{\bm{\mu}}'.
\end{equation}
Our next step is to understand the graded structure of the functions in $\mathcal{U}[\overline{\mathbf{m}}]$ through the associated graded space $\gr\mathcal{F}_{\overline{ \mathbf{m}}}$ of $\mathcal{F}_{\overline{\mathbf{m}}}$ on $\mathcal{U}[\overline{\mathbf{m}}]$, by relating each graded piece $\Gamma_{\bm{\mu}}/\Gamma_{\bm{\mu}}'$ of the associated graded space $\gr\mathcal{F}_{\overline{\mathbf{m}}}$ to its corresponding image under the evaluation map $\varphi_{\bm{ \mu}}$ for each multipartition $\bm{\mu}\vdash\overline{ \mathbf{m}}$. To this end, we will define $\tilde{\mathcal{H}}[\bm{\mu}]$ to be the space of rational functions~$h(\mathbf{ y}_{\bm{\mu}})$ in $\mathbf{y}_{\bm{\mu}}$ that is of the form
\begin{equation}\label{eq:3.24}
h(\mathbf{y}_{\bm{\mu}})
=\frac{\prod_{a\in I_r}\prod_{(i,r)>(j,s)}\bigl(\bigl(y_{i,r}^{(a)}\bigr)^{t_a^{\vee}}-\bigl(y_{j,s}^{(a)}\bigr) ^{t_a^{\vee}})^{2\min(i,j)}}{\prod_{\substack{a<b \\\overline{C}_{ab}<0}} \prod_{i,j,r,s}\bigl(\bigl(y_{i,r}^{(a)}\bigr) ^{k_{ab}}-\bigl(y_{j,s}^{(b)}\bigr)^{k_{ab}}\bigr)^{\min(i,j)}} h_1(\mathbf{y}_{\bm{\mu}}),
\end{equation}
where $k_{ab}=\max(t_a^{\vee},t_b^{\vee})$ for all $a,b\in I_r$ satisfying $\overline{C}_{ab}<0$, and $h_1(\mathbf{y}_{\bm{\mu}})$ is an arbitrary Laurent~polynomial in $\mathbf{y}_{\bm{\mu}}$ that is symmetric in \smash{$\bigl\{y_{i,r}^{(a)}\mid r\in[1,m_{a,i}]\bigr\}$} for all pairs $(a,i)$.

We are now ready to relate $\Gamma_{\bm{\mu}}/ \Gamma_{\bm{\mu}}'$ to $\tilde{\mathcal{H}}[\bm{\mu}]$ for each multipartition $\bm{\mu}\vdash\overline{\mathbf{m}}$.

\begin{Theorem}\label{3.4}
Let $\bm{\mu}$ be a multipartition of $\overline{\mathbf{m}}$. Then the evaluation map $\varphi_{\bm{\mu}}\colon\mathcal{U}[\overline{\mathbf{m}}]\to \mathcal{H}[\bm{\mu}]$ induces an isomorphism $\overline{\varphi}_{\bm{\mu}}\colon\Gamma_{\bm{\mu}}/ \Gamma_{\bm{\mu}}'\to\tilde{\mathcal{H}}[\bm{\mu}]$ of graded vector spaces.
\end{Theorem}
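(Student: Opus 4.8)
The plan is to adapt the argument of Ardonne, Kedem and Stokman \cite{AK07,AKS06} to the twisted setting, the genuinely new features being the exponents $t_a^{\vee}$ and $k_{ab}=\max(t_a^{\vee},t_b^{\vee})$, the $\xi^m$-shifted coincidence loci, and the fact that a variable $x_{\ell}^{(c)}$ with $t_c^{\vee}=\kappa$ enters only through $\bigl(x_{\ell}^{(c)}\bigr)^{\kappa}$. First I would check that $\varphi_{\bm{\mu}}$ carries $\Gamma_{\bm{\mu}}$ into $\tilde{\mathcal{H}}[\bm{\mu}]$. Fix $g\in\Gamma_{\bm{\mu}}$ written in the form \eqref{eq:3.18}. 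Under $\varphi_{\bm{\mu}}$ the $i$ variables assigned to a row $(i,r)$ of $\mu^{(a)}$ are all set equal to $y_{i,r}^{(a)}$, so each denominator factor $\bigl(x_p^{(a)}\bigr)^{k_{ab}}-\bigl(x_q^{(b)}\bigr)^{k_{ab}}$ of \eqref{eq:3.18} contributes, for a pair of rows $(i,r)$ on side $a$ and $(j,s)$ on side $b$ with $\overline{C}_{ab}<0$, the factor $\bigl(y_{i,r}^{(a)}\bigr)^{k_{ab}}-\bigl(y_{j,s}^{(b)}\bigr)^{k_{ab}}$ raised to the power $ij$. I would then prove the combinatorial lemma that the vanishing conditions \eqref{eq:3.19} and \eqref{eq:3.20}, together with the symmetry of $g_1$ inside each block, force $\varphi_{\bm{\mu}}(g_1)$ to vanish to order at least $ij-\min(i,j)$ along $\bigl(y_{i,r}^{(a)}\bigr)^{k_{ab}}=\bigl(y_{j,s}^{(b)}\bigr)^{k_{ab}}$ (and along the shifted loci when $k_{ab}=\kappa$); cancelling against the order-$ij$ pole leaves exactly the order-$\min(i,j)$ pole appearing in the denominator of \eqref{eq:3.24}.

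Second, the numerator factors $\bigl(\bigl(y_{i,r}^{(a)}\bigr)^{t_a^{\vee}}-\bigl(y_{j,s}^{(a)}\bigr)^{t_a^{\vee}}\bigr)^{2\min(i,j)}$ of \eqref{eq:3.24} come from the defining property $\Gamma_{\bm{\mu}}=\bigcap_{\bm{\nu}>\bm{\mu}}\ker\varphi_{\bm{\nu}}$: coalescing the rows $(i,r)$ and $(j,s)$ of $\mu^{(a)}$ yields, as a function, the evaluation map $\varphi_{\bm{\nu}}$ for the multipartition $\bm{\nu}$ obtained from $\bm{\mu}$ by merging those two parts, and merging parts strictly increases a partition in the lexicographic order, so $\bm{\nu}>\bm{\mu}$ and $\varphi_{\bm{\mu}}(g)$ vanishes on that locus. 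Iterating over the partial coalescences of $k=1,\dots,\min(i,j)$ cells and using block symmetry (each collision being counted twice) upgrades this to a zero of order $2\min(i,j)$, exactly as in \cite{AK07}. Together with the symmetry of $g_1$ in each $\bigl\{x_i^{(a)}\bigr\}$ and the absence of further poles, this shows $\varphi_{\bm{\mu}}(g)\in\tilde{\mathcal{H}}[\bm{\mu}]$, so $\overline{\varphi}_{\bm{\mu}}$ is well defined; it is degree-preserving since $\varphi_{\bm{\mu}}$ merely identifies variables and hence respects the degree gradings on $\mathcal{U}[\overline{\mathbf{m}}]$ and $\tilde{\mathcal{H}}[\bm{\mu}]$.

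Third, injectivity of $\overline{\varphi}_{\bm{\mu}}$ is immediate: the kernel of $\varphi_{\bm{\mu}}$ restricted to $\Gamma_{\bm{\mu}}$ is $\Gamma_{\bm{\mu}}\cap\ker\varphi_{\bm{\mu}}=\bigcap_{\bm{\nu}\geq\bm{\mu}}\ker\varphi_{\bm{\nu}}=\Gamma_{\bm{\mu}}'$. It remains to prove surjectivity. Given $h\in\tilde{\mathcal{H}}[\bm{\mu}]$, I would construct a preimage $g\in\mathcal{U}[\overline{\mathbf{m}}]$ lying in $\Gamma_{\bm{\mu}}$ by first distributing the variables of $\mathbf{y}_{\bm{\mu}}$ back into $\mathbf{x}_{\overline{\mathbf{m}}}$ according to the blocks, then multiplying $h$ by an explicit rational interpolation kernel built from factors $\bigl(\bigl(x_p^{(a)}\bigr)^{k_{ab}}-\bigl(x_q^{(b)}\bigr)^{k_{ab}}\bigr)^{\pm1}$ and $\bigl(x_p^{(a)}\bigr)^{t_a^{\vee}}-\bigl(x_q^{(a)}\bigr)^{t_a^{\vee}}$ chosen so that the product (i) has only the simple poles allowed by \eqref{eq:3.18}, (ii) satisfies \eqref{eq:3.19} and \eqref{eq:3.20}, (iii) is annihilated by $\varphi_{\bm{\nu}}$ for every $\bm{\nu}>\bm{\mu}$, and (iv) is normalized so that $\varphi_{\bm{\mu}}$ returns $h$; and finally symmetrizing over the symmetric groups acting within each $\bigl\{x_i^{(a)}\bigr\}$. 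Property (iii) is arranged by inserting, for each pair of rows of $\mu^{(a)}$, enough vanishing factors that the kernel dies identically once the two rows are forced to equal support, and one verifies via the OPE relations of Lemmas \ref{3.2} and \ref{3.3} that the resulting $g$ genuinely lies in $\mathcal{U}[\overline{\mathbf{m}}]$; this shows $\overline{\varphi}_{\bm{\mu}}$ is onto, hence an isomorphism of graded vector spaces.

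The main obstacle is this surjectivity step, and within it the explicit construction of the interpolation kernel in the twisted case: one must simultaneously reduce the pole orders to $\min(i,j)$ on each of the $\kappa$ shifted loci $x_i^{(a)}=\xi^m x_j^{(b)}$, respect the fact that a function attached to a long index $c$ depends only on $\bigl(x_{\ell}^{(c)}\bigr)^{\kappa}$, and check $g\in\ker\varphi_{\bm{\nu}}$ for all $\bm{\nu}>\bm{\mu}$ — none of which is present in the untwisted simply-laced model of \cite{AK07}. The twisted OPE relations of Lemmas \ref{3.2} and \ref{3.3} are exactly what governs the admissible pole and vanishing structure used throughout; the crux is to confirm that the bookkeeping still yields net pole order $\min(i,j)$ and net zero order $2\min(i,j)$ uniformly across all twisted types $\neq A_{2r}^{(2)}$.
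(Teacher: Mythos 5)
Your proposal follows the same three-step structure the paper uses: well-definedness of $\varphi_{\bm{\mu}}$ on $\Gamma_{\bm{\mu}}$ via the pole and zero bookkeeping (Lemmas~\ref{3.5} and~\ref{3.6}, attributing the order-$2\min(i,j)$ zeros to $\Gamma_{\bm{\mu}}=\bigcap_{\bm{\nu}>\bm{\mu}}\ker\varphi_{\bm{\nu}}$ and the pole reduction to the vanishing conditions~\eqref{eq:3.19}--\eqref{eq:3.20}), automatic injectivity from $\Gamma_{\bm{\mu}}\cap\ker\varphi_{\bm{\mu}}=\Gamma_{\bm{\mu}}'$, and surjectivity via an explicit interpolation-kernel-plus-symmetrization construction, deferring the detailed verifications to~\cite{AK07,AKS06}. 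The kernel you leave as a plan is precisely what equation~\eqref{eq:3.25} supplies as $g_0(\mathbf{x}_{\overline{\mathbf{m}}})$, with the numerator built from consecutive-index differences $x_{i,r}^{(a)}[j]^{t_a^{\vee}}-x_{i',r'}^{(a)}[j]^{t_a^{\vee}}$ and $x_{i,r}^{(a)}[j+1]^{t_a^{\vee}}-x_{i',r'}^{(a)}[j]^{t_a^{\vee}}$ and the denominator from $x_{i,r}^{(a)}[\ell]^{k_{ab}}-x_{j,s}^{(b)}[\ell]^{k_{ab}}$, and its properties are then verified in Lemma~\ref{3.7} by the same AKS-style arguments you invoke.
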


The proof of Theorem~\ref{3.4} proceeds in a similar manner as in the proof of \cite[Theorem 3.6]{AKS06} and \cite[Theorem 4.1]{AK07}, which we will describe below the fold.

Our first step is to describe the zeros and poles of the functions in $\tilde{\mathcal{H}}[\bm{\mu}]$.

\begin{Lemma}\label{3.5}
Let $g(\mathbf{x}_{\overline{\mathbf{m}}})\in \Gamma_{\bm{\mu}}$, and $a\in I_r$. Then the function $h(\mathbf{y}_{\bm{\mu}})=\varphi_{\bm{\mu}}(g(\mathbf{x}_{\overline{\mathbf{m}}}))$ has a zero of order at least $2\min(i,j)$ whenever \smash{$y_{i,r}^{(a)}=y_{j,s}^{(a)}$}. Moreover, if $t_a^{\vee}=\kappa$, then the function $h(\mathbf{y}_{\bm{\mu}})$ has a zero of order at least $2\min(i,j)$ whenever \smash{$y_{i,r}^{(a)}=\xi^m y_{j,s}^{(a)}$} for all $m\in[1,\kappa-1]$.
\end{Lemma}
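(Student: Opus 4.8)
The plan is to adapt the argument of Ardonne et al.\ \cite{AKS06,AK07} from the untwisted to the twisted setting, reducing the statement to a divisibility property of symmetric functions vanishing on coincidence loci. Since the assertion is symmetric under interchanging the two rows, I may assume $i\ge j$, so $\min(i,j)=j$. First I would perform a spectator reduction: fix all coordinates of $\mathbf{y}_{\bm\mu}$ other than $u:=y^{(a)}_{i,r}$ and $v:=y^{(a)}_{j,s}$ at generic values, and reinstate as free variables the $i+j$ variables $x^{(a)}_{c}$ attached to the cells of the two rows $(i,r)$ and $(j,s)$; this yields a Laurent function $\widetilde g$ of these $i+j$ variables, symmetric under permuting them (because $g$ is symmetric in the variables of each color), with $h(\mathbf{y}_{\bm\mu})=\widetilde g(\underbrace{u,\dots,u}_{i},\underbrace{v,\dots,v}_{j})$. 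In the case $t_a^{\vee}=\kappa$ one runs the whole argument with $(x^{(a)}_c)^{\kappa}$ in place of $x^{(a)}_c$ — legitimate because, as recorded before \eqref{eq:3.18}, every function in $\mathcal{U}[\overline{\mathbf m}]$ is a function of the $\kappa$th powers of such variables — and obtains divisibility of $h$ by $\bigl((y^{(a)}_{i,r})^{\kappa}-(y^{(a)}_{j,s})^{\kappa}\bigr)^{2j}$; since $(y^{(a)}_{i,r})^{\kappa}-(y^{(a)}_{j,s})^{\kappa}=\prod_{m=0}^{\kappa-1}(y^{(a)}_{i,r}-\xi^{m}y^{(a)}_{j,s})$, this gives both assertions of the lemma simultaneously.

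The two properties of $\widetilde g$ that drive the proof are: (1) whenever at least $i+1$ of its variables are specialized to one common value distinct from the fixed spectators, $\widetilde g$ vanishes; and (2) $\widetilde g$ in fact vanishes to order $\ge 2$ along every such coincidence plane $L=\{x_{c_1}=\dots=x_{c_{i+1}}\}$. For (1): such a specialization is a further specialization of $\varphi_{\bm\nu}(g)$, where $\bm\nu$ is the multipartition obtained from $\bm\mu$ by fusing the rows $(i,r)$ and $(j,s)$ into a single row of the coincident length together with singleton rows for the remaining cells; as the coincident length is $\ge i+1$ one has $\bm\nu>\bm\mu$ in the lexicographic order, and $g\in\Gamma_{\bm\mu}\subseteq\ker\varphi_{\bm\nu}$. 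For (2): $\widetilde g$ is invariant under the subgroup $S_{i+1}$ permuting $x_{c_1},\dots,x_{c_{i+1}}$, which fixes $L$ pointwise; since $\widetilde g$ vanishes on $L$, the restriction of $d\widetilde g$ to $L$ is an $S_{i+1}$-equivariant section of the conormal bundle of $L$, and this conormal representation is the standard representation of $S_{i+1}$, which has no nonzero invariant vectors, so $d\widetilde g|_L=0$ and therefore $\widetilde g\in I_{L}^{2}$. It is exactly this second factor of $2$ that turns the naive bound $\min(i,j)$ into $2\min(i,j)$.

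Granting these, I would finish as follows. For each $l\in[1,j]$ let $L_l$ be the plane on which the $i$ variables of row $(i,r)$ together with the $l$th variable $\xi_l$ of row $(j,s)$ all coincide; by (2), $\widetilde g\in I_{L_l}^{2}$. Restricting to the affine subspace on which the $i$ variables of row $(i,r)$ equal $u$, all but one of the generators $x_{c_1}-x_{c_2},\dots,x_{c_i}-\xi_l$ of $I_{L_l}$ restrict to $0$ and the last to $\xi_l-u$; hence $F(\xi_1,\dots,\xi_j):=\widetilde g(u,\dots,u,\xi_1,\dots,\xi_j)$ is divisible by $(\xi_l-u)^{2}$. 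As the factors $\xi_l-u$ are pairwise coprime, $\prod_{l=1}^{j}(\xi_l-u)^{2}$ divides $F$; specializing $\xi_1=\dots=\xi_j=v$ then shows $(v-u)^{2j}=(v-u)^{2\min(i,j)}$ divides $h$, as required.

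The part I expect to require the most care is combinatorial and analytic rather than conceptual: one must check that the fused multipartition $\bm\nu$ really does exceed $\bm\mu$ in the chosen lexicographic order in every configuration (especially when $\mu^{(a)}$ already contains several rows of the relevant length), and that the passage to $\widetilde g$ is analytically valid near the loci $L_l$ — the poles of $\widetilde g$ coming from the cross-color denominators in \eqref{eq:3.18} sit at the generic spectator-determined values and are disjoint from the loci where the $x^{(a)}$-variables coincide, so the order-of-vanishing statements and the ideal memberships $\widetilde g\in I_{L_l}^{2}$ are meaningful locally there. The adjustments for $t_a^{\vee}=\kappa$ are purely notational, replacing each $x^{(a)}_c$ by $(x^{(a)}_c)^{\kappa}$.
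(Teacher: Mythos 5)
Your proof is correct and supplies exactly the argument the paper defers to \cite[Lemma~3.7]{AKS06}: first-order vanishing on every $(i{+}1)$-fold coincidence locus (from $\Gamma_{\bm{\mu}}\subseteq\ker\varphi_{\bm{\nu}}$ with $\bm{\nu}>\bm{\mu}$ the fused multipartition), upgraded to second order by $S_{i+1}$-symmetry, and then multiplied across the $\min(i,j)$ hyperplanes $\xi_l=u$. Your treatment of the $t_a^{\vee}=\kappa$ case by running the whole argument in the variables $\bigl(x^{(a)}_c\bigr)^{\kappa}$ is equivalent to the paper's one-line observation that $h\bigl(y_{\ell,u}^{(a)}\bigr)=h\bigl(\xi^m y_{\ell,u}^{(a)}\bigr)$.
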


\begin{proof}
The proof of the first part of Lemma \ref{3.5} follows in the same fashion as in the proof of~\cite[Lemma 3.7]{AKS06}, and shall be omitted. For the second part of Lemma \ref{3.5}, we first note that as~${t_a^{\vee}=\kappa}$, we have \smash{$g\bigl(x_{\ell}^{(a)}\bigr)=g\bigl(\xi^mx_{\ell}^{(a)}\bigr)$} for all $m\in [1,\kappa-1]$. As the map $\varphi_{\bm{\mu}}$ is degree preserving, it follows that we have \smash{$h\bigl(y_{\ell,u}^{(a)}\bigr)= h\bigl(\xi^m y_{\ell,u}^{(a)}\bigr)$} for all $m\in [1,\kappa-1]$. Together with the first part of Lemma \ref{3.5}, this proves the second part of Lemma \ref{3.5}.
\end{proof}

\begin{Lemma}\label{3.6}
Let $g(\mathbf{x}_{\overline{\mathbf{m}}})\in\mathcal{U}[\overline{\mathbf{m}}]$, $a,b\in I_r$ be root indices that satisfy $\overline{C}_{ab}<0$, and $(i,r)$ and~${(j,s)}$ be parts of $\mu^{(a)}$ and $\mu^{(b)}$, respectively. Then the function $h(\mathbf{y}_{\bm{\mu}})=\varphi_{\bm{\mu}}(g(\mathbf{x}_{\overline{\mathbf{m}}}))$ has a pole of order~at most $\min(i,j)$ whenever \smash{$y_{i,r}^{(a)}=y_{j,s}^{(b)}$}. Moreover, if $\max(t_a^{\vee},t_b^{\vee})=\kappa$, then the function~$h(\mathbf{y}_{\bm{\mu}})$ has a pole of order at most $\min(i,j)$ whenever \smash{$y_{i,r}^{(a)}=\xi^m y_{j,s}^{(b)}$} for all $m\in[1,\kappa-1]$.
\end{Lemma}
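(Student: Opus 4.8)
The plan is to reduce the pole bound to a divisibility property of the numerator $g_1$ of $g$ in the sense of \eqref{eq:3.18}, and to establish that property by a short argument combining the vanishing condition \eqref{eq:3.19} with the symmetry of $g_1$, following the approach of \cite{AKS06,AK07}.

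First I would record how $\varphi_{\bm{\mu}}$ acts on the denominator in \eqref{eq:3.18}. Exactly $i$ of the variables $x_1^{(a)},\dots,x_{m^{(a)}}^{(a)}$, namely those occupying the row of $T_a$ of length $i$ indexed by $r$, are sent to $y_{i,r}^{(a)}$; exactly $j$ of the $x_1^{(b)},\dots,x_{m^{(b)}}^{(b)}$ are sent to $y_{j,s}^{(b)}$; and all remaining variables are sent to pairwise distinct generic values. Hence $\varphi_{\bm{\mu}}$ of the denominator in \eqref{eq:3.18} has a zero of order \emph{exactly} $ij$ along $\bigl\{y_{i,r}^{(a)}=y_{j,s}^{(b)}\bigr\}$, contributed by the $ij$ factors $\bigl((x_p^{(a)})^{k_{ab}}-(x_q^{(b)})^{k_{ab}}\bigr)$ with $x_p^{(a)}\mapsto y_{i,r}^{(a)}$ and $x_q^{(b)}\mapsto y_{j,s}^{(b)}$, every other factor being generically nonzero there. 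Since $g_1$ is a Laurent polynomial, so is $\varphi_{\bm{\mu}}(g_1)$; hence $h=\varphi_{\bm{\mu}}(g_1)/\varphi_{\bm{\mu}}(\mathrm{denom})$ has a pole of order at most $\min(i,j)$ along $\bigl\{y_{i,r}^{(a)}=y_{j,s}^{(b)}\bigr\}$ as soon as $\varphi_{\bm{\mu}}(g_1)$ vanishes there to order at least $ij-\min(i,j)$.

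To establish that vanishing, assume without loss of generality $i\le j$; the case $j=1$ is vacuous, so take $j\ge 2$. Write $u_1,\dots,u_i$ for the variables of $T_a$ in the chosen length-$i$ row and $v_1,\dots,v_j$ for those of $T_b$ in the chosen length-$j$ row, and regard $g_1$ as a polynomial in $u_1,\dots,u_i,v_1,\dots,v_j$ with the remaining variables fixed to generic complex values. By \eqref{eq:3.19}, applied with the roles of $a$ and $b$ interchanged, $g_1$ vanishes whenever $v_q=v_{q'}=u_p$ for distinct $q,q'$ and any $p$; since $g_1$ is symmetric in $v_1,\dots,v_j$, this says that for each fixed $p$ the polynomial $g_1$ lies in the ideal $\bigcap_{q<q'}\bigl(v_q-u_p,\,v_{q'}-u_p\bigr)$, which, as a monomial ideal in the differences $v_q-u_p$, is spanned by the monomials having at least $j-1$ nonzero exponents. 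Setting $v_1=\dots=v_j=v$ therefore makes $g_1$ divisible by $(v-u_p)^{j-1}$ for every $p$, hence, the $v-u_p$ being pairwise coprime, divisible by $\prod_{p=1}^{i}(v-u_p)^{j-1}$; a final specialization $u_1=\dots=u_i=u$ shows that $\varphi_{\bm{\mu}}(g_1)$, viewed as a function of $y_{i,r}^{(a)}=u$, $y_{j,s}^{(b)}=v$ and the remaining generic variables, is divisible by $(u-v)^{i(j-1)}=(u-v)^{ij-\min(i,j)}$, as required.

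For the $\xi^m$-shifted poles, suppose $\max(t_a^{\vee},t_b^{\vee})=\kappa$, so $k_{ab}=\kappa$, and say $t_a^{\vee}=\kappa$ (the case $t_b^{\vee}=\kappa$ is symmetric). Then by \eqref{eq:3.18} the function $g$, and hence also $g_1$ — the product of $g$ with the denominator, whose factors involving the $x_\bullet^{(a)}$ are polynomials in $(x_\bullet^{(a)})^{\kappa}$ because $k_{ac}=\kappa$ for every neighbour $c$ — and therefore $\varphi_{\bm{\mu}}(g_1)$ as well, depends on $y_{i,r}^{(a)}$ only through $(y_{i,r}^{(a)})^{\kappa}$. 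Thus the order-$(ij-\min(i,j))$ zero of $\varphi_{\bm{\mu}}(g_1)$ along $\bigl\{y_{i,r}^{(a)}=y_{j,s}^{(b)}\bigr\}$ is automatically accompanied by zeros of the same order along $\bigl\{y_{i,r}^{(a)}=\xi^m y_{j,s}^{(b)}\bigr\}$ for each $m\in[1,\kappa-1]$, while $X^{\kappa}-Y^{\kappa}=\prod_{m=0}^{\kappa-1}(X-\xi^m Y)$ gives the denominator an order-$ij$ zero along each such locus; the pole bound follows there too. I expect the only genuine work to be in the first two steps — pinning down the exact order $ij$ of the denominator's zero under $\varphi_{\bm{\mu}}$, and recognizing that \eqref{eq:3.19} together with symmetry forces membership in the monomial ideal with at least $j-1$ nonzero exponents — after which the divisibility, the pole bound, and the twisted refinement are all immediate.
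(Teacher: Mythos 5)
Your proof is correct and takes essentially the same approach as the paper, which defers the first part to \cite[Lemma~3.8]{AKS06} and \cite[Lemma~A.2]{AK07} --- the ideal-theoretic divisibility argument (Serre vanishing \eqref{eq:3.19} plus symmetry of $g_1$ giving membership in the monomial ideal with at least $j-1$ nonzero exponents, then specializing and comparing with the order-$ij$ zero of the denominator) that you have reconstructed in detail --- and handles the second part by the $\xi$-invariance argument used for Lemma~\ref{3.5}. The only cosmetic difference in the second part is that you track the $\xi$-periodicity of the numerator $\varphi_{\bm{\mu}}(g_1)$ and the factorization $X^{\kappa}-Y^{\kappa}=\prod_{m=0}^{\kappa-1}(X-\xi^mY)$ of the denominator separately, whereas the paper observes that $h$ itself is invariant under $y_{i,r}^{(a)}\mapsto\xi y_{i,r}^{(a)}$ (or under $y_{j,s}^{(b)}\mapsto\xi y_{j,s}^{(b)}$ if instead $t_b^{\vee}=\kappa$), which yields the same conclusion.
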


\begin{proof}
Let us write $g(\mathbf{x}_{\overline{\mathbf{m}}})$ in the form given in \eqref{eq:3.18}. As the vanishing condition \eqref{eq:3.19} satisfied by the function $g_1(\mathbf{x}_{\overline{\mathbf{m}}})$ is identical to the vanishing condition arising from the Serre relations in the simply-laced untwisted case (see \cite[equation~(3.15)]{AKS06} and \cite[equation~(4.4)]{AK07}), the first part of Lemma \ref{3.6} follows in a similar fashion as in the proofs of \cite[Lemma 3.8]{AKS06} and \cite[Lemma A.2]{AK07}, and shall be omitted. We will omit the proof of the second part of Lemma \ref{3.6} as well, as it follows from a similar argument as in the proof of the second part of Lemma \ref{3.5}.
\end{proof}

As a corollary of Lemmas \ref{3.5} and \ref{3.6}, this shows that the evaluation map $\varphi_{\bm{\mu}}\colon\Gamma_{\bm{\mu}} \to\tilde{\mathcal{H}}[\bm{\mu}]$ is well-defined.

Our next step is to show that the evaluation map $\varphi_{\bm{\mu}}\colon\Gamma_{\bm{\mu}} \to\tilde{\mathcal{H}}[\bm{\mu}]$ is surjective. Similar as before in the proofs of \cite[Lemma 3.16]{AKS06} and \cite[Theorem A.4]{AK07}, we will produce an explicit function $g(\mathbf{x}_{\overline{ \mathbf{m}}})\in\mathcal{U}[\overline{\mathbf{m}}]$ for each function $h(\mathbf{y}_{\bm{\mu}})\in \tilde{\mathcal{H}}[\bm{\mu}]$, and show that up to a nonzero scalar, the image of $g(\mathbf{x}_{\overline{\mathbf{m}}})$ under the evaluation map $\varphi_{\bm{\mu}}$ is precisely $h(\mathbf{y}_{\bm{\mu}})$.

Let us enumerate the variables in the pre-image of \smash{$y_{i,r}^{(a)}$} under the evaluation map $\varphi_{\bm{\mu}}= \varphi_{\bm{\mu},\mathbf{T}}$ as \smash{$\bigl\{x_{i,r}^{(a)}[1],\dots,x_{i,r}^{(a)}[i]\bigr\}\subseteq\bigl\{x_1^{(a)},\dots, x_{m^{(a)}}^{(a)}\bigr\}$}. We first let
\begin{gather}\label{eq:3.25}
g_0(\mathbf{x}_{\overline{\mathbf{m}}})
=\frac{\prod_{a\in I_r}\prod_{(i,r)>(i',r')} \prod_{j=1}^{i'}\bigl(x_{i,r}^{(a)}[j]^{t_a^{\vee}}-x_{i',r'}^{(a)}[j]^{t_a^{\vee}}\bigr)\!\bigl(x_{i,r}^{(a)}[j+1]^{t_a^{\vee}}-x_{i',r'}^{(a)}[j]^{t_a^{\vee}}\bigr)}{ \prod_{\substack{a<b\\\overline{C}_{ab}<0}}\prod_{i,j,r,s} \prod_{\ell=1}^{\min(i,j)}\bigl(x_{i,r}^{(a)}[\ell]^{k_{ab}}-x_{j,s}^{(b)}[\ell]^{k_{ab}}\bigr)},\!\!\!
\end{gather}
where $k_{ab}=\max(t_a^{\vee},t_b^{\vee})$ for all $a,b\in I_r$ satisfying $\overline{C}_{ab}<0$, and \smash{$x_{i,r}^{(a)}[i+1]:=x_{i,r}^{(a)}[1]$}. Next, for any Laurent polynomial $h_1(\mathbf{y}_{\bm{\mu}})$ in $\mathbf{y}_{\bm{\mu}}$ that is symmetric in \smash{$\bigl\{y_{i,r}^{(a)}\mid r\in[1,m_{a,i}]\bigr\}$} for all pairs~$(a,i)$, and any term
\[
c\cdot \bigl(y_{i_1,r_1}^{(a_1)}\bigr)^{m_1t_{a_1}^{\vee}}\cdots\bigl(y_{i_k,r_k}^{(a_k)}\bigr)^{m_kt_{a_k}^{\vee}}
\] of the Laurent polynomial $h_1(\mathbf{y}_{\bm{\mu}})$, we let
\[
g_1(\mathbf{x}_{\overline{\mathbf{m}}})
=\smash{\bigl(x_{i_1,r_1}^{(a_1)}[1]\bigr)^{m_1t_{a_1}^{\vee}}\cdots \bigl(x_{i_k,r_k}^{(a_k)}[1]\bigr)^{m_kt_{a_k}^{\vee}}},
\]
and we define
\begin{equation}\label{eq:3.27}
g(\mathbf{x}_{\overline{\mathbf{m}}})
=\Sym(g_0(\mathbf{x}_{\overline{\mathbf{m}}})g_1(\mathbf{x}_{\overline{\mathbf{m}}})),
\end{equation}
where the symmetrization is over each of the $r$ sets $\bigl\{ x_1^{(a)},\dots,x_{m^{(a)}}^{(a)}\bigr\}$ of
variables with the same label $a$.

\begin{Lemma}\label{3.7}
The function $g(\mathbf{x}_{\overline{\mathbf{m}}})$ defined by equation \eqref{eq:3.27} is an element of $\Gamma_{\bm{\mu}}$, and the image~$\varphi_{\bm{\mu}}(g(\mathbf{x}_{\overline{ \mathbf{m}}}))$ of the function $g(\mathbf{x}_{\overline{ \mathbf{m}}})$ under the evaluation map $\varphi_{\bm{\mu}}\colon \Gamma_{\bm{\mu}}\to\mathcal{H}[\bm{\mu}]$ is a nonzero scalar multiple of the function $h(\mathbf{y}_{\bm{\mu}})$ defined by \eqref{eq:3.24}.
\end{Lemma}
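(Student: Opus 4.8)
The plan is to adapt the three-step argument of \cite[Lemma 3.16]{AKS06} and \cite[Theorem A.4]{AK07} to the twisted setting, where the new ingredients are the exponents $t_a^{\vee}$, the root of unity $\xi$, and the exponents $k_{ab}=\max(t_a^{\vee},t_b^{\vee})$. Concretely, I would (i) verify that $g(\mathbf{x}_{\overline{\mathbf{m}}})$ lies in $\mathcal{U}[\overline{\mathbf{m}}]$, i.e., that it has the pole structure \eqref{eq:3.18} and satisfies the vanishing conditions \eqref{eq:3.19} and \eqref{eq:3.20}; (ii) verify that $\varphi_{\bm{\nu}}\bigl(g(\mathbf{x}_{\overline{\mathbf{m}}})\bigr)=0$ for every multipartition $\bm{\nu}\vdash\overline{\mathbf{m}}$ with $\bm{\nu}>\bm{\mu}$, so that in fact $g(\mathbf{x}_{\overline{\mathbf{m}}})\in\Gamma_{\bm{\mu}}$; and (iii) compute $\varphi_{\bm{\mu}}\bigl(g(\mathbf{x}_{\overline{\mathbf{m}}})\bigr)$ and identify it with a nonzero scalar multiple of $h(\mathbf{y}_{\bm{\mu}})$.

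For step (i), observe that the only denominators occurring in $g_0(\mathbf{x}_{\overline{\mathbf{m}}})g_1(\mathbf{x}_{\overline{\mathbf{m}}})$ are the cross-block factors $x_{i,r}^{(a)}[\ell]^{k_{ab}}-x_{j,s}^{(b)}[\ell]^{k_{ab}}$ with $\overline{C}_{ab}<0$; after symmetrizing over each block $\{x_1^{(a)},\dots,x_{m^{(a)}}^{(a)}\}$ these produce at worst simple poles at $x_i^{(a)}=\zeta x_j^{(b)}$ with $\zeta^{k_{ab}}=1$, which is precisely the pole locus permitted by \eqref{eq:3.18} once one recalls that for a label $c$ with $t_c^{\vee}=\kappa$ the functions depend only on the $\kappa$-th powers of the $x^{(c)}$-variables (this accounts for the $\xi^m$-shifted poles). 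The numerator $g_1(\mathbf{x}_{\overline{\mathbf{m}}})$ in the sense of \eqref{eq:3.18} is then a Laurent polynomial, symmetric in each block by construction, and the vanishing conditions \eqref{eq:3.19}--\eqref{eq:3.20} are forced by the ``consecutive-column'' shape of the numerator of $g_0$ together with $\xi$-invariance; since $\mathfrak{n}_-\bigl[t^{\pm1}\bigr]^{\sigma}$ is generated by simple-root currents, Lemmas \ref{3.2} and \ref{3.3} guarantee that these are the only analytic constraints to check, exactly as in the simply-laced case.

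Step (ii) is the technical heart of the lemma. Expand $\Sym(g_0 g_1)$ as a sum of permuted copies of $g_0 g_1$, apply $\varphi_{\bm{\nu}}=\varphi_{\bm{\nu},\mathbf{T}}$ term by term, and show each permuted term maps to $0$. The mechanism, following \cite[Lemma 3.16]{AKS06}, is a pigeonhole argument: because $\bm{\nu}$ strictly dominates $\bm{\mu}$ in the lexicographic order on multipartitions, in every permuted term at least one block-internal numerator factor $x_{i,r}^{(a)}[j]^{t_a^{\vee}}-x_{i',r'}^{(a)}[j]^{t_a^{\vee}}$ (or the companion factor with $[j+1]$) from $g_0$ is forced to involve two variables sent to a common $y$-variable by $\varphi_{\bm{\nu}}$, hence evaluates to $0$; since $\bigl[\overline{f}_a(w),\overline{f}_a(z)\bigr]=0$ there are no block-internal poles to cancel it, so the whole term vanishes. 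Carrying this column-counting argument through with $t_a^{\vee}$-th powers in place of first powers, and bookkeeping the $\xi^m$-shifted coincidences when $k_{ab}=\kappa$, is where essentially all the work lies, although the combinatorics are parallel to the untwisted case.

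For step (iii), evaluate $\varphi_{\bm{\mu}}$ on $\Sym(g_0 g_1)$: by the same mechanism as in step (ii) the permutations that do not respect the tableau filling are killed, while the surviving permutations all contribute (up to a common sign) the value $\varphi_{\bm{\mu}}(g_0 g_1)$, so the total is a nonzero integer depending only on $\bm{\mu}$ times $\varphi_{\bm{\mu}}(g_0 g_1)$. Under the substitution $x_{i,r}^{(a)}[\bullet]\mapsto y_{i,r}^{(a)}$ each product over $j=1,\dots,i'$ collapses to $\bigl(\bigl(y_{i,r}^{(a)}\bigr)^{t_a^{\vee}}-\bigl(y_{i',r'}^{(a)}\bigr)^{t_a^{\vee}}\bigr)^{2\min(i,i')}$, each cross-block denominator product collapses to $\bigl(\bigl(y_{i,r}^{(a)}\bigr)^{k_{ab}}-\bigl(y_{j,s}^{(b)}\bigr)^{k_{ab}}\bigr)^{\min(i,j)}$, and $\varphi_{\bm{\mu}}(g_1)$ returns the monomial $c\cdot\bigl(y_{i_1,r_1}^{(a_1)}\bigr)^{m_1t_{a_1}^{\vee}}\cdots\bigl(y_{i_k,r_k}^{(a_k)}\bigr)^{m_kt_{a_k}^{\vee}}$; summing over the monomials of $h_1(\mathbf{y}_{\bm{\mu}})$ rebuilds $h_1$. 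Hence $\varphi_{\bm{\mu}}\bigl(g(\mathbf{x}_{\overline{\mathbf{m}}})\bigr)$ equals $h(\mathbf{y}_{\bm{\mu}})$ of \eqref{eq:3.24} up to the nonzero integer factor above; in particular $g(\mathbf{x}_{\overline{\mathbf{m}}})\neq 0$ and $\varphi_{\bm{\mu}}\bigl(g(\mathbf{x}_{\overline{\mathbf{m}}})\bigr)\in\tilde{\mathcal{H}}[\bm{\mu}]$, completing the proof. The main obstacle, as noted, is the vanishing in step (ii).
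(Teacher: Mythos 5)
Your proposal follows exactly the three-step scheme the paper defers to — the same reduction to the untwisted proofs of \cite[Lemmas~3.11, 3.13, 3.15, and Lemma~3.16]{AKS06} and \cite[Theorem~A.4]{AK07}, with the modifications to $g_0$ (the $t_a^{\vee}$-th powers, the exponents $k_{ab}$, and the $\xi$-shifted coincidences) accounting for the twisted pole and zero structure. The paper omits the details for precisely this reason, so your outline is consistent with its intent, and your reading of how the ``consecutive-column'' numerator of $g_0$ collapses under $\varphi_{\bm{\mu}}$ to give the exponents $2\min(i,i')$ and $\min(i,j)$ in \eqref{eq:3.24} is correct.
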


The proof of Lemma \ref{3.7} shall be omitted, as it follows from the proofs of \cite[Lemmas 3.11, 3.13, and 3.15]{AKS06}, with the appropriate modifications made in the definition of $g_0(\mathbf{x}_{\overline{\mathbf{m}}})$ in \eqref{eq:3.25} to account for the differences in the structure of zeros and poles between the twisted case and the untwisted case described in \cite{AK07,AKS06}.

By Lemma \ref{3.7}, this shows that the evaluation map $\varphi_{\bm{\mu}}\colon \Gamma_{\bm{\mu}}\to\mathcal{H}[\bm{\mu}]$ is surjective. As the kernel of the evaluation map $\varphi_{\bm{\mu}}\colon \Gamma_{\bm{\mu}}\to\mathcal{H}[\bm{\mu}]$ is given by $\Gamma_{\bm{\mu}}\cap\ker \varphi_{\bm{\mu}}=\Gamma_{\bm{\mu}}'$, this completes the proof of Theorem~\ref{3.4}. Thus, as a corollary of Theorem~\ref{3.4} and \eqref{eq:3.23}, we see that the associated graded space $\gr\mathcal{F}_{\overline{\mathbf{m}}}$ of the filtration $\mathcal{F}_{\overline{\mathbf{m}}}$ on $\mathcal{U}[\overline{\mathbf{m}}]$ is given by
\smash{$
\gr\mathcal{F}_{\overline{\mathbf{m}}}
=\bigoplus_{\bm{\mu}\vdash\overline{\mathbf{m}}}\mathcal{H}[\bm{\mu}]$}.

\subsection[The dual space of generating functions of matrix elements to the fusion product of twisted KR-modules]{The dual space of generating functions of matrix elements\\ to the fusion product of twisted KR-modules}\label{Section 3.4}

Having provided an explicit description of the dual space $\mathcal{U}$ of functions by describing each associated graded space $\gr\mathcal{F}_{\overline{\mathbf{m}}}$ of the filtration $\mathcal{F}_{\overline{\mathbf{m}}}$ on $\mathcal{U}[\overline{\mathbf{m}}]$ in terms of spaces $\mathcal{H}[\bm{\mu}]$ of functions associated to multipartitions $\bm{\mu}\vdash\overline{\mathbf{m}}$ for each $r$-tuple $\overline{\mathbf{m}}$ of nonnegative integers, our goal in this subsection is to give a similar explicit characterization of the subspace $\overline{\mathcal{C}}_{ \overline{\lambda},\mathbf{n}}$ of $\mathcal{U}$, consisting of generating functions of matrix elements that arises from fusion products of localized KR-modules over $\mathfrak{g}[t]^{\sigma}$, by extending the procedure described in Section~\ref{Section 3.3} to $\overline{\mathcal{C}}_{ \overline{\lambda},\mathbf{n}}$, following the approaches outlined in \cite[Section~5.5]{AKS06} and \cite[Section~5.2]{AK07}.

Let us keep the notations as in Sections \ref{Section 3.1.1} and \ref{Section 3.1.2}, and write $V_j(z_j)=\KR_{a_j,i_j} ^{\sigma}(z_j)$ for all $j\in [1,N]$. Here, we view $V_1(z_1), \dots,V_N(z_N)$ as $\mathfrak{g}[t]^{\sigma}$-modules, even though they arise via the restriction the action of localized KR-modules over $\mathfrak{g}[t]$ to $\mathfrak{g}[t]^{\sigma}$, as we are interested in computing the graded multiplicities of irreducible $\mathfrak{g}^{\sigma}$-modules in fusion products of localized twisted KR-modules.

Following \eqref{eq:3.8}, we define $\overline{\mathcal{C}}_{\overline{\lambda},\mathbf{n}}[\overline{\mathbf{m}}]$ to be the subspace of $\mathcal{U}[\overline{\mathbf{m}}]$, consisting of generating functions of matrix elements of the form
\begin{equation}
\big\langle v_{\overline{\lambda}^*}\big|\overline{f}_{b_1}\bigl(x_1^{(b_1)}\bigr)\cdots\overline{f}_{b_M}\bigl(x_{m^{(b_M)}}^{(b_M)}\bigr) \big|v_1\otimes\cdots\otimes v_N\big\rangle\label{eq:3.29}
\end{equation}
for each $\overline{\mathbf{m}}=\bigl(m^{(1)},\dots,m^{(r)}\bigr)\in \mathbb{Z}_+^r$, where $M=\sum_{b\in I_r}m^{(b)}$, and $b_1,\dots,b_M\in I_r$ are root indices that satisfy \smash{$\sum_{j=1}^M\overline{\alpha}_{b_j}=\sum_{b\in I_r}^rm^{(b)} \overline{\alpha}_b$}.

As $\overline{\mathcal{C}}_{\overline{\lambda},\mathbf{n}}[\overline{\mathbf{m}}]$ is a subspace of $\mathcal{U}[\overline{\mathbf{m}}]$, it follows that all functions $g(\mathbf{x}_{\overline{\mathbf{m}}})$ in $\overline{\mathcal{C}}_{\overline{\lambda},\mathbf{n}}[\overline{\mathbf{m}}]$ are of the form given in \eqref{eq:3.18}, and its numerator $g_1(\mathbf{x}_{\overline{ \mathbf{m}}})$ satisfies the vanishing conditions given in \eqref{eq:3.19} and \eqref{eq:3.20}. In addition, the functions $g(\mathbf{x}_{\overline{\mathbf{m}}})$ (and hence their corresponding numerators $g_1(\mathbf{x}_{\overline{ \mathbf{m}}})$) satisfy additional properties that arise mainly from the properties of the lowest weight vector~$v_{\overline{\lambda}^*}$ of the $\widehat{\mathfrak{g}}^{ \sigma}$-module \smash{$\widehat{V}_{\kappa k,\overline{\lambda}}^*$} and the highest weight vector $v_1\otimes\cdots\otimes v_N$ of $V_1(z_1)*\cdots*V_N(z_N)=\KR_{a_1,i_1}^{\sigma}(z_1)*\cdots* \KR_{a_N,i_N}^{\sigma}(z_N)$. These additional properties are given as follows:
\begin{enumerate}\itemsep=0pt
\item[$(1)$] \emph{Zero weight condition:} For the matrix element given by \eqref{eq:3.29} to be nonzero, the total weight of this matrix element with respect to $\mathfrak{h}^{\sigma} \subseteq\mathfrak{g}^{\sigma}$ must be equal to zero, that is, we have (recall that the $\mathfrak{g}^{\sigma}$-weight of $v_{\overline{\lambda}^*}$ is $-\overline{\lambda}$)
\begin{equation}\label{eq:3.30}
-\overline{\lambda}-\sum_{j=1}^M\overline{\alpha}_{b_j}+ \sum_{j=1}^Ni_j\overline{\omega}_{a_j}=0.
\end{equation}
By letting $\overline{\lambda}=\sum_{a\in I_r}\ell_a \overline{\omega}_a$, we may rewrite the left-hand side of the zero weight condition~\eqref{eq:3.30} as
\begin{align}
-\overline{\lambda}-\sum_{j=1}^M\overline{\alpha}_{b_j}+ \sum_{j=1}^Ni_j\overline{\omega}_{a_j}
&=-\sum_{a\in I_r}\ell_a\overline{\omega}_a-\sum_{b\in I_r}m^{(b)} \overline{\alpha}_b+\sum_{a\in I_r}\sum_{i\in\mathbb{N}} in_{a,i}\overline{\omega}_a\nonumber\\
&=-\sum_{a\in I_r}\ell_a\overline{\omega}_a-\sum_{a,b\in I_r}\overline{C}_{ab} m^{(b)}\overline{\omega}_a+\sum_{a\in I_r}\sum_{i\in \mathbb{N}}in_{a,i}\overline{\omega}_a.\label{eq:3.31}
\end{align}
Thus, by comparing the coefficients of $\overline{\omega}_a$ on the right-hand side of \eqref{eq:3.30} and \eqref{eq:3.31} for all $a\in I_r$, it follows that the zero weight condition is equivalent to
\begin{equation}\label{eq:3.32}
\ell_a+\sum_{b\in I_r}\overline{C}_{ab} m^{(b)}-\sum_{i\in \mathbb{N}}in_{a,i}=0
\end{equation}
for all $a\in I_r$. In particular, as the Cartan matrix $\overline{C}$ is invertible, it follows if there exists a~solution $\overline{\mathbf{m}}\in\mathbb{Z}_+^r$ to \eqref{eq:3.32}, then it is uniquely determined by the choices of $\overline{\lambda}$ and $\mathbf{n}$ (note that for fixed $\overline{\lambda}$ and $\mathbf{n}$, there always exists a unique solution $\overline{\mathbf{m}}\in\mathbb{Q}^r$ to \eqref{eq:3.32}, as \eqref{eq:3.32} is an equation with integer coefficients). Subsequently, in this subsection and beyond, we may restrict our attention to pairs $\bigl(\overline{\lambda},\mathbf{n}\bigr)$ for which there exists a unique solution $\overline{\mathbf{m}}\in\mathbb{Z}_+^r$ to \eqref{eq:3.32}, or equivalently, a unique $\overline{ \mathbf{m}}\in\mathbb{Z}_+^r$ for which $\overline{ \mathcal{C}}_{\overline{\lambda},\mathbf{n}}[\overline{ \mathbf{m}}]\neq\{0\}$.

\item[$(2)$] \emph{Lowest weight condition:} As \smash{$v_{\overline{ \lambda}^*}$} is a lowest weight vector of the $\widehat{ \mathfrak{g}}^{\sigma}$-module \smash{$\widehat{V}_{\kappa k, \overline{\lambda}}^*$}, it follows that we have $\widehat{ \mathfrak{n}}_-^{\sigma}\cdot v_{\overline{ \lambda}^*}=0$. In particular, we have $\overline{f}_a [t_a^{\vee}n]\cdot v_{ \overline{\lambda}^*}=0$ for all $a\in I_r$ and~${n\in\mathbb{ Z}_{\leq0}}$. Thus, we have
\begin{align}
\overline{f}_a\bigl(x_i^{(a)}\bigr)\cdot v_{\overline{\lambda}^*}
&=\sum_{n\in\mathbb{Z}}\overline{f}_a[t_a^{\vee}n]\bigl(x_i^{(a)}\bigr)^{-t_a^{\vee}(n+1)}\cdot v_{\overline{\lambda}^*}\nonumber\\
&=\sum_{n\in\mathbb{N}}\overline{f}_a[t_a^{\vee}n]\bigl(x_i^{(a)}\bigr)^{-t_a^{\vee}(n+1)}\cdot v_{\overline{\lambda}^*},\label{eq:3.33}
\end{align}
which in turn implies that we must have
\smash{$
\deg_{x_i^{(a)}}g(\mathbf{x}_{\overline{\mathbf{m}}})\leq -2t_a^{\vee}
$}
for all $g(\mathbf{x}_{\overline{\mathbf{m}}})\in\overline{ \mathcal{C}}_{\overline{\lambda},\mathbf{n}}[\overline{ \mathbf{m}}]$ and $a\in I_r$. Equivalently, by letting
\begin{equation}\label{eq:3.35}
\overline{f}_a^+(z)=
\sum_{n\in\mathbb{N}}\overline{f}_a[t_a^{\vee}n]z^{-t_a^{\vee}(n+1)}
\end{equation}
for all $a\in I_r$, we see from \eqref{eq:3.29} and \eqref{eq:3.33} that the space $\overline{\mathcal{C}}_{ \overline{\lambda},\mathbf{n}}[\overline{\mathbf{m}}]$ consists of generating functions of matrix elements of the form
\begin{equation}\label{eq:3.36}
\big\langle v_{\overline{\lambda}^*}\big|\overline{f}_{b_1}^+\bigl(x_1^{(b_1)}\bigr)\cdots\overline{f}_{b_M}^+\bigl(x_{m^{(b_M)}}^{(b_M)}\bigr) \big|v_1\otimes\cdots\otimes v_N\big\rangle
\end{equation}
for each $\overline{\mathbf{m}}=\bigl(m^{(1)},\dots,m^{(r)}\bigr)\in \mathbb{Z}_+^r$, where $M=\sum_{b\in I_r}m^{(b)}$, and $b_1,\dots,b_M\in I_r$ are root indices that satisfy \smash{$\sum_{j=1}^M\overline{\alpha}_{b_j}=\sum_{b\in I_r}^rm^{(b)} \overline{\alpha}_b$}.
\item[$(3)$] \emph{Highest weight conditions:} By \eqref{eq:2.6}, it follows that for each $p\in[1,N]$, we have
\begin{equation*}
\psi_{z_p}\bigl(\overline{f}_a[t_a^{\vee}n]\bigr)v_p =\delta_{a_p,a}z_p^{t_{a_p}^{\vee}n}\psi_0\bigl(\overline{f}_{a_p}[0]\bigr)v_p.
\end{equation*}
In light of \eqref{eq:3.35} and \eqref{eq:3.36}, it follows that if $a\neq a_p$, then we have $\psi_{z_p}\bigl(\overline{ f}_a^+\bigl(x_i^{(a)}\bigr)\bigr)v_p=0$. Else, we have
\begin{align}
\psi_{z_p}\bigl(\overline{f}_{a_p}^+\bigl(x_i^{(a_p)}\bigr)\bigr)v_p
&=\sum_{n\in\mathbb{N}}\psi_{z_p}\bigl(\overline{f}_{a_p}[t_{a_p}^{\vee}n]\bigl(x_i^{(a_p)}\bigr)^{-t_{a_p}^{\vee}(n+1)}\bigr)v_p\nonumber\\
&=\sum_{n\in\mathbb{N}}z_p^{t_{a_p}^{\vee}n}\bigl(x_i^{(a_p)}\bigr)^{-t_{a_p}^{\vee}(n+1)}\psi_0\bigl(\overline{f}_{a_p}[0]\bigr)v_p\nonumber\\
&=\frac{z_p^{t_{a_p}^{\vee}}}{\bigl(x_i^{(a_p)}\bigr)^{t_{a_p}^{\vee}}\bigl(\bigl(x_i^{(a_p)}\bigr)^{t_{a_p}^{\vee}}-z_p^{t_{a_p}^{\vee}}\bigr)}\psi_0\bigl( \overline{f}_{a_p}[0]\bigr)v_p.\label{eq:3.37}
\end{align}
Using the pairing $\mathcal{U}[\overline{\mathbf{m}}]\times U[\overline{\mathbf{m}}]\to\mathbb{C}$, this shows that for each $g(\mathbf{x}_{\overline{\mathbf{m}}})\in\overline{ \mathcal{C}}_{\overline{\lambda},\mathbf{n}}[\overline{ \mathbf{m}}]$ and $a\in I_r\setminus\{a_p\}$, the function $g(\mathbf{x}_{\overline{\mathbf{m}}})$ does not have a pole at \smash{$x_i^{(a)}=z_p$}, and in the case where~${t_a^{\vee}= \kappa}$, the function $g(\mathbf{x}_{\overline{\mathbf{m}}})$ does not have a pole at \smash{$x_i^{(a)}=\xi^m z_p$} for all $m\in[1,\kappa-1]$ as well. The function $g(\mathbf{x}_{ \overline{\mathbf{m}}})$ may have a simple pole at \smash{$x_j^{(a_p)}=z_p$}, and in the case where $t_{a_p}^{\vee}= \kappa$, the~function $g(\mathbf{x}_{\overline{\mathbf{ m}}})$ may have a simple pole at \smash{$x_j^{(a_p)}=\xi^m z_p$} for all $m\in[1,\kappa-1]$ as well. Thus, if we define $g(\mathbf{x}_{\overline{\mathbf{m}}})$ by \eqref{eq:3.18}, then we have
\begin{equation}\label{eq:3.38}
g_1(\mathbf{x}_{\overline{\mathbf{m}}})
=\frac{g_2(\mathbf{x}_{\overline{\mathbf{m}}})}{  \prod_{p=1}^N\prod_{i=1}^{m^{(a_p)}}\bigl(\bigl(x_i^{(a_p)}\bigr)^{t_{a_p} ^{\vee}}-z_p^{t_{a_p}^{\vee}}\bigr)},
\end{equation}
where $g_2(\mathbf{x}_{\overline{\mathbf{m}}})$ is a polynomial in $\mathbf{x}_{\overline{\mathbf{m}}}$ that satisfies the vanishing conditions~\eqref{eq:3.19} and~\eqref{eq:3.20}. Lastly, $g(\mathbf{x}_{\overline{ \mathbf{m}}})$ does not have a pole at \smash{$x_i^{(a)}=0$} for all $a\in I_r$, as there are no $\mathfrak{g}[t]^{\sigma}$-modules localized at $0$.
\item[$(4)$] \emph{Integrability condition:} For each $p\in[1,N]$, it follows from \eqref{eq:2.5c} that we have
\begin{equation}\label{eq:3.39}
\psi_{z_p}\bigl(\overline{f}_{a_p}[0]\bigr)^{k+1}v_p
=0
=\psi_0\bigl(\overline{f}_{a_p}[0]\bigr)^{k+1}v_p.
\end{equation}
On the other hand, it follows from \eqref{eq:3.37} that we have
\begin{gather*}
\psi_{z_p}\bigl(\overline{f}_{a_p}^+\bigl(x_{j_1}^{(a_p)}\bigr)\bigr) \cdots\psi_{z_p}\bigl(\overline{f}_{a_p}^+\bigl(x_{j_{i_p}} ^{(a_p)}\bigr)\bigr) v_p\\
\qquad
=\frac{z_p^{t_{a_p}^{\vee}i_p}}{\prod_{\ell=1} ^{i_p}(x_{j_{\ell}}^{(a_p)})^{t_{a_p}^{\vee}}\bigl(\bigl(x_{j_{\ell}}^{(a_p)}\bigr) ^{t_{a_p}^{\vee}}-z_p^{t_{a_p}^{\vee}}\bigr)}\psi_0\bigl(\overline{f} _{a_p}[0]\bigr)^{i_p} v_p
\end{gather*}
for all \smash{$j_1,\dots,j_{i_p}\in\bigl[1, m^{(a_p)}\bigr]$}, which implies that
\begin{gather*}
\prod_{\ell=1}^{i_p}\bigl(\bigl(x_{j_{\ell}}^{(a_p)}\bigr)^{t_{a_p}^{\vee}}-z_p^{t_{a_p}^{\vee}}\bigr)\psi_{z_p}\bigl(\overline{f}_{a_p}^+(x_{ j_1}^{(a_p)})\bigr)\cdots\psi_{z_p}\bigl(\overline{f}_{a_p}^+(x_{j_{i_p}}^{(a_p)})\bigr)v_p
\end{gather*}
consists solely of regular terms. Together with \eqref{eq:3.39}, this shows that we have
\begin{equation*}
\prod_{\ell=1}^{i_p+1}\bigl(\bigl(x_{j_{\ell}}^{(a_p)}\bigr)^{t_{a_p}^{\vee}}-z_p^{t_{a_p}^{\vee}}\bigr)\psi_{z_p}\bigl(\overline{f}_{a_p}^+\bigl(x_{ j_1}^{(a_p)}\bigr)\bigr)\cdots\psi_{z_p}\bigl(\overline{f}_{a_p}^+\bigl(x_{j_{i_p+1}}^{(a_p)}\bigr)\bigr)v_p\big|_{x_{j_1}^{(a_p)}=\cdots=x_{j_{i_p+1}}^{(a_p)}=z_p}=0
\end{equation*}
for all distinct $j_1,\dots,j_{i_p+1}\in\bigl[1,m^{(a_p)}\bigr]$. Using the pairing $\mathcal{U}[\overline{\mathbf{m}}]\times U[\overline{ \mathbf{m}}]\to\mathbb{C}$, this shows that if we define $g(\mathbf{x}_{\overline{\mathbf{m}}})\in\overline{ \mathcal{C}}_{\overline{\lambda},\mathbf{n}}[\overline{ \mathbf{m}}]$ by \eqref{eq:3.18} and \eqref{eq:3.38}, then in addition to the vanishing conditions \eqref{eq:3.19} and \eqref{eq:3.20} satisfied by $g_2(\mathbf{x}_{ \overline{\mathbf{m}}})$, the function $g_2(\mathbf{x}_{\overline{\mathbf{m}}})$ satisfies the following vanishing condition:
\begin{equation*}
g_2(\mathbf{x}_{\overline{\mathbf{m}}})|_{x_{j_1}^{(a_p)}=\cdots=x_{j_{i_p+1}}^{(a_p)}=z_p}=0
\end{equation*}
for all distinct $j_1,\dots,j_{i_p+1}\in\bigl[1,m^{(a_p)}\bigr]$, and in the case where $t_{a_p}^{\vee}=\kappa$, the function $g_2(\mathbf{x}_{\overline{\mathbf{m}}})$ also satisfies the following vanishing condition:
\begin{equation*}
g_2(\mathbf{x}_{\overline{\mathbf{m}}})|_{x_{j_1}^{(a_p)}=\cdots=x_{j_{i_p+1}}^{(a_p)}=\xi^m z_p}=0
\end{equation*}
for all distinct $j_1,\dots,j_{i_p+1}\in\bigl[1,m^{(a_p)}\bigr]$ and $m\in[1,\kappa-1]$.
\end{enumerate}

\subsection[Filtration of the space of generating functions of matrix elements]{Filtration of the space of generating functions of matrix elements $\boldsymbol{\overline{\mathcal{C}}_{ \overline{\lambda},\mathbf{n}}[\overline{\mathbf{m}}]}$}\label{Section 3.5}

Following the approaches outlined in \cite[Section~5.6]{AKS06} and \cite[Section~5.3]{AK07}, our next step is to describe the induced filtration $\tilde{\mathcal{F}}_{\overline{\mathbf{m}}}$ on $\overline{\mathcal{C}}_{\overline{\lambda},\mathbf{n}}[\overline{\mathbf{m}}]$ from the filtration $\mathcal{F}_{\overline{\mathbf{m}}}$ on $\mathcal{U}[\overline{\mathbf{m}}]$ via restriction as a subspace of $\mathcal{U}[\overline{\mathbf{m}}]$, which can be described explicitly as follows: we enumerate the set of multipartitions $\bm{\mu}\vdash\overline{\mathbf{m}}$ as in Section~\ref{Section 3.3}, and we define
\begin{equation*}
\tilde{\Gamma}_{\bm{\mu}}=\Gamma_{\bm{\mu}}\cap\overline{\mathcal{C}}_{\overline{\lambda},\mathbf{n}}[\overline{\mathbf{m}}], \qquad
\tilde{\Gamma}_{\bm{\mu}}'=\Gamma_{\bm{\mu}}'\cap\overline{\mathcal{C}}_{\overline{\lambda},\mathbf{n}}[\overline{\mathbf{m}}]
\end{equation*}
for all multipartitions $\bm{\mu}\vdash\overline{\mathbf{m}}$. As we have \smash{$\tilde{\Gamma}_{\bm{\mu}_{i-1}}=\tilde{\Gamma} _{\bm{\mu}_i}'$} for all $i\in [1,N]$, we have a filtration~$\tilde{\mathcal{F}}_{\overline{\mathbf{m}}}$ on $\overline{ \mathcal{C}}_{\overline{\lambda},\mathbf{n}}[\overline{ \mathbf{m}}]$ parameterized by all multipartitions $\bm{\mu}\vdash\overline{\mathbf{m}}$
\begin{equation*}
\{0\}=\tilde{\Gamma}_{\bm{\mu}_0}\subseteq\tilde{\Gamma}_{\bm{ \mu}_1}\subseteq\cdots\subseteq\tilde{\Gamma}_{\bm{\mu}_N}= \overline{\mathcal{C}}_{\overline{\lambda},\mathbf{n}}[\overline{\mathbf{m}}].
\end{equation*}
Thus, the associated graded space $\gr\tilde{\mathcal{F}}_{\overline{\mathbf{m}}}$ of the filtration $\tilde{\mathcal{F}}_{\overline{\mathbf{m}}}$ on $\overline{\mathcal{C}}_{\overline{\lambda},\mathbf{n}}[\overline{\mathbf{m}}]$ is given by
\begin{equation}\label{eq:3.44}
\gr\tilde{\mathcal{F}}_{\overline{\mathbf{m}}}
=\bigoplus_{i=1}^N\tilde{\Gamma}_{\bm{\mu}_i}/\tilde{\Gamma}_{\bm{\mu}_{i-1}}
=\bigoplus_{i=1}^N\tilde{\Gamma}_{\bm{\mu}_i}/\tilde{\Gamma}_{\bm{\mu}_i}'
=\bigoplus_{\bm{\mu}\vdash\overline{\mathbf{m}}} \tilde{\Gamma}_{\bm{\mu}}/\tilde{\Gamma}_{\bm{\mu}}'.
\end{equation}
Similar as before in Section \ref{Section 3.3}, we would like to describe the image $\varphi_{\bm{\mu}}(\tilde{\Gamma}_{ \bm{\mu}})$ of the functions in~$\tilde{\Gamma}_{\bm{\mu}}$ under the evaluation map $\varphi_{\bm{\mu}}$ for each multipartition $\bm{\mu}\vdash\overline{\mathbf{m}}$. To this end, we will need the following description of the poles of the functions in $\varphi_{\bm{\mu}}\bigl(\overline{\mathcal{C}}_{ \overline{\lambda},\mathbf{n}}[\overline{\mathbf{m}}]\bigr)$ that arises from the highest weight and integrability conditions described in Section \ref{Section 3.4}.

\begin{Lemma}\label{3.8}
Let $g(\mathbf{x}_{\overline{\mathbf{m}}})\in \overline{ \mathcal{C}}_{\overline{\lambda},\mathbf{n}}[\overline{ \mathbf{m}}]$, and $p\in[1,N]$. Then the function $h(\mathbf{y}_{\bm{\mu}})=\varphi_{\bm{\mu}}(g(\mathbf{x}_{ \overline{ \mathbf{m}}}))$ has a pole of order at most $\min(i,i_p)$ whenever \smash{$y_{i,r}^{(a_p)}=z_p$}. Moreover, if $t_{a_p}^{\vee}=\kappa$, then the function~$h(\mathbf{y}_{\bm{ \mu}})$ has a pole of order at most $\min(i,i_p)$ whenever \smash{$y_{i,r}^{(a_p)}=\xi^m z_p$} for all $m\in[1,\kappa-1]$.
\end{Lemma}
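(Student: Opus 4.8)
The plan is to mimic the proof of the analogous pole bound in Lemma~\ref{3.6}, but now using the \emph{highest weight} and \emph{integrability} conditions of Section~\ref{Section 3.4} in place of the Serre-type vanishing. Recall from \eqref{eq:3.38} that any $g(\mathbf{x}_{\overline{\mathbf{m}}})\in\overline{\mathcal{C}}_{\overline{\lambda},\mathbf{n}}[\overline{\mathbf{m}}]$ can be written in the form \eqref{eq:3.18} with numerator
$g_1(\mathbf{x}_{\overline{\mathbf{m}}})=g_2(\mathbf{x}_{\overline{\mathbf{m}}})\big/\prod_{p=1}^N\prod_{i=1}^{m^{(a_p)}}\bigl(\bigl(x_i^{(a_p)}\bigr)^{t_{a_p}^{\vee}}-z_p^{t_{a_p}^{\vee}}\bigr)$,
where $g_2$ is a polynomial satisfying the vanishing conditions \eqref{eq:3.19}, \eqref{eq:3.20}, and additionally the integrability vanishing $g_2(\mathbf{x}_{\overline{\mathbf{m}}})|_{x_{j_1}^{(a_p)}=\cdots=x_{j_{i_p+1}}^{(a_p)}=z_p}=0$ for all distinct $j_1,\dots,j_{i_p+1}\in[1,m^{(a_p)}]$ (and its $\xi^m$-twisted counterpart when $t_{a_p}^{\vee}=\kappa$). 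So the only source of poles at $x_i^{(a_p)}=z_p$ in $g$ comes from the denominator $\prod_{i}\bigl(\bigl(x_i^{(a_p)}\bigr)^{t_{a_p}^{\vee}}-z_p^{t_{a_p}^{\vee}}\bigr)$, which contributes a single simple pole for each variable $x_i^{(a_p)}$.

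First I would analyze the effect of the evaluation map $\varphi_{\bm\mu}$. Fix a part $(i,r)$ of $\mu^{(a_p)}$ and enumerate the variables in its preimage as $\{x_{i,r}^{(a_p)}[1],\dots,x_{i,r}^{(a_p)}[i]\}$; the map sets all of them equal to $y_{i,r}^{(a_p)}$. Each of these $i$ variables contributes at most one simple pole at $x_{i,r}^{(a_p)}[\ell]=z_p$ from the denominator in \eqref{eq:3.38}, so naively $h(\mathbf{y}_{\bm\mu})$ could have a pole of order up to $i$ at $y_{i,r}^{(a_p)}=z_p$. The key point is that the integrability vanishing of $g_2$ forces cancellation: whenever more than $i_p$ of the variables $x_j^{(a_p)}$ are set equal to $z_p$, the numerator vanishes. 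After symmetrization (as in the construction \eqref{eq:3.27}) and evaluation, this means that $g_2$ restricted to the locus $x_{i,r}^{(a_p)}[1]=\cdots=x_{i,r}^{(a_p)}[i]=y_{i,r}^{(a_p)}$ vanishes to order at least $\max(0,i-i_p)$ in $(y_{i,r}^{(a_p)})^{t_{a_p}^{\vee}}-z_p^{t_{a_p}^{\vee}}$, which cancels $\max(0,i-i_p)$ of the $i$ simple poles, leaving a pole of order at most $i-\max(0,i-i_p)=\min(i,i_p)$. This is exactly the analogue of the argument in \cite[Lemma~3.17]{AKS06} and \cite[Lemma~A.3]{AK07}, and I would cite those proofs for the routine bookkeeping, indicating only the modifications: the factor $x_j^{(a_p)}-z_p$ is replaced by $\bigl(x_j^{(a_p)}\bigr)^{t_{a_p}^{\vee}}-z_p^{t_{a_p}^{\vee}}$, and the counting of how many preimage variables can simultaneously equal $z_p$ is unchanged since the evaluation map identifies the $i$ variables in each part.

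For the second (twisted) statement, the argument is the same as the second part of Lemma~\ref{3.5} and Lemma~\ref{3.6}: when $t_{a_p}^{\vee}=\kappa$, every $g(\mathbf{x}_{\overline{\mathbf{m}}})$ is a function of $\bigl(x_\ell^{(a_p)}\bigr)^{\kappa}$, hence $g\bigl(x_\ell^{(a_p)}\bigr)=g\bigl(\xi^m x_\ell^{(a_p)}\bigr)$ for all $m\in[1,\kappa-1]$, and since $\varphi_{\bm\mu}$ is degree-preserving, $h\bigl(y_{i,r}^{(a_p)}\bigr)=h\bigl(\xi^m y_{i,r}^{(a_p)}\bigr)$; combining this invariance with the bound at $y_{i,r}^{(a_p)}=z_p$ already established gives the bound at $y_{i,r}^{(a_p)}=\xi^m z_p$. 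The main obstacle I anticipate is making the cancellation count in the first paragraph fully rigorous after symmetrization — that is, checking that the integrability vanishing of $g_2$ genuinely produces a zero of the required order in the single variable $(y_{i,r}^{(a_p)})^{t_{a_p}^{\vee}}-z_p^{t_{a_p}^{\vee}}$ after the $i$ preimage variables are identified, rather than merely a zero on a higher-codimension locus. This is handled in \cite{AKS06, AK07} by a careful induction on $i$, and I would adapt that induction verbatim, the only change being the substitution of $\bigl(x^{(a_p)}\bigr)^{t_{a_p}^{\vee}}$ for $x^{(a_p)}$ throughout.
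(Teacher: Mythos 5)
Your proposal is correct and follows essentially the same route the paper takes, which is to reduce the statement to the pole-counting argument of Ardonne--Kedem--Stone: the only sources of poles at $y_{i,r}^{(a_p)}=z_p$ are the $i$ denominator factors $\bigl(x^{(a_p)}\bigr)^{t_{a_p}^{\vee}}-z_p^{t_{a_p}^{\vee}}$ coming from \eqref{eq:3.38}, the integrability vanishing of $g_2$ cancels $\max(0,i-i_p)$ of them, and the $\xi^m z_p$ case follows from the $\mathbb{Z}/\kappa$-invariance $g\bigl(\xi^m x_{\ell}^{(a_p)}\bigr)=g\bigl(x_{\ell}^{(a_p)}\bigr)$ when $t_{a_p}^{\vee}=\kappa$, exactly as in the second part of Lemmas~\ref{3.5} and~\ref{3.6}. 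Two small corrections: the paper's reference for the cancellation bookkeeping is \cite[Lemma~3.9]{AKS06} rather than the lemma numbers you cite, and the symmetry of $g$ in each block $\bigl\{x_j^{(a)}\bigr\}$ that you use is the general property stated after \eqref{eq:3.18}, not a consequence of the symmetrization in \eqref{eq:3.27}, which concerns the surjectivity construction of Lemma~\ref{3.7}.
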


The proof of Lemma \ref{3.8} follows from a similar argument as in the proof of \cite[Lemma 3.9]{AKS06}, bearing in mind the presence of extra poles in the twisted case, as compared to the untwisted case described in~\cite{AK07,AKS06}.

Together with Theorem~\ref{3.4} and the lowest weight condition, we have the following characterization of the functions in $\varphi_{\bm{\mu}}(\tilde{\Gamma}_{\bm{\mu}})$ for each multipartition $\bm{\mu}\vdash\overline{\mathbf{m}}$.

\begin{Theorem}\label{3.9}
Let $\bm{\mu}$ be a multipartition of $\overline{\mathbf{m}}$. Then the image \smash{$\varphi_{\bm{\mu}}\bigl(\tilde{\Gamma}_{\bm{\mu}}\bigr)$} of \smash{$\tilde{\Gamma}_{\bm{\mu}}$} under the evaluation map $\varphi_{\bm{\mu}}\colon\mathcal{U}[\overline{\mathbf{m}}]\to \mathcal{H}[\bm{\mu}]$ is a subspace of $\tilde{\mathcal{H}}[\bm{\mu}]\subseteq\mathcal{H}[\bm{\mu}]$, where $\tilde{ \mathcal{H}}[\bm{\mu}]$ is the space of rational functions
$h(\mathbf{y}_{\bm{\mu}})$ in $\mathbf{y}_{\bm{\mu}}$ of the form
\begin{align}
h(\mathbf{y}_{\bm{\mu}})
={}&\frac{\prod_{a\in I_r}\prod_{(i,r)>(j,s)}\bigl(\bigl(y_{i,r}^{(a)}\bigr)^{t_a^{\vee}}-\bigl(y_{j,s}^{(a)}\bigr) ^{t_a^{\vee}}\bigr)^{2\min(i,j)}}{\prod_{p=1}^N \prod_{i,r}\bigl(\bigl(y_{i,r}^{(a_p)}\bigr)^{t_{a_p}^{\vee}}-z_p^{t_{a_p} ^{\vee}})^{\min(i,i_p)}\prod_{\substack{a<b\\ \overline{C}_{ab}<0}}\prod_{i, j,r,s}\bigl(\bigl(y_{i,r}^{(a)}\bigr)^{k_{ab}}-\bigl(y_{j,s}^{(b)}\bigr) ^{k_{ab}}\bigr)^{\min(i,j)}}\nonumber\\
&\times h_1(\mathbf{y}_{\bm{\mu}})\label{eq:3.45}
\end{align}
with $k_{ab}=\max(t_a^{\vee},t_b^{\vee})$ for all $a,b\in I_r$ satisfying $\overline{C}_{ab}<0$, and $h_1(\mathbf{y}_{\bm{ \mu}})$ is an arbitrary polynomial in $\mathbf{y}_{\bm{\mu}}$ that is symmetric in \smash{$\{y_{i,r}^{(a)}\mid r\in[1,m_{a,i}]\}$} for all pairs $(a,i)$, such that the total degree~\smash{$\deg_{y_{i,r}^{(a)}}h( \mathbf{y}_{\bm{\mu}})$} of the function $h(\mathbf{y}_{\bm{ \mu}})$ in the variable \smash{$y_{i,r}^{(a)}$} is less than or equal to $-2t_a^{\vee}i$ for all $a\in I_r$ and parts $(i,r)$ of \smash{$\mu^{(a)}$}.
\end{Theorem}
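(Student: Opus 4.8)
The plan is to obtain \eqref{eq:3.45} by combining the structure result for $\mathcal{U}[\overline{\mathbf{m}}]$ (Theorem~\ref{3.4}, via Lemmas~\ref{3.5} and~\ref{3.6}) with the four families of conditions --- zero weight, lowest weight, highest weight, integrability --- imposed on $\overline{\mathcal{C}}_{\overline{\lambda},\mathbf{n}}[\overline{\mathbf{m}}]$ in Section~\ref{Section 3.4}, exactly as in \cite[Section~5.6]{AKS06} and \cite[Section~5.3]{AK07}. Fix $g(\mathbf{x}_{\overline{\mathbf{m}}})\in\tilde{\Gamma}_{\bm{\mu}}=\Gamma_{\bm{\mu}}\cap\overline{\mathcal{C}}_{\overline{\lambda},\mathbf{n}}[\overline{\mathbf{m}}]$ and put $h(\mathbf{y}_{\bm{\mu}})=\varphi_{\bm{\mu}}(g(\mathbf{x}_{\overline{\mathbf{m}}}))$. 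Since $g\in\Gamma_{\bm{\mu}}\subseteq\mathcal{U}[\overline{\mathbf{m}}]$, Lemma~\ref{3.5} shows that $h$ vanishes to order at least $2\min(i,j)$ on $y_{i,r}^{(a)}=y_{j,s}^{(a)}$, which, when $t_a^{\vee}=\kappa$, also subsumes the vanishing on the shifted loci $y_{i,r}^{(a)}=\xi^m y_{j,s}^{(a)}$ because $(y_{i,r}^{(a)})^{\kappa}-(y_{j,s}^{(a)})^{\kappa}=\prod_{m=0}^{\kappa-1}(y_{i,r}^{(a)}-\xi^m y_{j,s}^{(a)})$; this produces the numerator of \eqref{eq:3.45}. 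Lemma~\ref{3.6} shows that $h$ has a pole of order at most $\min(i,j)$ on $y_{i,r}^{(a)}=y_{j,s}^{(b)}$ (and on its $\xi$-shifts when $k_{ab}=\kappa$) for $\overline{C}_{ab}<0$, which accounts for the denominator factors indexed by pairs $a<b$ with $\overline{C}_{ab}<0$. Finally, $h$ is symmetric in each block $\{y_{i,r}^{(a)}\mid r\in[1,m_{a,i}]\}$ because $g$ is symmetric in $\{x_i^{(a)}\mid i\in[1,m^{(a)}]\}$.

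Next I would invoke Lemma~\ref{3.8}, which is the precise translation of the highest weight and integrability conditions of Section~\ref{Section 3.4}: for each $p\in[1,N]$ the function $h$ has a pole of order at most $\min(i,i_p)$ on $y_{i,r}^{(a_p)}=z_p$, together with the shifted loci $y_{i,r}^{(a_p)}=\xi^m z_p$ when $t_{a_p}^{\vee}=\kappa$, which supplies the remaining denominator factor $\prod_{p=1}^{N}\prod_{i,r}\bigl((y_{i,r}^{(a_p)})^{t_{a_p}^{\vee}}-z_p^{t_{a_p}^{\vee}}\bigr)^{\min(i,i_p)}$. By the last clause of the highest weight conditions, $g$ has no pole at $x_i^{(a)}=0$ (there are no $\mathfrak{g}[t]^{\sigma}$-modules localized at $0$), hence $h$ has no pole at $y_{i,r}^{(a)}=0$. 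Since every pole locus of $h$ has now been exhibited, multiplying $h$ by the denominator of \eqref{eq:3.45} and dividing by its numerator leaves a function $h_1(\mathbf{y}_{\bm{\mu}})$ that is regular at all finite values of the coordinates, hence a genuine polynomial in $\mathbf{y}_{\bm{\mu}}$ --- rather than merely a Laurent polynomial as in \eqref{eq:3.24} --- and still symmetric in each block $\{y_{i,r}^{(a)}\mid r\in[1,m_{a,i}]\}$.

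It remains to establish the degree bound $\deg_{y_{i,r}^{(a)}}h\leq -2t_a^{\vee}i$. For this I would use the lowest weight condition in the form $\deg_{x_n^{(a)}}g\leq -2t_a^{\vee}$ for every variable $x_n^{(a)}$, together with the description of $\varphi_{\bm{\mu}}$ as the substitution that identifies precisely the $i$ variables $x_{i,r}^{(a)}[1],\dots,x_{i,r}^{(a)}[i]$ among $\{x_1^{(a)},\dots,x_{m^{(a)}}^{(a)}\}$ with the single variable $y_{i,r}^{(a)}$. Under such a substitution the exponent of $y_{i,r}^{(a)}$ in each monomial of $h$ is the sum of the exponents of these $i$ variables in the corresponding monomial of $g$, each of which is at most $-2t_a^{\vee}$, and cancellation between monomials can only lower the top degree; this yields $\deg_{y_{i,r}^{(a)}}h\leq -2t_a^{\vee}i$. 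Assembling the three paragraphs shows $h\in\tilde{\mathcal{H}}[\bm{\mu}]$ in the sense of \eqref{eq:3.45}, so $\varphi_{\bm{\mu}}(\tilde{\Gamma}_{\bm{\mu}})\subseteq\tilde{\mathcal{H}}[\bm{\mu}]$.

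The computations behind Lemmas~\ref{3.5}, \ref{3.6} and~\ref{3.8} --- the precise vanishing orders and the contour-integral manipulations --- are direct transcriptions of the untwisted arguments of \cite{AKS06,AK07}, so I do not expect them to be the bottleneck here. The one point that genuinely requires care is the presence of the $\xi$-twisted loci $y_{i,r}^{(a)}=\xi^m y_{j,s}^{(a)}$, $y_{i,r}^{(a)}=\xi^m y_{j,s}^{(b)}$ and $y_{i,r}^{(a_p)}=\xi^m z_p$, which have no analogue in the untwisted setting: at each stage one must check that these are already absorbed into the $t^{\vee}$-th and $k_{ab}$-th power factors appearing in \eqref{eq:3.45}, so that no extra denominator factors are needed --- and, in the last step, that $\varphi_{\bm{\mu}}$ really does collect exactly $i$ of the $x$-variables into $y_{i,r}^{(a)}$, which is what produces the sharp exponent $-2t_a^{\vee}i$ rather than a weaker bound.
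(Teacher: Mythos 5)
Your proof is correct and follows the same route the paper takes: the paper likewise obtains Theorem~\ref{3.9} by combining the structure theorem~\ref{3.4} (via Lemmas~\ref{3.5} and~\ref{3.6}), Lemma~\ref{3.8} for the $z_p$-poles, the no-pole-at-$0$ clause of the highest weight conditions for the polynomiality of $h_1$, and the lowest weight bound $\deg_{x_n^{(a)}}g\le -2t_a^{\vee}$ for the degree inequality, stating this terse combination just before the theorem. You have simply spelled out the details, including the correct observation that the $\xi$-twisted loci are absorbed into the $\kappa$-th-power factors, and the degree argument works because the denominator of any $g\in\overline{\mathcal{C}}_{\overline{\lambda},\mathbf{n}}[\overline{\mathbf{m}}]$ is a product of factors each involving at most one $x$-variable with a given superscript $(a)$, so the degrees in the variables collapsed to $y_{i,r}^{(a)}$ genuinely add.
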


As the kernel of the evaluation map $\varphi_{\bm{\mu}} \colon\tilde{\Gamma}_{\bm{\mu}}\to\tilde{\mathcal{H}}[\bm{\mu}]$ is given by $\tilde{\Gamma}_{\bm{\mu}}\cap\ker\varphi_{\bm{\mu}} =\tilde{\Gamma}_{\bm{\mu}}'$, we have an induced injective map \smash{$\overline{\varphi}_{\bm{\mu}}\colon\tilde{\Gamma}_{\bm{\mu}}/\tilde{ \Gamma}_{\bm{\mu}}'\to\tilde{\mathcal{H}}[\bm{\mu}]$}. As in the untwisted case \cite{AK07,AKS06}, we are unable to show that the map \smash{$\overline{ \varphi}_{\bm{\mu}}\colon\tilde{\Gamma}_{\bm{\mu}}/\tilde{\Gamma} _{\bm{\mu}}'\to\tilde{\mathcal{H}}[\bm{\mu}]$} is surjective directly, which will only allow us to deduce Theorem~\ref{3.1} at this point. However, we will see in Section \ref{Section 4} that Theorems \ref{1.1} and \ref{1.2} imply the surjectivity of the evaluation map $\overline{\varphi}_{ \bm{\mu}}\colon\tilde{\Gamma}_{\bm{\mu}}/\tilde{\Gamma}_{\bm{\mu}}' \to\tilde{\mathcal{H}}[\bm{\mu}]$.

\subsection[Proof of Theorem 3.1]{Proof of Theorem~\ref{3.1}}\label{Section 3.6}

We are now ready to prove Theorem~\ref{3.1}. As in \cite{AK07,AKS06}, we will first relate the $q$-graded multiplicity~$\mathcal{M}_{\overline{\lambda},\mathbf{n}}(q)$ of $V\bigl(\overline{\lambda}\bigr)$ in the fusion product $\mathcal{F}_{\mathbf{n}}^*$ of localized twisted KR-modules parameterized by~$\mathbf{n}$ to the $q$-graded dimension of \smash{$\overline{\mathcal{C}}_{\overline{\lambda},\mathbf{n}}[\overline{\mathbf{m}}]$}, or equivalently, the $q$-graded dimension of \smash{$\gr\tilde{\mathcal{F}}_{\overline{ \mathbf{m}}}$}. Subsequently, we will compute the $q$-graded dimension of $\tilde{\mathcal{H}}[\bm{\mu}]$ for each multipartition $\bm{\mu}$ of~$\overline{\mathbf{m}}$ using the characterization of the functions in $\tilde{\mathcal{H}}[\bm{\mu}]$ given in Theorem~\ref{3.9}, and show that each~$\tilde{\mathcal{H}}[\bm{\mu}]$ whose $q$-graded dimension is nonzero corresponds to a (nonzero) term on the right-hand side of \eqref{eq:1.5}. Together with the injectivity of the induced evaluation map $\overline{\varphi}_{\bm{\mu}}\colon\tilde{ \Gamma}_{\bm{\mu}}/\tilde{\Gamma}_{\bm{\mu}}'\to\tilde{ \mathcal{H}}[\bm{\mu}]$ and~\eqref{eq:3.44}, this will then show that $\mathcal{M}_{\overline{\lambda},\mathbf{n}}\bigl(q^{-1}\bigr)\leq M_{\overline{\lambda},\mathbf{n}}\bigl(q^{-1}\bigr)$.

To begin, we first note that the space \smash{$\overline{ \mathcal{C}}_{\overline{\lambda},\mathbf{n}}[\overline{ \mathbf{m}}]$} has a filtration by homogeneous total degree in the variables \smash{$\bigl\{x_i^{(a)}\mid a\in I_r,\,i\in \bigl[1,m^{(a)}\bigr]\bigr\}$}, where we take the degree of the factors $\bigl(\smash{\bigl(x_i^{(a_p)}\bigr)^{t_{a_p}^{ \vee}}}\allowbreak-\smash{z_p^{t_{a_p}^{\vee}}}\bigr)$ in the denominator on the right-hand side of \eqref{eq:3.38} to be equal to $-t_{a_p}^{\vee}$ for all $p\in[1,N]$ and \smash{$i\in\bigl[1,m^{(a_p)}\bigr]$}, and is equivalent to setting $z_p=0$ for all $p\in[1,N]$ in \eqref{eq:3.38}. Let us denote the graded components by homogeneous total degree of \smash{$\overline{\mathcal{C}}_{\overline{\lambda},\mathbf{n}}[\overline{\mathbf{m}}]$} by \smash{$\overline{\mathcal{C}}_{ \overline{\lambda},\mathbf{n}}[\overline{\mathbf{m}}][n]$}. Similarly, each graded component $\tilde{\Gamma}_{\bm{\mu}}/ \tilde{\Gamma}_{\bm{\mu}}'$ of the associated graded space $\gr\tilde{\mathcal{F}}_{\overline{\mathbf{m}}}$ of the filtration $\tilde{\mathcal{F}}_{\overline{\mathbf{m}}}$ on $\overline{\mathcal{C}}_{\overline{\lambda},\mathbf{n}}[\overline{\mathbf{m}}]$ admits a filtration by homogeneous total degree in \smash{$\bigl\{x_i^{(a)}\mid a\in I_r,\,i\in \bigl[1,m^{(a)}\bigr]\bigr\}$} for all multipartitions $\bm{\mu}$ of $\overline{\mathbf{m}}$.

Now, as the degree of the coefficient \smash{$\bigl(x_i^{(a)}\bigr)^{-t_a^{\vee}(n+1)}$} of $\overline{f}_a[t_a^{\vee}n]$ in the generating function \smash{$\overline{f}_a\bigl(x_i^{(a)}\bigr)$} is equal to $-t_a^{\vee}(n+1)$, it follows that the space $\Hom_{\mathfrak{g}^{\sigma}}\bigl(\mathcal{F}_{\mathbf{n}}^*[m],V\bigl(\overline{\lambda}\bigr)\bigr)$ is dual to the space $\overline{ \mathcal{C}}_{\overline{ \lambda},\mathbf{n}}[\overline{ \mathbf{m}}][-m-C_{\overline{\mathbf{m}}}]$ for all $m\in \mathbb{Z}_+$, where \smash{$C_{\overline{\mathbf{m}}}=\sum_{a\in I_r}t_a^{\vee}m^{(a)}$}. Thus, by letting $\ch_q V=\sum_{m\in \mathbb{Z}}\dim V[m]q^m$ denote the generating function of the dimensions of the $\mathbb{Z}$-graded components~$V[m]$ of $V$ for any $\mathbb{Z}$-graded vector space $V=\bigoplus_{n\in \mathbb{Z}} V[m]$, it follows from \eqref{eq:1.8} and \eqref{eq:3.44} that we have
\begin{align*}
\mathcal{M}_{\overline{\lambda},\mathbf{n}}(q)
&=\sum_{m=0}^{\infty}\dim\Hom_{\mathfrak{g}^{\sigma}}\bigl(\mathcal{F}_{\mathbf{n}}^*[m],V\bigl(\overline{\lambda}\bigr)\bigr)q^m=\sum_{m=0}^{\infty}\dim\overline{ \mathcal{C}}_{\overline{ \lambda},\mathbf{n}}[\overline{\mathbf{m}}][-m-C_{\overline{ \mathbf{m}}}]q^m\\
&=q^{-C_{\overline{\mathbf{m}}}}\sum_{m=0}^{\infty}\dim \overline{\mathcal{C}}_{\overline{\lambda},\mathbf{n}}[\overline{\mathbf{m}}][-m-C_{\overline{\mathbf{m}}}]q^{m+C_{ \overline{\mathbf{m}}}}=q^{-C_{\overline{\mathbf{m}}}}\ch_{q^{-1}} \overline{ \mathcal{C}}_{\overline{ \lambda},\mathbf{n}}[\overline{ \mathbf{m}}]\\
&=q^{-C_{\overline{\mathbf{m}}}}\sum_{\bm{\mu}\vdash \overline{\mathbf{m}}}\ch_{q^{-1}}\tilde{\Gamma}_{\bm{\mu}}/\tilde{ \Gamma}_{\bm{\mu}}',
\end{align*}
or equivalently,
\begin{equation}\label{eq:3.46}
\mathcal{M}_{\overline{\lambda},\mathbf{n}}\bigl(q^{-1}\bigr)
=q^{C_{\overline{\mathbf{m}}}}\ch_q\overline{\mathcal{C}}_{ \overline{\lambda},\mathbf{n}}[\overline{\mathbf{m}}]
=q^{C_{\overline{\mathbf{m}}}}\sum_{\bm{\mu}\vdash\overline{ \mathbf{m}}}\ch_q\tilde{\Gamma}_{\bm{\mu}}/\tilde{\Gamma}_{ \bm{\mu}}'.
\end{equation}
Likewise, the space of functions $\tilde{\mathcal{H}}[\bm{ \mu}]$ admit a filtration by homogeneous total degree in the variables \smash{$\bigl\{y_{i,r}^{(a)}\mid a\in I_r,\,i\in\bigl[1,m^{(a)}\bigr],\,r\in[1, m_{a,i}]\bigr\}$} for each multipartition $\bm{\mu}$ of $\overline{\mathbf{m}}$. As $\overline{\varphi}_{\bm{\mu}}\colon \tilde{\Gamma}_{\bm{\mu}}/\tilde{\Gamma}_{\bm{\mu}}'\to \smash{\tilde{\mathcal{H}}[\bm{\mu}]}$ is an injective map of $\mathbb{Z}$-graded vector spaces, we have
\smash{$
\ch_q\tilde{ \Gamma}_{\bm{\mu}}/\tilde{\Gamma}_{\bm{\mu}}'
\leq\ch_q\tilde{\mathcal{H}}[\bm{\mu}]
$}
for all multipartitions $\bm{\mu}$ of $\overline{\mathbf{m}}$. Together with \eqref{eq:3.46}, we have
\begin{equation}\label{eq:3.47}
\mathcal{M}_{\overline{\lambda},\mathbf{n}}\bigl(q^{-1}\bigr)
\leq q^{C_{\overline{\mathbf{m}}}}\sum_{\bm{\mu}\vdash \overline{\mathbf{m}}}\ch_q\tilde{\mathcal{H}}[\bm{\mu}].
\end{equation}
It remains to show that the right-hand side of the inequality \eqref{eq:3.47} is equal to $M_{\overline{\lambda},\mathbf{n}}\bigl(q^{-1}\bigr)$. To this end, we will need to compute \smash{$\ch_q\tilde{ \mathcal{H}}[\bm{\mu}]$} for each multipartition $\bm{\mu}$ of $\overline{\mathbf{m}}$. We first note that the associated graded space of the filtration by homogeneous total degree in the variables $\smash{\bigl\{y_{i,r}^{(a)}}\mid a\in I_r,\, i\in\smash{\bigl[1,m^{(a)}\bigr]},\, r\in [1, m_{a,i}]\bigr\}$ on $\tilde{\mathcal{H}}[\bm{\mu}]$ is obtained by setting $z_p=0$ for all $p\in[1,N]$ in~\eqref{eq:3.45}, and by Theorem~\ref{3.9}, this associated graded space is isomorphic to the space of functions of the form~${
h(\mathbf{y}_{\bm{\mu}})
=h_0(\mathbf{y}_{\bm{\mu}})h_1(\mathbf{y}_{\bm{\mu}})}$,
where
\begin{align*}
 h_0(\mathbf{y}_{\bm{\mu}})
&=\frac{\prod_{a\in I_r}\prod_{(i,r)>(j,s)}\bigl(\bigl(y_{i,r}^{(a)}\bigr)^{t_a^{\vee}}-\bigl(y_{j,s}^{(a)}\bigr)^{t_a^{\vee}}) ^{2\min(i,j)}}{\prod_{p=1}^N\prod_{i,r}\bigl(y_{i,r} ^{(a_p)}\bigr)^{\min(i,i_p)t_{a_p}^{\vee}} \prod_{\substack{a<b\\ \overline{C}_{ab}<0}}\prod_{i,j,r,s}\bigl(\bigl(y_{i,r}^{(a)}\bigr)^{k_{ab}}-\bigl(y_{j,s}^{(b)}\bigr) ^{k_{ab}}\bigr)^{\min(i,j)}}\\
&=\frac{\prod_{a\in I_r}\prod_{(i,r)>(j,s)}\bigl(\bigl(y_{i,r}^{(a)}\bigr)^{t_a^{\vee}}-\bigl(y_{j,s}^{(a)}\bigr)^{t_a^{\vee}}\bigr) ^{2\min(i,j)}}{\prod_{b\in I_r}\prod_{j\in \mathbb{N}}\prod_{i,r}\bigl(y_{i,r}^{(b)}\bigr)^{n_{b,j}\min(i,j) t_b^{\vee}}\prod_{\substack{a<b\\ \overline{C}_{ab}<0}} \prod_{i,j,r,s}\bigl(\bigl(y_{i,r}^{(a)}\bigr)^{k_{ab}}-\bigl(y_{j,s}^{(b)}\bigr) ^{k_{ab}}\bigr)^{\min(i,j)}},
\end{align*}
with $k_{ab}=\max(t_a^{\vee},t_b^{\vee})$ for all $a,b\in I_r$ satisfying $\overline{C}_{ab}<0$,
\begin{equation}\label{eq:3.49}
\deg_{y_{i,r}^{(a)}}h(\mathbf{y}_{\bm{\mu}})
\leq-2t_a^{\vee}i
\end{equation}
for all $a\in I_r$ and parts $(i,r)$ of \smash{$\mu^{(a)}$}, and $h_1(\mathbf{y}_{\bm{\mu}})$ is an arbitrary polynomial in $\mathbf{y}_{\bm{\mu}}$ that is symmetric in \smash{$\bigl\{y_{i,r}^{(a)}\mid r\in[1,m_{a,i}]\bigr\}$} for all pairs $(a,i)$.

Now, using the fact that we have \smash{$\sum_{i\in\mathbb{N}} im_{a,i}=m^{(a)}$} for all $a\in I_r$, we may rewrite the left-hand side of~\eqref{eq:3.32} as
\begin{align*}
\ell_a+\sum_{b\in I_r}\overline{C}_{ab} m^{(b)}-\sum_{i\in \mathbb{N}}in_{a,i}
&=\ell_a+\sum_{i\in\mathbb{N}}\sum_{b\in I_r}i\overline{C}_{ ab} m_{b,i}-\sum_{i\in\mathbb{N}}in_{a,i}\\
&=\ell_a+\sum_{i\in\mathbb{N}}\sum_{b\in I_r}i\bigl(\overline{C}_{ ab} m_{b,i}-\delta_{ab}n_{b,i}\bigr)=q_{a,0},
\end{align*}
from which we see that the zero weight condition \eqref{eq:3.32} is equivalent to requiring the vector~${\mathbf{m}=(m_{a,i})_{a\in I_r,i\in\mathbb{N}}}$ satisfy $q_{a,0}=0$ for all $a\in I_r$.
Next, let us compute the total degree \smash{$\deg_{y_{i,r}^{(a)}} h_0(\mathbf{y}_{\bm{\mu}})$} of the function $h_0(\mathbf{ y}_{\bm{\mu}})$ in the variable \smash{$y_{i,r}^{(a)}$} for all $a\in I_r$ and parts $(i,r)$ of \smash{$\mu^{(a)}$}. As~we have $\overline{C}_{aa}=2$, and $k_{ab}=\max(t_a^{\vee},t_b^{\vee})=-t_a^{\vee}\overline{C}_{ab}=-t_b^{\vee}\overline{C}_{ba}$ for all $b\in I_r$ satisfying $\overline{C}_{ab}<0$, it follows that we have
\begin{align}
\deg_{y_{i,r}^{(a)}} h_0(\mathbf{y}_{\bm{\mu}})
={}&\sum_{(j,s)\neq(i,r)}2t_a^{\vee}\min(i,j)-\sum_{j\in\mathbb{N}} t_a^{\vee}\min(i,j)n_{a,j}-\sum_{\substack{b\neq a\\ \overline{C}_{ab}<0}}\sum_{(j,s)}k_{ab}\min(i,j)\nonumber\\
={}&-2t_a^{\vee}i+\sum_{j\in\mathbb{N}}2t_a^{\vee}\min(i,j)m_{a,j}-\sum_{j\in\mathbb{N}}\sum_{b\in I_r}t_a^{\vee}\min(i,j)\delta_{ab}n_{b,j}\nonumber\\
&+\sum_{j\in\mathbb{N}}\sum_{\substack{b\neq a\\ \overline{C}_{ab}<0}}t_a^{\vee}\min(i,j)\overline{C}_{ab}m_{b,j} \nonumber\\
={}&-2t_a^{\vee}i+t_a^{\vee}\sum_{j\in\mathbb{N}}\min(i,j)\overline{C}_{aa}m_{a,j}-t_a^{\vee}\sum_{j\in\mathbb{N}}\sum_{b\in I_r}\min(i,j)\delta_{ab}n_{b,j}\nonumber\\
&+t_a^{\vee}\sum_{j\in\mathbb{N}}\sum_{b\neq a}\min(i,j) \overline{C}_{ab}m_{b,j}\nonumber\\
={}&-2t_a^{\vee}i-t_a^{\vee}\sum_{j\in\mathbb{N}}\sum_{b\in I_r}\min(i,j)\delta_{ab}n_{b,j}+t_a^{\vee}\sum_{j\in\mathbb{N}}\sum_{b\in I_r}\min(i,j)\overline{C}_{ab}m_{b,j}\nonumber\\
={}&-2t_a^{\vee}i-t_a^{\vee}p_{a,i}.\label{eq:3.50}
\end{align}
Moreover, as $h_1(\mathbf{y}_{\bm{\mu}})$ is a polynomial in $\mathbf{y}_{ \bm{\mu}}$, it follows from \eqref{eq:3.49} and \eqref{eq:3.50} that we have
\begin{equation}\label{eq:3.51}
0\leq \deg_{y_{i,r}^{(a)}}h_1(\mathbf{y}_{\bm{\mu}})
\leq t_a^{\vee}p_{a,i}
\end{equation}
for all $a\in I_r$ and parts $(i,r)$ of $\mu^{(a)}$. Thus, the inequality \eqref{eq:3.51}, the zero weight condition, and the bijection \eqref{eq:3.21} together imply that $\dim\tilde{\mathcal{H}}[\bm{\mu}]>0$ if and only if the vector $\mathbf{m}=(m_{a,i})_{a\in I_r,i\in \mathbb{N}}$ satisfies $q_{a,0}=0$ and
$p_{a,i}\geq0$ for all $a\in I_r$ and $i\in\mathbb{N}$.

Now, we let $\mathcal{P}[\bm{\mu}]$ denote the space of polynomials $p(\mathbf{y}_{\bm{\mu}})$ in $\mathbf{y}_{\bm{\mu}}$ that are symmetric in $\bigl\{\smash{y_{i,r}^{(a)}}\mid r\in[1,m_{a,i}]\bigr\}$ for all pairs $(a,i)$, and satisfy the degree restrictions \eqref{eq:3.51}. Then we have
\begin{gather}
\ch_q\tilde{\mathcal{H}}[\bm{\mu}]
=q^{\deg h_0(\mathbf{y}_{\bm{\mu}})}\ch_q\mathcal{P}[\bm{\mu}],\label{eq:3.52}\\
\mathcal{P}[\bm{\mu}]
\cong\bigotimes_{a\in I_r}\bigotimes_{\substack{I\in\mathbb{N}\\m_{a,i}>0}}\mathcal{P}[\bm{\mu}]_{a,i},\label{eq:3.53}
\end{gather}
where $\mathcal{P}[\bm{\mu}]_{a,i}$ is the space of polynomials $p(\mathbf{y}_{a,i})$ in \smash{$\mathbf{y}_{a,i}=\bigl\{\bigl(y_{i,1}^{(a)}\bigr)^{ t_a^{\vee}},\dots,\bigl(y_{i,m_{a,i}}^{(a)}\bigr)^{t_a^{\vee}}\bigr\}$} that are symmetric in \smash{$\bigl\{y_{i,r}^{(a)}\mid r\in[1,m_{a,i}]\bigr\}$} for all pairs $(a,i)$, and satisfy the degree restriction \eqref{eq:3.51}. As~the degree of each variable \smash{$y_{i,r}^{(a)}$} of $p(\mathbf{y}_{a,i})$ is a multiple of $t_a^{\vee}$ for all polynomials $p(\mathbf{y}_{a,i})\in\mathcal{P}[\bm{\mu}]_{a,i}$, we have
\begin{equation*}
\ch_q\mathcal{P}[\bm{\mu}]_{a,i}=\begin{bmatrix} m_{a,i}+p_{a,i} \\m_{a,i}\end{bmatrix}_{q^{t_a^{\vee}}}=\begin{bmatrix} m_{a,i}+p_{a,i}\\m_{a,i}\end{bmatrix}_{q_a}
\end{equation*}
for all $a\in I_r$ and $i\in\mathbb{N}$ that satisfy $m_{a,i}>0$. Moreover, as
\begin{equation*}
\begin{bmatrix} m_{a,i}+p_{a,i}\\m_{a,i}\end{bmatrix}_{q_a}=1
\end{equation*}
for all $a\in I_r$ and $i\in\mathbb{N}$ that satisfy $m_{a,i}=0$, it follows from \eqref{eq:3.53} that we have
\begin{equation}\label{eq:3.54}
\ch_q\mathcal{P}[\bm{\mu}]
=\prod_{i\in\mathbb{N}}\prod_{a\in I_r}\ch_q\mathcal{P}[\bm{\mu}]_{a,i}
=\prod_{i\in\mathbb{N}}\prod_{a\in I_r}\begin{bmatrix} m_{a,i}+p_{a,i} \\m_{a,i}\end{bmatrix}_{q_a}.
\end{equation}
Finally, to complete the proof of Theorem~\ref{3.1}, we would need to show that $\deg h_0(\mathbf{y}_{\bm{\mu}})=-C_{\overline{ \mathbf{m}}}+Q(\mathbf{m},\mathbf{n})$. Indeed, we have
\begin{align*}
\deg h_0(\mathbf{y}_{\bm{\mu}})
={}&\sum_{a\in I_r}\sum_{(i,r)>(j,s)}2t_a^{\vee}\min(i,j)-\sum_{a\in I_r}\sum_{j\in\mathbb{N}}\sum_{(i,r)} t_a^{\vee}\min(i,j)n_{a,j}\\
&-\sum_{\substack{a<b\\ \overline{C}_{ab}<0}}\sum_{(i,r),(j,s)}k_{ab}\min(i,j)\\
={}&\sum_{a\in I_r}\sum_{(i,r)\neq(j,s)}t_a^{\vee}\min(i,j)-\sum_{a\in I_r}\sum_{i,j\in\mathbb{N}} t_a^{\vee}\min(i,j) m_{a,i}n_{a,j}\\
&+\sum_{\substack{a<b\\ \overline{C}_{ab}<0}}\sum_{i,j\in\mathbb{N}}t_a^{\vee}\overline{C}_{ab}\min(i,j)m_{a,i}m_{b,j}\\
={}&-\sum_{a\in I_r}t_a^{\vee}\sum_{i\in\mathbb{N}}im_{a,i}+\sum_{a\in I_r}\sum_{i,j\in\mathbb{N}}t_a^{\vee}\min(i,j)m_{a,i}m_{a,j}\\
&-\sum_{a,b\in I_r}\sum_{i,j\in\mathbb{N}} t_a^{\vee}\min(i,j)\delta_{ab}m_{a,i}n_{b,j}+\sum_{a<b}\sum_{i,j\in\mathbb{N}}t_a^{\vee}\overline{C}_{ab}\min(i,j)m_{a,i}m_{b,j}\\
={}&-\sum_{a\in I_r}t_a^{\vee}m^{(a)}+\frac{1}{2}\sum_{a\in I_r}\sum_{i,j\in\mathbb{N}}t_a^{\vee}\overline{C}_{aa}\min(i,j) m_{a,i}m_{a,j}\\
&-\sum_{a,b\in I_r}\sum_{i,j\in\mathbb{N}} t_a^{\vee}\min(i,j)\delta_{ab}m_{a,i}n_{b,j}\\
&+\frac{1}{2}\sum_{a<b}\sum_{i,j\in\mathbb{N}}\bigl(t_a^{\vee}\overline{C}_{ab}+t_b^{\vee}\overline{C}_{ba}\bigr)\min(i,j)m_{a,i}m_{b,j}\\
={}&-C_{\overline{\mathbf{m}}}+\frac{1}{2}\sum_{a\in I_r}\sum_{i,j\in\mathbb{N}}t_a^{\vee}\overline{C}_{aa}\min(i,j) m_{a,i}m_{a,j}\\
&-\sum_{a,b\in I_r}\sum_{i,j\in\mathbb{N}} t_a^{\vee}\min(i,j)\delta_{ab}m_{a,i}n_{b,j}+\frac{1}{2}\sum_{a\neq b}\sum_{i,j\in\mathbb{N}}t_a^{\vee}\overline{C}_{ab}\min(i,j)m_{a,i}m_{b,j}\\
={}&-C_{\overline{\mathbf{m}}}+\frac{1}{2}\sum_{i,j\in\mathbb{N}} \sum_{a,b\in I_r}t_a^{\vee}\min(i,j)m_{a,i}\bigl( \overline{C}_{ab}m_{b,j}-2\delta_{ab}n_{b,j}\bigr)\\
={}&-C_{\overline{\mathbf{m}}}+Q(\mathbf{m},\mathbf{n}).
\end{align*}
Together with \eqref{eq:3.47}, \eqref{eq:3.52} and \eqref{eq:3.54}, we have
\begin{align*}
\mathcal{M}_{\overline{\lambda},\mathbf{n}}\bigl(q^{-1}\bigr)
&\leq \sum_{\substack{\bm{\mu}\vdash \overline{\mathbf{m}}\\ \dim\tilde{\mathcal{H}}[\bm{\mu}]>0}} q^{Q( \mathbf{m},\mathbf{n})}\prod_{i\in\mathbb{N}}\prod_{a\in I_r} \begin{bmatrix} m_{a,i}+p_{a,i}\\m_{a,i}\end{bmatrix}_{q_a}\\
&=\sum_{\substack{\mathbf{m}\geq\mathbf{0}\\q_{a,0}=0,p_{a,i}\geq0}}q^{Q(\mathbf{m},\mathbf{n})}\prod_{i\in \mathbb{N}}\prod_{a\in I_r}\begin{bmatrix}m_{a,i}+p_{a,i}\\ m_{a,i}\end{bmatrix}_{q_a}=M_{\overline{\lambda},\mathbf{n}}\bigl(q^{-1}\bigr),
\end{align*}
and this completes the proof of Theorem~\ref{3.1}.

\section[Quantum twisted Q-systems and graded fermionic sums]{Quantum twisted $\boldsymbol{Q}$-systems and graded fermionic sums}\label{Section 4}

In this section, we will prove Theorems \ref{1.1} and \ref{1.2}. As in \cite{DFK08, DFK14, Lin21}, we will prove a slightly stronger statement, where we fix a positive integer $k$, and define $k$-restricted sums \smash{$M_{\overline{\lambda}, \mathbf{n}}^{(k)}\bigl(q^{-1}\bigr)$} and \smash{$\tilde{M}_{\overline{\lambda}, \mathbf{n}}^{(k)}\bigl(q^{-1}\bigr)$} by restricting the respective sums $M_{\overline{\lambda}, \mathbf{n}}\bigl(q^{-1}\bigr)$ and $\tilde{M}_{\overline{\lambda}, \mathbf{n}}\bigl(q^{-1}\bigr)$ to the vectors $\mathbf{m}$ that satisfy $m_{a,i}=0$ for all $a\in I_r$ and $i>k$, and show that
\[
M_{\overline{\lambda}, \mathbf{n}}^{(k)}\bigl(q^{-1}\bigr)=\tilde{M}_{\overline{\lambda}, \mathbf{n}}^{(k)}\bigl(q^{-1}\bigr)
\]
 are equal to each other. As both \smash{$M_{\overline{\lambda}, \mathbf{n}}^{(k)}\bigl(q^{-1}\bigr)$} and \smash{$\tilde{M}_{\overline{\lambda}, \mathbf{n}}^{(k)}\bigl(q^{-1}\bigr)$} are equal to \smash{$M_{\overline{\lambda}, \mathbf{n}}\bigl(q^{-1}\bigr)$} and \smash{$\tilde{M}_{\overline{\lambda}, \mathbf{n}}\bigl(q^{-1}\bigr)$}, respectively, whenever $k$ is large, this will show that Theorem~\ref{1.1} holds. Together with a similar argument as in \cite[Section~1]{Kedem11}, this will show that Theorem~\ref{1.2} holds as well.

\subsection[Quantum twisted Q-systems]{Quantum twisted $\boldsymbol{ Q}$-systems}\label{Section 4.1}

In this subsection, we will review the definition and properties of the quantum $X_m^{(\kappa)}$ $Q$-systems for all twisted affine types \smash{$X_m^{(\kappa)}\neq A_{2r}^{(2)}$}. These quantum twisted $Q$-systems arise from the quantization of the twisted $Q$-system cluster algebras defined in \cite{Williams15}. While explicit formulas for the~quantum \smash{$X_m^{(\kappa)}$} $Q$-system relations were only given for the classical twisted affine types in~\cite[equations~(4.2)--(4.4)]{DFK24}, that is, \smash{$X_m^{(\kappa)}=A_{2r-1}^{(2)}, D_{r+1}^{(2)}$}, the quantum \smash{$X_m^{(\kappa)}$} $Q$-system relations for the~exceptional twisted affine types, that is, \smash{$X_m^{(\kappa)}=E_6^{(2)}, D_4^{(3)}$}, can be derived in a similar fashion as the quantum \smash{$X_m^{(\kappa)}$} $Q$-system relations for the classical twisted affine types \smash{$\neq A_{2r}^{(2)}$}.

More generally, the quantum \smash{$X_m^{(\kappa)}$} $Q$-system relations for all affine types \smash{$\neq A_{2r}^{(2)}$} can be derived from the $Q$-system cluster algebras defined in \cite{DFK09,Kedem08,Williams15} in a uniform manner, following the procedure outlined in \cite[Remark 4.2]{DFK24}, which we will use to obtain a uniform derivation of the quantum \smash{$X_m^{(\kappa)}$} $Q$-systems for all twisted affine types \smash{$X_m^{(\kappa)}\neq A_{2r}^{(2)}$} below the fold.

Let us start by first recalling that the \smash{$X_m^{(\kappa)}$} $Q$-system cluster algebra is defined from the initial cluster data $\{Q_{a,0},Q_{a,1}\mid a\in I_r\}$, whose exchange matrix is given by\footnote{Our notation differs from \cite{Williams15}, which uses $B^T$ instead of $B$.}
$
B
=\big(\begin{smallmatrix}
0 & -\overline{C} \\
\overline{C} & 0
\end{smallmatrix}\big)$,
where~the order of both the row and column indices appearing in the exchange matrix $B$ correspond to that in the ordered initial cluster data $(Q_{1,0},\dots,Q_{r,0},Q_{1,1},\dots,Q_{r,1})$ of the \smash{$X_m^{(\kappa)}$} $Q$-system. We~let~${\delta=\det\bigl(\overline{C}\bigr)}$, $T^{\vee}=\diag(t_1^{\vee} ,\dots,t_r^{\vee})$, and \smash{$\Lambda=\delta T^{\vee}\overline{ C}^{-1}$}. Then $\Lambda$ is a symmetric matrix. Following \cite[Definition 3.1]{BZ05}, we define a skew-symmetric $\nu$-commutation matrix $\tilde{\Lambda}$ with respect to the exchange matrix $B$ by
$
\tilde{\Lambda}=\big(
\begin{smallmatrix}
0 & -\Lambda \\
\Lambda & 0
\end{smallmatrix}\big)$,
so that $\bigl(\tilde{\Lambda},B\bigr)$ forms a compatible pair. We~let~$\widehat{Q}_{a,i}$ denote the quantum cluster variable, which we call a quantum $Q$-system variable, that corresponds to the $Q$-system variable $Q_{a,i}$ in the quantum \smash{$X_m^{(\kappa)}$} $Q$-system cluster algebra associated to the compatible pair $\bigl(\tilde{\Lambda},B\bigr)$. These quantum $Q$-system variables satisfy two types of relations. The first type of relations are the $\nu$-commutation relations
\begin{equation*}
\widehat{Q}_{a,k+i}\widehat{Q}_{b,k+j}=\nu^{(i-j)\Lambda_{ab}} \widehat{Q}_{b,k+j}\widehat{Q}_{a,k+i},\qquad a,b\in I_r,\quad i,j\in\{0,1\},\quad k\in\mathbb{Z}.
\end{equation*}
The second type of relations are the quantum \smash{$X_m^{(\kappa)}$} $Q$-system relations, which are quantum deformations of the cluster transformations that correspond to the classical \smash{$X_m^{(\kappa)}$} $Q$-system relations~\eqref{eq:1.1}. Up to a renormalization of the quantum $Q$-system variables $\widehat{Q}_{a,i}$ (see, for instance, \cite[Lemma~4.4]{DFK21} and the derivation of \cite[equation~(3.20)]{Lin21}), the quantum \smash{$X_m^{(\kappa)}$} $Q$-system relations are given by
\begin{equation}\label{eq:4.4}
\nu^{-\Lambda_{aa}}\widehat{Q}_{a,k+1}\widehat{Q}_{a,k-1}= \widehat{Q}_{a,k}^2-\prod_{b\sim a}\widehat{Q}_{b,k}^{-\overline{C}_{ba}}
\end{equation}
for all $a\in I_r$ and $k\in\mathbb{Z}$.

\begin{remark*}
In the case where \smash{$X_m^{(\kappa)}=A_{2r-1}^{(2)}, D_{r+1}^{(2)}$}, our choice of the matrices $\Lambda$ and $\tilde{\Lambda}$ differ from those in \cite{DFK24} as follows: the matrix $\Lambda$ defined in \cite[Definition 4.1]{DFK24} is defined by \smash{$\Lambda=pT^{\vee}\overline{C}^{-1}$}, where $p$ is chosen so that $\Lambda_{11}=1$, and the skew-symmetric $\nu$-commutation matrix $\tilde{\Lambda}$ defined in \cite[Remark 4.2]{DFK24} differs from ours by a sign. The reader can verify that after accounting for these differences, the $\nu$-commutation relations and quantum $Q$-system relations given here and in \cite{DFK24} are equivalent to each other.
\end{remark*}

For later convenience, we will rewrite the quantum \smash{$X_m^{(\kappa)}$} $Q$-system relations \eqref{eq:4.4} as
\begin{equation}\label{eq:4.5}
\nu^{-\Lambda_{aa}}\widehat{Q}_{a,k+1}\widehat{Q}_{a,k-1}= \widehat{Q}_{a,k}^2\bigl(1-\widehat{Y}_{a,k}\bigr),
\end{equation}
where \smash{$\widehat{Y}_{a,k}= \prod_{b\in I_r}\widehat{Q}_{b,k}^{-\overline{C}_{ba}}$} for all $a\in I_r$ and $k\in\mathbb{Z}$.

Next, we will review some basic definitions and properties involving these quantum twisted $Q$-systems. We begin by first recalling that a Motzkin path of length $\ell-1$ is a vector ${\vec{m}=(m_i)_{i=1}^{\ell}}$ of integers that satisfy $|m_i-m_{i+1}|\leq 1$ for all $i\in[1,\ell-1]$. As an example, an initial data for the quantum \smash{$X_m^{(\kappa)}$} $Q$-system is given by \smash{$\bigl(\widehat{Q}_{a,0},\widehat{Q}_{a,1}\bigr)_{a\in I_r}$}. More generally, by letting $\widehat{\mathbf{Q}}_i=\bigl\{\widehat{Q}_{a,i}\mid a\in I_r\bigr\}$ for all $i\in\mathbb{Z}$, it follows that the set $\widehat{\mathbf{Q}}_k\cup\widehat{\mathbf{Q}}_{k+1}$ forms a valid set of initial data for the quantum \smash{$X_m^{(\kappa)}$} $Q$-system for all $k\in\mathbb{Z}$.

Now, each Motzkin path $\vec{m}=(m_a)_{a\in I_r}$ gives rise to a valid set $\mathbf{Y}_{\vec{m}}=\bigl(\widehat{Q}_{a,m_a}, \widehat{Q}_{a,m_a+1}\bigr)_{a\in I_r}$ of initial data for the quantum $Q$-system. As each $\mathbf{Y}_{\vec{m}}$ corresponds to a cluster consisting of quantum $Q$-system variables, it follows that the variables in $\mathbf{Y}_{\vec{m}}$ lie on the same quantum torus, and hence satisfy some $\nu$-commutation relations, which are given as follows:

\begin{Lemma}\label{4.1}
Let $\vec{m}=(m_a)_{a\in I_r}$ be a Motzkin path. Then for any pair $\widehat{Q}_{a,i}$, $\widehat{Q}_{b,j}$ of quantum $Q$-system variables in $\mathbf{Y}_{\vec{m}}$, we have
\smash{$
\widehat{Q}_{a,i}\widehat{Q}_{b,j}=\nu^{(i-j)\Lambda_{ab}} \widehat{Q}_{b,j}\widehat{Q}_{a,i}$}.
\end{Lemma}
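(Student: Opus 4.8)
The plan is to prove Lemma~\ref{4.1} by induction on the complexity $N(\vec m)=\sum_{a\in I_r}\bigl(m_a-\min_{b\in I_r}m_b\bigr)$ of the Motzkin path $\vec m$. When $N(\vec m)=0$, so that $\vec m=(k,\dots,k)$ is constant, the set $\mathbf Y_{\vec m}=\bigl\{\widehat Q_{a,k},\widehat Q_{a,k+1}\mid a\in I_r\bigr\}$ is a set of initial data for the quantum $X_m^{(\kappa)}$ $Q$-system, and for $i,j\in\{k,k+1\}$ the asserted identity is precisely the $\nu$-commutation relation recorded above for $\widehat{\mathbf Q}_k\cup\widehat{\mathbf Q}_{k+1}$; this will be the base case. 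For the inductive step I first record the elementary combinatorial fact that if $N(\vec m)>0$ then one may pick an index $c$ with $m_c=\max_a m_a$ and decrease $m_c$ by $1$, obtaining a Motzkin path $\vec m'$ with $N(\vec m')=N(\vec m)-1$: this move is admissible because each neighbour $b\sim c$ automatically has $m_b\in\{m_c-1,m_c\}$, and here I use that for every relevant $\mathfrak g^{\sigma}$ (type $C_r$, $B_r$, $F_4$ or $G_2$) the Dynkin diagram is a path, so the condition $|m_i-m_{i+1}|\le 1$ is the same as $|m_a-m_b|\le 1$ for $a\sim b$. Thus the original $\vec m$ is obtained from the lower-complexity path $\vec m'$ by the single upward move $m_c=m_c'+1$, and correspondingly $\mathbf Y_{\vec m}$ is obtained from $\mathbf Y_{\vec m'}$ by deleting $\widehat Q_{c,m_c-1}$ and inserting $\widehat Q_{c,m_c+1}$, the other elements being unchanged.

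For the inductive step itself I would use the quantum $X_m^{(\kappa)}$ $Q$-system relation \eqref{eq:4.4} at level $m_c$ to write
\begin{equation*}
\widehat Q_{c,m_c+1}=\nu^{\Lambda_{cc}}\Bigl(\widehat Q_{c,m_c}^{2}-\prod_{b\sim c}\widehat Q_{b,m_c}^{-\overline C_{bc}}\Bigr)\widehat Q_{c,m_c-1}^{-1}.
\end{equation*}
Admissibility of the move guarantees that $\widehat Q_{c,m_c-1}$, $\widehat Q_{c,m_c}$ and every $\widehat Q_{b,m_c}$ with $b\sim c$ lie in $\mathbf Y_{\vec m'}$, so the inductive hypothesis controls how each of them $\nu$-commutes with an arbitrary $\widehat Q_{a,i}\in\mathbf Y_{\vec m}$ with $a\neq c$ (such a $\widehat Q_{a,i}$ also belongs to $\mathbf Y_{\vec m'}$). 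The commutation of $\widehat Q_{c,m_c+1}$ with the remaining label-$c$ variable $\widehat Q_{c,m_c}$ of $\mathbf Y_{\vec m}$ is the stated relation for the adjacent pair $\bigl(\widehat Q_{c,m_c},\widehat Q_{c,m_c+1}\bigr)$, so it remains only to move $\widehat Q_{a,i}$, $a\neq c$, leftwards through the displayed formula. Carrying $\widehat Q_{a,i}$ past the two summands $\widehat Q_{c,m_c}^{2}$ and $\prod_{b\sim c}\widehat Q_{b,m_c}^{-\overline C_{bc}}$ produces the $\nu$-powers $\nu^{2(m_c-i)\Lambda_{ca}}$ and $\nu^{(m_c-i)\sum_{b\sim c}(-\overline C_{bc})\Lambda_{ba}}$; the key point is that these coincide, since $\sum_{b\in I_r}\overline C_{bc}\Lambda_{ba}=\delta t_c^{\vee}\delta_{ca}$, which for $a\neq c$ gives $\sum_{b\sim c}(-\overline C_{bc})\Lambda_{ba}=\overline C_{cc}\Lambda_{ca}=2\Lambda_{ca}$ (this uses $\Lambda=\delta T^{\vee}\overline C^{-1}$ together with the symmetrizability $t_a^{\vee}\overline C_{ab}=t_b^{\vee}\overline C_{ba}$). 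Because the two summands pick up the same power, the bracketed expression moves past $\widehat Q_{a,i}$ homogeneously; combining with the factor $\nu^{-(m_c-1-i)\Lambda_{ca}}$ from $\widehat Q_{c,m_c-1}^{-1}$ and cancelling $\nu^{\Lambda_{cc}}$ against $\nu^{-\Lambda_{cc}}$, the net exponent is $\bigl(-(m_c-1-i)+2(m_c-i)\bigr)\Lambda_{ca}=\bigl((m_c+1)-i\bigr)\Lambda_{ca}$, which is exactly the relation claimed in Lemma~\ref{4.1}. The downward move $m_c=m_c'-1$ is treated identically, using \eqref{eq:4.4} at level $m_c+1$.

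The main obstacle I anticipate is not conceptual but bookkeeping: one must check carefully that admissibility of the elementary move forces every quantum $Q$-system variable appearing on the right-hand side of the mutation relation \eqref{eq:4.4} to lie already in the cluster $\mathbf Y_{\vec m'}$ for which the inductive hypothesis is available, and one must verify --- via the identity $\sum_{b\in I_r}\overline C_{bc}\Lambda_{ba}=\delta t_c^{\vee}\delta_{ca}$ --- that the difference of monomials on the right of \eqref{eq:4.4} $\nu$-commutes homogeneously past any $\widehat Q_{a,i}$, which is what legitimizes manipulating the bracketed expression as a single element of the quantum torus. Once these points are settled the remaining exponent count is routine. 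An essentially equivalent alternative would be to track the $\nu$-commutation matrix of the seed $\mathbf Y_{\vec m}$ directly under cluster mutation and show inductively that its $\bigl((a,i),(b,j)\bigr)$-entry equals $(i-j)\Lambda_{ab}$; I would nonetheless prefer the explicit argument above, as it relies only on relations already displayed in the paper.
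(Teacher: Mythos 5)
Your proof is correct and takes the standard inductive approach that the paper gestures at when it defers to [DFK14, Lemma 3.2]: induct on the complexity $N(\vec m)=\sum_a(m_a-\min_b m_b)$, use the quantum $Q$-system relation \eqref{eq:4.4} to write the one new variable $\widehat Q_{c,m_c+1}$ in terms of variables already in the lower-complexity seed $\mathbf Y_{\vec m'}$, and verify via $\Lambda\overline C=\delta T^{\vee}$ that the two summands of the mutation relation pick up the \emph{same} $\nu$-power when a third variable is commuted past them, so that the bracketed expression can be treated homogeneously. Your observations that (i) the relevant Dynkin diagrams ($C_r$, $B_r$, $F_4$, $G_2$) are linear, so the Motzkin condition coincides with the condition $|m_a-m_b|\le1$ for $a\sim b$, and (ii) admissibility of the elementary move forces every variable appearing on the right of \eqref{eq:4.4} to lie in $\mathbf Y_{\vec m'}$, are exactly the bookkeeping points one must check, and you check them correctly.

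One phrase in the write-up is confused, though the final exponent count is right: there is no ``cancelling $\nu^{\Lambda_{cc}}$ against $\nu^{-\Lambda_{cc}}$''. The scalar $\nu^{\Lambda_{cc}}$ in the expression $\widehat Q_{c,m_c+1}=\nu^{\Lambda_{cc}}\bigl(\widehat Q_{c,m_c}^{2}-\prod_{b\sim c}\widehat Q_{b,m_c}^{-\overline C_{bc}}\bigr)\widehat Q_{c,m_c-1}^{-1}$ is central; it is carried along unchanged and reassembles into $\widehat Q_{c,m_c+1}$ on the other side of $\widehat Q_{a,i}$. The only contributions to the net $\nu$-power are the $\nu^{2(m_c-i)\Lambda_{ca}}$ from the (homogeneous) bracketed factor and the $\nu^{-(m_c-1-i)\Lambda_{ca}}$ from the inverse factor, and indeed $2(m_c-i)-(m_c-1-i)=(m_c+1)-i$. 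Also, the concluding sentence about ``the downward move $m_c=m_c'-1$'' is superfluous for the induction as you set it up (you always pass to $\vec m$ from $\vec m'$ by a single \emph{upward} move at a maximal index); it does no harm but is not needed.
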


The proof of Lemma \ref{4.1} is similar to that of \cite[Lemma~3.2]{DFK14} with minor modifications, and shall be omitted.

Moreover, as quantum cluster algebras satisfy the Laurent property \cite[Corollary~5.2]{BZ05}, we have that the solutions of the quantum $Q$-system inherit the following Laurent property as well.

\begin{Lemma}\label{4.2}
For any Motzkin path $\vec{m}=(m_a)_{a\in I_r}$, $b\in I_r$ and $i\in\mathbb{Z}$, the quantum $Q$-system variable $\widehat{Q}_{b,i}$ can be expressed as a $($non-commutative$)$ Laurent polynomial in the initial data $\mathbf{Y}_{\vec{m}}$ with coefficients in $\mathbb{Z}\bigl[\nu^{\pm1}\bigr]$.
\end{Lemma}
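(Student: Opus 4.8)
The statement to prove is Lemma~\ref{4.2}: for any Motzkin path $\vec{m}=(m_a)_{a\in I_r}$, any $b\in I_r$ and any $i\in\mathbb{Z}$, the quantum $Q$-system variable $\widehat{Q}_{b,i}$ is a non-commutative Laurent polynomial in the initial data $\mathbf{Y}_{\vec{m}}=\bigl(\widehat{Q}_{a,m_a},\widehat{Q}_{a,m_a+1}\bigr)_{a\in I_r}$ with coefficients in $\mathbb{Z}\bigl[\nu^{\pm1}\bigr]$.

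The plan is to deduce this directly from the quantum Laurent phenomenon for quantum cluster algebras \cite[Corollary~5.2]{BZ05}, exactly as in \cite[Lemma~3.2]{DFK14} and \cite[Lemma~3.1]{Lin21}, so the only real content is to check that each $\widehat{Q}_{b,i}$ is genuinely a cluster variable of the quantum $X_m^{(\kappa)}$ $Q$-system cluster algebra, and that $\mathbf{Y}_{\vec{m}}$ is an honest cluster of that algebra. First I would recall that, by construction in Section~\ref{Section 4.1}, the quantum $X_m^{(\kappa)}$ $Q$-system cluster algebra is the quantum cluster algebra attached to the compatible pair $\bigl(\tilde{\Lambda},B\bigr)$ with $B=\bigl(\begin{smallmatrix}0 & -\overline{C}\\ \overline{C} & 0\end{smallmatrix}\bigr)$, whose initial cluster is $\widehat{\mathbf{Q}}_0\cup\widehat{\mathbf{Q}}_1$; the exchange relations \eqref{eq:4.4} say precisely that mutating in direction $a$ takes $\widehat{Q}_{a,0}\mapsto\widehat{Q}_{a,2}$ (and leaves the other cluster variables fixed, with the exchange matrix evolving appropriately), and similarly any cluster of the form $\widehat{\mathbf{Q}}_k\cup\widehat{\mathbf{Q}}_{k+1}$ is reached from the initial one by a finite sequence of mutations. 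Hence every $\widehat{Q}_{b,i}$, $i\in\mathbb{Z}$, is a cluster variable: one mutates upward from $\widehat{\mathbf{Q}}_0\cup\widehat{\mathbf{Q}}_1$ using \eqref{eq:4.5} repeatedly to reach $\widehat Q_{b,i}$ for $i\ge 2$, and downward using the inverted relation $\nu^{-\Lambda_{bb}}\widehat{Q}_{b,k-1}\widehat{Q}_{b,k+1}^{-1}\cdots$ — more cleanly, by the symmetry $k\mapsto -k$ of the $Q$-system — to reach $\widehat Q_{b,i}$ for $i\le -1$.

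Next I would argue that $\mathbf{Y}_{\vec{m}}$ is a cluster. The point is the standard observation (already used implicitly above for constant paths) that one can mutate the levels $\{m_a,m_a+1\}$ independently for each orbit label $a$: starting from $\widehat{\mathbf{Q}}_0\cup\widehat{\mathbf{Q}}_1$, perform, for each $a$, the sequence of mutations in direction $a$ that raises or lowers the pair $(\widehat Q_{a,0},\widehat Q_{a,1})$ to $(\widehat Q_{a,m_a},\widehat Q_{a,m_a+1})$; because mutation in direction $a$ only alters the $a$-th cluster variable and the entries of $B$ in row/column $a$, and because the Motzkin condition $|m_a-m_{a+1}|\le 1$ is exactly what guarantees the intermediate matrices stay of $Q$-system type (equivalently, that the seeds visited are the seeds of the $Q$-system cluster algebra — this is the content of the ``Motzkin path = cluster'' correspondence in \cite{Kedem08,DFK09} in the classical case, and its quantum lift in \cite{DFK14}), these mutation sequences for different $a$ commute with one another and their composition produces exactly the seed whose cluster is $\mathbf{Y}_{\vec{m}}$. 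I would then simply invoke \cite[Corollary~5.2]{BZ05}: any cluster variable of a quantum cluster algebra — here $\widehat{Q}_{b,i}$ — is a Laurent polynomial with coefficients in $\mathbb{Z}\bigl[\nu^{\pm1}\bigr]$ in the variables of any given cluster — here $\mathbf{Y}_{\vec{m}}$. This yields the claim.

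The main obstacle is the verification that the Motzkin-path data $\mathbf{Y}_{\vec{m}}$ really is a seed of the quantum $Q$-system cluster algebra for \emph{every} Motzkin path, including the exceptional twisted types $E_6^{(2)}$ and $D_4^{(3)}$ for which \cite{DFK24} did not write out explicit relations; in the classical (non-quantum) case this is part of the structural theory of $Q$-system cluster algebras, and the quantum case is handled in \cite[Section~3]{DFK14} for untwisted types, so what is needed is to note that the same mutation-sequence argument goes through verbatim once one knows the quantum $X_m^{(\kappa)}$ $Q$-system relations \eqref{eq:4.4} hold — which was established in Section~\ref{Section 4.1}. Since the lemma statement itself says the proof ``is similar to that of \cite[Lemma~3.2]{DFK14} with minor modifications, and shall be omitted,'' I would present the above as a brief sketch rather than a detailed argument, emphasizing that the only modification is bookkeeping of the matrices $\Lambda$, $T^{\vee}$ and the factors $t_a^{\vee}\in\{1,\kappa\}$, none of which affects the combinatorics of which seeds are reachable.
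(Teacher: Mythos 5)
Your proposal is correct and matches the paper's approach exactly: the paper's ``proof'' of Lemma~\ref{4.2} is the one-line remark immediately preceding it, which invokes the quantum Laurent phenomenon of \cite[Corollary~5.2]{BZ05} applied to the cluster $\mathbf{Y}_{\vec{m}}$ of the quantum $X_m^{(\kappa)}$ $Q$-system cluster algebra, and your sketch supplies precisely the omitted verifications (that each $\widehat{Q}_{b,i}$ is a cluster variable, and that every Motzkin path $\vec{m}$ yields a genuine cluster). One small correction: the ``similar to \cite[Lemma~3.2]{DFK14} with minor modifications, and shall be omitted'' remark you quote actually refers to Lemma~\ref{4.1}, not Lemma~\ref{4.2}; for the latter the paper simply cites the BZ05 Laurent property as described.
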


Finally, similar to the untwisted case, the quantum twisted $Q$-systems satisfy a translational invariance property. To state the property clearly, we will need to write the solution of the quantum twisted $Q$-system as $\widehat{Q}_{a,n}(\mathbf{Y}_{\vec{m}})$ to display its dependence on the initial data $\mathbf{Y}_{\vec{m}}$. Then we have the following.

\begin{Lemma}\label{4.3}
For all $m\in\mathbb{Z}$ and $j\in\mathbb{Z}_+$, we have
\[
\widehat{Q}_{a,m}\bigl(\widehat{\mathbf{Q}}_j\cup\widehat{ \mathbf{Q}}_{j+1}\bigr)
=\widehat{Q}_{a,m+j}\bigl(\widehat{\mathbf{Q}}_0\cup\widehat{ \mathbf{Q}}_1\bigr).
\]
\end{Lemma}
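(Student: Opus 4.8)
The plan is to prove Lemma~\ref{4.3} by induction on $j$, exploiting the fact that the quantum twisted $Q$-system is a discrete evolution whose one-step update is governed by the relation \eqref{eq:4.4} (equivalently \eqref{eq:4.5}), together with the $\nu$-commutation relations for the various admissible clusters $\mathbf{Y}_{\vec m}$ provided by Lemma~\ref{4.1}. First I would observe that the statement is trivial for $j=0$. For the inductive step it suffices to treat $j=1$, since once $\widehat{Q}_{a,m}\bigl(\widehat{\mathbf{Q}}_1\cup\widehat{\mathbf{Q}}_2\bigr)=\widehat{Q}_{a,m+1}\bigl(\widehat{\mathbf{Q}}_0\cup\widehat{\mathbf{Q}}_1\bigr)$ is established, applying it repeatedly (shifting the base cluster by one index each time, which is legitimate because $\widehat{\mathbf{Q}}_k\cup\widehat{\mathbf{Q}}_{k+1}$ is a valid initial data set for every $k$) yields the general case by composing the shift maps.

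Next I would set up the $j=1$ case as a statement purely about the cluster algebra: both sides are expressed, via Lemma~\ref{4.2}, as noncommutative Laurent polynomials in the fixed initial data $\widehat{\mathbf{Q}}_0\cup\widehat{\mathbf{Q}}_1$. The key point is that the map sending the initial seed $\bigl(\widehat{Q}_{a,0},\widehat{Q}_{a,1}\bigr)_{a\in I_r}$ to the seed $\bigl(\widehat{Q}_{a,1},\widehat{Q}_{a,2}\bigr)_{a\in I_r}$ is realized by a composition of mutations (in fact the forward evolution of the $Q$-system), and this is precisely the same sequence of mutations that, applied to $\bigl(\widehat{Q}_{a,1},\widehat{Q}_{a,2}\bigr)_{a\in I_r}$, produces $\bigl(\widehat{Q}_{a,2},\widehat{Q}_{a,3}\bigr)_{a\in I_r}$, because the exchange matrix $B$ is invariant under the shift $k\mapsto k+1$ of the cluster structure. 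Hence the algebra automorphism implementing $\widehat{\mathbf{Q}}_k\cup\widehat{\mathbf{Q}}_{k+1}\mapsto\widehat{\mathbf{Q}}_{k+1}\cup\widehat{\mathbf{Q}}_{k+2}$ is independent of $k$, and $\widehat{Q}_{a,m}$ evaluated on the shifted data equals $\widehat{Q}_{a,m+1}$ evaluated on the original data. Concretely one verifies directly that defining $\widehat{Q}'_{a,n}:=\widehat{Q}_{a,n+1}$ produces a family satisfying the same $\nu$-commutation relations (by Lemma~\ref{4.1}, since $(i-j)$ is unchanged under the shift) and the same quantum $Q$-system relation \eqref{eq:4.4} with the same initial data $\widehat{\mathbf{Q}}_1\cup\widehat{\mathbf{Q}}_2$; by uniqueness of the solution of the recursion \eqref{eq:4.4} with prescribed initial data, $\widehat{Q}'_{a,n}=\widehat{Q}_{a,n}\bigl(\widehat{\mathbf{Q}}_1\cup\widehat{\mathbf{Q}}_2\bigr)$, which is exactly the $j=1$ claim.

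I would then close the induction: assuming the identity for $j$, write
\[
\widehat{Q}_{a,m}\bigl(\widehat{\mathbf{Q}}_{j+1}\cup\widehat{\mathbf{Q}}_{j+2}\bigr)
=\widehat{Q}_{a,m+1}\bigl(\widehat{\mathbf{Q}}_{j}\cup\widehat{\mathbf{Q}}_{j+1}\bigr)
=\widehat{Q}_{a,m+1+j}\bigl(\widehat{\mathbf{Q}}_{0}\cup\widehat{\mathbf{Q}}_{1}\bigr),
\]
where the first equality is the $j=1$ case applied with base cluster $\widehat{\mathbf{Q}}_{j}\cup\widehat{\mathbf{Q}}_{j+1}$ and the second is the inductive hypothesis. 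This gives the statement for $j+1$, completing the proof for all $j\in\mathbb{Z}_+$.

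The main obstacle I anticipate is making precise the claim that the one-step shift is implemented by a sequence of mutations independent of the starting index, i.e.\ checking that the forward $Q$-system evolution is a well-defined cluster mutation sequence compatible with the $\nu$-commutation structure. This is exactly the content that underlies Lemmas~\ref{4.1} and~\ref{4.2} (and the analogous untwisted statement, \cite[Lemma~3.3]{DFK14} or the corresponding result in \cite{Lin21}), so the cleanest route is to reduce everything to the uniqueness of solutions of the recursion \eqref{eq:4.4} with fixed initial data: once one knows $\widehat{Q}_{a,n}(\mathbf{Y})$ is the unique family with the prescribed $\nu$-commutations and quantum $Q$-relations extending $\mathbf{Y}$, translational invariance follows formally as above. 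I would therefore keep the cluster-theoretic discussion brief and lean on the uniqueness argument, noting that it mirrors the untwisted proof verbatim after substituting the twisted exchange matrix $B=\bigl(\begin{smallmatrix}0&-\overline{C}\\ \overline{C}&0\end{smallmatrix}\bigr)$ and commutation matrix $\Lambda=\delta T^{\vee}\overline{C}^{-1}$.
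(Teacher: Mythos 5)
Your proof is correct and uses essentially the same approach that the paper defers to via \cite[Lemma~4.10]{DFK14}: the recursion \eqref{eq:4.4} and the $\nu$-commutation relations of Lemma~\ref{4.1} do not depend on the index $k$, so the shifted family $\widehat{Q}_{a,n+1}$ satisfies the identical relations with initial data $\widehat{\mathbf{Q}}_1\cup\widehat{\mathbf{Q}}_2$ and, by uniqueness of the solution of the recursion, is given by the same Laurent polynomials, from which the general shift follows by iteration. The cluster-mutation framing is unnecessary scaffolding, as you ultimately acknowledge, since the autonomy-plus-uniqueness argument alone carries the whole proof.
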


The proof of Lemma \ref{4.3} follows from a similar argument as in the proof of \cite[Lemma 4.10]{DFK14}, and shall be omitted.

\subsection{Quantum generating functions}\label{Section 4.2}

Let us fix a positive integer $k$. Throughout this subsection and the next, we will restrict our attention to vectors $\mathbf{m}=(m_{a,i})_{a\in I_r,i\in\mathbb{N}}$ that satisfy $m_{a,i}=0$ for all $a\in I_r$ and $i>k$, or equivalently, $(r\times k)$-tuples $\mathbf{m}=(m_{a,i})_{a\in I_r,i\in[1,k]}$ of nonnegative integers.

For any $(r\times k)$-tuples $\mathbf{m}=(m_{a,i})_{a\in I_r,i\in[1,k]}$, $\mathbf{n}=(n_{b,j})_{b\in I_r,j\in[1,k]}$ and any dominant $\mathfrak{g}^{\sigma}$-weight \smash{$\overline{\lambda}=\sum_{a\in I_r}\ell_a\overline{\omega}_a$}, we define the $k$-restricted total spin $q_{a,0}$ and the $k$-restricted vacancy numbers~$p_{a,i}$ by
\begin{align*}
q_{a,0}&=\ell_a+\sum_{j=1}^k\sum_{b\in I_r}j\bigl(\overline{ C}_{ab}m_{b,j}-\delta_{ab}n_{b,j}\bigr),\qquad
p_{a,i}=\sum_{j=1}^k\sum_{b\in I_r}\min(i,j)\bigl(\delta_{ ab}n_{b,j}-\overline{C}_{ab}m_{b,j}\bigr).
\end{align*}
We will also define the modified $k$-restricted vacancy numbers $q_{a,i}$ by
\begin{equation}\label{eq:4.8}
q_{a,i}
=q_{a,0}+p_{a,i}
=\ell_a+\sum_{j=i+1}^k\sum_{b\in I_r}(j-i)\bigl(\overline{C}_{ab}m_{b,j}-\delta_{ab} n_{b,j}\bigr).
\end{equation}
We see that when $q_{a,0}=0$, we have $q_{a,i}=p_{a,i}$ for all $a\in I_r$ and $i\in[1,k]$.

Next, let us define the $k$-restricted quadratic form $Q^{(k)}(\mathbf{m},\mathbf{n})$ as follows:
\begin{gather*}
Q^{(k)}(\mathbf{m},\mathbf{n})=\frac{1}{2}\sum_{i,j=1}^k \sum_{a,b\in I_r}t_a^{\vee}\min(i,j)m_{a,i}\bigl(\overline{ C}_{ab}m_{b,j} -2\delta_{ab}n_{b,j}\bigr).
\end{gather*}
The $k$-restricted $M$- and $\tilde{M}$-sums \smash{$M_{\overline{ \lambda},\mathbf{n}}^{(k)}\bigl(q^{-1}\bigr)$} and \smash{$\tilde{M}_{\overline{ \lambda},\mathbf{n}}^{(k)}\bigl(q^{-1}\bigr)$} are then defined by
\begin{align}
M_{\overline{\lambda},\mathbf{n}}^{(k)}\bigl(q^{-1}\bigr)
&=\sum_{\substack{\mathbf{m}\geq\mathbf{0}\\q_{a,0}=0,p_{a,i}\geq0}}q^{Q^{(k)}(\mathbf{m},\mathbf{n})}\prod_{i=1}^k\prod_{a \in I_r}\begin{bmatrix}m_{a,i}+p_{a,i}\\m_{a,i} \end{bmatrix}_{q_a},\nonumber\\
\tilde{M}_{\overline{\lambda},\mathbf{n}}^{(k)}\bigl(q^{-1}\bigr)
&=\sum_{\substack{\mathbf{m}\geq\mathbf{0}\\q_{a,0}=0}}q^{Q^{(k)}(\mathbf{m},\mathbf{n})}\prod_{i=1}^k\prod_{a\in I_r}
\begin{bmatrix}m_{a,i}+p_{a,i}\\m_{a,i}\end{bmatrix}_{q_a}.\label{eq:4.11}
\end{align}
Our first technical lemma involves rewriting the quadratic form $Q^{(k)}(\mathbf{m},\mathbf{n})$ in terms of the modified $k$-restricted vacancy numbers $q_{a,i}$ and $n_{a,i}$.
\begin{Lemma}\label{4.4}
Let $\mathbf{n}_i=(n_{a,i})_{a\in I_r}$, $\mathbf{q}_j=(q_{a,j})_{a\in I_r}$, and $\mathbf{m}_{\ell}=(m_{a,\ell})_{a\in I_r}$, for all $i\in\mathbb{N}$, $j\in[0,k]$, and $\ell\in[1,k]$. Then we have
\begin{equation*}
Q^{(k)}(\mathbf{m},\mathbf{n})=\frac{1}{2\delta}\sum_{j=1}^k \left((\mathbf{q}_{j-1}-\mathbf{q}_j)\cdot\Lambda(\mathbf{ q}_{j-1}-\mathbf{q}_j)-\left(\sum_{i=j}^k\mathbf{n}_i\right) \cdot\Lambda\left(\sum_{i=j}^k\mathbf{n}_i\right)\right).
\end{equation*}
\end{Lemma}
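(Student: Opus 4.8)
The plan is to perform a direct but careful algebraic manipulation, reorganizing the double sum defining $Q^{(k)}(\mathbf{m},\mathbf{n})$ into the quadratic form in the variables $\mathbf{q}_j$ and $\mathbf{n}_i$. First I would express everything in terms of $\overline{C}$, $T^\vee$, and $\Lambda = \delta T^\vee \overline{C}^{-1}$, recalling that $\Lambda$ is symmetric and that $t_a^\vee \overline{C}_{ab} = t_b^\vee \overline{C}_{ba}$ so that the matrix $T^\vee \overline{C}$ is symmetric. The key preliminary identity is a ``summation by parts'' in the index $j$: using \eqref{eq:4.8}, one has the telescoping relation $\mathbf{q}_{j-1} - \mathbf{q}_j = \sum_{i=j}^k \bigl(\overline{C}\,\mathbf{m}_i - \mathbf{n}_i\bigr)$ (where $\overline{C}\,\mathbf{m}_i$ denotes the vector with components $\sum_b \overline{C}_{ab} m_{b,i}$), and more generally $\mathbf{q}_0 - \mathbf{q}_j = \sum_{i=j+1}^k \cdots$, which lets me trade the explicit $\min(i,j)$ weights for sums over the differences $\mathbf{q}_{j-1}-\mathbf{q}_j$.

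The main computational step is the standard identity $\sum_{i,j=1}^k \min(i,j)\, a_i b_j = \sum_{j=1}^k \bigl(\sum_{i=j}^k a_i\bigr)\bigl(\sum_{i=j}^k b_i\bigr)$, valid for any sequences $(a_i)$, $(b_i)$; applying it with $a_i = \mathbf{m}_i$ and $b_j = \overline{C}\,\mathbf{m}_j - 2\mathbf{n}_j$ (paired via the bilinear form $v \mapsto v^{\mathsf T} T^\vee w$) rewrites
\[
Q^{(k)}(\mathbf{m},\mathbf{n}) = \frac{1}{2}\sum_{j=1}^k \Bigl(\sum_{i=j}^k \mathbf{m}_i\Bigr)^{\mathsf T} T^\vee \Bigl(\sum_{i=j}^k (\overline{C}\,\mathbf{m}_i - 2\mathbf{n}_i)\Bigr).
\]
Writing $\mathbf{S}_j := \sum_{i=j}^k \mathbf{m}_i$ and $\mathbf{N}_j := \sum_{i=j}^k \mathbf{n}_i$, and noting $\overline{C}\,\mathbf{S}_j = (\mathbf{q}_{j-1}-\mathbf{q}_j) + \mathbf{N}_j$ from the telescoping relation above, I substitute to get each summand as $\frac12 \mathbf{S}_j^{\mathsf T} T^\vee\bigl((\mathbf{q}_{j-1}-\mathbf{q}_j) + \mathbf{N}_j - 2\mathbf{N}_j\bigr) = \frac12 \mathbf{S}_j^{\mathsf T} T^\vee\bigl((\mathbf{q}_{j-1}-\mathbf{q}_j) - \mathbf{N}_j\bigr)$, and then express $\mathbf{S}_j = \overline{C}^{-1}\bigl((\mathbf{q}_{j-1}-\mathbf{q}_j)+\mathbf{N}_j\bigr)$. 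Using $T^\vee \overline{C}^{-1} = \frac1\delta \Lambda$ and symmetry of $\Lambda$, each summand becomes $\frac{1}{2\delta}\bigl((\mathbf{q}_{j-1}-\mathbf{q}_j)+\mathbf{N}_j\bigr)\cdot\Lambda\bigl((\mathbf{q}_{j-1}-\mathbf{q}_j)-\mathbf{N}_j\bigr)$, which by the difference-of-squares expansion (legitimate since $\Lambda$ is symmetric, so the cross terms $u\cdot\Lambda v$ and $v\cdot\Lambda u$ agree and cancel) equals $\frac{1}{2\delta}\bigl((\mathbf{q}_{j-1}-\mathbf{q}_j)\cdot\Lambda(\mathbf{q}_{j-1}-\mathbf{q}_j) - \mathbf{N}_j\cdot\Lambda \mathbf{N}_j\bigr)$, which is precisely the claimed formula.

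The main obstacle I anticipate is purely bookkeeping: keeping the symmetrizations straight when passing between the form $m_{a,i}(\overline{C}_{ab}m_{b,j} - 2\delta_{ab}n_{b,j})$ with its asymmetric appearance and the manifestly symmetric $\Lambda$-quadratic form, and making sure the ``summation by parts'' is applied with the correct endpoint conventions (the $i=k$ boundary term and the relation $\mathbf{q}_k = \overline{\lambda}$-independent piece plus the fact that $\mathbf{m}_i = 0$ for $i > k$). One should double-check that no stray diagonal term $\min(i,i)$ is mishandled and that the identity $\sum_{i,j}\min(i,j)a_ib_j = \sum_j (\sum_{i\geq j}a_i)(\sum_{i\geq j}b_i)$ is applied to the $T^\vee$-weighted bilinear pairing rather than the naive one. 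These are routine once the notation $\mathbf{S}_j$, $\mathbf{N}_j$ is fixed, so the proof is short modulo this care.
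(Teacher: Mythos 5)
Your proposal is correct and takes essentially the same route as the paper's proof: both rely on the summation-by-parts identity $\sum_{i,j}\min(i,j)\,a_i b_j=\sum_{\ell}\bigl(\sum_{i\geq\ell}a_i\bigr)\bigl(\sum_{j\geq\ell}b_j\bigr)$, the telescoping relation $\mathbf{q}_{j-1}-\mathbf{q}_j=\sum_{i\geq j}\bigl(\overline{C}\mathbf{m}_i-\mathbf{n}_i\bigr)$ from \eqref{eq:4.8}, and the identity $\Lambda\overline{C}=\delta T^\vee$. The only cosmetic difference is that you solve for the tail sum $\mathbf{S}_j=\overline{C}^{-1}\bigl((\mathbf{q}_{j-1}-\mathbf{q}_j)+\mathbf{N}_j\bigr)$ and apply a difference-of-squares expansion, whereas the paper avoids inverting $\overline{C}$ and instead directly expands $\sum_j(\mathbf{q}_{j-1}-\mathbf{q}_j)\cdot\Lambda(\mathbf{q}_{j-1}-\mathbf{q}_j)$ and matches the pieces with $2\delta Q^{(k)}$ and $L_k(\mathbf{n})$.
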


\begin{proof}
Following the strategy in the proof of \cite[Lemma 4.2]{Lin21}, but with simplified indices in the twisted case, we first observe that
\begin{align*}
Q^{(k)}(\mathbf{m},\mathbf{n})
&=\frac{1}{2}\sum_{i,j=1}^k\sum_{a,b\in I_r} t_a^{\vee}\min(i,j)m_{a,i}\bigl(\overline{C}_{a b}m_{b,j}-2\delta_{ab}n_{b,j}\bigr)\\
&=\frac{1}{2}\sum_{i,j=1}^k\sum_{\ell=1}^{\min(i,j)}\sum_{ a,b\in I_r}t_a^{\vee}m_{a,i}\bigl(\overline{C}_{ab}m_{b,j}-2\delta_{ab}n_{b,j}\bigr)\\
&=\frac{1}{2}\sum_{\ell=1}^k\sum_{i,j=\ell}^k\mathbf{m}_i\cdot T^{\vee}\bigl(\overline{C}\mathbf{m}_j-2\mathbf{n}_j\bigr).
\end{align*}
On the other hand, it follows from \eqref{eq:4.8} that we have
$
\mathbf{q}_{j-1}-\mathbf{q}_j =\sum_{i=j}^k\bigl(\overline{C}\mathbf{m}_i-\mathbf{n}_i\bigr)
$
for all~${j\in[1,k]}$. This implies that
\begin{align*}
\sum_{j=1}^k(\mathbf{q}_{j-1}-\mathbf{q}_j)\cdot \Lambda(\mathbf{q}_{j-1}-\mathbf{q}_j)
&=\sum_{j=1}^k\sum_{i,\ell=j}^k\bigl(\overline{C}\mathbf{m}_i-\mathbf{n}_i\bigr)\cdot\Lambda\bigl(\overline{C}\mathbf{m}_{\ell}-\mathbf{n}_{\ell}\bigr)\\
&=\sum_{j=1}^k\sum_{i,\ell=j}^k\bigl[\mathbf{m}_i\cdot \overline{C}^T\Lambda\bigl(\overline{C}\mathbf{m}_{\ell}-2\mathbf{n}_{\ell}\bigr)+\mathbf{n}_i\cdot\Lambda\mathbf{n}_{\ell}\bigr]\\
&=\sum_{j=1}^k\sum_{i,\ell=j}^k\bigl[\delta\mathbf{m}_i\cdot T^{\vee}\bigl(\overline{C}\mathbf{m}_{\ell}-2\mathbf{n}_{\ell}\bigr) +\mathbf{n}_i\cdot\Lambda\mathbf{n}_{\ell}\bigr],
\end{align*}
where the last statement follows from the fact that $\Lambda\overline{C}=\delta T^{\vee}$.
\end{proof}

Let us fix the quantum parameter to be $q=\nu^{\delta}$, and let \smash{$u=\nu^{1/2}=q^{\frac{1}{2\delta}}$} and $\mathbb{Z}_u= \mathbb{Z}\bigl[u,u^{-1}\bigr]$. For any ring $R$ and a set of variables $x=\{x_1,\dots,x_n\}$, we let $R((x))$ denote the ring of formal Laurent series in the variables $x_1,\dots,x_n$, with coefficients in $R$. We define the generating function for multiplicities
\smash{$
Z_{\overline{\lambda},\mathbf{n}}^{(k)}\bigl(\widehat{\mathbf{Q}}_0 ,\widehat{\mathbf{Q}}_1)\in\mathbb{Z}_u\bigl[\widehat{\mathbf{Q}}_0^{\pm1}\bigr]\bigl(\bigl(\widehat{\mathbf{Q}}_1^{-1}\bigr)\bigr)
$}
in the fundamental initial data ${\widehat{\mathbf{Q}}_0\cup \widehat{\mathbf{Q}}_1}$ for the quantum $Q$-system as follows:
\begin{equation}\label{eq:4.13}
Z_{\overline{\lambda},\mathbf{n}}^{(k)}\bigl(\widehat{\mathbf{ Q}}_0,\widehat{\mathbf{Q}}_1\bigr)
=\sum_{\mathbf{m}}q^{\overline{Q}^{(k)}(\mathbf{m},\mathbf{ n})} \prod_{a\in I_r}\prod_{i=1}^k\begin{bmatrix}m_{ a,i}+q_{a,i}\\m_{a,i}\end{bmatrix}_{q_a}\prod_{a\in I_r}\widehat{Q}_{a,1}^{-q_{a,0}} \prod_{a\in I_r}\widehat{Q}_{a,0}^{q_{a,1}}.
\end{equation}
Here, the sum is over all $(r\times k)$-tuples $\mathbf{m}=(m_{a,i})_{a\in I_r,i\in[1,k]}$, and the modified $k$-restricted quadratic form \smash{$\overline{Q}^{(k)}(\mathbf{m},\mathbf{n})$} is defined by
\begin{equation*}
\overline{Q}^{(k)}(\mathbf{m},\mathbf{n})
=\frac{1}{2\delta}\left[\mathbf{q}_1\cdot\Lambda\mathbf{q}_1+ \sum_{j=2}^k(\mathbf{q}_{j-1}-\mathbf{q}_j)\cdot\Lambda (\mathbf{q}_{j-1}-\mathbf{q}_j)\right].
\end{equation*}
Thus, when $\mathbf{q}_0=\mathbf{0}$, we have
\begin{equation}\label{eq:4.15}
\overline{Q}^{(k)}(\mathbf{m},\mathbf{n})= Q^{(k)}(\mathbf{m},\mathbf{n})+\frac{1}{2\delta} L_k(\mathbf{n}),
\end{equation}
where
\begin{equation*}
L_k(\mathbf{n})
=\sum_{j=1}^k\left(\sum_{i=j}^k\mathbf{n}_i\right)\cdot\Lambda\left(\sum_{i=j}^k\mathbf{n}_i\right)
=\sum_{\ell=1}^k\sum_{i,j=\ell}^k\mathbf{n}_i\cdot\Lambda\mathbf{n}_j
=\sum_{i,j=1}^k\min(i,j)\mathbf{n}_i\cdot\Lambda\mathbf{n}_j.
\end{equation*}
The generating function \smash{$Z_{\overline{\lambda},\mathbf{n}}^{ (k)}\bigl(\widehat{\mathbf{Q}}_0,\widehat{\mathbf{Q}}_1\bigr)$} is related to the $k$-restricted $\tilde{M}$-sum \smash{$\tilde{M}_{ \overline{\lambda},\mathbf{n}}^{(k)}\bigl(q^{-1}\bigr)$} \eqref{eq:4.11} via a constant term and an evaluation. For any \smash{$f\in \mathbb{Z}_u\bigl[\widehat{\mathbf{Q}}_0^{\pm1}\bigr]\bigl(\bigl(\widehat{ \mathbf{Q}}_1^{-1}\bigr)\bigr)$}, we denote the constant term of $f$ by \smash{$\mathrm{CT}_{\widehat{\mathbf{Q}}_1}(f)$}. In particular, if
\begin{equation*}
f=\sum_{\mathbf{r},\mathbf{s}\in\mathbb{Z}^r}f_{\mathbf{r},\mathbf{s}}\prod_{a\in I_r}\widehat{Q}_{a,1}^{r_a}\prod_{b\in I_r}\widehat{Q}_{b,0}^{s_b}
\end{equation*}
with $f_{\mathbf{r},\mathbf{s}}\in\mathbb{Z}_u$ for all $\mathbf{r},\mathbf{s}\in\mathbb{Z}^r$, then we have
\begin{equation*}
\mathrm{CT}_{\widehat{\mathbf{Q}}_1}(f)=\sum_{\mathbf{r}\in\mathbb{Z}^r}f_{\mathbf{0},\mathbf{s}}\prod_{b\in I_r}\widehat{Q}_{b,0}^{s_b}.
\end{equation*}
Similarly, we define the multiple evaluation of $f$ at $\widehat{Q}_{1,0},\dots,\widehat{Q}_{r,0}=1$ to be the following formal series with coefficients in $\mathbb{Z}_u$:
\begin{equation*}
f|_{\widehat{\mathbf{Q}}_0=1}=\sum_{\mathbf{r},\mathbf{s}\in\mathbb{Z}^r}f_{\mathbf{r},\mathbf{s}}\prod_{a\in I_r}\widehat{Q}_{a,1}^{r_a}.
\end{equation*}
We note that this is a ``right evaluation". The constant term and evaluation maps commute, and their composition gives
\begin{equation*}
\mathrm{CT}_{\widehat{\mathbf{Q}}_1}(f)|_{\widehat{\mathbf{Q}}_0=1}=\sum_{\mathbf{r}\in\mathbb{Z}^r}f_{\mathbf{r},\mathbf{0}}.
\end{equation*}

\begin{remark*}
As $\widehat{Q}_{a,0}$ and $\widehat{Q}_{b,1}$ $q$-commute for all $a,b\in I_r$, we could also define the notion of a~``left evaluation" analogously. The two different ways of evaluation would still lead to the same result, since all of the variables involved are on the same quantum torus.
\end{remark*}

To simplify our notation, we will define $\phi\colon \mathbb{Z}_u\bigl[\widehat{\mathbf{Q}}_0^{\pm1}\bigr]\bigl(\bigl(\widehat{\mathbf{Q}}_1^{-1}\bigr)\bigr)\to\mathbb{Z}_u$ by
\begin{equation*}
\phi(f)
=\mathrm{CT}_{\widehat{\mathbf{Q}}_1}(f)|_{\widehat{\mathbf{Q}}_0=1}
=\sum_{\mathbf{r}\in\mathbb{Z}^r}f_{\mathbf{r},\mathbf{0}}
\end{equation*}
for all $f\in\mathbb{Z}_u\bigl[\widehat{\mathbf{Q}}_0^{\pm1}\bigr]\bigl(\bigl(\widehat{\mathbf{Q}}_1^{-1}\bigr)\bigr)$.

By \eqref{eq:4.15}, we may express the $k$-restricted $\tilde{M}$-sum \smash{$\tilde{M}_{\overline{\lambda},\mathbf{n}}^{(k)}\bigl(q^{-1}\bigr)$} in terms of \smash{$Z_{\overline{\lambda},\mathbf{n}}^{(k)}\bigl(\widehat{\mathbf{Q}}_0,\widehat{\mathbf{Q}}_1\bigr)$} as follows
\begin{equation}
\tilde{M}_{\overline{\lambda},\mathbf{n}}^{(k)}\bigl(q^{-1}\bigr)
=q^{-\frac{1}{2\delta}L_k(\mathbf{n})}\phi\bigl(Z_{\overline{ \lambda},\mathbf{n}}^{(k)}\bigl(\widehat{\mathbf{Q}}_0,\widehat{ \mathbf{Q}}_1\bigr)\bigr).\label{eq:4.16}
\end{equation}

\subsection{Factorization properties of the quantum generating functions}\label{Section 4.3}

In this subsection, we will express the quantum generating function \smash{$Z_{\overline{\lambda},\mathbf{n}}^{(k)}\bigl(\widehat{\mathbf{Q}}_0,\widehat{\mathbf{Q}}_1\bigr)$} as a~product of the quantum twisted $Q$-system variables (and their inverses). In order to arrive at such an expression, we would need to state a few technical lemmas.

\begin{Lemma}\label{4.5}
Let \smash{$\widehat{Z}_{a,k}=\widehat{Q}_{a,k}\widehat{Q}_{a,k+1}^{-1}$} for all $a\in I_r$ and $k\in\mathbb{Z}$. Then for any distinct root indices $a,b\in I_r$ and $i\in\mathbb{Z}$, we have
\begin{enumerate}\itemsep=0pt
\item[$(1)$] $\widehat{Q}_{b,i}\widehat{Y}_{a,i+1}=\widehat{Y}_{a,i+1}\widehat{Q}_{b,i}$ and $\widehat{Q}_{a,i}\widehat{Y}_{a,i+1}=q_a\widehat{Y}_{a,i+1}\widehat{Q}_{a,i}$,
\item[$(2)$] $\widehat{Q}_{a,i}\widehat{Q}_{b,i+2}=\nu^{-2\Lambda_{ab}}\widehat{Q}_{b,i+2}\widehat{Q}_{a,i}$,
\item[$(3)$] $\widehat{Q}_{b,i+2}\bigl(\widehat{Z}_{a,i}^{-1}\widehat{Z}_{a,i+1}\bigr)=\bigl(\widehat{Z}_{a,i}^{-1}\widehat{Z}_{a,i+1}\bigr)\widehat{Q}_{b,i+2}$,
\item[$(4)$] $\widehat{Z}_{b,i}\widehat{Z}_{a,i}=\widehat{Z}_{a,i}\widehat{Z}_{b,i}$ and $\widehat{Z}_{b,i}\widehat{Z}_{a,i-1}=\widehat{Z}_{a,i-1}\widehat{Z}_{b,i}$, and
\item[$(5)$] $\widehat{Z}_{a,i}\widehat{Y}_{a,i+1}=q_a\widehat{Y}_{a,i+1}\widehat{Z}_{a,i}$.
\end{enumerate}
\end{Lemma}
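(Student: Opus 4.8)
The plan is to derive all five relations from Lemma~\ref{4.1} together with the single matrix identity $\Lambda\overline{C}=\delta T^{\vee}$, equivalently $\sum_{c\in I_r}\Lambda_{ac}\overline{C}_{cb}=\delta t_a^{\vee}\delta_{ab}$, which is immediate from $\Lambda=\delta T^{\vee}\overline{C}^{-1}$. The principle used throughout is that two quantum $Q$-system variables $\widehat{Q}_{a,i}$, $\widehat{Q}_{b,j}$ $\nu$-commute via the formula of Lemma~\ref{4.1} as soon as they appear together in some Motzkin-path cluster $\mathbf{Y}_{\vec{m}}$; since the Dynkin diagram of $\mathfrak{g}^{\sigma}$ is a linear chain, for any distinct $a,b\in I_r$ and any $i$ one can pick a Motzkin path $\vec{m}$ with $m_a\in\{i-1,i\}$ and $m_b=m_a+1$ (and the intermediate entries interpolating), so every pair of variables whose commutation is asserted in (2)--(4) lies on a common quantum torus. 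Keeping in mind that $q=\nu^{\delta}$ and $q_a=q^{t_a^{\vee}}=\nu^{\delta t_a^{\vee}}$, the proof then reduces to bookkeeping of $\nu$-exponents.

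For (1), I would write $\widehat{Y}_{a,i+1}=\prod_{c\in I_r}\widehat{Q}_{c,i+1}^{-\overline{C}_{ca}}$ and move $\widehat{Q}_{b,i}$ past each factor using $\widehat{Q}_{b,i}\widehat{Q}_{c,i+1}=\nu^{-\Lambda_{bc}}\widehat{Q}_{c,i+1}\widehat{Q}_{b,i}$ from Lemma~\ref{4.1}; this produces the scalar $\nu^{\sum_{c}\Lambda_{bc}\overline{C}_{ca}}=\nu^{\delta t_b^{\vee}\delta_{ba}}$, which is $1$ for $b\neq a$ and $q_a$ for $b=a$. For (5), I would first note that $\widehat{Q}_{a,i+1}$ commutes with $\widehat{Y}_{a,i+1}$ since all the $\widehat{Q}_{c,i+1}$, $c\in I_r$, lie at the same level $i+1$ and hence mutually commute by Lemma~\ref{4.1}; then $\widehat{Z}_{a,i}\widehat{Y}_{a,i+1}=\widehat{Q}_{a,i}\widehat{Q}_{a,i+1}^{-1}\widehat{Y}_{a,i+1}=\widehat{Q}_{a,i}\widehat{Y}_{a,i+1}\widehat{Q}_{a,i+1}^{-1}=q_a\widehat{Y}_{a,i+1}\widehat{Z}_{a,i}$, using the second identity of~(1).

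Relation (2) is a direct application of Lemma~\ref{4.1} inside a cluster $\mathbf{Y}_{\vec{m}}$ containing both $\widehat{Q}_{a,i}$ and $\widehat{Q}_{b,i+2}$ (e.g.\ $m_a=i$, $m_b=i+1$), giving the exponent $(i-(i+2))\Lambda_{ab}=-2\Lambda_{ab}$. For (3) and (4), I would substitute $\widehat{Z}_{a,k}=\widehat{Q}_{a,k}\widehat{Q}_{a,k+1}^{-1}$ and commute the relevant $\widehat{Q}_{b,\cdot}$ through the resulting words in the $\widehat{Q}_{a,\cdot}^{\pm1}$, invoking Lemma~\ref{4.1} for pairs at equal or adjacent levels and relation~(2) for the pair at level-difference $2$; in each case the accumulated $\nu$-exponent telescopes to $0$ (for (3): $\Lambda_{ab}-2\Lambda_{ab}+\Lambda_{ab}=0$; for each identity of~(4): $\Lambda_{ba}-\Lambda_{ba}=0$), using that $\Lambda$ is symmetric.

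The computations are routine, so the one place that actually needs care — and the step most likely to hide an error — is the first: verifying that each pair of variables whose relation is claimed genuinely occurs in a single $\mathbf{Y}_{\vec{m}}$ so that Lemma~\ref{4.1} applies, and then tracking signs and the $\Lambda_{ab}$ versus $\Lambda_{ba}$ distinction. This mirrors exactly the corresponding step in \cite[Lemma~3.3]{DFK14} and in \cite{Lin21}, so I would model the write-up on those, merely recording the shorter index bookkeeping forced by the twisted labelling.
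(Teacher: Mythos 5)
Your proof is correct and takes essentially the same approach as the paper: (1) is established verbatim via Lemma~\ref{4.1} and the identity $\Lambda\overline{C}=\delta T^{\vee}$, and (2)--(5) reduce to tracking $\nu$-exponents. You differ only in the routing --- the paper derives (2) from (1) and then (3)--(5) from (2), whereas you obtain (2) directly from Lemma~\ref{4.1} inside a suitable Motzkin-path cluster and (5) from (1) --- which is an inessential reordering.
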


\begin{proof}
We only need to prove (1), since (2) follows from (1) and Lemma \ref{4.1}, while (3), (4) and~(5) follows from (2) and the commutation relations in Lemma \ref{4.1}. We have
\begin{align*}
\widehat{Q}_{b,i}\widehat{Y}_{a,i+1}
&=\widehat{Q}_{b,i}\prod_{d\in I_r}\widehat{Q}_{d,i+1}^{-\overline{C}_{da}}=\nu^{\sum_{d\in I_r}\Lambda_{bd}\overline{C}_{da}}\biggl(\prod_{d\in I_r}\widehat{Q}_{d,i+1}^{-\overline{C}_{da}}\biggr)\widehat{Q}_{b,i}\\
&=\nu^{\delta T_{ba}^{\vee}}\widehat{Y}_{a,i+1}\widehat{Q}_{b,i}=\widehat{Y}_{a,i+1}\widehat{Q}_{b,i},
\end{align*}
where the last equality follows from the fact that $\Lambda\overline{C}=\delta T^{\vee}$. A similar argument as above would also show that $\widehat{Q}_{a,i}\widehat{Y}_{a,i+1}=\nu^{\delta t_a^{\vee}}\widehat{Y}_{a,i+1}\widehat{Q}_{a,i}=q_a\widehat{Y}_{a,i+1}\widehat{Q}_{a,i}$.
\end{proof}

\begin{Lemma}\label{4.6}
For all $a\in I_r$ and $i\in\mathbb{Z}$, we have $\widehat{Z}_{a,i}\bigl(1-\widehat{Y}_{a,i+1}\bigr)^{-1} =\widehat{Z}_{a,i+1}$.
\end{Lemma}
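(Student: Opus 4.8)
The plan is to derive the identity $\widehat{Z}_{a,i}\bigl(1-\widehat{Y}_{a,i+1}\bigr)^{-1}=\widehat{Z}_{a,i+1}$ directly from the quantum twisted $Q$-system relation in the form \eqref{eq:4.5}, namely $\nu^{-\Lambda_{aa}}\widehat{Q}_{a,k+1}\widehat{Q}_{a,k-1}=\widehat{Q}_{a,k}^2\bigl(1-\widehat{Y}_{a,k}\bigr)$, together with the $\nu$-commutation relations of Lemma \ref{4.1} and the partial commutation relations collected in Lemma \ref{4.5}. First I would recall the definition $\widehat{Z}_{a,i}=\widehat{Q}_{a,i}\widehat{Q}_{a,i+1}^{-1}$, so that the claimed identity is equivalent to
\[
\widehat{Q}_{a,i}\widehat{Q}_{a,i+1}^{-1}=\widehat{Q}_{a,i+1}\widehat{Q}_{a,i+2}^{-1}\bigl(1-\widehat{Y}_{a,i+1}\bigr),
\]
which, after multiplying on the right by $\widehat{Q}_{a,i+2}$ and rearranging, becomes a statement about $\widehat{Q}_{a,i}\widehat{Q}_{a,i+1}^{-1}\widehat{Q}_{a,i+2}$ versus $\widehat{Q}_{a,i+1}\bigl(1-\widehat{Y}_{a,i+1}\bigr)$. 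The idea is to apply \eqref{eq:4.5} with $k=i+1$ to replace $\widehat{Q}_{a,i+1}^2\bigl(1-\widehat{Y}_{a,i+1}\bigr)$ by $\nu^{-\Lambda_{aa}}\widehat{Q}_{a,i+2}\widehat{Q}_{a,i}$, and then to carefully move the $\widehat{Q}_{a,i+1}^{-1}$ and $\widehat{Q}_{a,i+2}$ factors past each other using the $\nu$-commutation relation $\widehat{Q}_{a,i+1}\widehat{Q}_{a,i+2}=\nu^{-\Lambda_{aa}}\widehat{Q}_{a,i+2}\widehat{Q}_{a,i+1}$ from Lemma \ref{4.1} (taking $b=a$, $i\mapsto i+1$, $j\mapsto i+2$).

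More concretely, I would start from the right-hand side and compute $\widehat{Z}_{a,i+1}\bigl(1-\widehat{Y}_{a,i+1}\bigr)=\widehat{Q}_{a,i+1}\widehat{Q}_{a,i+2}^{-1}\bigl(1-\widehat{Y}_{a,i+1}\bigr)$. By Lemma \ref{4.5}(1), $\widehat{Q}_{a,i+2}$ and $\widehat{Y}_{a,i+1}$ satisfy $\widehat{Q}_{a,i+2}\widehat{Y}_{a,i+1}=q_a^{-1}\widehat{Y}_{a,i+1}\widehat{Q}_{a,i+2}$ or its inverse — I need the relation between $\widehat{Q}_{a,i+2}^{-1}$ and $\widehat{Y}_{a,i+1}$, which follows from $\widehat{Q}_{a,i}\widehat{Y}_{a,i+1}=q_a\widehat{Y}_{a,i+1}\widehat{Q}_{a,i}$ applied with the index shifted up by $2$, i.e.\ $\widehat{Q}_{a,i+2}\widehat{Y}_{a,i+3}=q_a\widehat{Y}_{a,i+3}\widehat{Q}_{a,i+2}$; here though I want $\widehat{Q}_{a,i+2}$ against $\widehat{Y}_{a,i+1}$, which is governed instead by the relation between $\widehat{Q}_{b,i}$ with $b=a$ two steps apart — this is exactly where Lemma \ref{4.5}(2) enters, giving $\widehat{Q}_{a,i}\widehat{Q}_{a,i+2}=\nu^{-2\Lambda_{aa}}\widehat{Q}_{a,i+2}\widehat{Q}_{a,i}$, hence $\widehat{Q}_{a,i+2}$ commutes with $\widehat{Y}_{a,i+1}=\prod_b\widehat{Q}_{b,i+1}^{-\overline{C}_{ba}}$ only up to a controlled power of $\nu$. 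I would then push $\widehat{Q}_{a,i+2}^{-1}$ through $\bigl(1-\widehat{Y}_{a,i+1}\bigr)$, use \eqref{eq:4.5} to rewrite $\widehat{Q}_{a,i+1}\widehat{Q}_{a,i+2}^{-1}\bigl(1-\widehat{Y}_{a,i+1}\bigr)$ in terms of $\widehat{Q}_{a,i+1}^{-1}\widehat{Q}_{a,i}$ with the appropriate $\nu$-prefactor, and check that the accumulated powers of $\nu$ cancel, leaving exactly $\widehat{Q}_{a,i}\widehat{Q}_{a,i+1}^{-1}=\widehat{Z}_{a,i}$.

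The main obstacle I anticipate is purely bookkeeping: tracking the powers of $\nu$ (equivalently $q_a=\nu^{\delta t_a^\vee}$ and $\nu^{\Lambda_{aa}}$) that appear each time a quantum $Q$-system variable is commuted past $\widehat{Y}_{a,i+1}$ or past another $\widehat{Q}$, and confirming they telescope to zero. The key algebraic facts needed are all present: the $\nu$-commutation relation for variables on a common quantum torus (Lemma \ref{4.1}), the specific relations $\widehat{Q}_{b,i}\widehat{Y}_{a,i+1}=\widehat{Y}_{a,i+1}\widehat{Q}_{b,i}$ for $b\neq a$ and $\widehat{Q}_{a,i}\widehat{Y}_{a,i+1}=q_a\widehat{Y}_{a,i+1}\widehat{Q}_{a,i}$ (Lemma \ref{4.5}(1)), and the two-step relation $\widehat{Q}_{a,i}\widehat{Q}_{b,i+2}=\nu^{-2\Lambda_{ab}}\widehat{Q}_{b,i+2}\widehat{Q}_{a,i}$ (Lemma \ref{4.5}(2)). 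An alternative, possibly cleaner route would be to first establish that $\widehat{Z}_{a,i}^{-1}\widehat{Z}_{a,i+1}$ commutes with $\widehat{Y}_{a,i+1}$ up to the expected scalar using Lemma \ref{4.5}(3) and (5), then recast the desired identity as $\widehat{Z}_{a,i}^{-1}\widehat{Z}_{a,i+1}=\bigl(1-\widehat{Y}_{a,i+1}\bigr)^{-1}$ and verify that $\widehat{Z}_{a,i+1}\bigl(1-\widehat{Y}_{a,i+1}\bigr)\widehat{Z}_{a,i+1}^{-1}$ equals $\widehat{Z}_{a,i}\widehat{Z}_{a,i+1}^{-1}\cdot\widehat{Z}_{a,i+1}$; I would keep both approaches in mind and present whichever produces the shorter chain of identities.
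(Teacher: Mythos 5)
Your proposal is essentially the paper's argument: the paper proves the lemma by using the quantum $Q$-system relation~\eqref{eq:4.5} with $k=i+1$ together with the $\nu$-commutation relations of Lemma~\ref{4.1} to rewrite $1-\widehat{Y}_{a,i+1}=\nu^{-\Lambda_{aa}}\widehat{Q}_{a,i+2}\widehat{Q}_{a,i}\widehat{Q}_{a,i+1}^{-2}=\widehat{Z}_{a,i+1}^{-1}\widehat{Z}_{a,i}$, exactly as in your ``alternative route'' of showing $\bigl(1-\widehat{Y}_{a,i+1}\bigr)^{-1}=\widehat{Z}_{a,i}^{-1}\widehat{Z}_{a,i+1}$. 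Your first sketch is a slightly more circuitous version of the same idea, and the bookkeeping of $\nu$-powers you anticipate does indeed close up cleanly once you use $\widehat{Q}_{a,i+1}^{-1}\widehat{Q}_{a,i}=\nu^{-\Lambda_{aa}}\widehat{Q}_{a,i}\widehat{Q}_{a,i+1}^{-1}$.
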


\begin{proof}
Similar to the proof of \cite[Lemma 4.5]{Lin21}, it follows from Lemma \ref{4.1} that we have
\begin{equation*}
1-\widehat{Y}_{a,i+1}
=\nu^{-\Lambda_{aa}}\widehat{Q}_{a, i+2}\widehat{Q}_{a,i}\widehat{Q}_{a,i+1}^{-2}
=\widehat{Z}_{a,i+1}^{-1}\widehat{Z}_{a,i},
\end{equation*}
or equivalently, $\bigl(1-\widehat{Y}_{a,i+1}\bigr)^{-1}= \widehat{Z}_{a,i}^{-1}\widehat{Z}_{a,i+1}$.
\end{proof}

\begin{Lemma}\label{4.7}
For any $a\in I_r$ and $i,b\in\mathbb{Z}$, we have
\begin{equation*}
\sum_{a\geq0}\begin{bmatrix}a+b\\a\end{bmatrix}_{q_a}\widehat{Y}_{a,i+1}^a\widehat{Z}_{a,i}^b=\widehat{Z}_{a,i}^{-1}\widehat{Z}_{a,i+1}^{b+1}.
\end{equation*}
\end{Lemma}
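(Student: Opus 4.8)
\textbf{Proof proposal for Lemma~\ref{4.7}.}

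The plan is to prove the identity by induction on $b$, using the $q$-binomial version of the $q$-exponential (or Rogers--Ramanujan-type) identity, together with the basic commutation relation from Lemma~\ref{4.5}(5), namely $\widehat{Z}_{a,i}\widehat{Y}_{a,i+1}=q_a\widehat{Y}_{a,i+1}\widehat{Z}_{a,i}$, and the key factorization $\widehat{Z}_{a,i}\bigl(1-\widehat{Y}_{a,i+1}\bigr)^{-1}=\widehat{Z}_{a,i+1}$ from Lemma~\ref{4.6}. First I would fix $a$ and $i$ and abbreviate $Y=\widehat{Y}_{a,i+1}$, $Z=\widehat{Z}_{a,i}$, $Z'=\widehat{Z}_{a,i+1}$, so the claim reads $\sum_{s\geq0}\begin{bmatrix}s+b\\s\end{bmatrix}_{q_a}Y^sZ^b=Z^{-1}(Z')^{b+1}$; note I rename the summation index to $s$ since the printed statement's reuse of $a$ as the summation variable is a typo that should be corrected. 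By Lemma~\ref{4.6} we have $(Z')^{b+1}=\bigl(Z(1-Y)^{-1}\bigr)^{b+1}$, but one must move all the $Z$'s to the left past the $(1-Y)^{-1}$ factors using the $q$-commutation; this introduces powers of $q_a$ in the exponents of $Y$, which is exactly what produces the $q_a$-binomial coefficients.

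The cleanest route is as follows. Using $ZY = q_a YZ$ repeatedly, one gets $Z^{-1}(Z')^{b+1} = Z^b\prod_{j=0}^{b}(1-q_a^{\,j}Y)^{-1}$ after commuting the $b+1$ copies of $Z$ leftward through the $b+1$ copies of $(1-Y)^{-1}$ (each copy of $Z$ passing a $(1-Y)^{-1}$ turns it into $(1-q_a Y)^{-1}$, and iterating gives the scaled factors). Then the identity to establish is the noncommutative $q$-binomial theorem
\[
\prod_{j=0}^{b}\frac{1}{1-q_a^{\,j}Y}=\sum_{s\geq0}\begin{bmatrix}s+b\\s\end{bmatrix}_{q_a}Y^s,
\]
which is a standard formal power series identity (the $q$-analogue of $(1-Y)^{-(b+1)}=\sum_s\binom{s+b}{s}Y^s$), valid in $\mathbb{Z}_u((\widehat{\mathbf{Q}}_1^{-1}))$ because $Y=\widehat{Y}_{a,i+1}$ is a product of nonnegative powers of the $\widehat{Q}_{\bullet,i+1}^{-1}$ up to a power of $\widehat{Q}_{\bullet,i+1}$, hence lies in the maximal ideal after the appropriate normalization, so the geometric-type series converge. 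I would either cite this $q$-binomial identity directly or prove it by a one-line induction on $b$ using $\frac{1}{1-q_a^{b}Y}\sum_s\begin{bmatrix}s+b-1\\s\end{bmatrix}_{q_a}Y^s=\sum_s\begin{bmatrix}s+b\\s\end{bmatrix}_{q_a}Y^s$, which follows from the Pascal recursion $\begin{bmatrix}s+b\\s\end{bmatrix}_{q_a}=\sum_{t=0}^{s}q_a^{\,bt}\begin{bmatrix}s-t+b-1\\s-t\end{bmatrix}_{q_a}$.

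The main obstacle is purely bookkeeping: getting the powers of $q_a$ right when commuting the $Z$'s past the $(1-Y)^{-1}$ factors, and being careful that the generating function $\sum_s\begin{bmatrix}s+b\\s\end{bmatrix}_{q_a}Y^s$ really does make sense as an element of the completed ring $\mathbb{Z}_u[\widehat{\mathbf{Q}}_0^{\pm1}]((\widehat{\mathbf{Q}}_1^{-1}))$ in which we are working — this amounts to checking that $\widehat{Y}_{a,i+1}$ is topologically nilpotent in the relevant sense, which follows from the Laurent property (Lemma~\ref{4.2}) together with the explicit form $\widehat{Y}_{a,i+1}=\prod_{b}\widehat{Q}_{b,i+1}^{-\overline{C}_{ba}}$ expressed in the fundamental initial data. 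I expect the argument to run essentially parallel to the untwisted case in \cite[Lemma~4.6 or its analogue]{Lin21}, with $q_a=q^{t_a^{\vee}}$ replacing $q$, so the proof can reasonably be compressed to a few lines with a pointer to that reference for the commutation bookkeeping.
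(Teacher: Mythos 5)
Your strategy matches the paper's: the paper reduces the lemma to the abstract identity
\[
\sum_{s\geq0}\begin{bmatrix}s+b\\s\end{bmatrix}_{v}x^sy^b = y^{-1}\bigl(y(1-x)^{-1}\bigr)^{b+1}
\]
valid in the setting $yx=vxy$, and then substitutes $v=q_a$, $x=\widehat{Y}_{a,i+1}$, $y=\widehat{Z}_{a,i}$ via Lemmas~\ref{4.5}\,(5) and~\ref{4.6} --- precisely the ingredients you name, and the same appeal to \cite[Lemma 4.7]{DFK14} and \cite[Lemma 4.6]{Lin21}.

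However, your intermediate factorization puts $Z^b$ on the wrong side. Write $A_j=(1-q_a^jY)^{-1}$ (these all commute with one another, being power series in $Y$), so that $ZA_j=A_{j+1}Z$ and $Z^{-1}A_j=A_{j-1}Z^{-1}$. Iterating gives $(ZA_0)^{b+1}=A_1A_2\cdots A_{b+1}Z^{b+1}$, hence
\[
Z^{-1}(Z')^{b+1}=Z^{-1}A_1\cdots A_{b+1}\,Z^{b+1}=A_0A_1\cdots A_b\,Z^{b}=\prod_{j=0}^{b}(1-q_a^{j}Y)^{-1}\cdot Z^{b},
\]
not $Z^{b}\prod_{j=0}^{b}(1-q_a^{j}Y)^{-1}$ as you wrote. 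The distinction is not cosmetic: with $Z^b$ on the left, applying the (ordinary, commutative --- your ``noncommutative'' here is a misnomer since only $Y$ appears) $q$-binomial identity $\prod_{j=0}^{b}(1-q_a^{j}Y)^{-1}=\sum_{s\geq0}\begin{bmatrix}s+b\\s\end{bmatrix}_{q_a}Y^s$ and then commuting $Z^b$ past $Y^s$ yields $\sum_s q_a^{bs}\begin{bmatrix}s+b\\s\end{bmatrix}_{q_a}Y^sZ^b$, which has a spurious factor $q_a^{bs}$ relative to the lemma. With the corrected ordering, the $q$-binomial identity gives $\bigl(\sum_s\begin{bmatrix}s+b\\s\end{bmatrix}_{q_a}Y^s\bigr)Z^b=\sum_s\begin{bmatrix}s+b\\s\end{bmatrix}_{q_a}Y^sZ^b$, which is exactly the left-hand side of the lemma. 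The source of the slip is that ``$Z$ passing $(1-Y)^{-1}$ turns it into $(1-q_aY)^{-1}$'' describes moving $Z$ \emph{rightward} ($ZA_0=A_1Z$), not leftward; passing leftward decrements the exponent. Once this bookkeeping is fixed, your argument is correct and coincides with the paper's.
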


\begin{proof}
Similar to the proofs of \cite[Lemma 4.7]{DFK14} and \cite[Lemma 4.6]{Lin21}, we have
\begin{equation*}
\sum_{a\geq0}\begin{bmatrix}a+b\\a\end{bmatrix}_vx^ay^b=y^{-1}\bigl(y(1-x)^{-1}\bigr)^{b+1},
\end{equation*}
where we consider the right-hand side of the above equation as a formal power series in $x$. In particular, by setting $v=q_a$, $x=\widehat{Y}_{a,i+1}$ and $y=\widehat{Z}_{a,i}$, the desired statement follows from the above equation, along with Lemmas \ref{4.5}\,(5) and \ref{4.6}.
\end{proof}

\begin{Lemma}\label{4.8}
For any $a\in I_r$ and $k,b\in\mathbb{Z}$, we have
\begin{equation*}
\widehat{Z}_{a,i}^{b+1}\widehat{Q}_{a,i}^{-b}
=\nu^{\frac{b(b+1)\Lambda_{aa}}{2}}\widehat{Z}_{a,i}\widehat{Q}_{a,i+1}^{-b}.
\end{equation*}
\end{Lemma}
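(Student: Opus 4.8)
The plan is to prove Lemma~\ref{4.8} by a direct computation using only the $\nu$-commutation relations among the quantum $Q$-system variables in a single cluster, together with the definition $\widehat{Z}_{a,i}=\widehat{Q}_{a,i}\widehat{Q}_{a,i+1}^{-1}$. The key observation is that $\widehat{Z}_{a,i}$ and $\widehat{Q}_{a,i}$ lie on the same quantum torus (both are expressible in the initial data $\widehat{\mathbf{Q}}_i\cup\widehat{\mathbf{Q}}_{i+1}$), so by Lemma~\ref{4.1} we have a $\nu$-commutation relation between them; explicitly, $\widehat{Q}_{a,i}\widehat{Q}_{a,i+1}^{-1}\widehat{Q}_{a,i}=\nu^{-\Lambda_{aa}}\widehat{Q}_{a,i}\widehat{Q}_{a,i}\widehat{Q}_{a,i+1}^{-1}$, i.e.\ $\widehat{Z}_{a,i}\widehat{Q}_{a,i}=\nu^{-\Lambda_{aa}}\widehat{Q}_{a,i}\widehat{Z}_{a,i}$ (and hence $\widehat{Z}_{a,i}\widehat{Q}_{a,i}^{-1}=\nu^{\Lambda_{aa}}\widehat{Q}_{a,i}^{-1}\widehat{Z}_{a,i}$). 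Recall also $q_a=\nu^{\delta t_a^{\vee}}=\nu^{\Lambda_{aa}}$ after accounting for $\Lambda=\delta T^\vee\overline{C}^{-1}$ and the relevant diagonal entry; this identification of $q_a$ with $\nu^{\Lambda_{aa}}$ is what links the scalar prefactor to the $q_a$-binomials appearing elsewhere.

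First I would record the two elementary commutation facts: (i) $\widehat{Z}_{a,i}\widehat{Q}_{a,i}^{-1}=q_a\,\widehat{Q}_{a,i}^{-1}\widehat{Z}_{a,i}$, which follows from Lemma~\ref{4.1} applied to the pair $\widehat{Q}_{a,i},\widehat{Q}_{a,i+1}$ in the cluster $\mathbf{Y}_{\vec m}$ with $\vec m$ the constant path at $i$; and (ii) $\widehat{Q}_{a,i}^{-1}\widehat{Q}_{a,i+1}^{-b}=\widehat{Q}_{a,i+1}^{-b}\widehat{Q}_{a,i}^{-1}$ up to the appropriate power of $\nu$ — more precisely $\widehat{Q}_{a,i}\widehat{Q}_{a,i+1}=\nu^{\Lambda_{aa}}\widehat{Q}_{a,i+1}\widehat{Q}_{a,i}$ gives $\widehat{Q}_{a,i+1}^{-b}\widehat{Q}_{a,i}^{-1}=\nu^{-b\Lambda_{aa}}\widehat{Q}_{a,i}^{-1}\widehat{Q}_{a,i+1}^{-b}$. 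Then I would write $\widehat{Z}_{a,i}^{b+1}\widehat{Q}_{a,i}^{-b}=\widehat{Z}_{a,i}\cdot\widehat{Z}_{a,i}^{b}\widehat{Q}_{a,i}^{-b}$ and move the $b$ copies of $\widehat{Q}_{a,i}^{-1}$ past the $b$ copies of $\widehat{Z}_{a,i}$ (inside $\widehat{Z}_{a,i}^b$, each $\widehat{Z}_{a,i}$ expanded as $\widehat{Q}_{a,i}\widehat{Q}_{a,i+1}^{-1}$), collecting the scalar. A cleaner bookkeeping: expand $\widehat{Z}_{a,i}^{b+1}=\widehat{Q}_{a,i}\widehat{Q}_{a,i+1}^{-1}\widehat{Q}_{a,i}\widehat{Q}_{a,i+1}^{-1}\cdots$ ($b+1$ factors) and use (ii) repeatedly to push all the $\widehat{Q}_{a,i+1}^{-1}$ to the right, turning the product into $\nu^{c}\widehat{Q}_{a,i}^{b+1}\widehat{Q}_{a,i+1}^{-(b+1)}$ for an explicit $c$; then multiply on the right by $\widehat{Q}_{a,i}^{-b}$, again using (ii), and compare with $\widehat{Z}_{a,i}\widehat{Q}_{a,i+1}^{-b}=\widehat{Q}_{a,i}\widehat{Q}_{a,i+1}^{-(b+1)}\widehat{Q}_{a,i+1}^{\,?}$ — one needs $\widehat{Q}_{a,i}^{b+1}\widehat{Q}_{a,i+1}^{-(b+1)}\widehat{Q}_{a,i}^{-b}=\nu^{?}\widehat{Q}_{a,i}\widehat{Q}_{a,i+1}^{-(b+1)}\widehat{Q}_{a,i}^{-b}\widehat{Q}_{a,i}^{b}$, so the substantive step is just moving $\widehat{Q}_{a,i}^{-b}$ from the far right past $\widehat{Q}_{a,i+1}^{-(b+1)}$, contributing $\nu^{b(b+1)\Lambda_{aa}}$ — wait, this overshoots, so the correct route is to split as $\widehat{Z}_{a,i}^{b+1}\widehat{Q}_{a,i}^{-b}=\widehat{Z}_{a,i}\cdot(\widehat{Z}_{a,i}^b\widehat{Q}_{a,i}^{-b})$ and show $\widehat{Z}_{a,i}^b\widehat{Q}_{a,i}^{-b}=\nu^{b(b+1)\Lambda_{aa}/2}\widehat{Q}_{a,i+1}^{-b}$ by induction on $b$.

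So the induction on $b$ is the cleanest. For $b=0$ both sides are $1$. For the inductive step, $\widehat{Z}_{a,i}^{b+1}\widehat{Q}_{a,i}^{-(b+1)}=\widehat{Z}_{a,i}\bigl(\widehat{Z}_{a,i}^{b}\widehat{Q}_{a,i}^{-b}\bigr)\widehat{Q}_{a,i}^{-1}=\nu^{b(b+1)\Lambda_{aa}/2}\widehat{Z}_{a,i}\widehat{Q}_{a,i+1}^{-b}\widehat{Q}_{a,i}^{-1}$ by hypothesis; now use (ii) to get $\widehat{Q}_{a,i+1}^{-b}\widehat{Q}_{a,i}^{-1}=\nu^{-b\Lambda_{aa}}\widehat{Q}_{a,i}^{-1}\widehat{Q}_{a,i+1}^{-b}$, hmm that has the wrong sign; instead use $\widehat{Q}_{a,i}^{-1}\widehat{Q}_{a,i+1}^{-b}=\nu^{b\Lambda_{aa}}\widehat{Q}_{a,i+1}^{-b}\widehat{Q}_{a,i}^{-1}$ read backwards, giving $\widehat{Z}_{a,i}\widehat{Q}_{a,i+1}^{-b}\widehat{Q}_{a,i}^{-1}=\widehat{Z}_{a,i}\nu^{b\Lambda_{aa}}\widehat{Q}_{a,i}^{-1}\widehat{Q}_{a,i+1}^{-b}\cdot(\text{reorder})$; combined with (i), $\widehat{Z}_{a,i}\widehat{Q}_{a,i}^{-1}=q_a\widehat{Q}_{a,i}^{-1}\widehat{Z}_{a,i}=\nu^{\Lambda_{aa}}\widehat{Q}_{a,i}^{-1}\widehat{Z}_{a,i}$, and $\widehat{Z}_{a,i}\widehat{Q}_{a,i+1}^{-1}=\widehat{Q}_{a,i}\widehat{Q}_{a,i+1}^{-2}$ commuting appropriately, one collects the total scalar $\nu^{b(b+1)\Lambda_{aa}/2}\cdot\nu^{b\Lambda_{aa}}\cdot(\text{from }\widehat{Z}_{a,i}\text{ absorbing one }\widehat{Q}_{a,i}^{-1})$, and after writing $\widehat{Z}_{a,i}\widehat{Q}_{a,i}^{-1}\cdot\widehat{Q}_{a,i+1}^{-b}$ in the form $\nu^{(b+1)\Lambda_{aa}}\widehat{Q}_{a,i+1}^{-(b+1)}$ up to reordering, the exponent becomes $b(b+1)\Lambda_{aa}/2+(b+1)\Lambda_{aa}=(b+1)(b+2)\Lambda_{aa}/2$, matching the claim with $b\to b+1$. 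The only mild obstacle is sign/exponent bookkeeping in the $\nu$-powers — there is genuine risk of an off-by-$b$ error — so the safeguard is to do the induction carefully and cross-check the $b=1$ case directly: $\widehat{Z}_{a,i}^2\widehat{Q}_{a,i}^{-1}=\widehat{Q}_{a,i}\widehat{Q}_{a,i+1}^{-1}\widehat{Q}_{a,i}\widehat{Q}_{a,i+1}^{-1}\widehat{Q}_{a,i}^{-1}$ should equal $\nu^{\Lambda_{aa}}\widehat{Z}_{a,i}\widehat{Q}_{a,i+1}^{-1}=\nu^{\Lambda_{aa}}\widehat{Q}_{a,i}\widehat{Q}_{a,i+1}^{-2}$, which one verifies using $\widehat{Q}_{a,i+1}^{-1}\widehat{Q}_{a,i}=\nu^{\Lambda_{aa}}\widehat{Q}_{a,i}\widehat{Q}_{a,i+1}^{-1}$ twice. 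Once the $\nu$-exponent is pinned down, the proof is complete.
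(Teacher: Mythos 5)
Your approach---induction on $b$, after splitting $\widehat{Z}_{a,i}^{b+1}\widehat{Q}_{a,i}^{-b}=\widehat{Z}_{a,i}\bigl(\widehat{Z}_{a,i}^{b}\widehat{Q}_{a,i}^{-b}\bigr)$ and proving $\widehat{Z}_{a,i}^{b}\widehat{Q}_{a,i}^{-b}=\nu^{b(b+1)\Lambda_{aa}/2}\widehat{Q}_{a,i+1}^{-b}$---is a correct alternative to the paper's one-shot rewriting of $\widehat{Z}_{a,i}^{b+1}$ as a scalar times $\widehat{Q}_{a,i}^{b+1}\widehat{Q}_{a,i+1}^{-(b+1)}$; both reduce to the same commutator bookkeeping, and the exponent arithmetic $\tfrac{b(b+1)}{2}+(b+1)=\tfrac{(b+1)(b+2)}{2}$ in your inductive step is right.

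However, you should actually pin down the signs rather than leave them as a ``mild obstacle.'' Your observation (ii) asserts $\widehat{Q}_{a,i}\widehat{Q}_{a,i+1}=\nu^{\Lambda_{aa}}\widehat{Q}_{a,i+1}\widehat{Q}_{a,i}$, but Lemma~\ref{4.1} with $(i,j)\mapsto(i,i+1)$ gives $\nu^{(i-(i+1))\Lambda_{aa}}=\nu^{-\Lambda_{aa}}$; this contradicts the (correct) relation $\widehat{Q}_{a,i+1}^{-1}\widehat{Q}_{a,i}=\nu^{-\Lambda_{aa}}\widehat{Q}_{a,i}\widehat{Q}_{a,i+1}^{-1}$ you derive in your own opening paragraph. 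Your proposed $b=1$ cross-check cites $\widehat{Q}_{a,i+1}^{-1}\widehat{Q}_{a,i}=\nu^{\Lambda_{aa}}\widehat{Q}_{a,i}\widehat{Q}_{a,i+1}^{-1}$ (wrong sign again); carried out literally with that sign the check produces $\nu^{-\Lambda_{aa}}$ and appears to fail, so as stated it does not serve as a safeguard. The correct ingredient, which you eventually land on, is $\widehat{Q}_{a,i+1}^{-b}\widehat{Q}_{a,i}^{-1}=\nu^{b\Lambda_{aa}}\widehat{Q}_{a,i}^{-1}\widehat{Q}_{a,i+1}^{-b}$, and with that the induction closes. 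Separately, the aside $q_a=\nu^{\delta t_a^{\vee}}=\nu^{\Lambda_{aa}}$ is false in general, since $\Lambda_{aa}=\delta t_a^{\vee}(\overline{C}^{-1})_{aa}$ and $(\overline{C}^{-1})_{aa}\neq 1$ typically; $q_a$ is the commutation scalar against $\widehat{Y}_{a,i+1}$ (Lemma~\ref{4.5}), whereas $\nu^{\Lambda_{aa}}$ is the distinct scalar governing $\widehat{Q}_{a,i}$ against $\widehat{Q}_{a,i+1}$, and it is the latter that occurs in Lemma~\ref{4.8}. Since your actual manipulations use $\nu^{\Lambda_{aa}}$ throughout, this misidentification does not sink the proof, but it is a genuine conceptual error that should be corrected before you rely on the two scalars interchangeably elsewhere.
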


\begin{proof}
We use the commutation relations in Lemma \ref{4.1} to deduce that
\begin{gather*}
\widehat{Z}_{a,i}^{b+1}\widehat{Q}_{a,i}^{-b}
=\nu^{\frac{b(b+1)\Lambda_{aa}}{2}}\widehat{Q}_{a,i}^{b+1}\widehat{Q}_{a,i+1}^{-b-1}\widehat{Q}_{a,i}^{-b}
=\nu^{\frac{b(b+1)\Lambda_{aa}}{2}}\widehat{Q}_{a,i}\widehat{Q}_{a,i+1}^{-b-1}
=\nu^{\frac{b(b+1)\Lambda_{aa}}{2}}\widehat{Z}_{a,i}\widehat{Q}_{a,i+1}^{-b}.\tag*{\qed}
\end{gather*}\renewcommand{\qed}{}
\end{proof}

Our first goal is to express \smash{$Z_{\overline{\lambda},\mathbf{n}}^{(1)}\bigl(\widehat{\mathbf{Q}}_0,\widehat{\mathbf{Q}}_1\bigr)$} as a product of the quantum twisted $Q$-system variables, by explicitly summing over $m_{a,1}$ for all $a\in I_r$. In the case $k=1$, it follows from equation \eqref{eq:4.8} that we have $q_{a,0}=\ell_a+\sum_{b\in I_r}\overline{C}_{ab}m_{b,1}-n_{a,1}$ and $q_{a,1}=\ell_a$ for all $a\in I_r$. Using the commutation relations in Lemmas \ref{4.1} and \ref{4.5}\,(1), we have
\begin{align*}
\prod_{a\in I_r}\widehat{Q}_{a,1}^{-q_{a,0}}\prod_{a\in I_r}\widehat{Q}_{a,0}^{q_{a,1}}
&=\prod_{a\in I_r}\widehat{Q}_{a,1}^{n_{a,1}}\prod_{a\in I_r}\widehat{Y}_{a,1}^{m_{a,1}}\prod_{a\in I_r}\widehat{Q}_{a,1}^{-\ell_a}\prod_{a\in I_r}\widehat{Q}_{a,0}^{\ell_a}\\
&=\nu^{-\sum_{a\leq b}\Lambda_{ab}\ell_a\ell_b}\prod_{a\in I_r}\widehat{Q}_{a,1}^{n_{a,1}}\prod_{a\in I_r}\widehat{Y}_{a,1}^{m_{a,1}}\prod_{a\in I_r}\widehat{Q}_{a,0}^{\ell_a}\widehat{Q}_{a,1}^{-\ell_a}\\
&=\nu^{-\frac{1}{2}\sum_{a\neq b}\Lambda_{ab}\ell_a\ell_b-\frac{1}{2}\sum_{a\in I_r}\Lambda_{aa}(2\ell_a^2-\ell_a(\ell_a-1))}\prod_{a\in I_r}\widehat{Q}_{a,1}^{n_{a,1}}\prod_{a\in I_r}\widehat{Y}_{a,1}^{m_{a,1}}\prod_{a\in I_r}\widehat{Z}_{a,0}^{\ell_a}\\
&=\nu^{-\frac{1}{2}\sum_{a,b\in I_r}\Lambda_{ab}\ell_a\ell_b-\frac{1}{2}\sum_{a\in I_r}\Lambda_{aa}\ell_a}\prod_{a\in I_r}\widehat{Q}_{a,1}^{n_{a,1}}\prod_{a\in I_r}\widehat{Y}_{a,1}^{m_{a,1}}\widehat{Z}_{a,0}^{\ell_a}\\
&=q^{-\frac{1}{2\delta}\mathbf{q}_1\cdot\Lambda\mathbf{q}_1-\frac{1}{2\delta}\sum_{a\in I_r}\Lambda_{aa}\ell_a}\prod_{a\in I_r}\widehat{Q}_{a,1}^{n_{a,1}}\prod_{a\in I_r}\widehat{Y}_{a,1}^{m_{a,1}}\widehat{Z}_{a,0}^{\ell_a}.
\end{align*}
As we have \smash{$\overline{Q}^{(1)}(\mathbf{m},\mathbf{n})=\frac{1}{2\delta}\mathbf{q}_1\cdot\Lambda\mathbf{q}_1$}, it follows from the previous equation that we have
\begin{align}
Z_{\overline{\lambda},\mathbf{n}}^{(1)}\bigl(\widehat{\mathbf{Q}}_0,\widehat{\mathbf{Q}}_1\bigr)
&=\sum_{\mathbf{m}}q^{\overline{Q}^{(1)}(\mathbf{m},\mathbf{n})}\prod_{a\in I_r}\begin{bmatrix}m_{a,1}+\ell_a\\m_{a,1}\end{bmatrix}_{q_a}\prod_{a\in I_r}\widehat{Q}_{a,1}^{-q_{a,0}}\prod_{a\in I_r}\widehat{Q}_{a,0}^{\ell_a}\nonumber\\
&=q^{-\frac{1}{2\delta}\sum_{a\in I_r}\Lambda_{aa}\ell_a}\prod_{a\in I_r}\widehat{Q}_{a,1}^{n_{a,1}}\prod_{a\in I_r}\sum_{m_{a,1}\geq0}\begin{bmatrix}m_{a,1}+\ell_a\\m_{a,1}\end{bmatrix}_{q_a}\widehat{Y}_{a,1}^{m_{a,1}}\widehat{Z}_{a,0}^{\ell_a}\nonumber\\
&=q^{-\frac{1}{2\delta}\sum_{a\in I_r}\Lambda_{aa}\ell_a}\prod_{a\in I_r}\widehat{Q}_{a,1}^{n_{a,1}}\prod_{a\in I_r}\widehat{Z}_{a,0}^{-1}\widehat{Z}_{a,1}^{\ell_a+1}\nonumber\\
&=q^{-\frac{1}{2\delta}\sum_{a\in I_r}\Lambda_{aa}\ell_a}\prod_{a\in I_r}\widehat{Q}_{a,1}^{n_{a,1}}\prod_{a\in I_r}\widehat{Z}_{a,0}^{-1}\prod_{a\in I_r}\widehat{Z}_{a,1}^{\ell_a+1}\nonumber\\
&=q^{-\frac{1}{2\delta}\sum_{a\in I_r}\Lambda_{aa}\ell_a-\frac{1}{\delta}\sum_{a,b\in I_r}\Lambda_{ab}n_{a,1}}\prod_{a\in I_r}\widehat{Z}_{a,0}^{-1}\prod_{a\in I_r}\widehat{Q}_{a,1}^{n_{a,1}}\prod_{a\in I_r}\widehat{Z}_{a,1}^{\ell_a+1},\label{eq:4.17}
\end{align}
where we have used the commutation relations in Lemma \ref{4.1}, as well as Lemmas \ref{4.5}\,(4) and~\ref{4.7}, to simplify the equalities.

Our next goal is to write \smash{$Z_{\overline{\lambda},\mathbf{n}}^{(k)}\bigl(\widehat{\mathbf{Q}}_0,\widehat{\mathbf{Q}}_1\bigr)$} as a product of the quantum twisted $Q$-system variab\-les, as well as the generating function \smash{$Z_{\overline{\lambda},\mathbf{n}'}^{(k-1)}\bigl(\widehat{\mathbf{Q}}_1,\widehat{\mathbf{Q}}_2\bigr)$}, where $\mathbf{n}'=(n_{a,i}')_{a\in I_r,i\in\mathbb{N}}$ is \mbox{defined} by~${n_{a,i}'=n_{a,i+1}}$ for all $a\in I_r$ and $i\in\mathbb{N}$. We first observe from equation \eqref{eq:4.8} that we have
\begin{equation*}
q_{a,0}-2q_{a,1}+q_{a,2}=\sum_{b\in I_r}\overline{C}_{ab}m_{b,1}-n_{a,1}
\end{equation*}
for all $a\in I_r$. Using the commutation relations in Lemmas~\ref{4.1} and~\ref{4.5}\,(1), we have
\begin{equation}\label{eq:4.18}
\prod_{a\in I_r}\widehat{Q}_{a,1}^{-q_{a,0}}
=\prod_{a\in I_r}\widehat{Q}_{a,1}^{n_{a,1}}\prod_{a\in I_r}\widehat{Y}_{a,1}^{m_{a,1}}\prod_{a\in I_r}\widehat{Q}_{a,1}^{-2q_{a,1}+q_{a,2}},
\end{equation}
and
\begin{gather}
\prod_{a\in I_r}\widehat{Q}_{a,1}^{-2q_{a,1}+q_{a,2}}\prod_{a\in I_r}\widehat{Q}_{a,0}^{q_{a,1}}\nonumber\\
\qquad=\nu^{-2\sum_{a\leq b}\Lambda_{ab}q_{a,1}q_{b,1}+\sum_{a,b\in I_r}\Lambda_{ab}q_{a,1}q_{b,2}}\prod_{a\in I_r}\widehat{Q}_{a,0}^{q_{a,1}}\widehat{Q}_{a,1}^{-2q_{a,1}}\prod_{a\in I_r}\widehat{Q}_{a,1}^{q_{a,2}}\nonumber\\
\qquad=\nu^{-\sum_{a,b\in I_r}\Lambda_{ab}q_{a,1}(q_{b,1}-q_{b,2})-\sum_{a\in I_r}\Lambda_{aa}q_{a,1}^2+\frac{1}{2}\sum_{a\in I_r}\Lambda_{aa}q_{a,1}(q_{a,1}-1)}\prod_{a\in I_r}\widehat{Z}_{a,0}^{q_{a,1}}\widehat{Q}_{a,1}^{-q_{a,1}}\prod_{a\in I_r}\widehat{Q}_{a,1}^{q_{a,2}}\nonumber\\
\qquad=q^{-\frac{1}{\delta}\mathbf{q}_1\cdot\Lambda(\mathbf{q}_1-\mathbf{q}_2)-\frac{1}{2\delta}\sum_{a\in I_r}\Lambda_{aa}q_{a,1}(q_{a,1}+1)}\prod_{a\in I_r}\widehat{Z}_{a,0}^{q_{a,1}}\widehat{Q}_{a,1}^{-q_{a,1}}\prod_{a\in I_r}\widehat{Q}_{a,1}^{q_{a,2}}.\label{eq:4.19}
\end{gather}
As \smash{$\overline{Q}^{(k)}(\mathbf{m},\mathbf{n})-\overline{Q}^{(k-1)}(\mathbf{m}',\mathbf{n}')=\frac{1}{\delta}\mathbf{q}_1\cdot\Lambda(\mathbf{q}_1-\mathbf{q}_2)$} (where $\mathbf{m}'=(m_{a,i}')_{a\in I_r,i\in[1,k-1]}$ is defined by~${m_{a,i}'= m_{a,i+1}}$ for all $a\in I_r$ and $i\in[1,k-1]$), it follows from equations \eqref{eq:4.18} and \eqref{eq:4.19} that we have
\begin{align}
Z_{\overline{\lambda},\mathbf{n}}^{(k)}\bigl(\widehat{\mathbf{Q}}_0,\widehat{\mathbf{Q}}_1\bigr)
={}&\sum_{\mathbf{m}}q^{\overline{Q}^{(k)}(\mathbf{m},\mathbf{n})}\prod_{a\in I_r}\prod_{i=1}^k\begin{bmatrix}m_{a,i}+q_{a,i}\\m_{a,i}\end{bmatrix}_{q_a}\prod_{a\in I_r}\widehat{Q}_{a,1}^{-q_{a,0}}\prod_{a\in I_r}\widehat{Q}_{a,0}^{q_{a,1}}\nonumber\\
={}&\prod_{a\in I_r}\widehat{Q}_{a,1}^{n_{a,1}}\sum_{\mathbf{m}'\geq\mathbf{0}}q^{\overline{Q}^{(k-1)}(\mathbf{m}',\mathbf{n}')-\frac{1}{2\delta}\sum_{a\in I_r}\Lambda_{aa}q_{a,1}(q_{a,1}+1)}\prod_{a\in I_r}\prod_{i=2}^k\begin{bmatrix}m_{a,i}+q_{a,i}\\m_{a,i}\end{bmatrix}_{q_a}\nonumber\\
&\times\left(\prod_{a\in I_r}\sum_{m_{a,1}\geq0}\begin{bmatrix}m_{a,1}+q_{a,1}\\m_{a,1}\end{bmatrix}_{q_a}\widehat{Y}_{a,1}^{m_{a,1}}\widehat{Z}_{a,0}^{q_{a,1}}\widehat{Q}_{a,1}^{-q_{a,1}}\right)\prod_{a\in I_r}\widehat{Q}_{a,1}^{q_{a,2}},\label{eq:4.20}
\end{align}
where we used Lemma \ref{4.5}\,(1) in the last equality. As \smash{$\overline{Q}^{(k-1)}(\mathbf{m}',\mathbf{n}')-\frac{1}{2\delta}\sum_{a\in I_r}\Lambda_{aa}q_{a,1}(q_{a,1}+1)$} is independent of $\mathbf{m}_1$, we may sum over $m_{a,1}$ for all $a\in I_r$, and simplify the internal product of sums
\begin{equation*}
\prod_{a\in I_r}\sum_{m_{a,1}\geq0}\begin{bmatrix}m_{a,1}+q_{a,1}\\m_{a,1}\end{bmatrix}_{q_a}\widehat{Y}_{a,1}^{m_{a,1}}\widehat{Z}_{a,0}^{q_{a,1}}\widehat{Q}_{a,1}^{-q_{a,1}}.
\end{equation*}
Now, it follows from Lemmas \ref{4.5}\,(3) and (4), \ref{4.7} and \ref{4.8} that we have
\begin{gather}
\prod_{a\in I_r}\sum_{m_{a,1}\geq0}\begin{bmatrix}m_{a,1}+q_{a,1}\\m_{a,1}\end{bmatrix}_{q_a}\widehat{Y}_{a,1}^{m_{a,1}}\widehat{Z}_{a,0}^{q_{a,1}}\widehat{Q}_{a,1}^{-q_{a,1}}\nonumber\\
\qquad=q^{\frac{1}{2\delta}\sum_{a\in I_r}\Lambda_{aa}q_{a,1}(q_{a,1}+1)}\prod_{a\in I_r}\widehat{Z}_{a,0}^{-1}\widehat{Z}_{a,1}\widehat{Q}_{a,2}^{-q_{a,1}}\nonumber\\
\qquad=q^{\frac{1}{2\delta}\sum_{a\in I_r}\Lambda_{aa}q_{a,1}(q_{a,1}+1)}\prod_{a\in I_r}\widehat{Z}_{a,0}^{-1}\widehat{Z}_{a,1}\prod_{a\in I_r}\widehat{Q}_{a,2}^{-q_{a,1}}.\label{eq:4.21}
\end{gather}
By combining equations \eqref{eq:4.20} and \eqref{eq:4.21}, we have
\begin{align}
Z_{\overline{\lambda},\mathbf{n}}^{(k)}\bigl(\widehat{\mathbf{Q}}_0,\widehat{\mathbf{Q}}_1\bigr)
={}&\prod_{a\in I_r}\widehat{Q}_{a,1}^{n_{a,1}}\sum_{\mathbf{m}'\geq\mathbf{0}}q^{\overline{Q}^{(k-1)}(\mathbf{m}',\mathbf{n}')-\frac{1}{2\delta}\sum_{a\in I_r}\Lambda_{aa}q_{a,1}(q_{a,1}+1)}\prod_{a\in I_r}\prod_{i=2}^k\begin{bmatrix}m_{a,i}+q_{a,i}\\m_{a,i}\end{bmatrix}_{q_a}\nonumber\\
& \times\left(\prod_{a\in I_r}\sum_{m_{a,1}\geq0}\begin{bmatrix}m_{a,1}+q_{a,1}\\m_{a,1}\end{bmatrix}_{q_a}\widehat{Y}_{a,1}^{m_{a,1}}\widehat{Z}_{a,0}^{q_{a,1}}\widehat{Q}_{a,1}^{-q_{a,1}}\right)\prod_{a\in I_r}\widehat{Q}_{a,1}^{q_{a,2}}\nonumber\\
={}&\prod_{a\in I_r}\widehat{Q}_{a,1}^{n_{a,1}}\sum_{\mathbf{m}'\geq\mathbf{0}}q^{\overline{Q}^{(k-1)}(\mathbf{m}',\mathbf{n}')}\prod_{a\in I_r}\prod_{i=2}^k\begin{bmatrix}m_{a,i}+q_{a,i}\\m_{a,i}\end{bmatrix}_{q_a}\nonumber\\
& \times\prod_{a\in I_r}\widehat{Z}_{a,0}^{-1}\widehat{Z}_{a,1}\prod_{a\in I_r}\widehat{Q}_{a,2}^{-q_{a,1}}\prod_{a\in I_r}\widehat{Q}_{a,1}^{q_{a,2}}\nonumber\\
={}&q^{-\frac{1}{\delta}\sum_{a,b\in I_r}\Lambda_{ab}n_{a,1}}\prod_{a\in I_r}\widehat{Z}_{a,0}^{-1}\prod_{a\in I_r}\widehat{Q}_{a,1}^{n_{a,1}}\prod_{a\in I_r}\widehat{Z}_{a,1}\nonumber\\
& \times\sum_{\mathbf{m}'\geq\mathbf{0}}q^{\overline{Q}^{(k-1)}(\mathbf{m}',\mathbf{n}')}\prod_{a\in I_r}\prod_{i=2}^k\begin{bmatrix}m_{a,i}+q_{a,i}\\m_{a,i}\end{bmatrix}_{q_a}\prod_{a\in I_r}\widehat{Q}_{a,2}^{-q_{a,1}}\prod_{a\in I_r}\widehat{Q}_{a,1}^{q_{a,2}}\nonumber\\
={}&q^{-\frac{1}{\delta}\sum_{a,b\in I_r}\Lambda_{ab}n_{a,1}}\prod_{a\in I_r}\widehat{Z}_{a,0}^{-1}\prod_{a\in I_r}\widehat{Q}_{a,1}^{n_{a,1}}\prod_{a\in I_r}\widehat{Z}_{a,1}Z_{\overline{\lambda},\mathbf{n}'}^{(k-1)}\bigl(\widehat{\mathbf{Q}}_1,\widehat{\mathbf{Q}}_2\bigr)\label{eq:4.22},
\end{align}
where we have used the commutation relations in Lemma \ref{4.1} in the second last equality.

By invoking Lemma \ref{4.3}, along with equations \eqref{eq:4.17} and \eqref{eq:4.22}, we get
\begin{align}
Z_{\overline{\lambda},\mathbf{n}}^{(k)}\bigl(\widehat{\mathbf{Q}}_0,\widehat{\mathbf{Q}}_1\bigr)
={}&q^{-\frac{1}{2\delta}\sum_{a\in I_r}\Lambda_{aa}\ell_a-\frac{1}{\delta}\sum_{a,b\in I_r}\sum_{i=1}^k\Lambda_{ab}n_{a,i}}\nonumber\\
&\times\prod_{a\in I_r}\widehat{Z}_{a,0}^{-1}\left(\prod_{i=1}^k\prod_{a\in I_r}\widehat{Q}_{a,i}^{n_{a,i}}\right)\prod_{a\in I_r}\widehat{Z}_{a,k+1}^{\ell_a+1}.\label{eq:4.23}
\end{align}
As an immediate consequence of equation \eqref{eq:4.23}, we deduce that
\begin{equation}\label{eq:4.24}
Z_{\overline{\lambda},\mathbf{n}}^{(k)}\bigl(\widehat{\mathbf{Q}}_0,\widehat{\mathbf{Q}}_1\bigr)
=Z_{0,\overline{\mathbf{n}}^{(j)}}^{(j)}\bigl(\widehat{\mathbf{Q}}_0,\widehat{\mathbf{Q}}_1\bigr)Z_{\overline{\lambda},\mathbf{n}^{(k-j)}}^{(k-j)}
\bigl(\widehat{\mathbf{Q}}_j,\widehat{\mathbf{Q}}_{j+1}\bigr)
\end{equation}
for all $j\in[1,k]$, where $\overline{\mathbf{n}}^{(j)}$ is the $(r\times j)$-tuple of non-negative integers \smash{$\overline{ \mathbf{n}}^{(j)}=(n_{a,i})_{a\in I_r,i\in[1,j]}$}, and \smash{$\mathbf{n}^{(j)} \!=\! \bigl(n_{a,i}^{(j)}\bigr)_{a\in I_r,i\in\mathbb{N}}$} is defined by \smash{$n_{a,i}^{(j)}=n_{a,i+j}$} for all $a\in I_r$ and $i\in\mathbb{N}$.

\subsection[Proof of Theorems 1.1 and 1.2]{Proof of Theorems \ref{1.1} and \ref{1.2}}\label{Section 4.4}

We are now ready to show that the two sums \smash{$M_{\overline{\lambda},\mathbf{n}}^{(k)}\bigl(q^{-1}\bigr)$} and \smash{$\tilde{M}_{\overline{\lambda},\mathbf{n}}^{(k)}\bigl(q^{-1}\bigr)$} are equal to each other. To this end, we need a few auxiliary lemmas from \cite[Section 5.5]{DFK14}. We will let $A$ denote the ring~${\mathbb{Z}_{\nu}[\widehat{\mathbf{Q}}_1,\widehat{\mathbf{Q}}_{-1},\widehat{\mathbf{Q}}_0^{\pm1}]}$, and $A_a$ denote the ring $\mathbb{Z}_{\nu}\bigl[\bigl\{\widehat{Q}_{b,\pm1}\bigr\}_{b\neq a},\widehat{\mathbf{Q}}_0^{\pm1}\bigr]$.

\begin{Lemma}[{\cite[Lemma 5.9]{DFK14}}]\label{4.9}
Let $a_1,\dots,a_n\in I_r$, $i_1,\dots,i_n\in\mathbb{Z}$ and $m_1,\dots,m_n\in\mathbb{Z}_+$. Then \smash{$\prod_{j=1}^n\widehat{Q}_{a_j,i_j}^{m_j}\in A$}.
\end{Lemma}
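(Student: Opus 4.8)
The plan is to reduce the general claim to a statement about individual quantum $Q$-system variables lying in $A = \mathbb{Z}_\nu[\widehat{\mathbf{Q}}_1,\widehat{\mathbf{Q}}_{-1},\widehat{\mathbf{Q}}_0^{\pm1}]$, and then use the Laurent property together with a parity/degree argument to rule out the possibility of genuinely negative powers of $\widehat{Q}_{b,\pm1}$ surviving. First I would invoke Lemma \ref{4.3} (translational invariance) to reduce to the case where all indices $i_j$ lie in a bounded window, say $i_j \in \{-1,0,1\}$: since $\widehat{Q}_{a,m}(\widehat{\mathbf{Q}}_j\cup\widehat{\mathbf{Q}}_{j+1}) = \widehat{Q}_{a,m+j}(\widehat{\mathbf{Q}}_0\cup\widehat{\mathbf{Q}}_1)$, any $\widehat{Q}_{a_j,i_j}$ is, up to relabelling the initial data cluster, one of the ``near-origin'' variables; the point is that $\widehat{\mathbf{Q}}_{-1}\cup\widehat{\mathbf{Q}}_0$, $\widehat{\mathbf{Q}}_0\cup\widehat{\mathbf{Q}}_1$, and $\widehat{\mathbf{Q}}_1\cup\widehat{\mathbf{Q}}_2$ are all valid initial data clusters obtained from one another by mutations, and the variables $\widehat{\mathbf{Q}}_1,\widehat{\mathbf{Q}}_{-1},\widehat{\mathbf{Q}}_0$ together generate (a localization inside) the cluster algebra.

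Next I would argue that each single variable $\widehat{Q}_{b,1}$, $\widehat{Q}_{b,-1}$, and $\widehat{Q}_{b,0}^{\pm1}$ visibly lies in $A$, so that a product of \emph{nonnegative} powers of these lies in $A$; the content of the lemma is that the $m_j$ being nonnegative is enough even though the $\widehat{Q}_{a_j,i_j}$ with $|i_j|\ge 2$ are, a priori, only Laurent polynomials in the initial data by Lemma \ref{4.2}. So the real work is: write $\widehat{Q}_{a_j,i_j}$ (for $i_j$ outside $\{-1,0,1\}$) via the quantum $Q$-system recursion \eqref{eq:4.4}, solving for $\widehat{Q}_{a,k+1}$ in terms of $\widehat{Q}_{a,k},\widehat{Q}_{a,k-1}$ and the $\widehat{Q}_{b,k}$ with $b\sim a$ — this expresses $\widehat{Q}_{a,k+1}$ as a polynomial in the cluster $\widehat{\mathbf{Q}}_k\cup\widehat{\mathbf{Q}}_{k+1}$ divided only by $\widehat{Q}_{a,k-1}$, and inductively only by a monomial in the $\widehat{\mathbf{Q}}_j$'s with $j$ between the relevant indices. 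Iterating the recursion outward from the window $\{-1,0,1\}$ and keeping careful track of which variables appear in denominators, one sees that the only denominators that can occur are monomials in $\widehat{Q}_{b,0}$ (for $b\in I_r$) — precisely the variables we have inverted in $A$ — provided we started from nonnegative powers. I expect the main obstacle to be exactly this bookkeeping: showing that the recursion, when propagated from the central cluster, never forces an inverse of $\widehat{Q}_{b,1}$ or $\widehat{Q}_{b,-1}$ into the denominator; this is where the nonnegativity hypothesis $m_j\ge 0$ and the structure of the $B$-matrix $B=\left(\begin{smallmatrix}0&-\overline{C}\\\overline{C}&0\end{smallmatrix}\right)$ (which makes $\widehat{\mathbf{Q}}_0$ the ``mutation pivot'' separating the two halves $\widehat{\mathbf{Q}}_{>0}$ and $\widehat{\mathbf{Q}}_{<0}$) must be used.

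Since the statement is cited verbatim from \cite[Lemma 5.9]{DFK14}, the cleanest route is to note that the quantum twisted $Q$-system in types $\ne A_{2r}^{(2)}$ satisfies the same recursive structure \eqref{eq:4.4}, the same commutation relations (Lemma \ref{4.1}), the same Laurent property (Lemma \ref{4.2}), and the same translational invariance (Lemma \ref{4.3}) as the untwisted quantum $Q$-system, so the proof of \cite[Lemma 5.9]{DFK14} applies \emph{mutatis mutandis}, with the single scalar $q$ replaced by the family $q_a = q^{t_a^\vee}$ throughout. I would therefore present the proof as: reduce to the window $\{-1,0,1\}$ via Lemma \ref{4.3}; observe $\widehat{Q}_{b,\pm1},\widehat{Q}_{b,0}^{\pm1}\in A$ trivially; and then carry out the outward induction on $|i_j|$ using \eqref{eq:4.4}, checking that each step introduces only denominators lying in the multiplicative set generated by $\{\widehat{Q}_{b,0}\}_{b\in I_r}$ — exactly as in \cite{DFK14} — which is legitimate precisely because all exponents $m_j$ are nonnegative.
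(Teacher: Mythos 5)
Your proposal correctly identifies the overall strategy --- transfer the argument from \cite[Lemma~5.9]{DFK14}, noting that the twisted quantum $Q$-system has the same recursion, commutation, Laurent, and translation structure --- and indeed the paper's proof is essentially a one-line citation of exactly this kind. However, the concrete outline you give has two problems. The proposed ``reduction to the window $\{-1,0,1\}$ via Lemma~\ref{4.3}'' is not a legitimate step: translational invariance relates the solution as a function of shifted initial data to a shift of the solution, but the ring $A$ is defined once and for all in terms of the fixed set $\widehat{\mathbf{Q}}_1,\widehat{\mathbf{Q}}_{-1},\widehat{\mathbf{Q}}_0^{\pm1}$, so you cannot ``relabel the initial cluster'' without changing $A$ at the same time, and nothing is gained.

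More seriously, the outward induction as you sketch it elides the actual content. The recursion \eqref{eq:4.4} gives $\widehat{Q}_{a,k+1}=\nu^{\Lambda_{aa}}\bigl(\widehat{Q}_{a,k}^2-\prod_{b\sim a}\widehat{Q}_{b,k}^{-\overline{C}_{ba}}\bigr)\widehat{Q}_{a,k-1}^{-1}$, so for $k\geq 2$ each step divides by $\widehat{Q}_{a,k-1}$, which is not a unit in $A$. Your assertion that ``the only denominators that can occur are monomials in $\widehat{Q}_{b,0}$'' is exactly the nontrivial claim, not a consequence of bookkeeping: already $\widehat{Q}_{a,3}$ is, naively, divided by $\widehat{Q}_{a,1}$, and the required cancellation becomes visible only after rewriting the numerator via the $k=0$ relation $\nu^{-\Lambda_{aa}}\widehat{Q}_{a,1}\widehat{Q}_{a,-1}=\widehat{Q}_{a,0}^2-\prod_{b\sim a}\widehat{Q}_{b,0}^{-\overline{C}_{ba}}$, so the variables $\widehat{Q}_{a,-1}$ are genuinely needed; this is precisely the divisibility argument supplied in \cite{DFK14}, and deferring to ``bookkeeping'' does not resolve it. You also omit the one adaptation the paper highlights explicitly, namely that $\widehat{Q}_{a,1}$ and $\widehat{Q}_{b,-1}$ $\nu$-commute for distinct $a,b\in I_r$ (Lemma~\ref{4.5}(1)); since these variables come from two different clusters, this commutation is what is needed even to make sense of $A$ as a ring and to reorder the factors of $\prod_j\widehat{Q}_{a_j,i_j}^{m_j}$ so as to land inside it.
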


The proof of Lemma \ref{4.9} follows from that of \cite[Lemma 5.9]{DFK14}, bearing in mind that $\widehat{Q}_{a,1}$ and~\smash{$\widehat{Q}_{b,-1}$} $\nu$-commute for all distinct $a,b\in I_r$ by Lemma \ref{4.5}\,(1).

\begin{Lemma}[{\cite[Lemma 5.12]{DFK14}}]\label{4.10}
For any $b\in I_r$, we have
\begin{equation*}
\biggl(\prod_{a\in I_r}\widehat{Z}_{a,0}^{-1}\biggr)\widehat{Q}_{b,-1}\biggl|_{\widehat{\mathbf{Q}}_0=1}=0.
\end{equation*}
\end{Lemma}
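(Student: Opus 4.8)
The plan is to reduce the claim to an explicit monomial computation in the quantum torus generated by $\widehat{\mathbf{Q}}_{-1}\cup\widehat{\mathbf{Q}}_0\cup\widehat{\mathbf{Q}}_1$, exploiting the Laurent property (Lemma \ref{4.2}, Lemma \ref{4.9}) together with the $\nu$-commutation relations of Lemmas \ref{4.1} and \ref{4.5}. First I would recall that by Lemma \ref{4.2} the variable $\widehat{Q}_{b,-1}$ is a Laurent polynomial in the initial data $\widehat{\mathbf{Q}}_0\cup\widehat{\mathbf{Q}}_1$, and by Lemma \ref{4.9} all the relevant products land in the ring $A$, so both $\prod_{a\in I_r}\widehat Z_{a,0}^{-1}=\prod_{a\in I_r}\widehat Q_{a,0}^{-1}\widehat Q_{a,1}$ and $\widehat Q_{b,-1}$ can be written with respect to the fundamental cluster $\widehat{\mathbf{Q}}_0\cup\widehat{\mathbf{Q}}_1$; the statement then makes sense as an evaluation at $\widehat{\mathbf{Q}}_0=1$.

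The key step is to evaluate $\widehat Q_{b,-1}$ explicitly on the cluster $\widehat{\mathbf{Q}}_0\cup\widehat{\mathbf{Q}}_1$. Applying the quantum twisted $Q$-system relation \eqref{eq:4.4} with $k=0$ and solving for $\widehat Q_{b,-1}$ gives
\[
\widehat Q_{b,-1}=\nu^{\Lambda_{bb}}\left(\widehat Q_{b,0}^2-\prod_{c\sim b}\widehat Q_{c,0}^{-\overline C_{cb}}\right)\widehat Q_{b,1}^{-1}
=\nu^{\Lambda_{bb}}\bigl(\widehat Q_{b,0}^2-\widehat Y_{b,0}\bigr)\widehat Q_{b,1}^{-1},
\]
where $\widehat Y_{b,0}=\prod_{c\in I_r}\widehat Q_{c,0}^{-\overline C_{cb}}$ (note the $c=b$ factor contributes $\widehat Q_{b,0}^{-2}$, so the product over $c\sim b$ and the expression $\widehat Q_{b,0}^2\widehat Y_{b,0}^{-1}$ should be carefully reconciled — the clean form to use is $\nu^{-\Lambda_{bb}}\widehat Q_{b,1}\widehat Q_{b,-1}=\widehat Q_{b,0}^2\bigl(1-\widehat Y_{b,0}\bigr)$ from \eqref{eq:4.5}). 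Thus
\[
\widehat Q_{b,-1}=\nu^{\Lambda_{bb}}\,\widehat Q_{b,0}^2\bigl(1-\widehat Y_{b,0}\bigr)\widehat Q_{b,1}^{-1}.
\]
Now multiply on the left by $\prod_{a\in I_r}\widehat Z_{a,0}^{-1}=\prod_{a\in I_r}\widehat Q_{a,0}^{-1}\widehat Q_{a,1}$, push all $\widehat{\mathbf{Q}}_1$-factors to one side using the commutation relations of Lemma \ref{4.1} (they only contribute powers of $\nu$, which are irrelevant after the evaluation since the whole expression will be shown to vanish), and then set $\widehat{\mathbf{Q}}_0=1$. Under $\widehat{\mathbf{Q}}_0=1$ we have $\widehat Y_{b,0}\mapsto 1$, hence the factor $(1-\widehat Y_{b,0})$ evaluates to $0$; since the remaining factors $\prod_{a}\widehat Q_{a,1}$, $\widehat Q_{b,1}^{-1}$ and the scalar $\nu^{\Lambda_{bb}}$ (times any $\nu$-powers from reordering) specialize to honest elements of $\mathbb{Z}_\nu[\widehat{\mathbf{Q}}_1^{\pm1}]$, the product is $0$. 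Here one must be careful that the evaluation map $|_{\widehat{\mathbf{Q}}_0=1}$ is well-defined on the Laurent expression, which is guaranteed because $\prod_a\widehat Z_{a,0}^{-1}\cdot\widehat Q_{b,-1}\in A$ by Lemma \ref{4.9} and every monomial appearing has a genuine (possibly negative) power of $\widehat{\mathbf{Q}}_1$ but only nonnegative-and-negative integer — in fact the expression above shows it is a polynomial in $\widehat Q_{b,0}$ times a Laurent monomial in $\widehat{\mathbf{Q}}_1$, so setting $\widehat{\mathbf{Q}}_0=1$ is literally substitution.

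The main obstacle I anticipate is bookkeeping of the $\nu$-powers and making sure the rewriting $\widehat Q_{b,-1}=\nu^{\Lambda_{bb}}\widehat Q_{b,0}^2(1-\widehat Y_{b,0})\widehat Q_{b,1}^{-1}$ is taken in the correct (left vs.\ right) order so that it is a valid identity in the quantum torus rather than just up to scalar; once the order is fixed, the vanishing of $(1-\widehat Y_{b,0})$ at $\widehat{\mathbf{Q}}_0=1$ does all the work. A secondary point to verify is that $\bigl(\prod_a \widehat Z_{a,0}^{-1}\bigr)\widehat Q_{b,-1}$ really lies in $A$ and not merely in a larger localization, so that $|_{\widehat{\mathbf{Q}}_0=1}$ is unambiguous; this is exactly what Lemma \ref{4.9} supplies, so I would cite it. No genuinely hard estimate is involved — the content is the algebraic identity \eqref{eq:4.5} rearranged plus a specialization.
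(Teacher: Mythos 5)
The paper cites this as \cite[Lemma~5.12]{DFK14} and does not reprove it, so there is no argument in the paper to compare against; I will assess your proposal on its own terms. Your governing idea is correct: solving the quantum twisted $Q$-system relation \eqref{eq:4.5} at $k=0$ exposes a factor $(1-\widehat{Y}_{b,0})$, which lies in the commutative subring $\mathbb{Z}_u\bigl[\widehat{\mathbf{Q}}_0^{\pm1}\bigr]$ and specializes to $1-1=0$ under the right evaluation $|_{\widehat{\mathbf{Q}}_0=1}$, and this is indeed the whole content of the lemma.

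However, the specific rearrangement you write, $\widehat Q_{b,-1}=\nu^{\Lambda_{bb}}\widehat Q_{b,0}^2\bigl(1-\widehat Y_{b,0}\bigr)\widehat Q_{b,1}^{-1}$, is \emph{not} an identity in the quantum torus: solving $\nu^{-\Lambda_{bb}}\widehat Q_{b,1}\widehat Q_{b,-1}=\widehat Q_{b,0}^2\bigl(1-\widehat Y_{b,0}\bigr)$ requires multiplying on the \emph{left} by $\widehat Q_{b,1}^{-1}$, giving $\widehat Q_{b,-1}=\nu^{\Lambda_{bb}}\widehat Q_{b,1}^{-1}\widehat Q_{b,0}^2\bigl(1-\widehat Y_{b,0}\bigr)$. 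This is not mere bookkeeping: with $\widehat Q_{b,1}^{-1}$ on the right, the two $\widehat{\mathbf{Q}}_0$-monomials $\widehat Q_{b,0}^2$ and $\widehat Q_{b,0}^2\widehat Y_{b,0}$ have different multidegrees and therefore pick up \emph{different} powers of $\nu$ when $\widehat Q_{b,1}^{-1}$ is commuted to the left into normal form, so the evaluation would no longer produce a manifest $1-1$ cancellation; equivalently, the valid identity with $\widehat Q_{b,1}^{-1}$ on the right is $\widehat Q_{b,-1}=\nu^{-\Lambda_{bb}}\widehat Q_{b,0}^2\bigl(1-q^{t_b^\vee}\widehat Y_{b,0}\bigr)\widehat Q_{b,1}^{-1}$, with the extra $q^{t_b^\vee}$ exactly compensating those $\nu$-discrepancies (compare \eqref{eq:5.4}). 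Once the left-multiplied form is used, the argument closes cleanly and you should also tighten the final step: the right evaluation is not a ring homomorphism, so ``the remaining factors specialize'' is not by itself a justification; the precise reason is that $\prod_{a}\widehat Z_{a,0}^{-1}\cdot\widehat Q_{b,-1}=\nu^{\Lambda_{bb}}\bigl(\prod_{a}\widehat Z_{a,0}^{-1}\bigr)\widehat Q_{b,1}^{-1}\widehat Q_{b,0}^2\cdot\bigl(1-\widehat Y_{b,0}\bigr)$ has the vanishing factor on the far right, it commutes with the $\widehat{\mathbf{Q}}_0$-part of the normal form of everything to its left, and hence every term in the normal form of the product carries a $\widehat{\mathbf{Q}}_0$-Laurent polynomial that evaluates to $0$ at $\widehat{\mathbf{Q}}_0=1$.
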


\begin{Lemma}[{\cite[Lemma 5.14]{DFK14}}]\label{4.11}
For all $a\in I_r$ and $n\in\mathbb{Z}_+$, we have \smash{$\widehat{Q}_{a,n}^{-1}\in A_a\bigl(\bigl(\widehat{Q}_{a,1}^{-1}\bigr)\bigr)$}.
\end{Lemma}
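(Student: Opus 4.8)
The plan is to prove the statement by descending induction relative to the fundamental initial data, using the Laurent property (Lemma~\ref{4.2}) together with the quantum twisted $Q$-system relation \eqref{eq:4.5} to control which variables appear and with what signs of exponents. The base cases are $n=0$ and $n=1$, where $\widehat{Q}_{a,0}^{-1}$ and $\widehat{Q}_{a,1}^{-1}$ trivially lie in $A_a\bigl(\bigl(\widehat{Q}_{a,1}^{-1}\bigr)\bigr)$ (the former because $\widehat{Q}_{a,0}^{\pm 1}\in A_a$ by definition, the latter because it is literally a monomial in $\widehat{Q}_{a,1}^{-1}$ with coefficient $1\in A_a$). For the inductive step I would like to show that if the claim holds for $\widehat{Q}_{a,n}^{-1}$ and $\widehat{Q}_{a,n-1}^{-1}$ then it holds for $\widehat{Q}_{a,n+1}^{-1}$. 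Rearranging \eqref{eq:4.5} gives
\[
\widehat{Q}_{a,n+1}
=\nu^{\Lambda_{aa}}\bigl(\widehat{Q}_{a,n}^{2}-\widehat{Y}_{a,n}\bigr)\widehat{Q}_{a,n-1}^{-1},
\]
and hence, after commuting factors past each other using the $\nu$-commutation relations of Lemma~\ref{4.1} and Lemma~\ref{4.5}, we obtain an expression for $\widehat{Q}_{a,n+1}^{-1}$ of the form $\widehat{Q}_{a,n-1}\bigl(\widehat{Q}_{a,n}^{2}-\widehat{Y}_{a,n}\bigr)^{-1}\nu^{-\Lambda_{aa}}$ up to a scalar; the point is that $\widehat{Q}_{a,n-1}\in A$ by Lemma~\ref{4.9}, so it remains to analyse $\bigl(\widehat{Q}_{a,n}^{2}-\widehat{Y}_{a,n}\bigr)^{-1}$. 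Writing $\widehat{Q}_{a,n}^{2}-\widehat{Y}_{a,n}=\widehat{Q}_{a,n}^{2}\bigl(1-\widehat{Q}_{a,n}^{-2}\widehat{Y}_{a,n}\bigr)$ (again commuting via Lemma~\ref{4.5}\,(1), which controls how $\widehat{Y}_{a,n}$ moves past $\widehat{Q}_{a,n}$), we can expand $\bigl(1-\widehat{Q}_{a,n}^{-2}\widehat{Y}_{a,n}\bigr)^{-1}$ as a geometric series in $\widehat{Q}_{a,n}^{-2}\widehat{Y}_{a,n}$.

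For this series expansion to make sense and to land in the claimed ring I would proceed as follows. First, $\widehat{Y}_{a,n}=\prod_{b\in I_r}\widehat{Q}_{b,n}^{-\overline{C}_{ba}}$; the factor $\widehat{Q}_{a,n}^{-\overline{C}_{aa}}=\widehat{Q}_{a,n}^{-2}$ combines with the explicit $\widehat{Q}_{a,n}^{-2}$ above, while the remaining factors $\widehat{Q}_{b,n}^{-\overline{C}_{ba}}$ with $b\sim a$ involve only indices $b\neq a$, and each $\widehat{Q}_{b,n}^{\pm 1}$ lies in $A_a$ by Lemma~\ref{4.9} applied within the subring $A_a$ (or directly, since $A_a$ contains $\widehat{Q}_{b,\pm 1}$ and $\widehat{Q}_{b,n}$ is a Laurent polynomial in those by the Laurent property restricted to the $b\neq a$ variables). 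So each term of the geometric series is, up to a power of $\nu$ coming from the commutation relations, a product of an element of $A_a$ with a nonnegative power of $\widehat{Q}_{a,n}^{-1}$. Second, I would invoke the inductive hypothesis: $\widehat{Q}_{a,n}^{-1}\in A_a\bigl(\bigl(\widehat{Q}_{a,1}^{-1}\bigr)\bigr)$, so every nonnegative power $\widehat{Q}_{a,n}^{-m}$ also lies in $A_a\bigl(\bigl(\widehat{Q}_{a,1}^{-1}\bigr)\bigr)$, and moreover the lowest order in $\widehat{Q}_{a,1}^{-1}$ occurring in $\widehat{Q}_{a,n}^{-m}$ grows with $m$ (this is the key point that makes the infinite sum over the geometric series a well-defined element of the Laurent-series ring $A_a\bigl(\bigl(\widehat{Q}_{a,1}^{-1}\bigr)\bigr)$ rather than a formal divergent object). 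Combining these, the geometric series converges in $A_a\bigl(\bigl(\widehat{Q}_{a,1}^{-1}\bigr)\bigr)$, and multiplying on the left by $\widehat{Q}_{a,n-1}\in A$ — writing $\widehat{Q}_{a,n-1}$ itself in $A_a\bigl(\bigl(\widehat{Q}_{a,1}^{-1}\bigr)\bigr)$ via the Laurent property and the inductive control on its $\widehat{Q}_{a,1}$-dependence — keeps us in the same ring, completing the induction.

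The main obstacle I anticipate is the order-of-vanishing bookkeeping: one must verify that $\widehat{Q}_{a,n}^{-1}$ has a strictly positive order of pole in $\widehat{Q}_{a,1}$ (equivalently, that as a Laurent series in $\widehat{Q}_{a,1}^{-1}$ its leading term starts at a positive power of $\widehat{Q}_{a,1}^{-1}$ whose exponent increases with $n$), so that raising it to high powers and summing geometrically does not produce an ill-defined series. This is where one genuinely uses the structure of the $Q$-system recursion and not just the abstract Laurent property; the argument should mirror \cite[Lemma~5.14]{DFK14} closely, with the only modifications being (i) keeping track of the extra powers of $\nu$ (equivalently $q_a=\nu^{\delta t_a^\vee}$) produced by the twisted commutation relations of Lemmas~\ref{4.1} and~\ref{4.5}, and (ii) using $\overline{C}_{aa}=2$ and the identity $\Lambda\overline{C}=\delta T^\vee$ exactly as in the proof of Lemma~\ref{4.5}. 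Since none of these modifications affect the combinatorial skeleton of the DFK argument, I would state the lemma with proof "as in \cite[Lemma~5.14]{DFK14}, with the modifications above," rather than reproducing the full induction.
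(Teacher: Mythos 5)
Your proposal ends by deferring to \cite[Lemma~5.14]{DFK14}, which is exactly what the paper does, so in spirit the "route" is the same. However, the argument you sketch en route contains a genuine error that would sink the induction if one tried to carry it out.

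The fatal step is the claim that for $b\neq a$ (in particular $b\sim a$) the variable $\widehat{Q}_{b,n}$ lies in $A_a$, justified by appeal to "the Laurent property restricted to the $b\neq a$ variables." No such restricted Laurent property exists: the $Q$-system is a coupled recursion, and for $b\sim a$ the variable $\widehat{Q}_{b,n}$ depends on $\widehat{Q}_{a,1}$ as soon as $n\geq 2$. Concretely, $\widehat{Q}_{b,2}=\nu^{\Lambda_{bb}}\widehat{Q}_{b,1}^2\bigl(1-\widehat{Y}_{b,1}\bigr)\widehat{Q}_{b,0}^{-1}$ contains the term $-\prod_{c\sim b}\widehat{Q}_{c,1}^{-\overline{C}_{cb}}\widehat{Q}_{b,0}^{-1}$, which for $a\sim b$ carries the factor $\widehat{Q}_{a,1}^{-\overline{C}_{ab}}$ with $-\overline{C}_{ab}>0$. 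So $\widehat{Q}_{b,n}\in A$ (by Lemma~\ref{4.9}) but emphatically not in $A_a$. This means the coefficients of your geometric series $\sum_{m\geq 0}\widehat{Y}_{a,n}^m$ are not elements of $A_a$ times powers of $\widehat{Q}_{a,n}^{-1}$; each power $\widehat{Y}_{a,n}^m$ contributes both negative and \emph{positive} powers of $\widehat{Q}_{a,1}$, the positive ones coming from $\prod_{b\sim a}\widehat{Q}_{b,n}^{-m\overline{C}_{ba}}$.

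As a result, the convergence issue you flag at the end ("order-of-vanishing bookkeeping") is not a side issue to be waved at with the inductive hypothesis on $\widehat{Q}_{a,n}^{-1}$; it is the entire content of the lemma, and it requires a degree estimate across \emph{all} the $\widehat{Q}_{b,n}$ simultaneously. What one actually needs is that $\deg_{\widehat{Q}_{a,1}}\widehat{Y}_{a,n}<0$, i.e., that the $\widehat{Q}_{a,1}$-degree of $\widehat{Q}_{a,n}^2$ strictly exceeds that of $\prod_{b\sim a}\widehat{Q}_{b,n}^{-\overline{C}_{ba}}$; equivalently that $\widehat{Q}_{a,n}$ has a leading monomial in $\widehat{Q}_{a,1}$ with an invertible coefficient in $A_a$, so that $\widehat{Q}_{a,n}^{-1}$ can be expanded as $(\text{monomial})^{-1}\cdot(1+\text{lower order})^{-1}$. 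This linearized degree recursion for the $Q$-system solutions is the nontrivial input from \cite[Lemma~5.14]{DFK14}, and your proposal neither establishes it nor correctly identifies it as the missing ingredient; the inductive hypothesis on $\widehat{Q}_{a,n}^{-1}$ alone does not control the $\widehat{Q}_{a,1}$-dependence of the neighboring $\widehat{Q}_{b,n}$.

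A secondary slip: you write $\widehat{Q}_{a,n+1}=\nu^{\Lambda_{aa}}\bigl(\widehat{Q}_{a,n}^2-\widehat{Y}_{a,n}\bigr)\widehat{Q}_{a,n-1}^{-1}$, but \eqref{eq:4.4}--\eqref{eq:4.5} give $\widehat{Q}_{a,n+1}=\nu^{\Lambda_{aa}}\widehat{Q}_{a,n}^{2}\bigl(1-\widehat{Y}_{a,n}\bigr)\widehat{Q}_{a,n-1}^{-1}$; the subtracted term is $\widehat{Q}_{a,n}^2\widehat{Y}_{a,n}=\prod_{b\sim a}\widehat{Q}_{b,n}^{-\overline{C}_{ba}}$, not $\widehat{Y}_{a,n}$ itself. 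This is easy to fix but should be corrected before any of the commutation bookkeeping you describe.
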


\begin{proof}[Proof of Theorem~\ref{1.1}]
By equation \eqref{eq:4.16}, it suffices to show that if a term $S$ on the right-hand side of~\eqref{eq:4.13} corresponds to a vector $\mathbf{m}$ that satisfies $q_{a,i}<0$ for some $a\in I_r$ and $i\in[1,k]$, then~${\phi(S)=0}$. To this end, we will prove by induction on $j=k,\dots,1$, and show that if $S$ on the right-hand side of \eqref{eq:4.13} corresponds to a vector $\mathbf{m}$ that satisfies $q_{a,i}<0$ for some $a\in I_r$ and~${i\in[j,k]}$, then $\phi(S)=0$. The base case $j=k$ holds since we have $q_{a,k}=\ell_a\geq0$ for all $a\in I_r$, which implies that there is no such term $S$ that satisfies $q_{a,k}<0$ for some $a\in I_r$. Next, let us assume that the statement holds for $j+1$, where $j\geq1$. By equations \eqref{eq:4.23} and \eqref{eq:4.24}, we have
\begin{align}\label{eq:4.25}
Z_{\overline{\lambda},\mathbf{n}}^{(k)}\bigl(\widehat{\mathbf{Q}}_0,\widehat{\mathbf{Q}}_1\bigr)
={}&q^{-\frac{1}{2\delta}\sum_{a\in I_r}\Lambda_{aa}\ell_a-\frac{1}{\delta}\sum_{a,b\in I_r}\sum_{i=1}^j\Lambda_{ab}n_{a,i}}\prod_{a\in I_r}\widehat{Z}_{a,0}^{-1}\left(\prod_{i=1}^j\prod_{a\in I_r}\widehat{Q}_{a,i}^{n_{a,i}}\right)\prod_{a\in I_r}\widehat{Z}_{a,j+1}\nonumber\\
&\times\sum_{\mathbf{m}^{(j)}\geq0}q^{\overline{Q}^{(k-j)}(\mathbf{m}^{(j)},\mathbf{n}^{(j)})}\nonumber\\
&\times\prod_{a\in I_r}\prod_{i=j+1}^k\begin{bmatrix}m_{a,i}+q_{a,i}\\m_{a,i}\end{bmatrix}_{q_a}\prod_{a\in I_r}\widehat{Q}_{a,j+1}^{-q_{a,j}}\prod_{a\in I_r}\widehat{Q}_{a,j}^{q_{a,j+1}}.
\end{align}
By induction hypothesis, we have $q_{a,i}\geq0$ for all $a\in I_r$ and $i\geq j+1$. We note that a term in the right-hand side of \eqref{eq:4.25} has the form
\begin{equation*}
S
=\prod_{a\in I_r}\widehat{Z}_{a,0}^{-1}\left(\prod_{i=1}^j\prod_{a\in I_r}\widehat{Q}_{a,i}^{n_{a,i}}\right)\prod_{a\in I_r}\widehat{Z}_{a,j+1}\prod_{a\in I_r}\widehat{Q}_{a,j+1}^{-q_{a,j}}\prod_{a\in I_r}\widehat{Q}_{a,j}^{q_{a,j+1}}.
\end{equation*}
When $q_{b,j}<0$, it follows from Lemmas \ref{4.9} and \ref{4.11} that $S\in A_b$. By Lemma \ref{4.10}, it follows that
\[
S|_{\widehat{\mathbf{Q}}_0=1}\in\mathbb{Z}_t\bigl[\widehat{\mathbf{Q}}_1\bigr]\bigl[\bigl[\bigl\{\widehat{Q}_{a,1}^{-1}\bigr\}_{a\neq b}\bigr]\bigr].
\]
 Due to the prefactor \smash{$\prod_{a\in I_r}\widehat{Z}_{a,0}^{-1}$}, it follows that the exponent of~$\widehat{Q}_{a,1}$ in all terms of \smash{$S|_{\widehat{\mathbf{Q}}_0=1}$} are positive. Consequently, we have $\phi(S)=0$. So this shows that the sum in equation \eqref{eq:4.13} is unchanged if we impose the restriction $q_{b,j}\geq0$. As $b\in I_r$ is arbitrary, the statement is proved for $j$, and we are done.
\end{proof}

Having proved Theorem~\ref{1.1}, it remains to show that Theorem~\ref{1.2} holds. As in \cite[Section~1]{Kedem11}, Theorem~\ref{1.2} will follow from a series of (in-)equalities involving tensor product multiplicities involving KR-modules over twisted quantum affine and current algebras, as well as fermionic sums, which we will describe in detail as follows.

\begin{proof}[Proof of Theorem~\ref{1.2}]
We first recall that similar to the setting of twisted current algebras, the KR-modules over the twisted quantum affine algebra $U_q(\widehat{\mathfrak{g}}^{\sigma})$ are parameterized by $a\in I_r$, $m\in\mathbb{Z}_+$, and $z\in\mathbb{C}^*$ as well, and we denote them by $W_{a,m}^{\sigma}(z)$. By \cite[Theorem 8.5]{Hernandez10}, the multiplicity of the irreducible $U_q(\mathfrak{g}^{ \sigma})$-module $V^q\bigl(\overline{\lambda}\bigr)$ in the tensor product \smash{$\bigotimes_{\alpha\in I_r,\,i\in\mathbb{Z}_+} (W_{\alpha,i}^{\sigma})^{\otimes n_{\alpha,i}}$} of KR-modules over $U_q(\widehat{\mathfrak{g}}^{\sigma})$ is given by $\tilde{M}_{\overline{\lambda},\mathbf{n}}(1)$, that is, we have
\begin{equation}
\dim\Hom_{U_q(\mathfrak{g}^{\sigma})}\biggl(\bigotimes_{ \alpha\in I_r,\,i\in\mathbb{Z}_+}(W_{\alpha,i}^{\sigma})^{ \otimes n_{\alpha,i}} ,V^q\bigl(\overline{\lambda}\bigr)\biggr)
=\tilde{M}_{\overline{\lambda},\mathbf{n}}(1).\label{eq:4.26}
\end{equation}
Next, we note that the tensor product \smash{$\bigotimes_{\alpha\in I_r,\,i\in\mathbb{Z}_+} (\KR_{\alpha,i}^{\sigma})^{\otimes n_{\alpha,i}}$} of KR-modules over $\mathfrak{g}[t]^{\sigma}$ arises as the $q\to 1$ limit of the tensor product $\bigotimes_{\alpha\in I_r,\, i\in\mathbb{Z}_+} (W_{\alpha,i}^{\sigma})^{\otimes n_{\alpha,i}}$ of KR-modules over $U_q(\widehat{\mathfrak{g}}^{\sigma})$. As both tensor products are defined as quotients by some ideal, and the ideal in the $q\to 1$ limit may be smaller than that for generic values of $q$, it follows from general deformation arguments that we have
\begin{gather}
\dim\Hom_{U_q(\mathfrak{g}^{\sigma})}\biggl(\bigotimes_{ \alpha\in I_r,i\in\mathbb{Z}_+}(W_{\alpha,i}^{\sigma})^{ \otimes n_{\alpha,i}} ,V^q\bigl(\overline{\lambda}\bigr)\biggr)\nonumber\\
\qquad
\leq\dim\Hom_{\mathfrak{g}^{\sigma}}\biggl(\bigotimes_{ \alpha\in I_r,
\, i\in\mathbb{Z}_+}(\KR_{\alpha,i}^{\sigma})^{ \otimes n_{\alpha,i}},V\bigl(\overline{\lambda}\bigr)\biggr).\label{eq:4.27}
\end{gather}
Likewise, by the definition of $\mathcal{F}_{\mathbf{n}}^*$ and by general deformation arguments, we have
\begin{equation}\label{eq:4.28}
\dim\Hom_{\mathfrak{g}^{\sigma}}\left(\bigotimes_{\alpha\in I_r,\,i\in\mathbb{Z}_+}(\KR_{\alpha,i}^{\sigma})^{\otimes n_{\alpha,i}},V\bigl(\overline{\lambda}\bigr)\right)
\leq\dim\Hom_{\mathfrak{g}^{\sigma}}(\mathcal{F}_{ \mathbf{n}}^*,V\bigl(\overline{\lambda}\bigr))=\mathcal{M}_{\overline{\lambda},\mathbf{n}}(1).
\end{equation}
Finally, by Theorem~\ref{3.1}, we have $\mathcal{M}_{ \overline{\lambda},\mathbf{n}}(1)\leq M_{\overline{\lambda}, \mathbf{n}}(1)$. Together with Theorem~\ref{1.1} and \eqref{eq:4.26}--\eqref{eq:4.28}, we have the following twisted analogue of pentagon of inequalities and equalities:
\begin{center}
\begin{tikzpicture}[scale=0.75, transform shape]
\node (A) {$\mathcal{M}_{\overline{\lambda},\mathbf{n}}(1)$};
\node (B) [right=2.5cm of A] {$M_{\overline{\lambda},\mathbf{ n}}(1)$};
\node (C) [right=2.5cm of B] {$\tilde{M}_{\overline{\lambda}, \mathbf{n}}(1)$};
\node (D) [above=2.5cm of C] {$\dim\Hom_{U_q(\mathfrak{g}^{ \sigma})} \bigl(\bigotimes_{\alpha\in I_r,\, i\in\mathbb{Z}_+}(W_{\alpha,i}^{\sigma})^{\otimes n_{\alpha,i}},V^q\bigl(\overline{ \lambda}\bigr)\bigr)$};
\node (E) [left=4cm of D] {$\dim\Hom_{\mathfrak{g}^{\sigma}} \bigl(\bigotimes_{ \alpha\in I_r,\,i\in\mathbb{Z}_+}(\KR_{ \alpha,i}^{\sigma})^{\otimes n_{\alpha,i}},V\bigl(\overline{ \lambda}\bigr)\bigr)$};
\path[sloped, anchor=center] (A) to node {$\leq$} (B);
\path[sloped, anchor=center] (B) to node {$=$} (C);
\path[sloped, anchor=center] (C) to node {$=$} (D);
\path[sloped, anchor=center] (D) to node {$\geq$} (E);
\path[sloped, anchor=center] (E) to node {$\leq$} (A);
\end{tikzpicture}
\end{center}
In particular, equality must hold throughout, and hence we must have \smash{$\mathcal{M}_{\overline{\lambda},\mathbf{n}}(1)
=M_{\overline{\lambda},\mathbf{n}}(1)$}. As each of the coefficients in the $M$-sum $M_{\overline{ \lambda},\mathbf{n}}\bigl(q^{-1}\bigr)$ are manifestly nonnegative as well, we must have \smash{$\mathcal{M}_{\overline{\lambda},\mathbf{n}}\bigl(q^{-1}\bigr)=M_{ \overline{\lambda},\mathbf{n}}\bigl(q^{-1}\bigr)$} as required.
\end{proof}

In particular, Theorem~\ref{1.2} implies that the fusion product $\mathcal{F}_{\mathbf{n}}^*$ of twisted KR-modules is independent of the choice of localization parameters, and the evaluation map $\smash{\overline{\varphi}_{ \bm{\mu}}\colon \tilde{\Gamma}_{\bm{ \mu}}/\tilde{\Gamma}_{\bm{\mu}}' }\to\tilde{\mathcal{H}}[\bm{\mu}]$ defined in Section \ref{Section 3.5} is an isomorphism of graded vector spaces. Moreover, equations \eqref{eq:4.16} and~\eqref{eq:4.23} allow us to express the graded multiplicity $\mathcal{M}_{\overline{\lambda},\mathbf{n}}\bigl(q^{-1}\bigr)$ as a~constant term evaluation of a~product of quantum twisted $Q$-system variables as follows:
\begin{align}
\mathcal{M}_{\overline{\lambda},\mathbf{n}}\bigl(q^{-1}\bigr)
={}&q^{-\frac{1}{2\delta}(L(\mathbf{n})+\sum_{a\in I_r}\Lambda_{aa}\ell_a+2\sum_{a,b\in I_r}\sum_{i\in\mathbb{N}}\Lambda_{ab}n_{a,i})}\nonumber\\
&\times\phi\biggl(\prod_{a\in I_r}\widehat{Z}_{a,0}^{-1}\biggl(\prod_{i\in\mathbb{N}}\prod_{a\in I_r}\widehat{Q}_{a,i} ^{n_{a,i}}\biggr)\prod_{a\in I_r}\widehat{Z}_a^{\ell_a+1}\biggr),\label{eq:4.29}
\end{align}
where
\begin{gather*}
L(\mathbf{n})
=\lim_{k\to\infty}L_k(\mathbf{n})
=\sum_{i,j\in\mathbb{N}}\min(i,j)\mathbf{n}_i\cdot\Lambda\mathbf{n}_j
=\sum_{i,j\in\mathbb{N}}\sum_{a,b\in I_r}\min(i,j)\Lambda_{ab} n_{a,i}n_{b,j},\\
\widehat{Z}_b
=\lim_{k\to\infty}\widehat{Z}_{b,k}.
\end{gather*}
Here, we note that a similar reasoning as in the proof of \cite[Theorem 5.17]{DFK14} shows that $\widehat{Z}_b$ is well-defined as a formal power series of $\widehat{Q}_{b,1}^{-1}$ with coefficients that are Laurent polynomials in the remaining initial data of \smash{$\widehat{\mathbf{Q}}_0\cup\widehat{\mathbf{Q}}_1$}.

\section[An identity satisfied by the graded characters of twisted KR-modules]{An identity satisfied by the graded characters\\ of twisted $\boldsymbol{ \KR}$-modules}\label{Section 5}

In this section, we will prove Theorem~\ref{1.3}, where our approach will follow that taken in the proof of \cite[Section 5]{Lin21}. As our previous calculations only involved nontrivial twisted KR-modules, whereas the identity of graded $\mathfrak{g}^{ \sigma}$-characters of the fusion products of twisted KR-modules stated in Theorem~\ref{1.3} involves trivial twisted KR-modules, we would need a generalized form of equation \eqref{eq:4.29} that includes trivial twisted KR-modules. To arrive at this generalized form, we will need to consider vectors $\widehat{\mathbf{n}}=(n_{a,i})_{a\in I_r,i\in\mathbb{Z}_+}$ that parameterizes a finite set of (possibly trivial) KR-modules over $\mathfrak{g}[t]^{\sigma}$. Let us also make the following definitions:
\begin{gather*}
\widehat{L}(\widehat{\mathbf{n}})
=\sum_{i,j\in\mathbb{Z}_+}\sum_{a,b\in I_r}\min(i,j)\Lambda_{ab} n_{a,i}n_{b,j},\\
\mathcal{M}_{\overline{\lambda},\widehat{\mathbf{n}}}\bigl(q^{-1}\bigr)
=q^{-\frac{1}{2\delta}(\sum_{a\in I_r}\Lambda_{aa}\ell_a+2\widehat{F}(\widehat{\mathbf{n}})+\widehat{L}(\widehat{\mathbf{n}}))}\phi\biggl(\prod_{a\in I_r}\widehat{Z}_{a,0}^{-1}\biggl(\prod_{i\in\mathbb{Z}_+}\prod_{a\in I_r}\widehat{Q}_{a,i}^{n_{a,i}}\biggr)\prod_{a\in I_r}\widehat{Z}_a^{\ell_a+1}\biggr),
\end{gather*}
where
$
\widehat{F}(\widehat{\mathbf{n}})=
\sum_{a,b\in I_r}\sum_{i\in\mathbb{Z}_+}\Lambda_{ab}n_{a,i}$.
We claim that $\mathcal{M}_{\overline{\lambda},\widehat{\mathbf{n}}}\bigl(q^{-1}\bigr)=\mathcal{M}_{\overline{\lambda},\mathbf{n}}\bigl(q^{-1}\bigr)$. Indeed, we may regard
\begin{equation*}
\prod_{a\in I_r}\widehat{Z}_{a,0}^{-1}\biggl(\prod_{i\in\mathbb{Z}_+}\prod_{a\in I_r}\widehat{Q}_{a,i}^{n_{a,i}}\biggr)\prod_{a\in I_r}\widehat{Z}_a^{\ell_a+1}
\end{equation*}
as an element of $\mathbb{Z}_u\bigl[\widehat{\mathbf{Q}}_0^{\pm1}\bigr]\bigl(\bigl(\widehat{\mathbf{Q}}_1^{-1}\bigr)\bigr)$ by \eqref{eq:4.23}. Thus, it follows from the definition of the function~$\phi$ that we have
\begin{align*}
\mathcal{M}_{\overline{\lambda},\mathbf{n}}\bigl(q^{-1}\bigr)
={}&q^{-\frac{1}{2\delta}(\sum_{a\in I_r}\Lambda_{aa}\ell_a+2\sum_{a,b\in I_r}\sum_{i\in\mathbb{N}}\Lambda_{ab}n_{a,i}+L( \mathbf{n}))}\\
&\times\phi\biggl(\prod_{a\in I_r}\widehat{Z}_{ a,0} ^{-1}\biggl(\prod_{i\in\mathbb{N}}\prod_{a\in I_r}\widehat{Q}_{ a,i}^{n_{a,i}}\biggr)\prod_{a\in I_r}\widehat{Z}_a^{\ell_a+1}\biggr)\\
={}&q^{-\frac{1}{2\delta}(\sum_{a\in I_r}\Lambda_{aa}\ell_a+2\sum_{a,b\in I_r}\sum_{i\in\mathbb{N}}\Lambda_{ab}n_{a,i}+\widehat{L}(\widehat{\mathbf{n}}))}\\
&\times\phi\biggl(\prod_{a\in I_r}\widehat{Q}_{a,0}^{n_{a,0}}\prod_{a\in I_r}\widehat{Z}_{ a,0} ^{-1}\biggl(\prod_{i\in\mathbb{N}}\prod_{a\in I_r} \widehat{Q}_{a,i}^{n_{a,i}}\biggr)\prod_{a\in I_r}\widehat{ Z}_a^{\ell_a+1}\biggr)\\
={}&q^{-\frac{1}{2\delta}(\sum_{a\in I_r}\Lambda_{aa}\ell_a+2\sum_{a,b\in I_r}\sum_{i\in\mathbb{Z}_+}\Lambda_{ab}n_{a,i}+\widehat{L}(\widehat{\mathbf{n}}))}\\
&\times\phi\biggl(\prod_{a\in I_r}\widehat{Z}_{a,0}^{-1}\biggl(\prod_{i\in\mathbb{Z}_+}\prod_{a\in I_r} \widehat{Q}_{a,i}^{n_{a,i}}\biggr)\prod_{a\in I_r}\widehat{ Z}_a^{\ell_a+1}\biggr)
=\mathcal{M}_{\overline{\lambda},\widehat{\mathbf{n}}}\bigl(q^{-1}\bigr),
\end{align*}
where the third equality follows from Lemma \ref{4.1}. This allows us to regard \smash{$\mathcal{M}_{\overline{\lambda},\widehat{\mathbf{n}}}\bigl(q^{-1}\bigr)$} as the graded multiplicity of the irreducible component $V\bigl(\overline{\lambda}\bigr)$ in $\mathcal{F}_{\widehat{\mathbf{n}}}^*$, where $\mathcal{F}_{\widehat{\mathbf{n}}}^*$ is the fusion product of twisted KR-modules parameterized by $\widehat{\mathbf{n}}$. More precisely, we have
\begin{equation*}
\mathcal{M}_{\overline{\lambda},\widehat{\mathbf{n}}}(q)
=\sum_{m=0}^{\infty}\dim\Hom_{\mathfrak{g}^{\sigma}}\bigl(\mathcal{ F}_{\widehat{\mathbf{n}}}^*[m],V\bigl(\overline{\lambda}\bigr)\bigr)q^m.
\end{equation*}
Next, let us use Lemma \ref{4.1} to rewrite the quantum $X_m^{(\kappa)}$ $Q$-system relations \eqref{eq:4.5} as
\begin{equation}\label{eq:5.4}
q^{\frac{1}{\delta}\Lambda_{aa}}\widehat{Q}_{a,k-1} \widehat{Q}_{a,k+1}= \widehat{Q}_{a,k}^2\bigl(1-q^{t_a^{\vee}}\widehat{Y}_{a,k}\bigr)
\end{equation}
for all $a\in I_r$ and $k\in\mathbb{Z}$.

Next, we let $\widehat{\mathbf{d}}=(d_{b,i})_{b\in I_r,\,i\in\mathbb{Z}_+}$, $\widehat{\mathbf{s}}=(s_{b,i})_{b\in I_r,\, i\in\mathbb{Z}_+}$, and $\widehat{\mathbf{k}}=(k_{b,i})_{b\in I_r,\,i\in\mathbb{Z}_+}$ be vectors that correspond to the terms
\[
\widehat{Q}_{a,m-1}\widehat{Q}_{a,m+1} , \qquad \widehat{Q}_{a,m}^2, \qquad \text{and}\qquad
\widehat{Q}_{a,m}^2\widehat{Y}_{a,m}=\prod_{b\sim a}\widehat{Q}_{b,m}^{-\overline{C}_{ba}},
\]
 respectively. Specifically, we define
\[
d_{b,i}
=\delta_{ab}(\delta_{i,m-1}+\delta_{i,m+1}), \qquad
s_{b,i}
=2\delta_{ab}\delta_{i,m} , \qquad
k_{b,i}
=-\delta_{b\sim a}\delta_{i,m}\overline{C}_{ba}
\]
for all $b\in I_r$ and~${i\in\mathbb{Z}_+}$, where the function $\delta_{b\sim a}$ is equal to $1$ if $b\sim a$, and $0$ otherwise.

By applying the map $\phi$ to \eqref{eq:5.4}, we have
\begin{gather*}
q^{\frac{1}{\delta}\Lambda_{aa}}\phi\biggl(\biggl(\prod_{b\in I_r}\widehat{Z}_{b,0}^{-1}\biggr)\widehat{Q}_{a,m-1}\widehat{Q}_{a,m+1}\biggl(\prod_{b\in I_r}\widehat{Z}_b^{\ell_b+1}\biggr)\biggr)\\
\qquad=\phi\biggl(\biggl(\prod_{b\in I_r}\widehat{Z}_{b,0}^{-1}\biggr)\widehat{Q}_{a,m}^2\biggl(\prod_{b\in I_r}\widehat{Z}_b^{\ell_b+1}\biggr)\biggr)
-q^{t_a^{\vee}}\phi\biggl(\prod_{b\in I_r}\widehat{Z}_{b,0}^{-1}\prod_{b\sim a}\widehat{Q}_{b,m}^{-\overline{C}_{ba}}\prod_{b\in I_r}\widehat{Z}_b^{\ell_b+1}\biggr),
\end{gather*}
or equivalently,
\begin{gather}
q^{\frac{1}{2\delta}(2\widehat{F}(\widehat{\mathbf{d}})+\widehat{L}(\widehat{\mathbf{d}})+2\Lambda_{aa})}
\mathcal{M}_{\overline{\lambda},\widehat{\mathbf{d}}}\bigl(q^{-1}\bigr)\nonumber\\
\qquad
=q^{\frac{1}{2\delta}(2\widehat{F}(\widehat{\mathbf{s}})+\widehat{L}(\widehat{\mathbf{s}}))}\mathcal{M}_{\overline{\lambda},\widehat{\mathbf{s}}}\bigl(q^{-1}\bigr)
+q^{\frac{1}{2\delta}(2\widehat{F}(\widehat{\mathbf{k}})+\widehat{L}(\widehat{\mathbf{k}})+2\delta t_a^{\vee})}\mathcal{M}_{\overline{\lambda},\widehat{\mathbf{k}}}\bigl(q^{-1}\bigr).\label{eq:5.8}
\end{gather}
We are now ready to prove Theorem~\ref{1.3}.

\begin{proof}[Proof of Theorem~\ref{1.3}]
Firstly, we have
\begin{gather}
\widehat{F}(\widehat{\mathbf{d}})
=\sum_{c,b\in I_r}\sum_{i\in\mathbb{Z}_+}\Lambda_{cb}d_{c,i}
=2\sum_{b\in I_r}\Lambda_{ab},\label{eq:5.9}\\
\widehat{F}(\widehat{\mathbf{s}})
=\sum_{c,b\in I_r}\sum_{i\in\mathbb{Z}_+}\Lambda_{cb}s_{c,i}
=2\sum_{b\in I_r}\Lambda_{ab},\\
\widehat{F}(\widehat{\mathbf{k}})
=\sum_{c,b\in I_r}\sum_{i\in\mathbb{Z}_+}\Lambda_{cb}k_{c,i}
=-\sum_{b\in I_r,c\sim a}\Lambda_{cb}\overline{C}_{ca}
=-\delta t_a^{\vee}+2\sum_{b\in I_r}\Lambda_{ab},\label{eq:5.11}
\end{gather}
where the last equality in \eqref{eq:5.11} follows from the fact that $\Lambda$ is symmetric, along with the equality~${\Lambda\overline{C}=\delta T^{\vee}}$.

Next, we observe that
\begin{gather}
\widehat{L}(\widehat{\mathbf{d}})
=\sum_{i,j\in\mathbb{Z}_+}\sum_{c,b\in I_r}\min(i,j)\Lambda_{cb}d_{c,i}d_{b,j}\nonumber\\
\phantom{\widehat{L}(\widehat{\mathbf{d}})}{}
=\Lambda_{aa}[\min(m-1,m-1)+2\min(m-1,m+1)+\min(m+1,m+1)]\nonumber\\
\phantom{\widehat{L}(\widehat{\mathbf{d}})}{}=(4m-2)\Lambda_{aa},\\
\widehat{L}(\widehat{\mathbf{s}})
=\sum_{i,j\in\mathbb{Z}_+}\sum_{c,b\in I_r}\min(i,j)\Lambda_{cb}s_{c,i}s_{b,j}
=2^2\Lambda_{aa}\min(m,m)
=4m\Lambda_{aa},\\
\widehat{L}(\widehat{\mathbf{k}})
=\sum_{i,j\in\mathbb{Z}_+}\sum_{c,b\in I_r}\min(i,j)\Lambda_{cb}k_{c,i}k_{b,j}
=\min(m,m)\sum_{c,b\sim a}\Lambda_{cb}\overline{C}_{ba}\overline{C}_{ca}\nonumber\\
\phantom{\widehat{L}(\widehat{\mathbf{k}})}{}
=m\sum_{c,b\neq a}\Lambda_{cb}\overline{C}_{ba}\overline{C}_{ca}
=m\sum_{b\in I_r,c\neq a}\Lambda_{cb}\overline{C}_{ba}\overline{C}_{ca}-2m\sum_{c\neq a}\Lambda_{ca}\overline{C}_{ca}\nonumber\\
\phantom{\widehat{L}(\widehat{\mathbf{k}})}{}=m\sum_{b\in I_r,c\neq a}\delta_{ca}t_a^{\vee}\overline{C}_{ca}-2m\sum_{c\in I_r}\Lambda_{ac}\overline{C}_{ca}+4m\Lambda_{aa}\nonumber\\
\phantom{\widehat{L}(\widehat{\mathbf{k}})}{}=-2m\delta t_a^{\vee}+4m\Lambda_{aa}.\label{eq:5.14}
\end{gather}
Again, the last two equalities in \eqref{eq:5.14} follows from the fact that $\Lambda$ is symmetric, along with the equality $\Lambda\overline{C}=\delta T^{\vee}$. Thus, by \eqref{eq:5.9}--\eqref{eq:5.14}, we have
\begin{equation*}
2\widehat{F}(\widehat{\mathbf{d}})+\widehat{L}(\widehat{\mathbf{d}})+2\Lambda_{aa}
=2\widehat{F}(\widehat{\mathbf{s}})+\widehat{L}(\widehat{\mathbf{s}})
=2\widehat{F}(\widehat{\mathbf{k}})+\widehat{L}(\widehat{\mathbf{k}})+2\delta t_a^{\vee}+2m\delta t_a^{\vee},
\end{equation*}
which implies that \eqref{eq:5.8} reduces to
\begin{equation*}
\mathcal{M}_{\overline{\lambda},\widehat{\mathbf{d}}}\bigl(q^{-1}\bigr)
=\mathcal{M}_{\overline{\lambda},\widehat{\mathbf{s}}}\bigl(q^{-1}\bigr)
-q^{-t_a^{\vee}m}\mathcal{M}_{\overline{\lambda},\widehat{\mathbf{k}}}\bigl(q^{-1}\bigr),
\end{equation*}
or equivalently,
\[
\mathcal{M}_{\overline{\lambda},\widehat{\mathbf{d}}}(q)
=\mathcal{M}_{\overline{\lambda},\widehat{\mathbf{s}}}(q)
-q^{t_a^{\vee}m}\mathcal{M}_{\overline{\lambda},\widehat{\mathbf{k}}}(q).
\]
As \smash{$\mathcal{M}_{\overline{\lambda},\widehat{\mathbf{n}}}(q)$} is the graded multiplicity of~$V\bigl(\overline{\lambda}\bigr)$ for all dominant $\mathfrak{g}^{\sigma}$-weights $\overline{\lambda}$, we have
\begin{equation}\label{eq:5.16}
\ch_q\mathcal{F}_{\widehat{\mathbf{d}}}^*
=\ch_q\mathcal{F}_{\widehat{\mathbf{s}}}^*
-q^{t_a^{\vee}m}\ch_q\mathcal{F}_{\widehat{\mathbf{k}}}^*.
\end{equation}
Theorem~\ref{1.3} now follows from \eqref{eq:5.16}.
\end{proof}

\begin{remark*}
Kus and Venkatesh \cite[Proposition 7.3]{KV16} obtained a short exact sequence of fusion product of KR-modules over $\mathfrak{g}[t]^{\sigma}$ that generalizes the \smash{$X_m^{(\kappa)}$} $Q$-system relations \eqref{eq:1.1} as follows:
\begin{equation*}
0\longrightarrow(K_{a,m}^{\sigma})^*\longrightarrow\KR_{a,m}^{\sigma}*\KR_{a,m}^{\sigma}\longrightarrow\KR_{a,m+1}^{\sigma}*\KR_{a,m-1}^{\sigma}\longrightarrow0.
\end{equation*}
By applying the character map to the above exact sequence, we see that the resulting identity of characters coincides with the identity stated in Theorem~\ref{1.3} when $q=1$.
\end{remark*}

\subsection*{Acknowledgments}

The author would like to thank Phillipe Di Francesco and Rinat Kedem for their helpful clarifications and illuminating discussions throughout the project. The author would also like to thank the anonymous referees for their careful reading of the manuscript and helpful suggestions to improve the exposition of this paper. Part of this work was completed while the author was a graduate student at the University of Illinois Urbana-Champaign, where the author was supported by a graduate fellowship from A*STAR (Agency for Science, Technology and Research, Singapore), and this work was also supported in part by the US National Science Foundation (DMS-1802044).

\pdfbookmark[1]{References}{ref}
\LastPageEnding

\end{document}